\documentclass[11pt, leqno]{amsart}
\usepackage[utf8]{inputenc}
\usepackage[T1]{fontenc}

\counterwithout{subsection}{section}

\theoremstyle{plain}
\newtheorem{theorem}{Theorem}
\newtheorem{proposition}[theorem]{Proposition}
\newtheorem{lemma}[theorem]{Lemma}
\newtheorem{corollary}[theorem]{Corollary}

\theoremstyle{definition}
\newtheorem{definition}[theorem]{Definition}
\newtheorem{convention}[theorem]{Convention}

\theoremstyle{remark}
\newtheorem{remark}[theorem]{Remark}
\newtheorem{example}[theorem]{Example}

\usepackage{xpatch}
\NewCommandCopy{\notocsection}{\section}
\xpatchcmd{\notocsection}{{1}}{{1001}}{}{}

\makeatletter 
\def\l@subsection{\@tocline{2}{0pt}{2pc}{6pc}{}}
\makeatother

\makeatletter
\newif\ifshowtocpage
\showtocpagetrue

\let\oldtocline\@tocline
\def\@tocline#1#2#3#4#5#6#7{%
    \ifshowtocpage \oldtocline{#1}{#2}{#3}{#4}{#5}{#6}{#7}%
    \else \oldtocline{#1}{#2}{#3}{#4}{#5}{#6}{}%
    \fi%
}
\makeatother

\newcommand{\hidepagenumber}{\addtocontents{toc}{\protect\showtocpagefalse}}
\newcommand{\showpagenumber}{\addtocontents{toc}{\protect\showtocpagetrue}}

\usepackage{tikz}
\usetikzlibrary{patterns, fit}
\usepackage{tikz-cd}
\usepackage{quiver}
\usepackage{amsmath}
\usepackage{amsthm}
\usepackage{amssymb}
\usepackage{stmaryrd}
\usepackage{mathtools}
\usepackage{bbold}
\usepackage{systeme,mathtools}
\usepackage[hidelinks]{hyperref}
\usepackage{enumitem}

\usepackage[
    backend=biber,
    style=alphabetic,
    maxnames=10,
    maxalphanames=10,
    sorting=nyt,
    url=false, doi=false, isbn=false%
]{biblatex}
\AtBeginBibliography{\small}

\DeclareFieldFormat{extraalpha}{#1}

\DeclareLabelalphaTemplate{
  \labelelement{
    \field[final]{shorthand}
    \field{label}
    \field[strwidth=3,strside=left,ifnames=1]{labelname}
    \field[strwidth=1,strside=left]{labelname}
  }
}

\DeclareSymbolFont{sfoperators}{OT1}{cmss}{m}{n}
\DeclareSymbolFontAlphabet{\mathsf}{sfoperators} \makeatletter
\def\operator@font{\mathgroup\symsfoperators} \makeatother

\newcommand{\Z}{\mathbb Z}
\DeclareMathOperator{\End}{End}
\DeclareMathOperator{\Hom}{Hom}
\DeclareMathOperator{\im}{im}

\DeclareMathOperator{\rad}{rad}
\DeclareMathOperator{\soc}{soc}
\DeclareMathOperator{\head}{top}
\DeclareMathOperator{\modcat}{mod}
\DeclareMathOperator{\projcat}{proj}
\DeclareMathOperator{\injcat}{inj}
\DeclareMathOperator{\Ext}{Ext}
\DeclareMathOperator{\pdim}{proj\,dim}
\DeclareMathOperator{\idim}{inj\,dim}

\DeclareMathOperator{\add}{add}
\DeclareMathOperator{\silt}{silt}
\DeclareMathOperator{\twosilt}{2-silt}
\DeclareMathOperator{\Fac}{Fac}
\DeclareMathOperator{\Sub}{Sub}
\DeclareMathOperator{\tors}{tors}
\DeclareMathOperator{\torf}{torf}
\DeclareMathOperator{\ftors}{ff-tors}
\DeclareMathOperator{\ftorf}{ff-torf}
\DeclareMathOperator{\ind}{ind}
\DeclareMathOperator{\stors}{s-tors}
\DeclareMathOperator{\red}{red}
\DeclareMathOperator{\lift}{lift}

\DeclareMathOperator{\Fun}{Fun}
\DeclareMathOperator{\Cam}{C}
\DeclareMathOperator{\thick}{thick}
\newcommand{\inv}{^{-1}}
\newcommand{\opcat}{^\mathrm{op}}

\newcommand{\surj}{\twoheadrightarrow}
\newcommand{\inj}{\hookrightarrow}
\newcommand{\iso}{\stackrel{\sim}{\smash{\longrightarrow}\rule{0pt}{0.4ex}}}
\newcommand{\card}[1]{\lvert #1 \rvert}
\newcommand{\blank}{{-}}
\newcommand{\deriv}{\mathcal D^{\mathrm b}}
\newcommand{\Kb}{\mathcal K^{\mathrm b}}
\newcommand{\Kbplus}{\mathcal K^{\mathrm b, +}}
\newcommand{\leftperp}[1]{\prescript{\perp}{}{#1}}
\newcommand{\rightperp}[1]{#1^\perp}
\newcommand{\twosiltwrt}[1]{2_#1\operatorname{-silt}}
\newcommand{\longrightleftarrows}{\mathrel{\substack{\longrightarrow \\[-.6ex] \longleftarrow}}}
\newcommand{\plus}{+}

\begin{document}

\title{Derived equivalence of posets of torsion classes}
\author{Marius Goguet}
\address{Universit\'e Grenoble Alpes, Institut Fourier, CS 40700, 38058 Grenoble cedex 09}
\email{marius.goguet@univ-grenoble-alpes.fr}

\begin{abstract}
    Inspired by work of Ladkani,
    we investigate the structure of the poset of (functorially finite) torsion classes for a finite dimensional algebra admitting a simple projective module.
    We extend Ladkani's result by showing that if two algebras are related by a 1-APR tilt,
    then their posets of functorially finite torsion classes are related by a flip-flop.
    This implies that the incidence algebras of the posets are derived equivalent.
    We also show that the result holds for the posets of arbitrary torsion classes.
    A key ingredient of the proof is to see the two posets we want to relate
    as subposets of a common poset of silting objects in the functorially finite case,
    and of a common poset of s-torsion pairs in the arbitrary case.
\end{abstract}

\begin{abstract}
    We investigate the structure of the poset of torsion classes for a finite dimensional algebra admitting a simple projective module.
    We generalise a result of Ladkani by showing that if two algebras are related by a 1-APR tilt,
    then their posets of torsion classes are related by a flip-flop.
    This implies that the incidence algebras of the posets are derived equivalent.
    We give two different proofs of this result.
    The first one applies to functorially finite torsion classes,
    and a key ingredient is to see the two posets we want to relate as subposets of a common poset of silting objects.
    The second proof applies to arbitrary torsion classes,
    and we use a similar strategy,
    this time embedding the two posets into a common poset of s-torsion pairs.
\end{abstract}

\maketitle

\tableofcontents

\section*{Introduction}

The notion of torsion pairs in abelian categories was introduced by Dickson in \cite{Dickson},
as a generalisation of the theory of torsion(-free) abelian groups.
The study of torsion pairs in the module category of an algebra, hand in hand with tilting theory,
has been at the centre of the development of the modern representation theory of finite dimensional algebras.
For instance, torsion pairs feature prominently in the Brenner-Butler theorem \cite{BB},
and they were used by Happel, Reiten and Smal\o{} in \cite{HRS} to construct new t-structures
in derived categories of algebras.

The set of torsion classes for a finite dimensional algebra $\Lambda$ comes with a natural partial ordering,
and the resulting poset can be seen as a combinatorial invariant of $\Lambda$.
This poset enjoys many nice and interesting properties,
of which we mention two.
First, it is a complete lattice, and the relationship between its lattice structure and the representation theory of $\Lambda$
has been recently studied in depth, see for instance \cite{DIRRT}.
Second, under suitable conditions,
the set of torsion classes is in bijection with several other sets of objects associated to the representation theory of $\Lambda$,
such as support $\tau$-tilting modules, 2-term silting complexes of projective modules, cluster-tilting objects in the cluster category,
to name a few.
This fact was at the centre of the introduction of $\tau$-tilting theory in \cite{AIR}.
Moreover, for each of these sets, there is a notion of partial order,
which turns out to be respected by those bijections.

To a finite poset, one can associate a finite dimensional algebra,
called the incidence algebra of the poset.
The study of posets through their incidence algebras is an area of active research:
by studying how homological properties of the incidence algebra relate to combinatorial properties of the poset
\cite{IM, GKKM};
studying properties of the derived category of the incidence algebra, such as the fractionally Calabi-Yau property
\cite{Cha,Rog,Got};
or studying when two incidence algebras share the same derived category \cite{Lad-sheaves}.

In \cite{Lad-universal}, Ladkani introduced a combinatorial operation on posets,
called a flip-flop.
Roughly speaking, two posets $Z$ and $Z'$ are said to be related by a flip-flop
when $Z$ can be split into two disjoint subposets $X$ and $Y$,
so that $Z'$ is obtained by reversing the relative order of $X$ and $Y$ in the poset,
but keeping the same ordering inside of $X$ and $Y$ (see Figure~\ref{figure intro}).
Moreover, $Z$ and $Z'$ can be reconstructed from the data of the posets $X$ and $Y$,
and of a certain order-preserving map $f : X \to Y$.
Ladkani proved that if two finite posets are related by a flip-flop,
then the derived categories of their incidence algebras are equivalent.
Moreover, under a small extra assumption,
one can show that the Hasse quivers of the posets can be easily deduced from each other.
Ladkani's construction was later generalised in \cite{DJW} to the context of stable $\infty$-categories.

\begin{figure}
    \def\svgwidth{0.92\textwidth}
    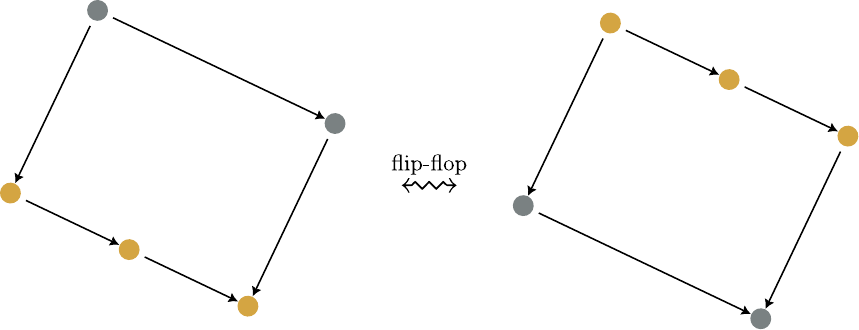
    \caption{Two posets related by a flip-flop.}
    \label{figure intro}
\end{figure}

In \cite{Lad-cluster_tilting} and \cite{Lad-tilting},
Ladkani applied this framework to some posets appearing naturally in representation theory.
In particular, he showed that for two quivers $Q$ and $Q'$ related by a reflection,
the posets of functorially finite torsion classes $\ftors kQ$ and $\ftors kQ'$ (under a different name)
are related by a flip-flop.

The goal of this paper is to extend this result in two different directions.
First, we broaden the class of pairs of algebras whose posets of torsion classes we are able to compare,
from hereditary algebras to algebras of arbitrary global dimension.
To this end, we replace the notion of reflection of quivers by the more general notion of APR tilting.
This was introduced by Auslander, Platzeck and Reiten in \cite{APR},
with the goal of giving a more categorical approach to Bernstein--Gelfand--Ponomarev reflection functors.
If $\Lambda$ is a finite dimensional algebra,
an APR tilting module is a particular tilting $\Lambda$-module associated to a simple projective $\Lambda$-module.
In order to be able to compare the posets of torsion classes of two algebras related by an APR tilt,
we have to impose an extra homological assumption on the simple projective module,
namely that its injective dimension is equal to 1.

In \cite{IO},
Iyama and Oppermann introduced a generalisation of APR tilting modules in the setting of higher homological algebra,
which they called $n$-APR tilting modules, where $n \geq 1$ is an integer.
In the case $n = 1$, what they call a weak 1-APR tilting module is simply a classical APR tilting module,
and they call such a module a 1-APR tilting module under the extra assumption
that the injective dimension of the associated simple projective module is equal to 1.
Since this is exactly the assumption we need, we employ their terminology in the rest of this paper.

In this setting, we show the following.

\begin{theorem} \label{thm intro ff}
    Let $\Lambda$ and $\Gamma$ be two finite dimensional algebras
    which are related by a 1-APR tilt.
    Then their posets of functorially finite torsion classes $\ftors \Lambda$ and $\ftors \Gamma$ are related by a flip-flop.
\end{theorem}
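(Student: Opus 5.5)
Let $S$ be the simple projective $\Lambda$-module with $\idim S = 1$, and write $\Lambda = P \oplus S$. The APR tilting module is $T = \tau^{-1}S \oplus P$, and $\Gamma = \End_\Lambda(T)$. The tilting complex $T$ induces a derived equivalence $F\colon \deriv(\modcat \Gamma) \to \deriv(\modcat \Lambda)$ sending $\Gamma$ to $T$. The plan is to realise both $\ftors\Lambda$ and $\ftors\Gamma$ inside one common poset, namely the poset $\silt \Lambda$ of basic silting objects in $\Kb(\projcat \Lambda)$. The embeddings are the following.
\begin{itemize}
\item $\ftors\Lambda \cong \twosilt \Lambda$, the silting objects $U$ with $\Lambda \geq U \geq \Lambda[1]$.
\item $\ftors\Gamma \cong \twosilt\Gamma$, which $F$ identifies with the interval $\{U : T \geq U \geq T[1]\}$ in $\silt\Lambda$.
\end{itemize}
Both embeddings are order-preserving, by the Adachi--Iyama--Reiten bijection together with the fact that a derived equivalence preserves the silting order.

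The $T$ is a mutation of $\Lambda$ at $S$. Since $\idim S = 1$ and $S$ is simple projective, $\tau^{-1}S$ has a minimal projective presentation of the form $0 \to S \to P' \to \tau^{-1}S \to 0$ with $P' \in \add P$. So $T$ is the left mutation $\mu_S(\Lambda)$, and $\Lambda > T$ in $\silt\Lambda$.

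The key step is to split each interval in two. Put $X = \{U \in \twosilt\Lambda : U \geq T\}$ and $Y = \twosilt\Lambda \setminus X$, and define $X', Y'$ similarly inside $[T[1],T]$. I expect the following description.
\begin{itemize}
\item $X$ consists of the two-term silting objects having $S$ as a summand of degree $0$. Equivalently, $X$ corresponds to the torsion classes containing $S$.
\item $Y$ consists of the torsion classes $\mathcal T$ with $\Hom(\mathcal T, S) = 0$, since $S$ is simple.
\end{itemize}
Using reduction (Jasso's $\tau$-tilting reduction, or silting reduction at $S$), $X$ identifies with $\twosilt$ of $\Lambda/\langle e_S\rangle$. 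The same holds for $Y$ up to a shift of the $S$-summand. The condition $\idim S = 1$ should ensure the following.
\begin{itemize}
\item Every $U \in Y$ satisfies $U \geq T[1]$ and, after replacing the summand $S[1]$ appropriately, lies in $[T[1],T]$.
\item The set $X' = \{U \in [T[1],T] : U \geq \Lambda[1]\}$, equivalently $U \in [\Lambda[1],T]$, matches $Y$.
\item Symmetrically, $Y'$ matches $X[1]$, that is, $X$ shifted by one.
\end{itemize}
Concretely, the elements of $[T[1],T]$ split into those that are $\geq \Lambda[1]$, which are exactly the elements of $[\Lambda[1],T]$ and so lie in $Y$, and those that are not, which are of the form $U[1]$ for $U \in X$. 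This gives bijections $\ftors\Lambda = X \sqcup Y$ and $\ftors\Gamma = Y \sqcup X$, where the order inside $X$ and inside $Y$ is the restricted silting order on both sides.

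It then remains to check the flip-flop axioms. In $\ftors\Lambda$, $Y$ lies below $X$ in the sense that no $x \in X$ satisfies $x \leq y$ with $y \in Y$. In $\ftors\Gamma$ the situation is reversed. One must also find a single order-preserving map $f\colon X \to Y$ that encodes the cross relations on both sides: $y \leq x$ iff $y \leq f(x)$ in $\ftors\Lambda$, and correspondingly in $\ftors\Gamma$. I would define $f(x)$ as the largest element of $Y$ below $x$. In torsion-class language this is $\mathcal T \mapsto \{M \in \mathcal T : \Hom(M,S)=0\}$, or rather the largest torsion class inside ${}^\perp S \cap \mathcal T$. On the silting side, $f$ should be mutation at the summand $S$.

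The main obstacle is this last step: showing that this same $f$, transported through the shift, governs the relations $x[1] \leq y$ in $[T[1],T]$. That amounts to a statement comparing $\Hom(U, V[>0])$ for $U = x[1]$ and $V \in Y$. I would attack it by showing that $y \leq f(x)$ iff $y \leq x$ iff $x[1] \leq y$. The argument would use the triangle $S \to P' \to \tau^{-1}S \to S[1]$ and the vanishing $\Ext^2(-,S) = 0$, which comes from $\idim S = 1$.
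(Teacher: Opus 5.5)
Your global set-up matches the paper's. Both posets sit in $\silt\Kb(\projcat\Lambda)$ as $[\Lambda[1],\Lambda]$ and $[T[1],T]$, the shared piece is $Y=[\Lambda[1],T]$ (torsion classes of $\Lambda$ not containing $S$), and $f=\mu^-_S$. There are two slips. First, $\{U\ge T\}$ is just $\{T,\Lambda\}$ because $\Lambda$ covers $T$; you mean $Y=\{U\le T\}$ with $X$ its complement. Second, $Y$ is not in general $\twosilt(\Lambda/\langle e_S\rangle)$.

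The genuine gap is your description of the other piece of $[T[1],T]$: it is \emph{not} $\{U[1] : U\in X\}$. Already $U=\Lambda$ gives $\Lambda[1]\in Y$. For $\Lambda$ of type $A_2$, $\Lambda=S\oplus P$, the element $U=S\oplus P[1]\in X$ (torsion class $\add S$) is incomparable with $T$ (torsion class ${}^{\perp}S$), so $U[1]\notin[T[1],T]$ at all. The correct piece is the interval $[T[1],(\tau^{-1}S)[1]\oplus P]$ of objects having $(\tau^{-1}S)[1]$ as a summand, with $\tau^{-1}S$ identified with its resolution $(S\to P')$. Under $\mathbf R\Hom_\Lambda(T,-)$ this summand becomes $P_I[1]$, where $P_I$ is the projective cover of the simple injective $\Gamma$-module $I$. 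So this piece consists of the torsion classes of $\Gamma$ not containing $I$.

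The bijection with $X$ replaces only one summand: $\alpha(S\oplus U')=(\tau^{-1}S)[1]\oplus U'$. The paper obtains it from the two-complement property. First, $\mu^-_S(S\oplus U')=Z\oplus U'$ lies in both intervals. Next, $U'$ has exactly two completions in each interval. The completion in $[T[1],T]$ other than $Z\oplus U'$ cannot lie in $[\Lambda[1],\Lambda]$, because there $U'$ already has its two completions $S\oplus U'\notin[T[1],T]$ and $Z\oplus U'$. So that completion lies in the bottom piece, and its new summand is $(\tau^{-1}S)[1]$. This also gives $\mu^+_{(\tau^{-1}S)[1]}\circ\alpha=\mu^-_S$ at once.

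The cross relation you plan to prove, $y\le f(x)\iff y\le x\iff x[1]\le y$, also has the wrong shape. In $\ftors\Gamma$ the bottom piece lies below $Y$, so what is needed is $\alpha(x)\le y\iff f(x)\le y$. This is a different condition from $y\le x\iff y\le f(x)$. In the $A_2$ example, $x=S\oplus P[1]$ has $f(x)=\Lambda[1]$ and $\alpha(x)=T[1]$. Then $\alpha(x)\le y$ holds for all three $y\in Y$, whereas $y\le x$ holds only for $y=\Lambda[1]$.

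The paper does not establish this relation by a direct Hom computation using $\Ext^2(-,S)=0$. Instead it runs the $\Lambda$-side argument dually for $\Gamma$ and its simple injective $I$: torsion-free classes containing $I$, with map $\mu^+_I$. It then transports the resulting flip-flop along $\Sigma^{-1}\nu_\Gamma\circ\mathbf R\Hom_\Lambda(T,-)$. This is where $\idim S=1$ is used: it makes $I=\Ext^1_\Lambda(T,S)$ injective, with $\mathbf R\Hom_\Lambda(T,S[1])\simeq I$. With that decomposition of $[T[1],T]$ in hand, only the identification $\alpha$ above remains to finish the proof.
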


The second direction in which we extend Ladkani's result is by also taking into account the poset of arbitrary torsion classes.
Namely, we show that this flip-flop relationship also exists between these \emph{a priori} larger posets.

\begin{theorem} \label{thm intro arbitrary}
    Let $\Lambda$ and $\Gamma$ be two finite dimensional algebras
    which are related by a 1-APR tilt.
    Then their posets of torsion classes $\tors \Lambda$ and $\tors \Gamma$ are related by a flip-flop.
\end{theorem}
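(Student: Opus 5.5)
Let $P = S$ be the simple projective $\Lambda$-module with $\idim S = 1$, let $T = \tau^{-1}S \oplus \Lambda/S$ be the 1-APR tilting module, and let $\Gamma = \End_\Lambda(T)$. I will construct the flip-flop explicitly. Every torsion class $\mathcal T$ of $\Lambda$ either contains $S$ or does not. Since $S$ is simple projective, $S \in \mathcal T$ is equivalent to $\Hom(S,\blank)|_{\mathcal T}\neq 0$. Hence $\tors\Lambda = X \sqcup Y$, where $Y=\{\mathcal T \mid S\in\mathcal T\}$ is an up-closed subposet and $X$ is its down-closed complement. Dually, for $\Gamma$ the module $S' = \tau_\Gamma$-shifted image of $S$ is the simple injective $\Gamma$-module with projective dimension one (the "other end" of the APR tilt). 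This gives a splitting $\tors\Gamma = X'\sqcup Y'$, where $X'=\{\mathcal U\mid S'\notin\mathcal U\}$ is now up-closed and $Y'$ is down-closed.

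The first step is to identify $X\cong X'$ and $Y\cong Y'$ as posets. For $Y$: a torsion class containing $S$ is determined by its image in $\modcat \Lambda/\langle e_S\rangle$-like data. More precisely, using that $\idim S=1$, I expect the torsion pair $(\Fac T, \Sub \tau S)=(\{M\mid \Hom(M,S)... \})$ to be cotilting-compatible. The Brenner--Butler equivalences $\Hom_\Lambda(T,\blank)\colon \mathcal T(T)\to \mathcal Y(T)$ and $\Ext^1_\Lambda(T,\blank)\colon \mathcal F(T)\to\mathcal X(T)$ then apply, where $\mathcal F(T)=\add S$. I would show that $\mathcal T \mapsto \Hom(T,\mathcal T\cap \mathcal T(T))$, with the filtration closure taken in $\modcat\Gamma$, sends $X$ bijectively onto $X'$, and similarly sends $Y$ to $Y'$ after removing or adjoining $S$ and $S'$. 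The key point is that for $\mathcal T\in X$, every module in $\mathcal T$ lies in $\mathcal T(T)=\{M\mid\Hom(M,S)=0\}$. This needs a check: $M \to S$ nonzero means $S$ is a quotient of $M$, so $S\in\mathcal T$. The equivalence then transports quotients and extensions because $\mathcal Y(T)$ is closed under quotients when $\pdim_\Gamma$ is small, which is where $\idim S=1$ enters.

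The second step is to produce the order-preserving map $f\colon X\to Y$ and the gluing data. In $\tors\Lambda$ the relation $x\le y$ for $x\in X$, $y\in Y$ should be $f(x)\le y$ with $f(\mathcal T)=\mathcal T\vee \Fac S$, the smallest torsion class containing both $\mathcal T$ and $S$. In $\tors\Gamma$ the reversed relation $y'\le x'$ should be $y\le g(x)$ with $g$ the analogous meet-type map. Ladkani's flip-flop requires precisely that both posets are glued along the same $f$, reversed on the other side. So I must verify that under the identifications of the first step, $\mathcal U_y \subseteq \mathcal U_x$ in $\tors\Gamma$ if and only if $f(x)\le y$ in $\tors\Lambda$.

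This comparison is the main obstacle, because the bijections are not restrictions of a single lattice map and the cross relations are not formal. To handle it, I would embed both posets into a common poset, as the abstract suggests: the poset of s-torsion pairs, or 2-term-like t-structures, in $\deriv(\Lambda)\simeq\deriv(\Gamma)$ via the derived tilting equivalence $\mathbf R\Hom(T,\blank)$. There, torsion classes of $\Lambda$ and of $\Gamma$ both appear as intervals of intermediate t-structures, via HRS tilting, between shifted hearts, and the order is just inclusion of aisles. Cross comparisons then become inclusions of aisles in one category, and I would check them using that the heart of $\Gamma$ is the HRS tilt of $\modcat\Lambda$ at $(\mathcal T(T),\add S)$. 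This makes the reversal of $X$ and $Y$ visible: $S$ moves from the heart to the shifted heart $S[1]$. Finally, the paper's own statement of the flip-flop condition then yields the theorem, and derived equivalence of incidence algebras follows from Ladkani.
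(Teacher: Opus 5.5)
Your ambient framework (intermediate t-structures in $\deriv(\Lambda)$, ordered by inclusion of aisles) is essentially the paper's. However, the combinatorial data you feed into it are wrong, and the decisive construction is missing. First, your matching of the pieces is crossed. Your $X'=\{\mathcal U\mid S'\notin\mathcal U\}$ is not up-closed: it is the down-closed interval $[\{0\},\leftperp{S'}]$. Write $\mathcal U,\mathcal U'$ for the standard aisles of $\Lambda$ and $\Gamma$ in $\deriv(\Lambda)$. Then $\Sigma\mathcal U\subseteq\mathcal U'\subseteq\mathcal U$, and the t-structures common to both intervals are those with aisle between $\Sigma\mathcal U$ and $\mathcal U'$. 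Their $\Lambda$-torsion class omits $S$, while their $\Gamma$-torsion class contains $S'\simeq\mathbf RF(\Sigma S)$. So the torsion classes of $\Lambda$ not containing $S$ correspond to the torsion classes of $\Gamma$ containing $S'$, via $\mathcal T\mapsto F(\mathcal T)\plus\add S'$ with $F=\Hom_\Lambda(T,\blank)$. Your ``filtration closure of $F(\mathcal T)$'' already fails for $\mathcal T=\{0\}$, which must go to $\add S'$. Your justification is also false: $\mathcal Y(T)=\rightperp{S'}$ is not closed under quotients, since $S'$ is the top of the projective $F(\tau\inv S)\in\mathcal Y(T)$. Concretely, take $\Lambda=k(1\to 2)$ and $S=S_1$. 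Both $\tors\Lambda$ and $\tors\Gamma$ are pentagons, but your $X$ has three elements and your $X'$ has two, and similarly $|Y|=2\neq 3=|Y'|$. (Also, $S\in\mathcal T$ iff $\Hom_\Lambda(\mathcal T,S)\neq 0$, not $\Hom_\Lambda(S,\mathcal T)\neq0$.)

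Second, your gluing maps are the wrong way round. On the $\Lambda$ side the map must be the meet-type map $\mathcal T\mapsto\mathcal T\cap\leftperp{S}$, from the classes containing $S$ to the others. On the $\Gamma$ side it is the join-type map $\mathcal U\mapsto\mathcal U\vee\add S'$, from the classes omitting $S'$ to those containing it; this is deletion of $S'$ from a torsion-free class, rewritten for torsion classes. Your $f(\mathcal T)=\mathcal T\vee\Fac S$ does give a valid decomposition $(X\sqcup Y,\leq^f_+)$ of $\tors\Lambda$, but its partner $(X\sqcup Y,\leq^f_-)$ is not $\tors\Gamma$. For $k(1\to2)$, with $P_2$ the projective-injective and $S_2$ the simple injective $\Lambda$-module, the partner has bottom $\add S_1$, then the incomparable pair $\{0\}$, $\modcat\Lambda$, then $\add S_2$, then $\add\{P_2,S_2\}$. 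All its maximal chains have length $3$, so it is not a pentagon.

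Finally, the heart of the theorem is never supplied: an isomorphism from the torsion classes of $\Lambda$ containing $S$ onto those of $\Gamma$ contained in $\leftperp{S'}$, intertwining the two gluing maps. It cannot come from inclusion of aisles, because these two pieces are disjoint sets of t-structures. Nor is it ``remove $S$ and apply $F$'': for $k(1\to2)$, $\modcat\Lambda$ must go to $\leftperp{S'}=\add F(P_2)$, while $F(\add\{P_2,S_2\})$ is not even a torsion class.

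The paper obtains this isomorphism as follows. It reduces the two pieces, viewed as intervals of s-torsion pairs in $\mathcal C=\mathcal U\cap\Sigma\mathcal V'$, to their hearts $\rightperp{S}$ and $\leftperp{S'}$. It then uses an explicit equivalence $\theta$ between these hearts, sending $X$ to $F(\widetilde X)$, where $0\to S^n\to\widetilde X\to X\to 0$ is the universal extension. Compatibility with the gluing maps comes from the identity $(\add S*\mathcal Z)\cap\mathcal H'=(\theta(\mathcal Z)*\add\Sigma S)\cap\mathcal H$ for $\mathcal Z\subseteq\rightperp{S}$, where $\mathcal H,\mathcal H'$ are the two hearts. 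Without such an explicit bijection and compatibility check, the proposal does not prove the theorem.
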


Although the flip-flop decompositions for both types of posets are related,
we stress that Theorem~\ref{thm intro arbitrary} does not imply Theorem~\ref{thm intro ff}.
We also remark that these results do not hold in general if we replace ``1-APR tilt'' by ``APR tilt''.
Indeed, already for $\Gamma$ the path algebra of the equioriented $A_3$ quiver and $\Lambda = \Gamma / (\rad \Gamma)^2$,
one can check that the posets $\tors \Lambda$ and $\tors \Gamma$ have 12 and 14 elements respectively,
which prevents them from being related by a flip-flop,
despite the algebras $\Lambda$ and $\Gamma$ being related by an APR tilt.

We now give an idea of the strategy of the proof.
The first step is to define the flip-flop decompositions for the different posets at play.
These decompositions actually hold in a more general setting than that of 1-APR tilting.
Indeed, they only require that the algebra $\Lambda$ admit a simple projective module,
or dually a simple injective module.

\begin{theorem} \label{thm intro flip-flop decomposition}
    Let $\Lambda$ be a finite dimensional algebra which admits a simple projective module.
    Then the poset $\tors \Lambda$ admits a flip-flop decomposition,
    which restricts to a flip-flop decomposition of $\ftors \Lambda$.
\end{theorem}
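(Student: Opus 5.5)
The plan is to take the most naive splitting: separate torsion classes according to whether they contain the simple projective module $S$. I will use Ladkani's description of a flip-flop decomposition of a poset $Z$. It is a partition $Z = X \sqcup Y$ together with an order-preserving map $f \colon X \to Y$ such that:
\begin{itemize}
\item there is no relation $y \le x$ with $y \in Y$ and $x \in X$, so $Y$ is an up-set;
\item for $x \in X$ and $y \in Y$ we have $x \le y$ if and only if $f(x) \le y$.
\end{itemize}
Equivalently, for every $x \in X$ the set $\{ y \in Y : x \le y\}$ has a least element, namely $f(x)$.

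For $Z = \tors \Lambda$, set $Y = \{\mathcal T : S \in \mathcal T\}$ and $X = \{\mathcal T : S \notin \mathcal T\}$. Clearly $Y$ is an up-set. The natural candidate for $f$ is $f(\mathcal T) = \mathcal T \vee \mathcal T(S)$, the join in the complete lattice $\tors \Lambda$. This join is the smallest torsion class containing both $\mathcal T$ and $S$. By construction $f$ is order preserving, and $\mathcal T \subseteq \mathcal U$ with $S \in \mathcal U$ holds if and only if $f(\mathcal T) \subseteq \mathcal U$. So the decomposition of $\tors \Lambda$ is essentially formal.

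First I will make $\mathcal T(S)$ explicit. Since $S$ is simple projective, we have $\Fac S = \add S$ and $\Ext^1(S,S) = 0$, hence $\mathcal T(S) = \add S$. Therefore $f(\mathcal T)$ is the extension closure of $\mathcal T \cup \add S$. Equivalently, its torsion-free class is $\rightperp{\mathcal T} \cap \rightperp{S}$, and $\rightperp{S}$ is exactly the class of modules whose socle has no summand isomorphic to $S$.

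The real content is the restriction to $\ftors \Lambda$. The partition and the order relations restrict automatically. The only thing to prove is that $f$ maps $X \cap \ftors\Lambda$ into $\ftors \Lambda$, i.e.\ that $\mathcal T \vee \add S$ is functorially finite whenever $\mathcal T$ is. This is the main obstacle, since joins of functorially finite torsion classes are not functorially finite in general.

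My first attempt is as follows. Write $\mathcal T = \Fac M$ with $M$ support $\tau$-tilting, and prove that $\mathcal T \vee \add S = \Fac(M \oplus S)$. This requires showing that $\Fac(M \oplus S)$ is closed under extensions. Given $0 \to A \to E \to B \to 0$ with $A, B \in \Fac(M\oplus S)$, lift the $M$-part of an epimorphism $M^n \oplus S^m \surj B$ using the projectivity of $S$. The aim is to show that the obstruction lies in the $S$-isotypic part of the socle of $E$, which is covered by copies of $S$.

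If this direct route fails, the fallback is to work on the torsion-free side. Since $\rightperp{\mathcal T}$ is functorially finite, its objects are the modules cogenerated by some $N$. I would then show that $\rightperp{\mathcal T} \cap \rightperp{S}$ is again of the form $\Sub N'$, obtained by removing $S$-socle from the approximation. Equivalently, I would use $\tau$-tilting theory: pass from $M$ to the support $\tau$-tilting module whose torsion class is the smallest one containing $M$ and $S$. For this I would consider $\tau$-rigid modules over the quotient by the idempotent of $S$, and use that $\tau S = 0$.
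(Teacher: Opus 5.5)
Your decomposition of $\tors\Lambda$ is correct, and it is a different flip-flop structure on the same partition as the paper's. The paper takes the torsion classes containing $P$ as the domain and sends them \emph{down} via $\mathcal T\mapsto\mathcal T\cap\leftperp P$ (deleting the single indecomposable $P$ leaves a torsion class), which gives $\leq^f_-$. You send the other half \emph{up} via the join, which gives $\leq^f_+$. The paper's choice makes the restriction to $\ftors\Lambda$ immediate, because $\Fac(P\oplus M')\mapsto\Fac M'$ by Lemma~\ref{removing P from Fac M}.

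With your choice, as you say, all the content lies in showing that $\mathcal T\vee\add S$ is functorially finite, and that step is not established. Your primary claim $\mathcal T\vee\add S=\Fac(M\oplus S)$ is false. Take $\Lambda$ a path algebra of type $A_2$, with simple projective $S$, simple injective $S'$, and the projective-injective $E$ fitting into a non-split sequence $0\to S\to E\to S'\to 0$. Then:
\begin{itemize}
\item $S'$ is support $\tau$-tilting, and $\mathcal T=\Fac S'=\add S'$ does not contain $S$;
\item $\Fac(S'\oplus S)=\add(S\oplus S')$ is not closed under extensions;
\item but $\mathcal T\vee\add S=\modcat\Lambda$.
\end{itemize}
The lifting argument cannot be patched: $\Hom_\Lambda(S',E)=0$, so no part of the epimorphism onto $S'$ lifts to $E$.

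The torsion-free fallback is the right fix, but it must be made precise. The operation has to be passing to a submodule of $N$. Quotienting out the $S$-part of the socle would not do, since a quotient of $N$ need not stay in $\rightperp{\mathcal T}$.
\begin{itemize}
\item Write $\rightperp{\mathcal T}=\Sub N$ and let $N'$ be the largest submodule of $N$ lying in $\rightperp S$. It exists because $\rightperp S$ is closed under submodules, finite direct sums and, since $S$ is projective, quotients.
\item If $L\in\rightperp{\mathcal T}\cap\rightperp S$ embeds into $N^m$, each coordinate projection of its image is a quotient of $L$. Hence it lies in $\rightperp S$, so it is contained in $N'$, and therefore $L\in\Sub N'$.
\item The reverse inclusion $\Sub N'\subseteq\rightperp{\mathcal T}\cap\rightperp S$ is clear.
\end{itemize}
Thus the torsion-free class of $\mathcal T\vee\add S$ is $\Sub N'$, and $\mathcal T\vee\add S$ is functorially finite by Proposition~\ref{prop:equivalence ff torsion(free)}.

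Your $\tau$-tilting alternative, by contrast, is circular as stated. It presupposes a support $\tau$-tilting module generating $\mathcal T\vee\add S$, which is exactly the functorial finiteness to be proved.
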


While a given poset admits \emph{a priori} many different flip-flop decompositions,
the one given by Theorem~\ref{thm intro flip-flop decomposition} is relevant to us because it comes from representation theory.
Indeed, we decompose the poset of torsion classes into two subposets along a representation-theoretic criterion,
namely whether the simple projective module belongs to a given torsion class or not.
Moreover, we remark that one of those two subposets can be further understood as the poset of torsion classes for a smaller algebra,
through the process of reduction described by Jasso in \cite{Jasso}
(see Remark~\ref{Jasso reduction} for a precise statement).
Thus we can hope that this flip-flop decomposition helps understand more about the structure of the poset of torsion classes,
even when we do not know whether the flip-flop counterpart poset has a representation-theoretic interpretation.

The common strategy for our proofs of Theorem~\ref{thm intro ff} and Theorem~\ref{thm intro arbitrary}
is to leverage the fact that two algebras $\Lambda$ and $\Gamma$ related by a 1-APR tilt are derived equivalent.
In order to fully profit from this fact, we translate our posets of torsion classes $\ftors \Lambda$, $\ftors \Gamma$, and $\tors \Lambda$, $\tors \Gamma$
into posets of objects or subcategories that live in the ``common'' derived category $\mathcal D$ of $\Lambda$ and $\Gamma$.

In the functorially finite case, we see $\ftors \Lambda$ and $\ftors \Gamma$ as posets of 2-term silting complexes in the homotopy category $\Kb (\projcat \Lambda)$,
in the spirit of $\tau$-tilting theory, following \cite{AIR}.
In this context, we are able to interpret the order-preserving maps defining the flip-flop decompositions as silting mutation.
This allows us to use the rich theory of silting complexes and the nice properties of completion exhibited by $\tau$-tilting theory.

In the case of arbitrary torsion classes,
we use the framework of extriangulated categories,
which was introduced by Nakaoka and Palu in \cite{NP}.
We introduce an extriangulated category $\mathcal C$ as a certain extension-closed subcategory of the derived category $\mathcal D$.
We think of this extriangulated category as being obtained by ``gluing'' the module categories $\modcat \Lambda$ and $\modcat \Gamma$ along a certain functor $\modcat \Lambda \to \modcat \Gamma$.
In \cite{AET}, Adachi, Enomoto and Tsukamoto introduced the notion of s-torsion pairs in an extriangulated category $\mathcal E$
equipped with an extra structure of first negative extension.
They developed a theory of reduction for intervals in the poset $\stors \mathcal E$ of s-torsion pairs,
giving a framework that encompasses Jasso's reduction of torsion pairs,
as well as the bijection between intermediate t-structures and torsion pairs discovered by Happel, Reiten and Smal\o{}.
In our context, the poset $\stors \mathcal C$ appears as a particularly convenient framework to compare the posets $\tors \Lambda$ and $\tors \Gamma$.
Indeed, the poset $\stors \mathcal C$ can also be seen as a kind of ``gluing'' of the two posets of torsion classes.
Moreover, we make heavy use of the reduction theorem of \cite{AET} to understand the structure of $\stors \mathcal C$
and to relate the different parts of the flip-flop decompositions at play.

To conclude, we give an application of our results to a class of well-studied lattices that arise as posets of torsion classes,
called Cambrian lattices.
They were introduced by Reading in \cite{Rea},
generalising the Tamari lattice (in equioriented type $A$) to any Dynkin type and any orientation.
In \cite{Lad-cluster_tilting}, Ladkani applied his results to show that the Cambrian lattices $\Cam_{\Delta,\Omega}$
corresponding to various orientations $\Omega$ of a fixed simply-laced Dynkin diagram $\Delta$ are all derived equivalent.
Our results allow us to extend this application to non simply-laced Dynkin diagrams,
by considering 1-APR tilts of representation-finite hereditary algebras given by species,
or alternatively 1-APR tilts of the (non-hereditary) algebras introduced by Gei{\ss}, Leclerc and Schr{\"o}er.

We give an overview of the structure of the paper.
In sections 1 through 5 we give necessary preliminaries on torsion pairs, silting objects, 1-APR tilting and flip-flop.
In section 6, we introduce the flip-flop decompositions of the posets $\tors \Lambda$ and $\ftors \Lambda$ in the presence of a simple projective module,
and prove Theorem~\ref{thm intro flip-flop decomposition}.
In section 7 we translate this result in terms of 2-term silting complexes of projective modules,
and in section 8 we use this translation to prove Theorem~\ref{thm intro ff}.
In section 9, we explain how to apply our results to get derived equivalences of Cambrian lattices.
In section 10, we give the necessary preliminaries on extriangulated categories, s-torsion pairs, and t-structures in triangulated categories.
In section 11, we introduce an extriangulated category $\mathcal C$ associated to the data of a 1-APR tilt.
Finally, in section 12, we study the structure of the poset $\stors \mathcal C$,
and we use it to prove Theorem~\ref{thm intro arbitrary}.

\notocsection{Conventions}

Throughout the paper, we fix a field $k$.
All algebras we consider are basic and finite dimensional over $k$,
and for such an algebra $\Lambda$, we write $\modcat \Lambda$ for the category of finitely generated right $\Lambda$-modules.
We compose arrows in a quiver in the same direction as composition of maps,
that is, $ba$ denotes the path starting at the source of $a$ and ending at the target of $b$.
We denote by $D = \Hom_k (\blank, k)$ the $k$-linear duality functor.

All categories we consider are additive, $k$-linear, Hom-finite,
and satisfy the Krull--Remak--Schmidt property.
If $\mathcal C$ is such a category,
by \emph{subcategory} of $\mathcal C$ we always mean a full subcategory which is closed under isomorphisms,
taking direct sums and direct summands.
With this convention, a subcategory $\mathcal X$ of $\mathcal C$ is determined
by its set of (isomorphism classes of) indecomposable objects,
which we denote by $\ind \mathcal X$,
and which coincides with the set $(\ind \mathcal C) \cap \mathcal X$.
If $E$ is a set of objects in $\mathcal C$,
we denote by $\add E$ the smallest subcategory of $\mathcal C$ containing the elements of $E$.
If $\mathcal X$ and $\mathcal Y$ are two subcategories of $\mathcal C$,
we write $\mathcal X \plus \mathcal Y = \add (\mathcal X \cup \mathcal Y)$,
which is characterised as a subcategory of $\mathcal C$ by $\ind (\mathcal X \plus \mathcal Y) = (\ind \mathcal X) \cup (\ind \mathcal Y)$.

If $(P,\leq)$ is a partially ordered set (poset),
its Hasse quiver is the quiver whose set of vertices is $P$,
and whose arrows are of the form $x \to y$ whenever $x > y$ is a cover relation in $P$,
meaning there is no $z \in P$ satisfying $x > z > y$.

\notocsection{Acknowledgements}

I would like to thank my advisors, Claire Amiot and Baptiste Rognerud,
for introducing me to this problem, and for their constant guidance and support throughout this project.

\numberwithin{theorem}{subsection}

\hidepagenumber
\section{Preliminaries} \label{sec:preliminaries}
\showpagenumber

\subsection{Torsion classes and torsion-free classes}

In this section, we define the notions of torsion pair, torsion class, torsion-free class
in the module category of a finite dimensional algebra,
and we also define associated posets.

The notion of torsion pair in an abelian category was introduced by Dickson in \cite{Dickson},
as a generalisation of the notion of torsion and torsion-free abelian groups.

\begin{definition} \label{def torsion pair}
    A pair $(\mathcal T, \mathcal F)$ of subcategories of $\modcat \Lambda$ is called a \emph{torsion pair} when
    \begin{enumerate}
        \item $\Hom_\Lambda(\mathcal T, \mathcal F) = 0$
        \item For every $X \in \modcat \Lambda$, there exists a short exact sequence
        $ 0 \to tX \to X \to fX \to 0$ with $tX \in \mathcal T$ and $fX \in \mathcal F$.
    \end{enumerate}
    If $(\mathcal T, \mathcal F)$ is a torsion pair,
    then $\mathcal T$ is called a \emph{torsion class} and $\mathcal F$ is called a \emph{torsion-free class}.
\end{definition}

The next proposition states that each subcategory in a torsion pair determines the other one.

\begin{proposition}[\protect{\cite{Dickson}}] \label{prop hom orthogonals}
    Let $(\mathcal T, \mathcal F)$ be a torsion pair in $\modcat \Lambda$.
    Then $\mathcal F = \rightperp {\mathcal T}$ and $\mathcal T = \leftperp {\mathcal F}$,
    where we set
    \[\rightperp {\mathcal T} = \{X \in \modcat \Lambda \mid \forall T \in \mathcal T, \Hom (T, X) = 0\}\]
    and
    \[\leftperp {\mathcal F} = \{X \in \modcat \Lambda \mid \forall F \in \mathcal F, \Hom (X, F) = 0\}.\]
\end{proposition}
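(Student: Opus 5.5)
The plan is to prove the two equalities by showing two inclusions each, using only the two axioms of Definition~\ref{def torsion pair}. By symmetry the argument for $\mathcal T = \leftperp{\mathcal F}$ is dual to the one for $\mathcal F = \rightperp{\mathcal T}$, so I would write out the latter in full and indicate the changes for the former.

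\emph{Step 1: $\mathcal F \subseteq \rightperp{\mathcal T}$.} This is immediate from axiom (1): for $X \in \mathcal F$ and any $T \in \mathcal T$ we have $\Hom(T,X) = 0$.

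\emph{Step 2: $\rightperp{\mathcal T} \subseteq \mathcal F$.} Let $X \in \rightperp{\mathcal T}$ and take the sequence $0 \to tX \xrightarrow{\iota} X \to fX \to 0$ from axiom (2). Since $tX \in \mathcal T$, the monomorphism $\iota$ lies in $\Hom(tX, X) = 0$. An injective zero map forces $tX = 0$, so $X \to fX$ is an isomorphism. Subcategories are closed under isomorphisms by the paper's convention, so $X \in \mathcal F$.

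\emph{Dual steps.} The inclusion $\mathcal T \subseteq \leftperp{\mathcal F}$ is again axiom (1). For the reverse inclusion, take $X \in \leftperp{\mathcal F}$. The epimorphism $X \to fX$ lies in $\Hom(X, fX) = 0$, so $fX = 0$. Hence $tX \to X$ is an isomorphism and $X \in \mathcal T$.

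There is no real obstacle here. The only point needing care is the use of isomorphism-closure of subcategories in Step 2 and its dual. This closure is part of the standing conventions, so it is available. No finiteness or Krull--Remak--Schmidt hypotheses are needed.
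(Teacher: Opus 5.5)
Your proof is correct. The zero monomorphism $tX \to X$ (resp.\ zero epimorphism $X \to fX$) forces $tX = 0$ (resp.\ $fX = 0$), and closure under isomorphisms finishes each inclusion; the paper gives no proof of its own and just cites Dickson, for which this is the standard argument.
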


\begin{corollary}[\protect{\cite{Dickson}}] \label{cor:first results on torsion pairs}
    Let $(\mathcal T, \mathcal F)$ be a torsion pair in $\modcat \Lambda$.
    Then
    \begin{enumerate}
        \item $\mathcal T$ is closed under taking extensions and quotients.
        \item $\mathcal F$ is closed under taking extensions and submodules.
        \item For $(\mathcal T', \mathcal F')$ a torsion pair in $\modcat \Lambda$,
        we have $\mathcal T \subseteq \mathcal T' \iff \mathcal F \supseteq \mathcal F'$.
    \end{enumerate}
\end{corollary}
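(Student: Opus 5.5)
All three parts follow from Proposition~\ref{prop hom orthogonals}, which expresses each class of a torsion pair as a Hom-orthogonal of the other. The one technical input is left exactness of $\Hom_\Lambda(T,\blank)$ and $\Hom_\Lambda(\blank,F)$.

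For (1), write $\mathcal T = \leftperp{\mathcal F}$.

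\emph{Quotients.} Let $X \in \mathcal T$ and let $X \surj X''$ be an epimorphism. For any $F \in \mathcal F$, the map $\Hom(X'',F) \to \Hom(X,F)$ is injective. The target is zero, so $\Hom(X'',F)=0$, and hence $X'' \in \leftperp{\mathcal F} = \mathcal T$.

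\emph{Extensions.} Let $0 \to X' \to X \to X'' \to 0$ be exact with $X', X'' \in \mathcal T$. For $F \in \mathcal F$ we get the left exact sequence
\[0 \to \Hom(X'',F) \to \Hom(X,F) \to \Hom(X',F).\]
Both outer terms vanish, so the middle one vanishes too. Thus $X \in \mathcal T$.

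For (2), the argument is dual, using $\mathcal F = \rightperp{\mathcal T}$ and the left exact functor $\Hom(T,\blank)$ for $T \in \mathcal T$.

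\emph{Submodules.} If $X' \inj X$ is a monomorphism with $X \in \mathcal F$, then $\Hom(T,X') \inj \Hom(T,X) = 0$, so $X' \in \mathcal F$.

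\emph{Extensions.} If $0 \to X' \to X \to X'' \to 0$ is exact with $X', X'' \in \mathcal F$, then the exact sequence
\[0 \to \Hom(T,X') \to \Hom(T,X) \to \Hom(T,X'')\]
has vanishing outer terms, so $\Hom(T,X)=0$ and $X \in \mathcal F$.

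For (3), suppose $\mathcal T \subseteq \mathcal T'$. Any $X$ killed by all maps from $\mathcal T'$ is in particular killed by all maps from $\mathcal T$, so $\mathcal F' = \rightperp{\mathcal T'} \subseteq \rightperp{\mathcal T} = \mathcal F$. Conversely, if $\mathcal F' \subseteq \mathcal F$, the same reasoning gives $\mathcal T = \leftperp{\mathcal F} \subseteq \leftperp{\mathcal F'} = \mathcal T'$.

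There is no real obstacle here. The only point to watch is that each argument uses the correct orthogonal description from Proposition~\ref{prop hom orthogonals} together with the left exactness of the appropriate Hom functor.
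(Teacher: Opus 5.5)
Your proof is correct and follows the intended route. The paper gives no separate proof: it cites Dickson and presents the statement as an immediate consequence of Proposition~\ref{prop hom orthogonals}. Your argument is exactly that deduction, combining the orthogonal descriptions with left exactness of the Hom functors for (1) and (2), and the order-reversing property of taking orthogonals for (3).
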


The next proposition gives a converse of the first two statements in Corollary~\ref{cor:first results on torsion pairs}.
It relies on the fact that $\modcat \Lambda$ is a length category.

\begin{proposition}[\protect{\cite[Proposition~VI.1.4]{ASS}}] \label{prop:characterisation of torsion classes}
    Let $\mathcal T$ be a subcategory of $\modcat \Lambda$,
    and assume that $\mathcal T$ is closed under taking extensions and quotients.
    Then $(\mathcal T, \rightperp{\mathcal T})$ is a torsion pair.
    
    Dually, if $\mathcal F \subseteq \modcat \Lambda$ is closed under taking extensions and submodules,
    then $(\leftperp{\mathcal F}, \mathcal F)$ is a torsion pair.
\end{proposition}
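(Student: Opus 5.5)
We prove the first statement; the second follows by the dual argument, exchanging quotients with submodules, $\Hom(T,\blank)$ with $\Hom(\blank,F)$, and the largest submodule in $\mathcal T$ with the largest quotient in $\mathcal F$. Set $\mathcal F = \rightperp{\mathcal T}$. Condition (1) of Definition~\ref{def torsion pair} holds by definition of $\rightperp{\mathcal T}$. It remains to produce, for each $X \in \modcat \Lambda$, a short exact sequence $0 \to tX \to X \to fX \to 0$ with $tX \in \mathcal T$ and $fX \in \mathcal F$.

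\emph{Construction of $tX$.} We take $tX$ to be the sum, inside $X$, of all submodules of $X$ lying in $\mathcal T$. Since $X$ is finite dimensional, this sum is attained by finitely many submodules $T_1, \dots, T_n \in \mathcal T$. Then $tX$ is the image of the natural map $T_1 \oplus \dots \oplus T_n \to X$. Subcategories are closed under finite direct sums by convention, so $T_1 \oplus \dots \oplus T_n \in \mathcal T$. Since $\mathcal T$ is closed under quotients, $tX \in \mathcal T$. By construction, $tX$ is the largest submodule of $X$ lying in $\mathcal T$.

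\emph{The quotient lies in $\mathcal F$.} Set $fX = X/tX$, and suppose there is a nonzero map $g : T \to fX$ with $T \in \mathcal T$. Its image $U = \im g$ is nonzero and lies in $\mathcal T$ by quotient-closure. Let $\pi : X \to fX$ be the projection and $U' = \pi^{-1}(U) \subseteq X$. This gives a short exact sequence
\[0 \to tX \to U' \to U \to 0.\]
Both ends lie in $\mathcal T$, so $U' \in \mathcal T$ by extension-closure. But $U'$ strictly contains $tX$, since $U \neq 0$. This contradicts the maximality of $tX$. Hence $\Hom(\mathcal T, fX) = 0$, that is, $fX \in \mathcal F$.

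The only delicate point is the existence of a largest $\mathcal T$-submodule. This is where finite length (here, finite dimensionality) is used, to reduce to a finite sum so that closure under finite direct sums and quotients applies.
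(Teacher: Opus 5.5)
Your proof is correct and is essentially the standard argument of \cite[Proposition~VI.1.4]{ASS}, which the paper cites rather than reproving, remarking only that it relies on $\modcat \Lambda$ being a length category. In that argument, finite length together with closure under finite direct sums (built into the paper's convention on subcategories) and under quotients gives a largest submodule $tX \in \mathcal T$, extension-closure then forces $X/tX \in \rightperp{\mathcal T}$, and the dual statement follows symmetrically as you indicate.
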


We now define the notion of functorial finiteness for a subcategory of $\modcat \Lambda$,
and give a characterisation of functorially finite torsion(-free) classes,
which is due to Smal\o{}.

\begin{definition}
    Let $\mathcal C$ be a subcategory of $\modcat \Lambda$.
    We say that $\mathcal C$ is \emph{covariantly finite} if every object $M \in \modcat \Lambda$
    admits a left $\mathcal C$-approximation,
    that is a morphism $M \stackrel{f}{\to} C$ with $C \in \mathcal C$
    such that for every $C' \in \mathcal C$,
    the morphism $\Hom(f, C')$ is surjective.
    
    The notion of \emph{contravariantly finite} subcategory is defined dually.
    
    We say that $\mathcal C$ is \emph{functorially finite} when it is both covariantly and contravariantly finite.
\end{definition}

\begin{proposition}[\protect{\cite{Smalo}}] \label{prop:equivalence ff torsion(free)}
    Let $(\mathcal T, \mathcal F)$ be a torsion pair.
    The following are equivalent:
    \begin{enumerate}
        \item $\mathcal T$ is functorially finite.
        \item $\mathcal F$ is functorially finite.
        \item $\mathcal T = \Fac T$ for some $T \in \modcat \Lambda$, 
            where $\Fac T = \{N \in \modcat \Lambda \mid \exists f : T^{\oplus n} \surj N \textnormal{ surjective}\}$.
        \item $\mathcal F = \Sub F$ for some $F \in \modcat \Lambda$,
            where $\Sub F = \{N \in \modcat \Lambda \mid \exists g : N \inj F^{\oplus n} \textnormal{ injective}\}$.
    \end{enumerate}
\end{proposition}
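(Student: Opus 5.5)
The plan is to prove the cycle of implications $(3)\Rightarrow(1)\Rightarrow(2)\Rightarrow(4)\Rightarrow(3)$. The dual statements are handled by $D$, and two ingredients carry the argument: Auslander--Smal\o{} theory of covariantly finite subcategories, and the correspondence between functorially finite torsion classes and Ext-projectives.

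\textbf{$(3)\Rightarrow(1)$.} Let $\mathcal T = \Fac T$.
\begin{itemize}
\item[] \emph{Contravariant finiteness.} For $M \in \modcat\Lambda$, take a basis $f_1,\dots,f_n$ of $\Hom(T,M)$ and form $(f_i): T^{\oplus n} \to M$. Any map $T' \to M$ with $T' \in \Fac T$ lifts through this map, because $T'$ is a quotient of some $T^m$ and maps $T \to M$ factor through $(f_i)$. (Alternatively, the image of this map is $tM$, and $tM \inj M$ is a right approximation.)
\item[] \emph{Covariant finiteness.} For $M$, consider the intersection of kernels of all maps $M \to T'$ with $T' \in \mathcal T$. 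By finite length this equals a finite intersection, so there is $M \to T_1\oplus\cdots\oplus T_r$ whose image $M/K$ lies in $\mathcal T$ (submodule of a module in... no, so take instead the quotient $M/K$). It lies in $\mathcal T$? Not automatically, because torsion classes are not closed under submodules.
\end{itemize}
For the covariant direction I would therefore follow Auslander--Smal\o{} and use the Hom-finite construction. Let $T_M$ be the pushout/universal extension such that $\Hom(M,\blank)|_{\mathcal T}$ is finitely generated. This holds because every $T' \in \Fac T$ receives a surjection from $T^m$. Concretely, $\Hom(M, T^m)$ is finitely generated over $\End(T)$, and surjectivity $T^m \to T'$ does not lift maps from $M$. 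So the honest route is to cite the standard fact that $\Fac T$ is covariantly finite, proved via the left $\add T$-approximation followed by taking the image in the torsion part.

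\textbf{$(1)\Rightarrow(2)$ and $(2)\Rightarrow(1)$.} If $\mathcal T$ is functorially finite, I would show that $\mathcal F$ is covariantly finite using the left map $M \to fM$. Every map $M \to F$ with $F\in\mathcal F$ kills $tM$, so $M\to fM$ is a left approximation, and $\mathcal F$ is always covariantly finite. For contravariant finiteness of $\mathcal F$, I would use that $\mathcal T$ has an Ext-projective generator $P(\mathcal T)$ with $\mathcal T = \Fac P(\mathcal T)$ (Auslander--Smal\o{}, from the covariant finiteness of $\mathcal T$). Then $\mathcal F = (P(\mathcal T))^{\perp}$. Via the Auslander--Reiten formula this is $\Sub \tau P(\mathcal T) \oplus$ injectives-type description, namely $\mathcal F = \Sub(\tau P \oplus \nu\,\text{something})$, which gives $(4)$ and hence contravariant finiteness by the dual of the $(3)\Rightarrow(1)$ argument.

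\textbf{$(4)\Rightarrow(3)$.} This is dual: $\mathcal F = \Sub F$ gives $\mathcal T = \Fac(\tau^{-}F\oplus\cdots)$, or one applies $D$ and the above.

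The main obstacle is producing the Ext-projective generator, i.e.\ showing that covariant finiteness of $\mathcal T$ forces $\mathcal T = \Fac P$ with $P$ Ext-projective in $\mathcal T$. I would first try the argument of taking a left $\mathcal T$-approximation $\Lambda \to T_0$ and checking that $T_0$ generates $\mathcal T$. It does: any $T'$ receives maps from $\Lambda^m$ covering it, and these factor through $T_0^m$. This already gives $(1)\Rightarrow(3)$ directly, since covariant finiteness alone suffices, and dually $(2)\Rightarrow(4)$. The remaining link between $\mathcal T$ and $\mathcal F$ is the Auslander--Smal\o{} theorem relating $\Fac$ and $\Sub \tau$, which I would cite from \cite{Smalo}.
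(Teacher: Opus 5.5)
The paper gives no proof of this statement; it is quoted from \cite{Smalo}. Judged as a proof, your proposal gets only the routine parts right. Torsion classes are always contravariantly finite via $tM\inj M$; only your parenthetical argument is valid here, since the first one, lifting through $T^{\oplus n}\to M$, is false. Torsion-free classes are always covariantly finite via $M\surj fM$. The implications (1)$\Rightarrow$(3) and (2)$\Rightarrow$(4) follow from approximating $\Lambda$ and $D\Lambda$.

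For covariant finiteness of $\Fac T$ in (3)$\Rightarrow$(1), the recipe you propose to cite is wrong. Let $\Lambda$ be the path algebra of $A_2$ and $T$ its projective-injective indecomposable, with top $S$. Then $\Fac T=\add(T\oplus S)$ is a torsion class and $\Hom(S,T)=0$. So the left $\add T$-approximation of $S$ is $S\to 0$, and $\mathrm{id}_S$ cannot factor through it; the same example shows $\mathrm{id}_S$ does not lift through $T\surj S$. The correct elementary argument approximates a projective cover $\pi:Q_0\surj M$ instead. If $g:Q_0\to T_0$ is a left $\add T$-approximation, the pushout map $M\to T_0/g(\ker\pi)$ is a left $\Fac T$-approximation, because maps from $Q_0$ into a quotient of $T^{\oplus m}$ lift to $T^{\oplus m}$.

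The real content of the theorem is that covariant finiteness of $\mathcal T$ forces contravariant finiteness of $\mathcal F$, and this is not proved. You assert $\mathcal F=\Sub(\tau P(\mathcal T)\oplus\nu\,\text{something})$, but the Auslander--Reiten formula plus Ext-projectivity only gives the easy inclusion $\Sub(\tau M\oplus\nu Q)\subseteq\mathcal F$. Here $Q$ must be the maximal projective with $\Hom(Q,\mathcal T)=0$. Even this inclusion needs the Auslander--Smal\o{} trick of replacing a nonzero map $T'\to\tau M$ by its image, to pass from $\overline{\Hom}$ to $\Hom$.

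The missing idea is why $\tau M\oplus\nu Q$ cogenerates all of $\mathcal F$. In addition, your $T_0$ is Ext-projective only if $f:\Lambda\to T_0$ is minimal, and $C=\operatorname{coker}f$ is needed as well. One complete argument runs as follows.
\begin{enumerate}
\item Set $M=T_0\oplus C$ and $Y=\tau M\oplus\nu Q$. For $N\in\mathcal F$, let $r$ be the intersection of the kernels of all maps $N\to Y$.
\item Since $N/r\in\Sub Y\subseteq\mathcal F$, the Auslander--Reiten formula gives $\Ext^1(N/r,Y)=0$. Hence $\Hom(r,Y)=0$, and so $\Hom(M,r)=\Ext^1(M,r)=\Hom(Q,r)=0$.
\item Applying $\Hom(-,r)$ to $0\to\operatorname{im}f\to T_0\to C\to 0$ gives $\Hom(\operatorname{im}f,r)=0$. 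So every map $\Lambda\to r$ vanishing on $\ker f=\mathrm{ann}(\mathcal T)$ is zero.
\item Because $\Hom(Q,r)=0$, any simple submodule of $r$ is a composition factor of $M$, hence is killed by $\mathrm{ann}(\mathcal T)$. Together with the previous step this forces $r=0$, so $\mathcal F=\Sub Y$.
\end{enumerate}
Deferring this step to \cite{Smalo}, as you finally do, matches the paper, but then your proposal is a citation rather than a proof.
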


In the following we will be concerned with both functorially finite and arbitrary torsion(-free) classes.
We write $\tors \Lambda$ (resp.\ $\torf \Lambda$) for the set of torsion classes (resp.\ torsion-free classes) of $\modcat \Lambda$,
and we write $\ftors \Lambda$ (resp.\ $\ftorf \Lambda$) for the corresponding sets of functorially finite subcategories.
These sets are naturally equipped with a partial order given by inclusion of subcategories.
In general, not all torsion classes are functorially finite.
In fact, in \cite[Theorem~3.8]{DIJ},
Demonet, Iyama and Jasso showed that the posets $\ftors \Lambda$ and $\tors \Lambda$ coincide if and only if $\ftors \Lambda$ is finite.

\begin{proposition}
    We have an isomorphism of posets $(\tors \Lambda, \subseteq) \simeq (\torf \Lambda, \supseteq)$,
    or equivalently $(\tors \Lambda, \subseteq) \simeq (\torf \Lambda, \subseteq)\opcat$,
    given by $\mathcal T \mapsto \rightperp{\mathcal T}$.

    Moreover, this isomorphism restricts to isomorphisms
    \[(\ftors \Lambda, \subseteq) \simeq (\ftorf \Lambda, \supseteq) = {(\ftorf \Lambda, \subseteq)}\opcat.\]
\end{proposition}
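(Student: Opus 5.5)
The plan is to check directly that the map $\Phi : \mathcal T \mapsto \rightperp{\mathcal T}$ is well defined, order-reversing, and bijective with an order-reversing inverse $\Psi : \mathcal F \mapsto \leftperp{\mathcal F}$. Everything will follow from Proposition~\ref{prop hom orthogonals}, Corollary~\ref{cor:first results on torsion pairs} and Proposition~\ref{prop:characterisation of torsion classes}. The functorially finite refinement then comes from Proposition~\ref{prop:equivalence ff torsion(free)}.

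\textbf{Well-definedness and bijectivity.} If $\mathcal T$ is a torsion class, it belongs to some torsion pair $(\mathcal T, \mathcal F)$. By Proposition~\ref{prop hom orthogonals} we have $\mathcal F = \rightperp{\mathcal T}$, so $\rightperp{\mathcal T}$ is a torsion-free class and $\Phi$ lands in $\torf \Lambda$. Dually, $\Psi$ lands in $\tors \Lambda$. To see that $\Phi$ and $\Psi$ are mutually inverse, start from a torsion class $\mathcal T$. Its pair is $(\mathcal T, \rightperp{\mathcal T})$, and Proposition~\ref{prop hom orthogonals} gives $\mathcal T = \leftperp{(\rightperp{\mathcal T})}$, that is, $\Psi\Phi = \mathrm{id}$. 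The symmetric argument applied to a torsion-free class gives $\Phi\Psi = \mathrm{id}$. Note that the uniqueness of the partner in a torsion pair is exactly what Proposition~\ref{prop hom orthogonals} provides.

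\textbf{Order.} Order reversal follows from Corollary~\ref{cor:first results on torsion pairs}(3), applied to the pairs $(\mathcal T, \rightperp{\mathcal T})$ and $(\mathcal T', \rightperp{\mathcal T'})$. It states $\mathcal T \subseteq \mathcal T' \iff \rightperp{\mathcal T} \supseteq \rightperp{\mathcal T'}$. Hence $\Phi$ is an isomorphism of posets $(\tors \Lambda, \subseteq) \simeq (\torf \Lambda, \supseteq)$. The latter poset is by definition $(\torf \Lambda, \subseteq)\opcat$.

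\textbf{Restriction to functorially finite classes.} By the equivalence (1)$\iff$(2) of Proposition~\ref{prop:equivalence ff torsion(free)}, for a torsion pair $(\mathcal T, \rightperp{\mathcal T})$ the class $\mathcal T$ is functorially finite exactly when $\rightperp{\mathcal T}$ is. So $\Phi$ maps $\ftors \Lambda$ into $\ftorf \Lambda$ and $\Psi$ maps $\ftorf \Lambda$ into $\ftors \Lambda$. The restrictions are therefore mutually inverse, order-reversing bijections, which gives the stated isomorphism. I expect no real obstacle: the only substantive input is Smal\o{}'s theorem, which we take as given, and the rest is bookkeeping.
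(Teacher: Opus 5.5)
Your proposal is correct and follows the same route as the paper, which simply cites Proposition~\ref{prop hom orthogonals}, Corollary~\ref{cor:first results on torsion pairs} and Proposition~\ref{prop:equivalence ff torsion(free)} as giving the result directly. Your argument spells out exactly these steps. Proposition~\ref{prop:characterisation of torsion classes} is not actually needed, and your argument in fact never uses it.
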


\begin{proof}
    This is a direct consequence of Proposition~\ref{prop hom orthogonals}, Corollary~\ref{cor:first results on torsion pairs} and Proposition~\ref{prop:equivalence ff torsion(free)}.
\end{proof}

\subsection{Silting objects in a triangulated category}

Silting objects (or more generally silting subcategories) in triangulated categories were introduced by Keller and Vossieck in \cite{KV},
and their mutation theory was introduced by Aihara and Iyama in \cite{AI}.

Let $\mathcal D$ be a triangulated category,
whose translation functor is denoted by $\Sigma$.

\begin{definition}
    An object $S \in \mathcal D$ is called \emph{presilting} when
    $\Hom_\mathcal D(S, \Sigma^i S) = 0$ for every $i > 0$.
    A presilting object $S$ which generates $\mathcal D$ as a thick subcategory is called \emph{silting}.
    We denote by $\silt \mathcal D$ the set of (isomorphism classes of) basic silting objects in $\mathcal D$.
\end{definition}

In the following, we will write the vanishing condition ``$\Hom_\mathcal D(S, \Sigma^i S) = 0$ for every $i > 0$''
as ``$\Hom_\mathcal D (S, \Sigma^{>0} S) = 0$'',
and we use similar notation for similar vanishing conditions.

\begin{remark}
    If $S$ is a (pre)silting object in $\mathcal D$,
    then so is $\Sigma^i S$ for every $i \in \Z$.

    Any direct summand of a presilting object is also presilting.
\end{remark}

\begin{definition}
    We define a relation $\geq$ on $\silt \mathcal D$ by setting
    \[S \geq T \iff \Hom_\mathcal D (S, \Sigma^{>0} T) = 0.\]
\end{definition}

\begin{remark}
    If we have $S \geq T$ for some $S, T \in \silt \mathcal D$,
    then clearly we have $\Sigma^i S \geq \Sigma^i T$ for any $i \in \Z$.
\end{remark}

\begin{theorem}[\protect{\cite[Theorem~2.11]{AI}}]
    The relation $\geq$ is a partial order on $\silt \mathcal D$.
\end{theorem}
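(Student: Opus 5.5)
We have to show that the relation $S \geq T \iff \Hom_\mathcal D(S, \Sigma^{>0} T) = 0$ on $\silt \mathcal D$ is reflexive, transitive and antisymmetric. Reflexivity is just the presilting condition. For the other two properties the idea is to replace the relation by an inclusion of subcategories. For $S \in \silt \mathcal D$ set
\[\mathcal D^{\leq 0}_S = \{X \in \mathcal D \mid \Hom_\mathcal D(S, \Sigma^{>0} X) = 0\}.\]
The plan is to prove
\[S \geq T \iff \mathcal D^{\leq 0}_S \supseteq \mathcal D^{\leq 0}_T,\]
and then to show that $S$ can be recovered from $\mathcal D^{\leq 0}_S$ up to additive closure.

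For the implication from right to left, note that $T \in \mathcal D^{\leq 0}_T$ because $T$ is presilting; hence $T \in \mathcal D^{\leq 0}_S$, which says exactly $S \geq T$. For the implication from left to right, I will use that $T$ is silting. Since $T$ generates $\mathcal D$ as a thick subcategory, the standard argument of Aihara--Iyama shows that $\mathcal D^{\leq 0}_T$ equals the smallest subcategory containing $T$ and closed under extensions, direct summands and positive shifts $\Sigma^{\geq 0}$. I would prove this by induction on the length of a "filtration" of an object $X$ by shifts of $\add T$, truncating away the pieces $\Sigma^{<0}$ using the vanishing $\Hom(T,\Sigma^{>0}X)=0$. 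This description is the main technical obstacle; the key step is a d\'evissage showing that an object with $\Hom(T, \Sigma^{>0}X)=0$ lies in $\bigcup_n \add T * \Sigma \add T * \cdots * \Sigma^n \add T$. Once it holds, the inclusion follows: if $\Hom(S, \Sigma^{>0}T)=0$, then $\Hom(S, \Sigma^{>0}-)$ vanishes on $T$, on its nonnegative shifts, and on extensions and summands of these, so it vanishes on all of $\mathcal D^{\leq 0}_T$. Transitivity is then immediate, since it becomes transitivity of inclusion.

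For antisymmetry, suppose $S \geq T \geq S$. Then $\mathcal D^{\leq 0}_S = \mathcal D^{\leq 0}_T$, so $\Hom(S,\Sigma^{>0}T)=0$ and $\Hom(T,\Sigma^{>0}S)=0$; in other words $S \oplus T$ is presilting. Next, by the d\'evissage above there is a triangle $T_0 \to S \to X \to \Sigma T_0$ with $T_0 \in \add T$ and $X \in \Sigma\mathcal D^{\leq 0}_T$. In that triangle the morphism $X \to \Sigma T_0$ lies in $\Hom(\Sigma\mathcal D^{\leq 0}_T, \Sigma T)$, and I will argue that this group vanishes. The argument is that $\Hom(\Sigma^{\geq 1} S, \Sigma T)=0$ for the shifts where this follows from presilting, together with the identification $\Sigma \mathcal D^{\leq 0}_T = \Sigma\mathcal D^{\leq 0}_S$ to control the remaining terms. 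Granting the vanishing, $S$ becomes a summand of $T_0 \oplus X$. More cleanly, I would use the standard fact that if $S \oplus T$ is presilting and $T$ is silting, then $S \in \add T$. This is proved by taking a minimal right $\add T$-approximation $T_0 \to S$ and showing that its cone $X$ satisfies $\Hom(X, \Sigma \add T)=0$ together with $X \in \Sigma\mathcal D^{\leq 0}_T$; these force the connecting morphism to vanish, so the triangle splits. By symmetry $T \in \add S$, and since both are basic, $S \cong T$.
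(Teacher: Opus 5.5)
Reflexivity and transitivity are fine. Comparing the subcategories $\mathcal D^{\leq 0}_S$ through the filtration description of $\mathcal D^{\leq 0}_T$ is the standard route behind this result; the paper gives no proof and simply cites \cite{AI}. You are right to single out that filtration description as the technical core.

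The gap is in antisymmetry, where you try to kill the wrong morphism of the triangle $T_0 \xrightarrow{g} S \xrightarrow{h} X \xrightarrow{w} \Sigma T_0$.

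The group $\Hom(\Sigma \mathcal D^{\leq 0}_T, \Sigma T)$ containing $w$ is not zero, since it contains $\Hom(\Sigma T, \Sigma T) \ni \mathrm{id}$. Moreover, presilting of $S \oplus T$ gives $\Hom(\Sigma^i S, \Sigma T) = \Hom(S, \Sigma^{1-i} T) = 0$ only for $i \leq 0$, never in the range $i \geq 1$ that you invoke.

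In your ``cleaner'' version, the key claim $\Hom(X, \Sigma \add T) = 0$ is asserted without argument. Because $\Hom(S, \Sigma T) = 0$, the long exact sequence identifies $\Hom(X, \Sigma T)$ with the cokernel of $g^* \colon \Hom(S, T) \to \Hom(T_0, T)$. Being a right $\add T$-approximation gives no control over this map. The vanishing only holds a posteriori, once $g$ is known to be an isomorphism, and that is exactly what you are trying to prove.

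The repair uses only what you have already set up, applied to $h$ instead of $w$.

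First, since $g$ is a right $\add T$-approximation and $\Hom(T, \Sigma^{>0} S) = 0$, the long exact sequence for $\Hom(T, \blank)$ gives $\Hom(T, \Sigma^{\geq 0} X) = 0$. That is, $\Sigma\inv X \in \mathcal D^{\leq 0}_T$.

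Second, since $S \geq T$, your inclusion $\mathcal D^{\leq 0}_T \subseteq \mathcal D^{\leq 0}_S$ yields $\Hom(S, X) = \Hom(S, \Sigma (\Sigma\inv X)) = 0$, so $h = 0$.

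Hence $g$ is a split epimorphism, and $S$ is a direct summand of $T_0 \in \add T$.

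Alternatively, peel off the top layer of a filtration $Y \to S \to \Sigma^n T_n \to \Sigma Y$ with $Y \in \add T * \cdots * \Sigma^{n-1}\add T$. For $n \geq 1$ the map $S \to \Sigma^n T_n$ vanishes because $\Hom(S, \Sigma^{>0} T) = 0$. So $S$ is a summand of $Y$, and you induct on $n$.

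By symmetry $T \in \add S$, and since both are basic, $S \cong T$.
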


We now recall the operation called silting mutation.

\begin{definition}
    Let $S = X \oplus S' \in \silt \mathcal D$.
    Take a minimal left ($\add S'$)-approximation $f : X \to T$,
    and form a triangle $X \to T \to Y \to \Sigma X$ in $\mathcal D$.
    Then the \emph{left mutation} of $S$ with respect to $X$ is defined by
    $\mu^-_X(S) = Y \oplus S'$.

    We define \emph{right mutation} dually, that is by taking a minimal right ($\add S'$)-approximation $g : U \to X$,
    forming the triangle $Z \to U \to X \to \Sigma Z$,
    and setting $\mu^+_X(S) = Z \oplus S'$.
\end{definition}

\begin{remark}
    We use the opposite sign convention of \cite{AI} for $\mu^\pm$,
    but we use the same left/right terminology.
    See also the footnote in \cite[Definition-Proposition~1.7]{AIR}.
\end{remark}

\begin{theorem}[\protect{\cite[Theorem~2.31]{AI}}] \label{mutation preserves silting}
    Let $S = X \oplus S' \in \silt \mathcal D$.
    Then $\mu^+_X(S), \mu^-_X(S) \in \silt \mathcal D$.
\end{theorem}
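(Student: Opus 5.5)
We show that $\mu^-_X(S)=Y\oplus S'$ is silting; the case of $\mu^+_X(S)$ is dual. Throughout, $S = X \oplus S'$ and $X \xrightarrow{f} T \to Y \to \Sigma X$ is the triangle in the definition, with $f$ a minimal left $(\add S')$-approximation. There are two things to check: that $Y\oplus S'$ generates $\mathcal D$ as a thick subcategory, and that it is presilting.

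\emph{Generation.} The triangle shows $X \in \thick(T \oplus Y) \subseteq \thick(S' \oplus Y)$. Hence $\thick(Y\oplus S')$ contains $S$, and therefore equals $\mathcal D$.

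\emph{Presilting.} We verify the vanishing $\Hom(\blank, \Sigma^{i}\blank)=0$ for $i>0$ on the four pairs of summands. The pair $(S', S')$ is given, since $S'$ is a summand of $S$.

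\begin{enumerate}
    \item $\Hom(S',\Sigma^i Y)$. Apply $\Hom(S',\blank)$ to the triangle. The term $\Hom(S',\Sigma^i Y)$ sits between $\Hom(S',\Sigma^i T)=0$ and $\Hom(S',\Sigma^{i+1}X)=0$, so it vanishes for $i\geq 1$.
    \item $\Hom(Y,\Sigma^i S')$. Apply $\Hom(\blank,\Sigma^i S')$ to get the exact sequence
    \[\Hom(T,\Sigma^{i-1}S')\to\Hom(X,\Sigma^{i-1}S')\to\Hom(Y,\Sigma^i S')\to\Hom(T,\Sigma^i S').\]
    For $i\geq 2$ both outer terms vanish. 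For $i=1$, the right term is $0$ and the left map $\Hom(f,S')$ is surjective because $f$ is an $\add S'$-approximation. In both cases $\Hom(Y,\Sigma^i S')=0$.
    \item $\Hom(Y,\Sigma^i Y)$. Apply $\Hom(Y,\blank)$ to the triangle, giving
    \[\Hom(Y,\Sigma^iT)\to\Hom(Y,\Sigma^iY)\to\Hom(Y,\Sigma^{i+1}X).\]
    The left term is zero by step 2. For the right term, apply $\Hom(\blank,\Sigma^{i+1}X)$ to the triangle:
    \[\Hom(X,\Sigma^{i}X)\to\Hom(Y,\Sigma^{i+1}X)\to\Hom(T,\Sigma^{i+1}X).\]
    For $i\geq1$ both outer terms vanish since $S$ is presilting, so $\Hom(Y,\Sigma^{i+1}X)=0$. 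Hence $\Hom(Y,\Sigma^iY)=0$.
\end{enumerate}

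The only place where the approximation property is genuinely used is the $i=1$ case of step 2. Minimality of $f$ is not needed for silting; it only serves to make $\mu^-_X(S)$ basic and well defined up to isomorphism. If the object obtained is not basic, we pass to its basic part, which has the same additive closure and is therefore still silting. The main point of care is keeping the degree shifts correct in the long exact sequences, especially $\Sigma^{i+1}$ versus $\Sigma^{i}$.
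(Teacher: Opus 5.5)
Your argument is correct: the thick-generation step and the three Hom-vanishing checks are sound, and the approximation property is used exactly where it must be, namely for $\Hom(Y,\Sigma S')=0$. The paper gives no proof of its own and simply cites \cite[Theorem~2.31]{AI}, whose proof is essentially this same direct verification; the case of $\mu^+_X(S)$ is dual, as you say.
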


\begin{proposition}[\protect{\cite[Proposition~2.33, Theorem~2.35]{AI}}] \label{properties of mutation}
    Let $S = X \oplus S' \in \silt \mathcal D$.
    \begin{enumerate}
        \item For $S = X \oplus S' \in \silt \mathcal D$, we have
        \[\mu^+_X(S) \geq S \geq \mu^-_X(S).\]
        Moreover, the equalities hold if and only if $X = 0$.
        \item If $X$ is indecomposable, then the previous inequalities are cover relations in $\silt \mathcal D$.
        \item Left and right mutation (with respect to appropriate summands) are mutual inverses (up to isomorphism).
    \end{enumerate}
\end{proposition}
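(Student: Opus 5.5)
The statement is Proposition~\ref{properties of mutation}, and I would prove its three parts in order. Throughout, write $S = X \oplus S'$ and use the left mutation triangle $X \xrightarrow{f} T \to Y \to \Sigma X$, where $f$ is a minimal left $\add S'$-approximation. Theorem~\ref{mutation preserves silting} already tells us that $Y \oplus S'$ is silting, so only the comparisons need proof.

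\emph{Part (1).} To show $S \geq \mu^-_X(S)$, I need $\Hom(S, \Sigma^{>0}(Y \oplus S')) = 0$.

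\begin{itemize}
\item The summand $S'$ is immediate, since $S$ is presilting.
\item For $Y$, apply $\Hom(S, \blank)$ to the triangle. This gives an exact sequence
\[\Hom(S, \Sigma^i T) \to \Hom(S, \Sigma^i Y) \to \Hom(S, \Sigma^{i+1} X).\]
For $i > 0$ both outer terms vanish, because $T \in \add S'$ and $S$ is presilting. Hence the middle term vanishes.
\end{itemize}

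The inequality $\mu^+_X(S) \geq S$ is dual.

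For the equality statement, suppose $S = \mu^-_X(S)$ in $\silt \mathcal D$. Then $Y \in \add S$. I would then show that $\Hom(Y, \Sigma X)$ contains the nonzero connecting map $Y \to \Sigma X$. Since $S$ is presilting, this forces that map to vanish, so the triangle splits. Then $f$ is a split monomorphism into $T \in \add S'$, which puts $X$ in $\add S'$. Because $S$ is basic and $S'$ is a complement, this gives $X = 0$. Conversely, if $X = 0$ then trivially $\mu^-_X(S) = S$.

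\emph{Part (3).} I would check that the triangle $X \to T \to Y \to \Sigma X$ also serves as the right mutation triangle of $Y \oplus S'$ at $Y$. This means showing that $T \to Y$ is a minimal right $\add S'$-approximation.

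\begin{itemize}
\item \emph{Approximation.} For any $S'' \in \add S'$, the map $\Hom(S'', T) \to \Hom(S'', Y)$ is surjective, because the next term $\Hom(S'', \Sigma X)$ is zero.
\item \emph{Minimality.} This follows from minimality of $f$ via the standard argument that the triangle has no split summand of the form $U \xrightarrow{=} U \to 0$.
\end{itemize}

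Hence $\mu^+_Y(\mu^-_X S) \cong X \oplus S'$, and symmetrically in the other direction.

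\emph{Part (2).} This is the main obstacle. Suppose $X$ is indecomposable and $S \geq U \geq \mu^-_X(S)$ with $U$ silting. My plan has three steps.

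\begin{enumerate}
\item Show that $S' \in \add U$. Since $U \geq \mu^-_X(S)$, we have $\Hom(U, \Sigma^{>0} S') = 0$. Since $S \geq U$, we have $\Hom(S', \Sigma^{>0} U) = 0$. So $U \oplus S'$ is presilting. I would then invoke the Aihara--Iyama fact that if $\Hom(M, \Sigma^{>0} U) = 0 = \Hom(U, \Sigma^{>0} M)$ with $U$ silting, then $M \in \add U$. This fact comes from the description of $U^{\perp_{>0}}$ via iterated extensions.
\item Deduce that $U = S' \oplus U''$ for some complement $U''$.
\item Use that an almost complete silting object $S'$ has exactly two complements lying between $S$ and $\mu^-_X(S)$, namely $X$ and $Y$. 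I would prove this by applying an $\add S'$-approximation to the complement and using $S \geq U$ to show that the complement lies in the extension closure of $X$ and $\add S'$, or in that of $Y$ and $\add S'$.
\end{enumerate}

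The delicate point is the classification of complements in step 3. There I would lean on the reduction of the interval $[\mu^-_X S, S]$ to two-term silting complexes over $\End(S)$. By Part (1) with the indecomposable summand $X$, that interval has exactly two elements, which gives the cover relation.
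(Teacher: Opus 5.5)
\textbf{Overall.} The paper gives no proof of this proposition; it quotes it from Aihara--Iyama. Judged on its own, your treatment of (1) and (3) is essentially sound. Part (2), the only substantial claim, has a genuine gap.

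\textbf{The gap in (2).} Your Steps 1--2 are correct. From $S \geq U \geq \mu^-_X(S)$ you get $\Hom(S',\Sigma^{>0}U)=0=\Hom(U,\Sigma^{>0}S')$, hence $S'\in\add U$. The problem is Step 3, which carries all the content, and neither justification you give works.
\begin{enumerate}
\item The approximation sketch only aims to place the complement of $S'$ in an extension closure. That does not identify it with $X$ or $Y$.
\item The fallback is circular. The assertion that $\bigl[\mu^-_X(S),S\bigr]$ has exactly two elements \emph{is} the cover relation. Part (1) only gives $\mu^-_X(S)\neq S$, i.e.\ at least two elements.
\end{enumerate}
What is missing is a result bounding the number of completions of the almost complete presilting object $S'$.

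\textbf{How to close it within the paper.} We have $Y\in \add S' * \add\Sigma X\subseteq S*\Sigma S$, so $\bigl[\mu^-_X(S),S\bigr]\subseteq\bigl[\Sigma S,S\bigr]=\twosiltwrt S\mathcal D$. Since $X$ is indecomposable, $\card{S'}=\card S-1$. Proposition~\ref{prop:two complements twosilt}(3) then says $S'$ is a summand of exactly two elements of $\twosiltwrt S\mathcal D$. By your Step 1, every $U$ in the interval is one of these two. Since $S\neq\mu^-_X(S)$ are two such objects, the interval is $\{S,\mu^-_X(S)\}$.

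\textbf{The route in the cited source.} Aihara--Iyama argue differently, via an existence statement: whenever $S>U$, there is an indecomposable summand $Z$ of $S$ with $S>\mu^-_Z(S)\geq U$. If $U$ lies in the interval, then $\mu^-_Z(S)$ does too, and your Step 1 gives $S'\in\add\mu^-_Z(S)$. This forces $Z=X$: otherwise $S\in\add\mu^-_Z(S)$, giving $\mu^-_Z(S)\geq S$ and contradicting Part (1). Hence $\mu^-_X(S)\geq U$, so $U=\mu^-_X(S)$.

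The first route imports $\tau$-tilting theory, namely the two-completion theorem transported to $\twosiltwrt S\mathcal D$. The second stays inside silting theory but needs that existence result.

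\textbf{Smaller point in (3).} You derive right minimality of $T\to Y$ from left minimality of $f$, but these are different conditions.
\begin{enumerate}
\item $T\to Y$ is right minimal iff the triangle has no summand $U\xrightarrow{1}U\to 0\to\Sigma U$. This holds because such a $U$ would be a common summand of $X$ and $T\in\add S'$, contradicting that $S$ is basic.
\item Left minimality of $f$ instead rules out summands $0\to U\xrightarrow{1}U\to 0$. It guarantees that $Y$ has no summand in $\add S'$, which is what you need for $S'$ to be exactly the complement of $Y$ in $\mu^-_X(S)$.
\end{enumerate}

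\textbf{Smaller point in (1).} The sentence about $\Hom(Y,\Sigma X)$ ``containing the nonzero connecting map'' is garbled. The intended argument is fine: the connecting map lies in $\Hom(Y,\Sigma X)=0$, so the triangle splits and $X\in\add S'$, forcing $X=0$.
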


\subsection{Two-term silting complexes and torsion(-free) classes}

In this section, we present a link between torsion(-free) classes in $\modcat \Lambda$
and a particular type of silting objects, called \emph{2-term objects} in the homotopy categories $\Kb(\projcat \Lambda)$ and $\Kb(\injcat \Lambda)$.

We still work in a given triangulated category $\mathcal D$.

\begin{definition} \label{def star notation}
    Let $\mathcal X$, $\mathcal Y$ be two subcategories of $\mathcal D$ such that $\Hom_\mathcal D (\mathcal X, \mathcal Y) = 0$.
    We write $\mathcal X * \mathcal Y$ for the subcategory of $\mathcal D$
    whose objects $Z$ fit in a triangle of the form $X \to Z \to Y \to \Sigma X$
    with $X \in \mathcal X$, $Y \in \mathcal Y$.
    For objects $X$ and $Y$ in $\mathcal D$,
    we write $X * Y$ for the subcategory $(\add X) * (\add Y)$.
\end{definition}

\begin{remark}
    The condition $\Hom_\mathcal D (\mathcal X, \mathcal Y) = 0$ ensures that $\mathcal X * \mathcal Y$ is still a subcategory of $\mathcal D$ under our conventions,
    namely that it is closed under direct summands
    (see for instance \cite[Proposition~2.1]{IYoshino}).
\end{remark}

Now we fix a silting object $S \in \silt \mathcal D$.
Note that by definition we have $\Hom_\mathcal D (S, \Sigma S) = 0$.

\begin{definition}
    An object $X \in \mathcal D$ is said to be \emph{2-term with respect to $S$}
    when $X \in S * \Sigma S$.
\end{definition}

\begin{proposition}[\protect{\cite[Proposition~4.4]{Jasso}}] \label{prop:def 2-term wrt}
    Let $X \in \silt \mathcal D$.
    Then $X \in S * \Sigma S$ if and only if $S \geq X \geq \Sigma S$.
    
    We write $\twosiltwrt S \mathcal D = \silt \mathcal D \cap (S * \Sigma S)$ for the poset of 2-term silting objects with respect to $S$.
    In particular, $\twosiltwrt S \mathcal D$ coincides with the interval $\bigl[\Sigma S, S\bigr]$ inside $\silt \mathcal D$.
\end{proposition}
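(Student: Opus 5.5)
We need to prove: for $X \in \silt \mathcal D$, $X \in S * \Sigma S$ if and only if $S \geq X \geq \Sigma S$. By definition of the order, $S \geq X$ means $\Hom(S, \Sigma^{>0} X) = 0$, and $X \geq \Sigma S$ means $\Hom(X, \Sigma^{>1} S) = 0$. The plan is to prove the two implications separately, using only that $S$ and $X$ are silting.

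\emph{($\Rightarrow$).} Suppose there is a triangle $S_0 \to X \to \Sigma S_1 \to \Sigma S_0$ with $S_0, S_1 \in \add S$. For $i>0$, apply $\Hom(S, \Sigma^i -)$. The outer terms are $\Hom(S,\Sigma^i S_0)$ and $\Hom(S,\Sigma^{i+1}S_1)$, and both vanish because $S$ is presilting; hence $\Hom(S,\Sigma^i X)=0$. Similarly, for $i>1$ apply $\Hom(-, \Sigma^i S)$. The outer terms are $\Hom(S_0,\Sigma^i S)$ and $\Hom(\Sigma S_1, \Sigma^i S)=\Hom(S_1,\Sigma^{i-1}S)$, which vanish since $i-1>0$. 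So $\Hom(X,\Sigma^{>1}S)=0$. Note that this direction does not need $X$ to be silting.

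\emph{($\Leftarrow$).} Assume $\Hom(S,\Sigma^{>0}X)=0$ and $\Hom(X,\Sigma^{>1}S)=0$. Take a minimal right $\add S$-approximation $g: S_0 \to X$ and complete it to a triangle $S_0 \to X \to C \to \Sigma S_0$. The goal is to show $C \in \add \Sigma S$, i.e.\ $\Sigma^{-1}C \in \add S$. This proceeds in three steps.

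1. Apply $\Hom(S,-)$ to the triangle. Since $g$ is an approximation, $\Hom(S,X)\to\Hom(S,C)$ is zero. Since $\Hom(S,\Sigma X)=0$, the sequence $\Hom(S,C)\to\Hom(S,\Sigma S_0)=0$ gives $\Hom(S,C)=0$. For $i>0$ we have $\Hom(S,\Sigma^iX)=0$ and $\Hom(S,\Sigma^{i+1}S_0)=0$, so $\Hom(S,\Sigma^{\geq 0}C)=0$.

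2. Apply $\Hom(-,\Sigma^{i}S)$ with $i\ge 2$. This gives $\Hom(C,\Sigma^{i} S)$ squeezed between $\Hom(\Sigma S_0,\Sigma^i S)=\Hom(S_0,\Sigma^{i-1}S)=0$ and $\Hom(X,\Sigma^i S)=0$. Hence $\Hom(\Sigma^{-1}C, \Sigma^{>0} S)=0$, and combined with step 1, $\Hom(S,\Sigma^{>0}\Sigma^{-1}C)=0$.

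3. The main obstacle is concluding $\Sigma^{-1}C\in\add S$ from these orthogonality conditions. Let $Y=\Sigma^{-1}C$. I would first show that $S\oplus Y$ is presilting. We already have $\Hom(S,\Sigma^{>0}Y)=0$ (step 1 gives $\Hom(S,\Sigma^{\geq1}Y)=0$) and $\Hom(Y,\Sigma^{>0}S)=0$ (step 2). For $\Hom(Y,\Sigma^{>0}Y)$, apply $\Hom(Y,-)$ to the rotated triangle $Y\to S_0\to X\to \Sigma Y$, which yields $\Hom(Y,\Sigma^iS_0)\to\Hom(Y,\Sigma^iX)\to\Hom(Y,\Sigma^{i+1}Y)\to\Hom(Y,\Sigma^{i+1}S_0)$. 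For $i\geq 0$ the last term vanishes, so it suffices that $\Hom(Y,\Sigma^i X)$ vanishes, or at least is hit, for $i\ge0$.

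To control $\Hom(Y,\Sigma^iX)$, apply $\Hom(-,\Sigma^iX)$ to the same triangle. This uses that $X$ is presilting, $\Hom(S_0,\Sigma^{>0}X)=0$, and, for $i=0$, the fact that any map $Y\to X$ composed appropriately factors through $g$. I expect some bookkeeping here.

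Once $S\oplus Y$ is presilting, I would invoke the standard fact \cite[Theorem~2.18]{AI}-type result that a presilting object containing a silting summand $S$ satisfies $\add(S\oplus Y)=\add S$. Alternatively, and more directly, since $S$ is silting and $\Hom(S,\Sigma^{>0}Y)=0=\Hom(Y,\Sigma^{>0}S)$, one can use that $Y\in\thick S$: building $Y$ from $S$, the vanishing forces the relevant triangles to split. This gives $Y\in\add S$, hence $X\in S*\Sigma S$.

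The final statement, that $\twosiltwrt S\mathcal D$ equals the interval $[\Sigma S,S]$, is then just a restatement of the equivalence.
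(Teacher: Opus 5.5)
Your proof is correct, but it takes a different route from the one behind the citation. The paper gives no argument of its own, only the reference to Jasso. The usual proof of this fact goes back to Aihara--Iyama. For $S$ silting, every $X$ with $\Hom(S,\Sigma^{>0}X)=0$ lies in $S*\Sigma S*\cdots*\Sigma^{l}S$ for some $l$. Split this as $(S*\Sigma S)*(\Sigma^{2}S*\cdots*\Sigma^{l}S)$. The hypothesis $\Hom(X,\Sigma^{\geq 2}S)=0$ makes the map from $X$ to the tail vanish, so the triangle splits and $X$ is a summand of an object of $S*\Sigma S$.

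You instead build the triangle from a right $\add S$-approximation $g\colon S_0\to X$ and reduce to showing $Y=\Sigma^{-1}C\in\add S$ via maximality of silting objects. The bookkeeping you left open does close:
\begin{itemize}
\item For $i\geq 1$, $\Hom(Y,\Sigma^{i}X)$ sits between $\Hom(S_0,\Sigma^{i}X)=0$ and $\Hom(X,\Sigma^{i+1}X)=0$, so it vanishes.
\item For $i=0$, $\Hom(Y,X)$ need not vanish; it should ``map to zero'' rather than ``be hit''. Indeed $\Hom(Y,\Sigma Y)$ is the image of $\Hom(Y,X)$ under composition with $X\to\Sigma Y$. Every map $Y\to X$ factors through $Y\to S_0$, because $\Hom(X,\Sigma X)=0$, hence through $g$, hence is killed by that composition.
\end{itemize}
So $S\oplus Y$ is silting with $S\geq S\oplus Y\geq S$, and antisymmetry \cite[Theorem~2.11]{AI} gives $Y\in\add S$.

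What your route buys is that it only uses the partial-order theorem already quoted in the paper. It also produces the 2-term triangle explicitly, with $S_0\to X$ an $\add S$-approximation. What it costs is that it genuinely uses that $X$ is presilting, namely to get $\Hom(Y,\Sigma^{>0}Y)=0$. It is also not more elementary from scratch, since antisymmetry in \cite{AI} itself rests on the same kind of filtration result. The filtration argument needs no hypothesis on $X$ at all. It yields the stronger statement $S*\Sigma S=\{X\in\mathcal D\mid \Hom(S,\Sigma^{>0}X)=0=\Hom(X,\Sigma^{>1}S)\}$.

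Your ``alternatively'' sentence is that argument in disguise. As written it is only a gesture: its real content is the Aihara--Iyama filtration statement, which you would have to cite rather than describe as triangles being forced to split.
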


The interest of considering 2-term silting objects is that they behave better than general silting objects,
in particular with respect to completion.

\begin{proposition} \label{prop:two complements twosilt}
    Let $X$ be a basic presilting object in $\mathcal D$ which is 2-term with respect to $S$.
    \begin{enumerate}
        \item There exists $T \in \twosiltwrt S \mathcal D$ such that $X$ is a direct summand of $T$.
        \item $X$ is silting if and only if $\card X = \card S$,
        where $\card X$ denotes the number of (non-isomorphic) direct summands of $X$.
        \item If $\card X = \card S - 1$,
        then $X$ is a direct summand of exactly two objects in $\twosiltwrt S \mathcal D$.
    \end{enumerate}
\end{proposition}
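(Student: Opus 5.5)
The plan is to reduce the statement to the standard case of 2-term silting complexes over a finite dimensional algebra, where all three facts are known from \cite{AIR}.

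\textbf{Step 1: reduction to the algebra $\End S$.} Put $\Lambda_S = \End_{\mathcal D}(S)$. The functor $\Hom_{\mathcal D}(S,\blank)$ restricts to an equivalence $\add S \simeq \projcat \Lambda_S$. We would like to upgrade this to an equivalence of additive categories
\[S * \Sigma S \;\simeq\; \mathcal K^{[-1,0]}(\projcat \Lambda_S),\]
the category of 2-term complexes of projectives. Under this equivalence we want $\Hom_{\mathcal D}(X,\Sigma Y)$ to be preserved for objects of $S * \Sigma S$, and hence presiltingness to be preserved. This is the argument of \cite[Section~4]{Jasso} (and \cite{IYang}).

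\textbf{Step 2: building the functor.} An object $Z \in S*\Sigma S$ fits in a triangle $S_1 \to S_0 \to Z \to \Sigma S_1$ with $S_i \in \add S$. We send $Z$ to the complex $P_1 \to P_0$ given by the image of $S_1 \to S_0$. This assignment is well defined and full because $\Hom_{\mathcal D}(S,\Sigma^{<0}S)$ never enters the computation. One can instead bypass the $\Hom(S,\Sigma^{-1}S)$ issues by working with the additive quotient. The key computation is that, for $X,Y \in S*\Sigma S$, the group $\Hom(X,\Sigma Y)$ depends only on the 2-term data. This follows from the long exact sequences together with $\Hom(S,\Sigma^{>0}S)=0$, and it matches $\Hom_{\Kb}(P_X,\Sigma P_Y)$.

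\textbf{Step 3: silting versus cardinality.} Silting (generating the thick subcategory) must also be compared. Given $\card X$ equal to $\card S$, the claim is that $X$ is presilting 2-term and silting in $\mathcal D$. The cleaner route is Bongartz completion carried out directly in $\mathcal D$. For (1), take a minimal left $\add X$-approximation... wait, Bongartz is a right approximation of $\Sigma S$. Concretely, take the triangle
\[S \to X' \to \Sigma\, \text{?}\]
More precisely, take a minimal right $\add X$-approximation-type triangle $S \to E \to X^{r} \to \Sigma S$, with $X^r \in \add X$ chosen so that $\Hom(X,\Sigma E)=0$. Then $T = X\oplus E$ is presilting, lies in $S*\Sigma S$, and generates since $S \in \thick T$. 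For (2), if $X$ is silting then $\card X = \card S$, because silting objects in $\Kb$-like settings share the rank of the Grothendieck group; alternatively, use that $\silt$ in an interval is transferred by Step 1 to $2$-$\silt \Lambda_S$, where \cite[Proposition~2.3]{AIR} gives the equality. Conversely, if $\card X = \card S$ then the completion $T \supseteq X$ from (1) has $\card T = \card S$ by the forward direction, so $T = X$ and $X$ is silting. For (3), transfer through Step 1 to 2-term presilting complexes over $\Lambda_S$ and apply \cite[Theorem~2.18]{AIR}: an almost complete object has exactly two completions, namely the Bongartz and co-Bongartz ones, given by the triangle above and its dual built from $\Sigma S$.

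\textbf{Main obstacle.} The delicate point is Step 1/Step 3: justifying that being silting in $\mathcal D$ corresponds to being silting in $\Kb(\projcat \Lambda_S)$. In particular we need the rank statement $\card X=\card S$ for silting $X$ in the interval $[\Sigma S,S]$. I would handle this with Proposition~\ref{prop:def 2-term wrt} plus the argument of \cite[Theorem~2.27]{AI}, which shows that silting objects related by $S\geq X\geq \Sigma S$ have the same number of summands. I would simply cite \cite[Proposition~4.8, Theorem~4.5]{Jasso} or \cite{IYang} for this transfer, rather than reprove it.
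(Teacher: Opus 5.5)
Your proposal is correct in substance and follows the paper's route. The paper also reduces to $\Lambda_S=\End_{\mathcal D}(S)$ and imports the results of \cite{AIR}, transferring them through the bijection of \cite[Theorem~4.6]{IJY}. The only difference is that for (1) and (2) you argue directly in $\mathcal D$, where the paper simply cites \cite[Lemma~4.2, Corollary~4.4]{IJY}: you use Bongartz completion via the triangle $S\to E\to X^r\to\Sigma S$, with $X^r\to\Sigma S$ a right $\add X$-approximation, together with the $K_0$-rank theorem of \cite{AI}.

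One correction: the ``equivalence of additive categories'' $S*\Sigma S\simeq\mathcal K^{[-1,0]}(\projcat\Lambda_S)$ claimed in Step~1 is false in general. The reason is that $\Hom_{\mathcal D}(\Sigma S,S)=\Hom_{\mathcal D}(S,\Sigma^{-1}S)$ need not vanish. For instance, take $A$ hereditary of type $A_2$, with simple projective $P$ and other indecomposable projective $P'$. Then $S=P\oplus\Sigma P'$ is silting in $\Kb(\projcat A)$ with $\End(S)\simeq k\times k$, yet $\Hom_{\mathcal D}(\Sigma P,\Sigma P')\neq 0$ inside $S*\Sigma S$. No such nonzero map between non-isomorphic indecomposables exists in $\mathcal K^{[-1,0]}(\projcat(k\times k))$.

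You should drop that claim, because your argument never needs it. What you actually use is the following:
\begin{enumerate}
\item the bijection on isomorphism classes of objects $\mathrm{cone}(S_1\to S_0)\leftrightarrow(P_1\to P_0)$, which is compatible with direct sums and hence with the number of summands;
\item the isomorphism $\Hom_{\mathcal D}(X,\Sigma Y)\cong\Hom_{\Kb(\projcat\Lambda_S)}(P_X,\Sigma P_Y)$, which is exactly your (correct) Step~2 computation.
\end{enumerate}
Together with (2) on both sides, these two facts suffice to transfer (3) from \cite[Corollary~3.8]{AIR}.
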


The three statements are proven in \cite{AIR} for 2-term complexes in $\mathcal D = \Kb(\projcat \Lambda)$ with respect to $\Lambda$,
see \cite[Proposition~3.3]{AIR} for the first two statements and \cite[Corollary~3.8(a)]{AIR} for the third one.
One may translate them to this more general setting using the bijection provided by \cite[Theorem~4.6]{IJY}.
The first two statements are also explicitly stated (and proven) in \cite[Lemma~4.2]{IJY} and \cite[Corollary~4.4]{IJY} respectively.

We now specialise this definition to the case where $\mathcal D$ is either $\Kb(\projcat \Lambda)$ or $\Kb(\injcat \Lambda)$,
where $\Lambda$ is a finite dimensional algebra.

\begin{proposition} \leavevmode
    \begin{enumerate}
        \item The object $\Lambda \in \Kb (\projcat \Lambda)$ is silting,
        and a complex $P \in \Kb (\projcat \Lambda)$ is 2-term with respect to $\Lambda$
        (we also call it a \emph{2-term complex})
        if and only if it is isomorphic in $\Kb (\projcat \Lambda)$ to a complex concentrated in (cohomological) degrees $-1$ and $0$.

        \item The object $\Sigma\inv D\Lambda \in \Kb (\injcat \Lambda)$ is silting,
        and a complex $I \in \Kb (\injcat \Lambda)$ is 2-term with respect to $\Sigma\inv D\Lambda$
        (we also call it a \emph{2-term complex})
        if and only if it is isomorphic in $\Kb (\injcat \Lambda)$ to a complex concentrated in (cohomological) degrees $0$ and $1$.
    \end{enumerate}
\end{proposition}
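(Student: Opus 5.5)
We prove the two parts separately; part (2) is the dual of part (1).

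For the silting claim in (1), note that $\Hom_{\Kb(\projcat \Lambda)}(\Lambda, \Sigma^i \Lambda) = H^i(\Lambda)$, computed on the stalk complex. This vanishes for $i \neq 0$, so $\Lambda$ is presilting. Every bounded complex of finitely generated projectives is built from its terms by finitely many cones, using the stupid (brutal) truncations. Each term lies in $\add \Lambda$. Hence $\thick \Lambda = \Kb(\projcat \Lambda)$, and $\Lambda$ is silting.

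For the characterisation of 2-term complexes, first suppose $P$ is concentrated in degrees $-1$ and $0$, say $P = (P^{-1} \xrightarrow{d} P^0)$. The stupid truncation gives a triangle
\[P^0 \to P \to \Sigma P^{-1} \to \Sigma P^0,\]
where $P^0$ and $P^{-1}$ are viewed as stalk complexes in degree $0$. So $P \in \Lambda * \Sigma \Lambda$.

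Conversely, suppose there is a triangle $X \to P \to \Sigma Y \xrightarrow{h} \Sigma X$ with $X, Y \in \add \Lambda$. Then $P$ is isomorphic to the cocone of $h$. Up to a shift, this is the cone of the morphism $\Sigma^{-1} h \colon Y \to X$ between stalk complexes in degree $0$. Since $\Hom_{\Kb}(Y, X) = \Hom_\Lambda(Y, X)$, this morphism is a genuine module map $g \colon Y \to X$. Its mapping cone is the two-term complex $Y \xrightarrow{g} X$ in degrees $-1$ and $0$. The only point needing care is to check that the shift and sign conventions place the terms in degrees $-1$ and $0$ rather than $0$ and $1$. We check this directly with the standard cone construction.

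Part (2) is the exact dual, with $D\Lambda$ playing the role of $\Lambda$. We have $\Hom(\Sigma^{-1} D\Lambda, \Sigma^{i}\Sigma^{-1} D\Lambda) = \Hom(D\Lambda, \Sigma^i D\Lambda)$, which vanishes for $i \neq 0$. Stupid truncations again show that $\add D\Lambda$ generates $\Kb(\injcat \Lambda)$. Alternatively, one can apply the duality $D$, which sends $\Kb(\projcat \Lambda^{\mathrm{op}})$ to $\Kb(\injcat \Lambda)^{\mathrm{op}}$.

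The objects of $\Sigma^{-1} D\Lambda * D\Lambda$ are cones of maps $\Sigma^{-1} I \to \Sigma^{-1} J$ shifted appropriately. Equivalently, they are the complexes $I^0 \to I^1$ in degrees $0$ and $1$, obtained from the triangle
\[\Sigma^{-1} I^1 \to I \to I^0 \to I^1\]
given by the stupid truncation. The converse direction is identical to the projective case.

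No step is a real obstacle. The only care required is bookkeeping of degree and shift conventions.
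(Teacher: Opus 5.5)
Your proof is correct. The paper states this proposition as a standard fact and gives no proof of its own; your argument is the standard verification. For silting, you use Hom-vanishing between stalk complexes and generation via brutal truncations. For the forward direction, you use the truncation triangle $P^0 \to P \to \Sigma P^{-1} \to \Sigma P^0$.

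For the converse, rotate $X \to P \to \Sigma Y \xrightarrow{h} \Sigma X$ to $Y \xrightarrow{-\Sigma\inv h} X \to P \to \Sigma Y$. This exhibits $P$ as the mapping cone of a module map $Y \to X$, with $Y$ in degree $-1$ and $X$ in degree $0$. So the degree check you deferred comes out as claimed.

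The one point you pass over silently is that objects of $\add \Lambda$ in $\Kb(\projcat \Lambda)$ are isomorphic to stalk complexes of projective modules. This holds because $\End_{\Kb(\projcat \Lambda)}(\Lambda^n) = \End_\Lambda(\Lambda^n)$, so idempotents already split in $\projcat \Lambda$. With that, part (2) follows dually, as you say.
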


\begin{convention} \label{convention 2-term complexes}
    Unless otherwise mentioned,
    if we write a complex of projective modules as $(P \to P')$,
    we mean that the complex is concentrated in cohomological degrees $-1$ and 0.
    Similarly, we denote by $(I \to I')$ a complex of injective modules concentrated in degrees 0 and 1.
    Moreover, we identify modules and the corresponding stalk complexes concentrated in degree 0,
    so that for a projective module $P$ (resp.\ an injective module $I$),
    we have
    $P = (0 \to P)$, $\Sigma P = (P \to 0)$,
    $I = (I \to 0)$, $\Sigma\inv I = (0 \to I)$.
\end{convention}

We write $\twosilt(\projcat \Lambda)$ (resp.\ $\twosilt(\injcat \Lambda)$)
for the set of 2-term silting complexes of projective $\Lambda$-modules
(resp.\ 2-term silting complexes of injective $\Lambda$-modules).

The following is a direct corollary of Proposition~\ref{prop:def 2-term wrt}.

\begin{corollary} \label{2-silt proj/inj as interval}
    We have 
    \begin{gather*}
        \twosilt(\projcat \Lambda) = (\silt \Kb(\projcat \Lambda)) \cap (\Lambda * \Sigma \Lambda) = \bigl[\Sigma \Lambda, \Lambda\bigr] \\
        \twosilt(\injcat \Lambda) = (\silt \Kb(\injcat \Lambda)) \cap (\Sigma\inv D \Lambda * D \Lambda) = \bigl[D \Lambda, \Sigma\inv D \Lambda\bigr].
    \end{gather*}
\end{corollary}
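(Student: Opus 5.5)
The statement follows from Proposition~\ref{prop:def 2-term wrt}, applied once with $\mathcal D = \Kb(\projcat \Lambda)$, $S = \Lambda$, and once with $\mathcal D = \Kb(\injcat \Lambda)$, $S = \Sigma\inv D\Lambda$. The preceding proposition supplies the two inputs needed: each of these objects is silting, and being 2-term with respect to it is the same as being concentrated in degrees $-1,0$ (respectively $0,1$). That second fact is exactly the definition of $\twosilt(\projcat \Lambda)$ and $\twosilt(\injcat \Lambda)$.

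For the projective case, the preceding proposition shows that a basic silting complex lies in $\twosilt(\projcat \Lambda)$ exactly when it lies in $\Lambda * \Sigma\Lambda$. This gives the first equality $\twosilt(\projcat\Lambda) = (\silt \Kb(\projcat\Lambda)) \cap (\Lambda * \Sigma\Lambda)$. By Proposition~\ref{prop:def 2-term wrt} with $S = \Lambda$, a silting object $X$ lies in $\Lambda * \Sigma \Lambda$ if and only if $\Lambda \geq X \geq \Sigma\Lambda$. So this set is the interval $\bigl[\Sigma\Lambda, \Lambda\bigr]$ in $\silt \Kb(\projcat \Lambda)$.

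For the injective case the argument is the same with $S = \Sigma\inv D\Lambda$. The preceding proposition identifies $\twosilt(\injcat\Lambda)$ with $(\silt\Kb(\injcat\Lambda)) \cap (\Sigma\inv D\Lambda * D\Lambda)$, using $\Sigma S = D\Lambda$. Proposition~\ref{prop:def 2-term wrt} then gives the interval $\bigl[\Sigma S, S\bigr] = \bigl[D\Lambda, \Sigma\inv D\Lambda\bigr]$.

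There is no real obstacle here. The only points needing care are two bookkeeping matters. First, the shift conventions must match: $\Sigma(\Sigma\inv D\Lambda) = D\Lambda$, and the degree conventions of Convention~\ref{convention 2-term complexes} must agree with those of the previous proposition. Second, the objects involved must be basic, so that membership in $\silt$ is meaningful; this holds because $\Lambda$ is basic, and so is $D\Lambda$.
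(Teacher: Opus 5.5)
Your proposal is correct and follows the paper's route: the paper records this statement as a direct corollary of Proposition~\ref{prop:def 2-term wrt}, applied with $S = \Lambda$ in $\Kb(\projcat \Lambda)$ and with $S = \Sigma\inv D\Lambda$ in $\Kb(\injcat \Lambda)$. The first equality in each line is then definitional, exactly as you observe.
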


\begin{proposition}[\protect{\cite[Theorems 2.7, 3.2]{AIR}}] \label{correspondence twosilt/torsion/torsionfree}
    There is an isomorphism of posets
    \[(\twosilt (\projcat \Lambda), \mathop\leq) \iso (\ftors \Lambda, \mathord{\subseteq})\]
    given by sending a 2-term silting complex of projectives $X$ to the functorially finite torsion class $\Fac H^0(X)$.

    Dually, there is an isomorphism of posets from $(\twosilt (\injcat \Lambda), \leq)$
    to $(\ftorf \Lambda, \mathord{\subseteq})\opcat$ given by sending a 2-term silting complex of injectives $X$ to the functorially finite torsion-free class $\Sub H^0(X)$.
\end{proposition}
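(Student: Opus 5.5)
The first isomorphism is the main theorem of \cite{AIR}; the dual statement for injectives is what needs deriving. My plan is to reduce it to the projective case via the Nakayama functor, rather than redo the $\tau$-tilting arguments.

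\textbf{Projective case.} I would recall the argument of \cite{AIR}, Theorems 2.7 and 3.2, in three steps.
\begin{enumerate}
    \item For a 2-term presilting complex $X = (P^{-1} \xrightarrow{d} P^0)$, the module $M = H^0(X) = \operatorname{coker} d$ is $\tau$-rigid. This comes from the identification
    \[\Hom_{\Kb(\projcat \Lambda)}(X, \Sigma Y) \twoheadrightarrow \Hom_\Lambda(H^0(Y), \tau H^0(X))\]
    via the Auslander--Reiten formula. So $\Fac M$ is a functorially finite torsion class, by Proposition~\ref{prop:equivalence ff torsion(free)}.
    \item Using Proposition~\ref{prop:two complements twosilt}, silting $X$ corresponds exactly to support $\tau$-tilting pairs $(M, P)$, where $X = (P_1 \oplus P \to P_0)$ for a minimal projective presentation $P_1 \to P_0$ of $M$. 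These pairs are in bijection with $\ftors \Lambda$ via $\Fac$.
    \item Order: $X \geq Y$ means $\Hom(X, \Sigma Y) = 0$. By the surjection above together with the support condition, this translates into $\Hom(H^0(Y), \tau H^0(X)) = 0$ and $\Hom(P, H^0(Y)) = 0$, i.e.\ $H^0(Y) \in \leftperp{(\tau M)} \cap \rightperp{P}$. That intersection equals $\Fac M$ when $(M, P)$ is support $\tau$-tilting, so the condition becomes $\Fac H^0(Y) \subseteq \Fac H^0(X)$, with the inequalities matching appropriately.
\end{enumerate}

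\textbf{Injective case.} I would apply the Nakayama functor $\nu = \blank \otimes_\Lambda D\Lambda$. It gives a triangle equivalence $\Kb(\projcat \Lambda) \iso \Kb(\injcat \Lambda)$ sending $\Lambda$ to $D\Lambda$. It therefore preserves silting objects and the order, and maps $\bigl[\Sigma \Lambda, \Lambda\bigr]$ onto $\bigl[\Sigma D\Lambda, D\Lambda\bigr]$. Shifting by $\Sigma^{-1}$, which is order-preserving, lands in $\bigl[D\Lambda, \Sigma^{-1} D\Lambda\bigr] = \twosilt(\injcat \Lambda)$ by Corollary~\ref{2-silt proj/inj as interval}.

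For $X = (P^{-1} \to P^0)$, the image $\Sigma^{-1}\nu X$ is the complex $(\nu P^{-1} \to \nu P^0)$ in degrees $0,1$. Its $H^0$ is $\ker(\nu d)$, which equals $\tau H^0(X)$ up to injective summands coming from the support part $\nu P$.

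Composing the isomorphism $\twosilt(\projcat\Lambda) \iso \ftors \Lambda$ with $\mathcal T \mapsto \rightperp{\mathcal T}$ from the earlier proposition (an anti-isomorphism onto $\ftorf \Lambda$), I would then check that
\[\rightperp{(\Fac M)} = \Sub(\tau M \oplus \nu P).\]
This is the standard torsion-free class attached to the support $\tau$-tilting pair $(M, P)$ (\cite{AIR}, Theorem 2.7 dual). Since $\Sub H^0(\Sigma^{-1}\nu X) = \Sub(\tau M \oplus \nu P)$, we get an order-reversing bijection $\twosilt(\injcat \Lambda) \to \ftorf \Lambda$ of the stated form.

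\textbf{Main obstacle.} I expect the hardest step to be the precise computation of $H^0$ of $\nu X$, specifically tracking the summands $\nu P$ and the non-minimal parts of the presentation. For this I would first reduce to minimal presentations, since contractible summands $(Q \xrightarrow{\mathrm{id}} Q)$ do not change $\Hom$-vanishing. Alternatively, I would simply invoke the dual of \cite{AIR} applied to $\Lambda\opcat$ via $D$.
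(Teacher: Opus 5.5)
Your projective half is a faithful sketch of \cite{AIR}, and that is all the paper does here. It gives no proof and cites \cite[Theorems 2.7, 3.2]{AIR} for both halves. The injective half is then the formal dual, which is your fallback at the end: $D$ gives a duality $\Kb(\injcat \Lambda)\opcat \simeq \Kb(\projcat \Lambda\opcat)$ which reverses the silting order, maps $\bigl[D\Lambda, \Sigma\inv D\Lambda\bigr]$ onto $\bigl[\Sigma\Lambda\opcat, \Lambda\opcat\bigr]$, and sends $\Sub H^0(X)$ to $\Fac H^0(DX)$.

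Your main route via $\Sigma\inv\nu$ is genuinely different and correct. However, the identity $\rightperp{(\Fac M)} = \Sub(\tau M \oplus \nu P)$ it rests on is not the dual of \cite[Theorem~2.7]{AIR}; that only says $\Sub$ of a support $\tau\inv$-tilting module is a functorially finite torsion-free class. The identity is \cite[Proposition~2.16]{AIR}, i.e.\ the content of Proposition~\ref{interpretation Sigma-1 nu}, and it needs two inputs:
\begin{enumerate}
    \item $\Fac M = \leftperp{(\tau M)} \cap \rightperp P = \leftperp{(\tau M \oplus \nu P)}$ for a support $\tau$-tilting pair;
    \item $\tau\inv$-rigidity of $\tau M \oplus \nu P$, which follows from $\Hom_\Lambda(M, \tau M) = 0 = \Hom_\Lambda(P, M)$.
\end{enumerate}
Together these make $\Sub(\tau M \oplus \nu P)$ a torsion-free class with left perpendicular $\Fac M$. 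So the duality route is purely formal. Yours costs this lemma, but it delivers the injective statement and the compatibility square of Proposition~\ref{interpretation Sigma-1 nu} at once.

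Two small repairs are needed in your projective half.
\begin{enumerate}
    \item The map in step 1 lands in $D\Hom_\Lambda(H^0(Y), \tau H^0(X))$, not in $\Hom_\Lambda(H^0(Y), \tau H^0(X))$. Also, a surjection alone does not give the reverse implication in step 3. What you want is that $\Hom_{\Kb(\projcat \Lambda)}(X, \Sigma Y) \cong \operatorname{coker} \Hom_\Lambda(d_X, H^0(Y))$ for all 2-term $X, Y$. For $X = P_M \oplus \Sigma P$ with $P_M$ minimal, this equals $D\Hom_\Lambda(H^0(Y), \tau M) \oplus \Hom_\Lambda(P, H^0(Y))$.
    \item Before invoking Proposition~\ref{prop:equivalence ff torsion(free)}, you need Auslander--Smal\o{} to know that $\Fac M$ is a torsion class at all for $\tau$-rigid $M$.
\end{enumerate}
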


\begin{remark}
    The apparent asymmetry between the two statements comes from the fact that for both sets $\ftors \Lambda$ and $\ftorf \Lambda$,
    we have a canonical choice for the partial order, namely inclusion of subcategories $\subseteq$.
    However, for silting objects,
    it is not so clear whether having the relation $\Hom(X, \Sigma^{>0} Y) = 0$ should be interpreted as $X$ being ``greater than'' or ``less than'' $Y$.
    We therefore have to make an arbitrary choice, which in our context is skewed towards torsion classes.
\end{remark}

The Nakayama functor $\nu_\Lambda :\projcat \Lambda \iso \injcat \Lambda$ is an equivalence.
So it extends to a triangle equivalence $\Kb (\projcat \Lambda) \iso \Kb(\injcat \Lambda)$,
which we call the \emph{derived Nakayama functor},
and which we also write $\nu$ by abuse of notation.
Since $\nu$ is a triangle equivalence,
it induces an isomorphism of posets $\silt \Kb(\projcat \Lambda) \iso \silt \Kb(\injcat \Lambda)$.

The following proposition is a translation of \cite[Proposition~2.16]{AIR} in terms of 2-term silting complexes.

\begin{proposition} \label{interpretation Sigma-1 nu}
    We have a commutative diagram of isomorphisms of posets
    \[\begin{tikzcd}[ampersand replacement=\&,cramped]
        {\twosilt(\projcat \Lambda)} \& {\twosilt(\injcat \Lambda)} \\
        {\ftors \Lambda} \& {(\ftorf \Lambda)\opcat}
        \arrow["{\Sigma\inv \nu}", from=1-1, to=1-2]
        \arrow["{\Fac H^0}"', from=1-1, to=2-1]
        \arrow["{\Sub H^0}", from=1-2, to=2-2]
        \arrow["{\blank^\perp}"', from=2-1, to=2-2]
    \end{tikzcd}\]
\end{proposition}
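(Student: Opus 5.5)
The diagram consists of three maps already known to be poset isomorphisms together with one map we must understand on objects. The vertical arrows are isomorphisms by Proposition~\ref{correspondence twosilt/torsion/torsionfree}. The bottom arrow $\mathcal T \mapsto \mathcal T^\perp$ is an isomorphism $\ftors \Lambda \to (\ftorf \Lambda)\opcat$ by the proposition at the end of the first subsection. So it suffices to check two things: that $\Sigma\inv \nu$ sends $\twosilt(\projcat\Lambda)$ onto $\twosilt(\injcat\Lambda)$ as a poset isomorphism, and that the square commutes on objects. Once commutativity is known, the isomorphism property of the top arrow actually follows from that of the other three arrows.

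For the top arrow, I would argue with intervals. The functor $\nu$ is a triangle equivalence $\Kb(\projcat\Lambda)\iso\Kb(\injcat\Lambda)$, and $\Sigma\inv$ is an automorphism, so $\Sigma\inv\nu$ induces an isomorphism $\silt\Kb(\projcat\Lambda)\iso\silt\Kb(\injcat\Lambda)$ that preserves $\geq$. We have $\nu\Lambda = D\Lambda$. Hence $\Sigma\inv\nu$ maps the interval $[\Sigma\Lambda,\Lambda]$ onto $[D\Lambda,\Sigma\inv D\Lambda]$, and by Corollary~\ref{2-silt proj/inj as interval} these intervals are exactly the two sets of 2-term silting complexes.

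For commutativity, take $X = (P^{-1}\xrightarrow{d} P^0)$ in $\twosilt(\projcat\Lambda)$. Then $\Sigma\inv\nu X = (\nu P^{-1}\xrightarrow{\nu d}\nu P^0)$, placed in degrees $0,1$, so $H^0(\Sigma\inv\nu X)=\ker(\nu d)$. We must show $\Sub \ker(\nu d) = (\Fac H^0X)^\perp$. Set $\mathcal T=\Fac H^0 X$. The key input is \cite[Proposition~2.16]{AIR} together with the standard description of the torsion pair attached to a 2-term silting complex. First, $\mathcal T = \{M \mid \Hom(X,\Sigma M)=0\}$. Second, $\mathcal T^\perp = \{M\mid \Hom(X,M)=0\} = \Sub H^{-1}(\nu X)$. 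Here $H^{-1}(\nu X) = \ker(\nu d)$ is in fact $\tau H^0X$ when $X$ is the minimal presentation, up to an injective summand coming from the $(P\to 0)$ summands of $X$.

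To prove the second description, use the functorial isomorphism
\[\Hom_{\Kb}(X, M) \cong D\Hom(M,\nu X),\]
which is the derived form of $\Hom_\Lambda(P,M)\cong D\Hom_\Lambda(M,\nu P)$. This identifies $\Hom(X,M)=0$ with $\Hom(M,\Sigma\inv\nu X)=0$, and the latter is computed by maps $M\to \ker(\nu d)$ modulo nothing. The remaining step is to show that $\{M \mid \Hom(M,\ker \nu d)=0\}^{\perp}$-type conditions match $\Sub\ker\nu d$. I would get this from the dual statement of Proposition~\ref{correspondence twosilt/torsion/torsionfree} applied to $\Sigma\inv\nu X$: its torsion-free class is $\Sub H^0(\Sigma\inv\nu X)$, and its associated torsion class is $\{M\mid \Hom(M,\Sigma\inv\nu X)=0\}$. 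By the duality isomorphism, that torsion class equals $\{M \mid \Hom(X,M)=0\}$.

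The main obstacle is matching conventions precisely. Specifically, one has to confirm that the torsion class attached to a 2-term injective silting complex $Y$ is $\{M \mid \Hom(M,Y)=0\}$ rather than the version with a shift, and that $\mathcal T^\perp = \{M\mid\Hom(X,M)=0\}$. I would check the latter directly. Since $\Hom(X,M)$ is the cokernel-free part, namely the maps $P^0\to M$ killing $d$, we get $\Hom(X,M)\cong\Hom(H^0X,M)$. Hence it vanishes exactly when $M\in(\Fac H^0X)^\perp$. Combining this with the dual correspondence gives the commutativity.
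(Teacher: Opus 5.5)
Your treatment of the top arrow (via $\Sigma\inv\nu$ carrying $[\Sigma\Lambda,\Lambda]$ onto $[D\Lambda,\Sigma\inv D\Lambda]$) and of the other three arrows is fine. The commutativity argument, however, breaks at the duality step. From $\Hom(X,M)\cong D\Hom(M,\nu X)$ you get that $\Hom(X,M)=0$ is equivalent to $\Hom(M,\nu X)=0$, not to $\Hom(M,\Sigma\inv\nu X)=0$. The correctly shifted statement is
\[\Hom(X,\Sigma M)\cong D\Hom(M,\Sigma\inv\nu X)\cong D\Hom_\Lambda(M,\ker\nu d).\]
So your claim that the torsion class $\{M\mid\Hom(M,\Sigma\inv\nu X)=0\}$ equals $\{M\mid\Hom(X,M)=0\}=\mathcal T^\perp$ is false. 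For $X=\Lambda$ we have $\ker\nu d=0$, so the left side is all of $\modcat\Lambda$ while the right side is $\{0\}$. It is also the wrong kind of identity: it would match the torsion class paired with $\Sub\ker\nu d$ with the torsion-free class $\mathcal T^\perp$, and that can never yield $\Sub\ker\nu d=\mathcal T^\perp$. The ``second description'' $\mathcal T^\perp=\{M\mid\Hom(X,M)=0\}$ that you verify at the end is true, but it is not what the argument needs.

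The repair uses your first description instead. By the displayed isomorphism, ${}^\perp(\ker\nu d)=\{M\mid\Hom(X,\Sigma M)=0\}=\Fac H^0X=\mathcal T$. Since $\Sigma\inv\nu X\in\twosilt(\injcat\Lambda)$, the dual correspondence says $\Sub\ker\nu d$ is a torsion-free class. Its paired torsion class is ${}^\perp(\Sub\ker\nu d)={}^\perp(\ker\nu d)=\mathcal T$, and therefore $\Sub\ker\nu d=\mathcal T^\perp$. Note that the first description, $\Fac H^0X=\{M\mid\Hom(X,\Sigma M)=0\}$, is the real content here and must be cited rather than treated as a formality. It is the characterisation $\Fac M={}^\perp(\tau M)\cap Q^\perp$ for a support $\tau$-tilting pair $(M,Q)$, which uses maximality; for a merely $\tau$-rigid pair only the inclusion holds. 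With that citation, the corrected argument is essentially a silting-language rederivation of \cite[Proposition~2.16]{AIR}, which the paper simply invokes.
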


\subsection{1-APR tilting} \label{section 1-APR}

Throughout this section, we fix $\Lambda$ a finite dimensional algebra
admitting a simple projective module $P$,
and we decompose $\Lambda = P \oplus R$ as a $\Lambda$-module.
If $\Lambda = kQ/I$ is the path algebra of a quiver with relations,
then such a module is the indecomposable projective module corresponding to a source $x \in Q_0$.

\begin{lemma} \label{morphisms with target P}
    For $M \in \modcat \Lambda$,
    we have $\Hom_\Lambda(M, P) \neq 0$ if and only if $P$ is (isomorphic to) a direct summand of $M$. 
\end{lemma}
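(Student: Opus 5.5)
The plan is to prove the two implications separately, using only that $P$ is simple and projective.

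The implication "$P$ is a direct summand of $M$ $\Rightarrow$ $\Hom_\Lambda(M,P)\neq 0$" is immediate. If $M \cong P \oplus M'$, the projection $M \to P$ is a nonzero morphism, since $P \neq 0$.

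For the converse, take a nonzero morphism $f : M \to P$. Because $P$ is simple, $\im f$ is a nonzero submodule of $P$, so it equals $P$ and $f$ is surjective. Because $P$ is projective, the epimorphism $f$ splits: there is $s : P \to M$ with $f s = \mathrm{id}_P$. Then $M \cong P \oplus \ker f$, so $P$ is (isomorphic to) a direct summand of $M$.

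There is no real obstacle here. The only point to stress is that both hypotheses on $P$ are used, simplicity to get surjectivity and projectivity to get the splitting. The dual statement (for a simple injective module $I$, $\Hom_\Lambda(I,M)\neq 0$ if and only if $I$ is a direct summand of $M$) follows by the same argument, and I would record it in passing in case it is needed later.
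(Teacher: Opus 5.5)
Your proposal is correct and follows the paper's proof: a nonzero map $M \to P$ is surjective because $P$ is simple and splits because $P$ is projective. The only difference is that you also write out the trivial converse, which the paper leaves implicit.
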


\begin{proof}
    Assume that $\Hom_\Lambda(M,P) \neq 0$,
    and take a non zero morphism $f : M \to P$.
    Since $P$ is simple, $f$ is surjective.
    But $P$ is projective, so $f$ splits,
    and we have $P \in \add M$.
\end{proof}

\begin{lemma} \label{simple projective has to be in socle}
    Let $M \in \modcat \Lambda$.
    Then the simple $P$ does not appear as a composition factor of $M / \soc M$.
    In other words, if $P$ appears as a composition factor of $M$,
    then it appears as a direct summand of $\soc M$.
\end{lemma}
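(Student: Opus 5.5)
The plan is to show that $P$ never occurs as a composition factor of $M/\soc M$. If $P$ is a composition factor of $M/\soc M$, then $P$ embeds in some subquotient, and we will contradict this using the fact that $P$ is simple projective. Since $\soc M$ is semisimple, the reformulation in the second sentence follows at once. If $P$ is a composition factor of $M$, then either it is a composition factor of $\soc M$, hence a direct summand of it, or it is a composition factor of $M/\soc M$, which we will rule out.

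The key observation concerns an arbitrary module $N$ with a submodule $N' \subseteq N$ such that $N/N' \cong P$. The surjection $N \surj P$ splits because $P$ is projective. So $P$ is isomorphic to a submodule of $N$, and being simple, it lies in $\soc N$. We will apply this with a suitable $N \subseteq M$.

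Suppose $P$ is a composition factor of $M/\soc M$. Choose a composition series of $M/\soc M$ and lift it to $M$. This gives submodules $\soc M \subseteq N' \subset N \subseteq M$ with $N/N' \cong P$. By the observation, there is a submodule $S \subseteq N$ with $S \cong P$ and $S \cap N' = 0$. Since $S$ is simple, $S \subseteq \soc N \subseteq \soc M \subseteq N'$. This contradicts $S \cap N' = 0$ with $S \neq 0$.

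There is no real obstacle. The only point needing care is the choice of the submodules $N' \subset N$ containing $\soc M$, which is the correspondence theorem applied to the composition series. The proof uses only simplicity and projectivity of $P$, just as Lemma~\ref{morphisms with target P} does.
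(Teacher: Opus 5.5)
Your proof is correct and uses the same idea as the paper: a surjection from a submodule $N \subseteq M$ onto $P$ splits because $P$ is projective, and simplicity then places the resulting copy of $P$ inside $\soc M$. Your choice of $N' \supseteq \soc M$ is a useful refinement. It proves the first, multiplicity-sensitive formulation directly (no copy of $P$ in $M/\soc M$, which is what is later applied to $J_0/\soc J_0$), whereas the paper's terse argument literally only shows that $P$ embeds in $\soc M$.
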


\begin{proof}
    Assume that $P$ is a composition factor of $M$,
    so that there exists some submodule $N \subseteq M$ and a surjection $N \surj P$.
    Now since $P$ is projective, this surjection splits,
    and $P$ is a submodule of $M$.
    Finally, since $P$ is simple, this implies that $P$ is direct summand of $\soc M$.
\end{proof}

\begin{definition} \label{def 1-APR}
    The $\Lambda$-module $T = \tau\inv P \oplus R$ is called the \emph{weak 1-APR tilting module} associated to $P$,
    where $\tau\inv$ denotes the inverse Auslander--Reiten translation of $\modcat \Lambda$.
    If moreover we have $\idim P = 1$, we call $T$ a \emph{1-APR tilting module},
    and the algebra $\Gamma = \End_\Lambda(T)$ is called a \emph{1-APR tilt} of $\Lambda$.
\end{definition}

\begin{remark}
    Dually, we can define the notion of \emph{(weak) 1-APR cotilting module} associated to a simple injective module $I$.
\end{remark}

We fix a minimal injective presentation of $P$:
\[0 \to P \to J_0 \stackrel{a}{\to} J_1.\]

\begin{lemma} \label{inj presentation -> left approximation}
    We have $\nu\inv J_0 = P$, $\nu\inv J_1 \in \add R$,
    and the morphism $P \xrightarrow{\nu\inv a} \nu\inv J_1$
    is an injective minimal left $(\add R)$-approximation of $P$.
\end{lemma}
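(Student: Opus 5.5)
The plan is to apply the inverse Nakayama functor $\nu\inv = \Hom_\Lambda(D\Lambda, \blank) \colon \injcat \Lambda \iso \projcat \Lambda$ to the minimal injective presentation $0 \to P \to J_0 \xrightarrow{a} J_1$ and read off the three claims from the equivalence together with left exactness of $\Hom$.

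\textbf{Identifying the terms.} Since $P$ is simple, its injective envelope is $J_0 = I(P) = \nu P$, the indecomposable injective with socle $P$. Hence $\nu\inv J_0 = P$.

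For $J_1$, minimality means $J_1 = I(\operatorname{coker}(P \to J_0))$. The indecomposable summands of $J_1$ are the $I(S)$ with $S$ a simple in $\soc(J_0/P)$. I claim no such $S$ is isomorphic to $P$. Indeed, $P$ is not a composition factor of $J_0/P$ at all: by Lemma~\ref{simple projective has to be in socle}, any occurrence of $P$ as a composition factor of $J_0$ lies in $\soc J_0 = P$, and $P$ occurs in $J_0$ exactly once. Thus $J_1 \in \add \nu R$, so $\nu\inv J_1 \in \add R$.

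\textbf{Injectivity.} Write $N = \nu\inv$. For a projective $Q$ we have natural isomorphisms
\[
\Hom_\Lambda(Q, \nu\inv J) \cong \Hom_\Lambda(\nu Q, J).
\]
Alternatively, use that $\nu\inv$ is left exact, being $\Hom_\Lambda(D\Lambda, \blank)$. Applying it to $0 \to P \to J_0 \to J_1$ gives an exact sequence
\[
0 \to \Hom_\Lambda(D\Lambda, P) \to \nu\inv J_0 \to \nu\inv J_1.
\]
So injectivity of $\nu\inv a$ reduces to showing $\Hom_\Lambda(D\Lambda, P) = 0$. By Lemma~\ref{morphisms with target P}, a nonzero map $D\Lambda \to P$ would make $P$ a direct summand of $D\Lambda$, that is, $P$ would be injective. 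In that case $P = J_0$, $J_1 = 0$, and $\nu\inv a \colon P \to 0$ is not injective. This is the one point where care is needed: if $P$ is simple injective the statement fails. I would therefore state the standing assumption that $P$ is not injective; this is implicit in the APR setting, since otherwise $\tau\inv P = 0$. Under this assumption, $\nu\inv a$ is injective.

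\textbf{Approximation and minimality.} For any $R' \in \add R$, the equivalence $\nu$ gives
\[
\Hom_\Lambda(\nu\inv J_1, R') \cong \Hom_\Lambda(J_1, \nu R').
\]
Surjectivity of $\Hom(\nu\inv a, R')$ is therefore equivalent to surjectivity of $\Hom(a, \nu R')$. Any map $J_0 \to \nu R'$ kills $P = \soc J_0$, because $\nu R'$ has no socle summand isomorphic to $P$. Such a map therefore factors through $J_0/P$, and then, by injectivity of $\nu R'$, through the monomorphism $J_0/P \inj J_1$. This shows $\Hom(a, \nu R')$ is surjective, so $\nu\inv a$ is a left $(\add R)$-approximation.

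Minimality is left-minimality: every endomorphism $\phi$ of $J_1$ with $\phi a = a$ must be an automorphism. This is preserved by the equivalence $\nu\inv$, and it holds for $a$ because $J_1$ is the injective envelope of $\im a$. Indeed, $\phi$ is the identity on the essential submodule $\im a$, so $\phi$ is injective, hence an isomorphism.

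I expect the main obstacle to be the non-injective edge case and the exactness bookkeeping in the injectivity step; the rest is formal transport along $\nu$.
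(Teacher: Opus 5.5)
Your proof is correct and follows essentially the same route as the paper. $J_0$ is the injective envelope of $P$, Lemma~\ref{simple projective has to be in socle} rules out $P$ as a composition factor of $J_0/P$, and the approximation property comes from factoring through $J_0/P$ and extending along $J_0/P \inj J_1$ by injectivity of $\nu R'$. Your kernel computation $\ker(\nu\inv a) \simeq \Hom_\Lambda(D\Lambda, P)$ and your essentiality argument for minimality make explicit what the paper states in one line. Your caveat is also justified: the paper's claim that $\nu\inv a$ is nonzero tacitly needs $J_1 \neq 0$, i.e.\ $P$ not injective. This is automatic once $\idim P = 1$, but not in the generality in which the lemma is stated.
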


\begin{proof}
    Since $P$ is simple and the resolution is minimal,
    we have $P \simeq \soc J_0$,
    and $J_0$ is the injective envelope of the simple $P$.
    This implies that $\nu\inv J_0$ is the projective cover of the simple $P$,
    namely $\nu\inv J_0 = P$.

    Moreover,
    to show that $\nu\inv J_1 \in \add R$,
    it suffices to show that the simple $P$ does not appear as a direct summand of $\head (\nu\inv J_1) \simeq \soc J_1$.
    Now we know that $\soc J_1 = \soc (J_0 / P)$ since the presentation is minimal,
    and that $J_0 / P \simeq J_0 / \soc J_0$.
    So by Lemma~\ref{simple projective has to be in socle},
    the simple $P$ does not appear as a composition factor of $J_0 / P$,
    hence the result.
    
    Now we show that $\nu \inv a$ is a left $(\add R)$-approximation of $P$.
    Let $f : P \to S$ be a morphism with $S \in \add R$.
    This gives a morphism $\nu (f) : J_0 \to \nu S$.
    Now the composition $P \to J_0 \xrightarrow{\nu (f)} \nu S$ has to be zero since $\Hom(P, \nu R) = 0$.
    So $\nu (f)$ factors through the cokernel $C$ of $P \to J_0$,
    hence it factors through $a$ since $\nu S$ is an injective module.
    So we get a morphism $g : J_1 \to \nu S$ satisfying $ga = \nu (f)$,
    hence the morphism $\nu\inv (g) : \nu\inv J_1 \to S$ satisfies $\nu\inv (g) \circ \nu\inv (a) = f$.
    This shows what we wanted.
    \[\begin{tikzcd}
        P & {J_0} & C & {J_1} \\
        & {\nu S}
        \arrow[from=1-1, to=1-2]
        \arrow[two heads, from=1-2, to=1-3]
        \arrow["a", curve={height=-18pt}, from=1-2, to=1-4]
        \arrow["{\nu (f)}"', from=1-2, to=2-2]
        \arrow[hook, from=1-3, to=1-4]
        \arrow[dashed, from=1-3, to=2-2]
        \arrow["g", curve={height=-12pt}, dashed, from=1-4, to=2-2]
    \end{tikzcd}\]

    Finally, the minimality of the approximation follows from the minimality of the injective resolution,
    and $\nu\inv a$ is injective because it is non zero and $P$ is simple.
\end{proof}

\begin{lemma} \label{proj presentation of tau- P}
    We have an exact sequence
    \[0 \to P \xrightarrow{\nu\inv a} \nu\inv J_1 \to \tau\inv P \to 0\]
    which gives a minimal projective resolution of $\tau\inv P$.
\end{lemma}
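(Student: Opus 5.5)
We use the Auslander--Reiten formula for $\tau^{-1}$ in terms of the inverse Nakayama functor. Applying $\nu^{-1} = \Hom_\Lambda(D\Lambda, \blank)$ to the minimal injective presentation $0 \to P \to J_0 \xrightarrow{a} J_1$ gives a map $\nu^{-1}J_0 \xrightarrow{\nu^{-1}a} \nu^{-1}J_1$ between projectives. By the standard construction, $\tau^{-1}P = \operatorname{Coker}(\nu^{-1}a)$. This requires $P$ to be non-injective, which holds: if $P$ were injective, then $J_1 = 0$, and $\tau^{-1}P = 0$ is still the cokernel. In that degenerate case the sequence reads $0 \to P \to 0 \to 0$, so we should assume $P$ is not injective, as is implicit when $T$ is tilting.

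By Lemma~\ref{inj presentation -> left approximation}, $\nu^{-1}J_0 = P$ and $\nu^{-1}a$ is injective. This yields the exact sequence
\[0 \to P \xrightarrow{\nu^{-1}a} \nu^{-1}J_1 \to \tau^{-1}P \to 0.\]

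It remains to check minimality of this projective resolution, which consists of two points.

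First, $\nu^{-1}J_1 \to \tau^{-1}P$ is a projective cover. Equivalently, $\im(\nu^{-1}a) \subseteq \rad \nu^{-1}J_1$, i.e. no nonzero summand of $\nu^{-1}J_1$ is split off through the map from $P$. Suppose otherwise. Since $P$ is indecomposable and $\nu^{-1}a$ is injective, the map would then be a split mono onto a summand of $\nu^{-1}J_1$. That summand would be isomorphic to $P$, contradicting $\nu^{-1}J_1 \in \add R$.

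Second, the kernel $P$ is already projective, so the resolution stops, and the first syzygy is exactly $P$.

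The only substantive step is identifying the cokernel with $\tau^{-1}P$. This follows from $\tau^{-1} = \operatorname{Tr}D$. Dualising the minimal injective presentation of $P$ gives a minimal projective presentation of $DP$ over $\Lambda\opcat$: minimality is preserved because $D$ is a duality. Applying $\Hom_{\Lambda\opcat}(\blank, \Lambda)$ to that presentation produces $\nu^{-1}a$, since $\Hom_{\Lambda\opcat}(DJ, \Lambda) \simeq \nu^{-1}J$ naturally. Its cokernel is $\operatorname{Tr}DP = \tau^{-1}P$.

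We expect no real obstacle beyond this bookkeeping with natural isomorphisms.
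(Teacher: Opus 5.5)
Your proposal is correct and follows the same route as the paper. The paper's proof just cites the description $\tau\inv M = \mathrm{Coker}(\nu\inv J_0 \to \nu\inv J_1)$, which comes from $\tau\inv = \mathrm{Tr}\,D$, together with Lemma~\ref{inj presentation -> left approximation}. Your explicit minimality check via $\nu\inv J_1 \in \add R$ makes explicit what the paper leaves implicit, and so does your remark that $P$ must be non-injective: this is needed for $\nu\inv a \neq 0$, and is automatic when $\idim P = 1$ or $T$ is tilting. You should, however, state non-injectivity as an assumption rather than as something that ``holds'', since your own next sentence treats the injective case as a counterexample.
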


\begin{proof}
    This follows from the definition of the functor $\tau\inv$ and from Lemma~\ref{inj presentation -> left approximation}.
\end{proof}

\begin{theorem}[\protect{\cite[Theorem~3.2]{IO}}] \label{1-APR implies tilting}
    The weak 1-APR tilting module $T$ is a tilting module with $\pdim T = 1$.
\end{theorem}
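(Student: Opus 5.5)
We need to prove that $T = \tau\inv P \oplus R$ is a tilting module with $\pdim T = 1$. We check the three classical conditions: $\pdim T \leq 1$, $\Ext^1_\Lambda(T,T) = 0$, and the number of non-isomorphic indecomposable summands of $T$ equals that of $\Lambda$. Equivalently, since $\pdim T\le 1$, we can show that the 2-term complex $\nu\inv J_1 \oplus ( P \to \nu\inv J_1 )$-type object is silting in $\Kb(\projcat \Lambda)$.

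\textbf{Projective dimension.} By Lemma~\ref{proj presentation of tau- P}, $\tau\inv P$ has the projective resolution $0 \to P \to \nu\inv J_1 \to \tau\inv P \to 0$. Hence $\pdim \tau\inv P \le 1$. Since $R$ is projective, $\pdim T \le 1$. Moreover $\tau\inv P$ is not projective, because the resolution is minimal with nonzero term $P$ in degree $-1$. So $\pdim T = 1$ exactly; in particular $\tau\inv P \neq 0$, which uses $\idim P \ge 1$, i.e.\ $P$ is not injective. I should note this implicit hypothesis: if $P$ were simple injective, then $\Lambda \simeq P \times R$ and the statement degenerates. I would assume $P$ is non-injective, as is implicit in the construction.

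\textbf{Self-orthogonality.} We need $\Ext^1(T,T)=0$. First, $\Ext^1(R, -) = 0$ since $R$ is projective. Second, $\Ext^1(\tau\inv P, M) \simeq D\overline{\Hom}(M, P)$ by the Auslander--Reiten formula, since $\pdim \tau\inv P \le 1$ and $\tau \tau\inv P = P$ ($P$ being non-injective indecomposable). For $M = R$ this vanishes, because $\Hom(R,P) = 0$ by Lemma~\ref{morphisms with target P}: $P$ is not a summand of $R$. For $M = \tau\inv P$, $\Hom(\tau\inv P, P) = 0$ again by Lemma~\ref{morphisms with target P}, since $\tau\inv P$ is indecomposable and not isomorphic to $P$ (it is non-projective). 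Alternatively, one can compute directly: applying $\Hom(-,M)$ to the resolution shows that $\Ext^1(\tau\inv P, M)$ is the cokernel of $\Hom(\nu\inv J_1, M) \to \Hom(P,M)$, and this is surjective for $M \in \add R$ by the approximation property of Lemma~\ref{inj presentation -> left approximation}.

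\textbf{Counting summands.} $\tau\inv P$ is indecomposable and not in $\add R$, since $R$ is projective and $\tau\inv P$ is not. So $T$ has $|R| + 1 = |\Lambda|$ non-isomorphic indecomposable summands.

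By the classical characterisation of tilting modules of projective dimension at most one (Bongartz), $T$ is tilting. The only subtle point is the non-injectivity of $P$ needed for $\tau\tau\inv P \simeq P$; I would make it explicit at the start of the proof.
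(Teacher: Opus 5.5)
Your proof is correct. The paper gives no argument of its own here; it only cites \cite[Theorem~3.2]{IO}. What you give is the classical Auslander--Platzeck--Reiten argument, essentially the case $n=1$ of the Iyama--Oppermann setting:
\begin{itemize}
\item The resolution $0 \to P \to \nu\inv J_1 \to \tau\inv P \to 0$ of Lemma~\ref{proj presentation of tau- P} gives $\pdim T = 1$.
\item The vanishing $\Ext^1_\Lambda(\tau\inv P, R \oplus \tau\inv P) \simeq D\overline{\Hom}_\Lambda(R \oplus \tau\inv P, P) = 0$ holds because no module lacking $P$ as a direct summand maps nonzero to $P$.
\end{itemize}

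The one step that could be streamlined is the third tilting axiom. You count summands and invoke Bongartz's lemma. That costs you the indecomposability of $\tau\inv P$ and only works in projective dimension at most one. Instead, add the identity of $R$ to the same resolution. This gives directly the required coresolution $0 \to \Lambda \to \nu\inv J_1 \oplus R \to \tau\inv P \to 0$, whose middle and right terms lie in $\add T$. This form carries over to the $n$-APR setting. It is also the sequence behind the paper's later claim that $\thick T = \thick \Lambda$.

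Your caveat about injectivity is justified. In \cite{IO} a weak $1$-APR tilting module is only defined when $\Hom_\Lambda(D\Lambda, P) = 0$, that is, when $P$ is not injective. The paper's Definition~\ref{def 1-APR} omits this. It is nevertheless tacitly needed already in Lemma~\ref{proj presentation of tau- P}: if $P$ were injective, then $J_1 = 0$ and that sequence is not exact. The condition becomes automatic once $\idim P = 1$ is imposed.

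There is one small slip in your opening aside, which your argument never uses. The 2-term complex corresponding to $T$ is $S \oplus R = \widetilde T$ with $S = (P \to \nu\inv J_1)$, not $\nu\inv J_1 \oplus S$. Since $\nu\inv J_1$ need not contain every indecomposable summand of $R$, the latter generally has fewer than $\card{\Lambda}$ summands and so is not silting.
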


\begin{corollary} \label{1-APR tilting derived equivalence}
    The functor $\mathbf R \Hom_\Lambda(T, \blank) : \deriv (\Lambda) \to \deriv (\Gamma)$
    is an equivalence of triangulated categories.
\end{corollary}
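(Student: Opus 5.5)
This is a direct consequence of Theorem~\ref{1-APR implies tilting} together with the standard theorem of Happel (equivalently, Rickard's Morita theory for derived categories): if $T$ is a tilting module of finite projective dimension over a finite dimensional algebra $\Lambda$ with endomorphism algebra $\Gamma$, then $\mathbf R\Hom_\Lambda(T,\blank)$ is a triangle equivalence $\deriv(\Lambda)\iso\deriv(\Gamma)$. The plan is simply to check the hypotheses and then recall why the conclusion holds.

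First, by Theorem~\ref{1-APR implies tilting}, $T=\tau\inv P\oplus R$ satisfies three conditions: $\pdim T\le 1$, $\Ext^1_\Lambda(T,T)=0$, and there is an exact sequence $0\to\Lambda\to T_0\to T_1\to 0$ with $T_i\in\add T$. So $T$ is a classical tilting module. Viewed as an object of $\Kb(\projcat\Lambda)\simeq$ the perfect complexes inside $\deriv(\Lambda)$, these conditions say the following.
\begin{enumerate}
\item $\Hom_{\deriv(\Lambda)}(T,\Sigma^iT)=\Ext^i_\Lambda(T,T)=0$ for $i\neq0$. For $i<0$ this is automatic, and for $i\geq 2$ it follows from $\pdim T\le1$.
\item $T$ generates $\Kb(\projcat\Lambda)$ as a thick subcategory, since $\Lambda$ lies in $\thick T$ by the coresolution above.
\end{enumerate}
Hence $T$ is a tilting object in $\Kb(\projcat\Lambda)$, with $\End(T)=\Gamma$.

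Second, we consider the functor $F=\mathbf R\Hom_\Lambda(T,\blank)$, computed using a projective resolution of $T$ of length $1$. It sends $T$ to $\Gamma_\Gamma$ and induces isomorphisms $\Hom(T,\Sigma^iT)\to\Hom(\Gamma,\Sigma^i\Gamma)$ for all $i$. By dévissage, meaning the five lemma on triangles, $F$ is fully faithful on $\thick T=\Kb(\projcat\Lambda)$, and its essential image is $\thick\Gamma=\Kb(\projcat\Gamma)$. To pass to bounded derived categories of finitely generated modules, note that $F$ has finite cohomological amplitude because $\pdim T\le1$. So $F$ maps $\deriv(\Lambda)$ into $\deriv(\Gamma)$. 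Its quasi-inverse is $\blank\otimes^{\mathbf L}_\Gamma T$, and this also has finite amplitude, because $\pdim T_\Gamma\le 1$ as well; this is the Brenner--Butler symmetry for classical tilting modules. The unit and counit are isomorphisms on $T$ and $\Gamma$, hence on $\Kb(\projcat)$. They extend to all bounded complexes by writing a bounded complex as a homotopy colimit of truncations of its projective resolution, or, more simply, by using the standard argument that both functors are way-out in both directions.

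The only non-formal point is this extension from perfect complexes to all of $\deriv$. It requires the finiteness of $\pdim T$ over both $\Lambda$ and $\Gamma$, which is where classical tilting theory (Happel's theorem) is invoked. I would simply cite Happel's theorem for this step rather than reprove it.
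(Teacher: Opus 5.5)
Your proposal is correct and follows the paper's proof. The paper likewise combines Theorem~\ref{1-APR implies tilting} with Happel's theorem that a classical tilting module induces a derived equivalence, and your additional sketch of why Happel's theorem holds is sound; the one slip is that the finite amplitude of $\blank\otimes^{\mathbf L}_\Gamma T$ comes from $\pdim {}_\Gamma T\le 1$, for $T$ as a left $\Gamma$-module.
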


\begin{proof}
    The fact that a tilting module induces a derived equivalence is a theorem of Happel,
    see \cite[{}III.2.10]{Happel}.
\end{proof}

We consider the functor $F = \Hom_\Lambda(T, \blank) : \modcat \Lambda \to \modcat \Gamma$,
where $\Gamma = \End_\Lambda (T)$.
We know (see for instance \cite[{}VI.3.1]{ASS}) that $F$ restricts to an equivalence $\add T \iso \add \Gamma$.

\begin{proposition}[\protect{\cite[Lemma~3.7(2)]{IO}}] \label{simple injective Gamma module}
    The $\Gamma$-module $I = \Ext^1_\Lambda(T,P)$ is simple,
    and we have an exact sequence
    \[0 \to F(\nu\inv J_1) \stackrel{b}{\to} F(\tau\inv P) \to I \to 0\]
    in $\modcat \Gamma$ which gives a projective resolution of $I$.
    In particular, we have $\pdim I = 1$.

    If moreover $T$ is a 1-APR tilting module, then $I$ is injective.
\end{proposition}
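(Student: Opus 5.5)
Apply $F$ to the short exact sequence of Lemma~\ref{proj presentation of tau- P},
\[0 \to P \xrightarrow{\nu\inv a} \nu\inv J_1 \to \tau\inv P \to 0,\]
and read off the long exact sequence of $\Hom_\Lambda(T,\blank)$.

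\textbf{The resolution.} Since $T = \tau\inv P \oplus R$ and $\Hom_\Lambda(\tau\inv P, P) = 0$ and $\Hom_\Lambda(R,P)=0$ by Lemma~\ref{morphisms with target P}, we get $F(P) = 0$. Since $T$ is tilting, $\Ext^1_\Lambda(T, \nu\inv J_1) = 0$, because $\nu\inv J_1 \in \add R \subseteq \add T$. The long exact sequence therefore yields
\[0 \to F(\nu\inv J_1) \xrightarrow{b} F(\tau\inv P) \to \Ext^1_\Lambda(T,P) \to 0.\]
The two middle terms lie in $\add \Gamma$ because $F$ restricts to an equivalence $\add T \iso \add \Gamma$. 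Hence this is a projective resolution of $I = \Ext^1_\Lambda(T,P)$, and $\pdim I \le 1$.

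\textbf{Simplicity of $I$.} We have $\Ext^1_\Lambda(R,P) \cong D\overline{\Hom}(P,\tau R)$ by the AR formula, but it is easier to use the resolution directly. The indecomposable projective $\Gamma$-module $F(\tau\inv P)$ has simple top $S$, corresponding to the vertex $\tau\inv P$ of $\Gamma$. It suffices to show that $\operatorname{im} b$ equals the radical of $F(\tau\inv P)$, i.e. that every non-isomorphism $X \to \tau\inv P$ with $X \in \add T$ factors through $\nu\inv J_1 \to \tau\inv P$.

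There are two cases for the summands of $X$:
\begin{itemize}
\item For summands in $\add R$, this is the statement that $\nu\inv J_1 \to \tau\inv P$ is a right $(\add R)$-approximation. It holds because any map $R' \to \tau\inv P$ lifts through the surjection from the projective $\nu\inv J_1$, since $R'$ is projective.
\item For endomorphisms of $\tau\inv P$ that are not isomorphisms (radical endomorphisms), composing with the projective cover gives a map $\nu\inv J_1 \to \tau\inv P$. One must show that a radical endomorphism factors through $\nu\inv J_1$. This is where minimality of the approximation in Lemma~\ref{inj presentation -> left approximation} is used. Lift the endomorphism to a chain map on the resolution $P \to \nu\inv J_1$. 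The component on $P$ is a scalar $\lambda$. If $\lambda \ne 0$, the component on $\nu\inv J_1$ is an isomorphism by left minimality, and so the endomorphism is an isomorphism. Hence $\lambda = 0$, and the homotopy gives the factorization.
\end{itemize}
So $\operatorname{coker} b = S$, and $I$ is simple. Since $b \neq 0$ (as $F(\nu\inv J_1)\neq 0$ maps injectively), $I$ is not projective, and so $\pdim I = 1$.

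\textbf{Injectivity under $\idim P = 1$.} This is the main step. We need $\Ext^1_\Gamma(M, I) = 0$ for all $M$, i.e. $\Ext^1_\Gamma(D\Gamma, I)=0$, or equivalently, since $\pdim I = 1$ and $I$ is simple, that $I$ is not in the socle-relevant position of any non-injective module.

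The plan is to use the derived equivalence $\mathbf R\Hom_\Lambda(T,\blank)$ of Corollary~\ref{1-APR tilting derived equivalence}. It sends $P$ to $\Sigma\inv I$, because $F(P) = 0$ and $\Ext^1_\Lambda(T,P) = I$. Then
\[\Ext^1_\Gamma(N, I) \cong \Hom_{\deriv(\Lambda)}(\tilde N, \Sigma^2 P)\]
for $\tilde N$ the preimage of a $\Gamma$-module $N$. Such $\tilde N$ lies in the tilting heart $\mathcal Y * \Sigma\mathcal X$, where $\mathcal X = \ker F$ and $\mathcal Y = \ker \Ext^1(T,\blank)$; these are concentrated in degrees $0$ and $-1$.

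We then show the $\Hom$ vanishes term by term:
\begin{itemize}
\item $\Hom(Y, \Sigma^2 P) = \Ext^2_\Lambda(Y,P) = 0$ because $\idim P = 1$.
\item $\Hom(\Sigma X, \Sigma^2 P) = \Ext^1_\Lambda(X,P)$ for $X \in \mathcal X = \add P$, since the torsion class $\ker F$ is $\add P$. This vanishes because $\Ext^1(P,P)=0$ as $P$ is projective.
\end{itemize}
Hence $\Ext^1_\Gamma(\blank, I) = 0$, and $I$ is injective.

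The identification $\ker F = \add P$ is the point to check carefully. It follows because $\Hom(T,M)=0$ forces $\Hom(R,M)=0$, so $M$ has only $P$ as composition factors, and $P$ is simple projective, so $M \in \add P$.
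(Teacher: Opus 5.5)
Your proposal is correct. For the exact sequence you argue exactly as the paper does. The difference is that the paper proves only that part and cites \cite[Lemma~3.7(2)]{IO} for simplicity, $\pdim I = 1$ and injectivity, whereas you prove these yourself.

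\textbf{Simplicity.} You identify $\im b$ with $\rad F(\tau\inv P)$. Radical maps from $\add R$ lift through the projective cover $\pi\colon \nu\inv J_1 \to \tau\inv P$. A radical endomorphism of $\tau\inv P$ lifts to a chain map whose component on $P$ must vanish, by left minimality of $\nu\inv a$, and hence it factors through $\pi$.

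\textbf{Injectivity.} You transport $\Ext^1_\Gamma(N,I)$ to $\Hom_{\deriv(\Lambda)}(\tilde N,\Sigma^2 P)$ using $\mathbf R F(\Sigma P)\simeq I$. You then kill it with the Brenner--Butler description of the tilted heart and $\ker F = \add P$.

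This gives a self-contained proof in the derived-category language the paper adopts later (compare Lemma~\ref{I = Sigma P}). It also makes visible that $\idim P = 1$ enters only through $\Ext^2_\Lambda(\blank,P)=0$. The cost is relying on the Brenner--Butler theorem instead of an external lemma.

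Some slips to clean up, none of which breaks the argument:
\begin{enumerate}
\item The sentence saying injectivity of $I$ amounts to $\Ext^1_\Gamma(D\Gamma,I)=0$ is false: over a self-injective algebra every module satisfies the latter. Delete it; you then prove vanishing for all $N$, which is what is needed.
\item To conclude $\operatorname{coker} b = S$ you also need $\im b \subseteq \rad F(\tau\inv P)$, i.e.\ that $\pi$ does not split. This holds since $\tau\inv P$ is not projective.
\item ``Non-isomorphism'' should read ``radical morphism'' when $X$ is decomposable.
\item In the minimality step you actually have $h_1\iota = \iota\lambda$, not $h_1\iota=\iota$. Choose $u$ with $u\iota = \iota\lambda\inv$; then $h_1u\iota=\iota$, so $h_1u$ is an automorphism and $h_1$ is bijective. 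Also, $\End(P)$ is a division ring, not necessarily $k$.
\item $\mathbf R F(P)\simeq\Sigma\inv I$ also uses $\Ext^{\geq 2}_\Lambda(T,P)=0$.
\item In the paper's convention for $*$, the heart is $\Sigma\mathcal X * \mathcal Y$ rather than $\mathcal Y * \Sigma\mathcal X$. The order is irrelevant for the Hom-vanishing.
\end{enumerate}
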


\begin{proof}
    We only show the existence of the exact sequence,
    and we refer to \cite{IO} for the proof of the remaining claims.

    We apply the functor $F = \Hom_\Lambda(T,\blank)$ to the exact sequence
    \[0 \to P \xrightarrow{\nu\inv a} \nu\inv J_1 \to \tau\inv P \to 0\]
    from Lemma~\ref{proj presentation of tau- P} to get an exact sequence
    \[0 \to FP \to F(\nu\inv J_1) \to F(\tau\inv P) \to \Ext^1_\Lambda(T,P) \to \Ext^1_\Lambda(T,\nu\inv J_1).\]
    Now we know that $FP = \Hom_\Lambda(T,P) = 0$ since $P$ is not in $\add T$ (see \ref{morphisms with target P}),
    and we have $\Ext^1_\Lambda(T, \nu\inv J_1) = 0$ since $\nu\inv J_1 \in \add R \subseteq \add T$ and $T$ is tilting.
    So we get the desired short exact sequence,
    and it is a projective resolution of $I$ since both $F(\nu\inv J_1)$ and $F(\tau\inv P)$
    belong to the subcategory $F(\add T) = \add \Gamma = \projcat \Gamma$.
\end{proof}

Until the end of this section,
we assume that $\idim P = 1$,
that is, $T$ is a 1-APR tilting module.

\begin{lemma} \label{nu F(R) = J}
    Set $I = \Ext^1_\Lambda(T,P)$, which is a simple injective $\Gamma$-module by Proposition~\ref{simple injective Gamma module},
    and decompose $D\Gamma = I \oplus J$.
    Then we have $\nu_\Gamma F(\tau\inv P) \simeq I$ and $\nu_\Gamma F(R) \simeq J$.
\end{lemma}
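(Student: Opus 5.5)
The plan is to compute the Nakayama functor $\nu_\Gamma = D\Hom_\Gamma(\blank, \Gamma)$ on the indecomposable projective $\Gamma$-modules $F(X)$, $X \in \add T$, using the equivalence $F : \add T \iso \projcat \Gamma$. For $X \in \add T$ we have
\[\nu_\Gamma F(X) = D\Hom_\Gamma(F X, \Gamma) \simeq D\Hom_\Gamma(FX, FT) \simeq D\Hom_\Lambda(X, T),\]
so $\nu_\Gamma F(X)$ is the injective $\Gamma$-module with socle the simple top of $F(X)$. Since $\nu_\Gamma$ is an equivalence $\projcat\Gamma \iso \injcat\Gamma$ and $D\Gamma = \nu_\Gamma \Gamma = \nu_\Gamma F(\tau\inv P) \oplus \nu_\Gamma F(R)$, it suffices to prove $\nu_\Gamma F(\tau\inv P) \simeq I$. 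Then $\nu_\Gamma F(R) \simeq J$ follows by Krull--Remak--Schmidt cancellation in $D\Gamma = I \oplus J$.

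To identify $\nu_\Gamma F(\tau\inv P)$, we use that $\tau\inv P$ is indecomposable (for instance because $P$ is a non-injective indecomposable, as $\idim P = 1$). Hence $F(\tau\inv P)$ is an indecomposable projective, and $\nu_\Gamma F(\tau\inv P)$ is the injective envelope of the simple module $\head F(\tau\inv P)$. By Proposition~\ref{simple injective Gamma module}, there is a surjection $F(\tau\inv P) \surj I$ with $I$ simple, so $I \simeq \head F(\tau\inv P)$. The injective envelope of $I$ is then $\nu_\Gamma F(\tau\inv P)$. Since $I$ is injective by Proposition~\ref{simple injective Gamma module}, because $T$ is 1-APR, this envelope is $I$ itself, giving $\nu_\Gamma F(\tau\inv P) \simeq I$.

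The only point requiring care is the identification of $\nu_\Gamma P_i$ with the injective envelope of $\head P_i$ for an indecomposable projective $P_i$. This is standard: $\nu_\Gamma(e\Gamma) = D(\Gamma e)$, which is the injective envelope of the simple top of $e\Gamma$. We also need the indecomposability of $\tau\inv P$, which is immediate from Auslander--Reiten theory since $\tau\inv P \neq 0$; it is nonzero because its projective resolution in Lemma~\ref{proj presentation of tau- P} is non-split. So the main substantive input is the injectivity of $I$, which is already granted by the earlier proposition.
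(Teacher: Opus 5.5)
Your proposal is correct and follows essentially the same route as the paper: identify the top of $F(\tau\inv P)$ with $I$ via the resolution in Proposition~\ref{simple injective Gamma module}, deduce $\nu_\Gamma F(\tau\inv P)\simeq I$, and get $\nu_\Gamma F(R)\simeq J$ by Krull--Remak--Schmidt cancellation in $D\Gamma = I\oplus J$. The paper leaves implicit two points that you state explicitly: the indecomposability of $\tau\inv P$, and the fact that the injective envelope of $I$ is $I$ itself because $I$ is injective.
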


\begin{proof}
    We know by Proposition~\ref{simple injective Gamma module} that the top of the projective $\Gamma$-module $F (\tau\inv P)$
    is isomorphic to $I$,
    which implies that $\nu_\Gamma (F\tau\inv P) = I$.
    Moreover, we know that
    \[\nu_\Gamma (FT) = \nu_\Gamma F(\tau\inv P) \oplus \nu_\Gamma F(R) = D\Gamma = I \oplus J,\]
    so by the Krull--Remak--Schmidt theorem we conclude that $\nu_\Gamma F(R) = J$.
\end{proof}

\begin{lemma} \label{nu and F commute}
    We have an isomorphism $\nu_\Gamma F \simeq F \nu_\Lambda$
    of functors $\add R \iso \add J$.
\end{lemma}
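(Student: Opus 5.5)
We need natural isomorphisms $\nu_\Gamma F(X) \simeq F\nu_\Lambda(X)$ for $X \in \add R$, and it suffices to construct them as functors on $\add R$. The plan is to compute both sides as Hom functors and compare.

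First, identify $F\nu_\Lambda$. For $X \in \add R$, $\nu_\Lambda X$ is injective. Since $\idim P = 1$ and the only indecomposable injectives are the $\nu_\Lambda$ of indecomposable projectives, we check that $F\nu_\Lambda X = \Hom_\Lambda(T,\nu_\Lambda X)$ behaves well. We have
\[\Hom_\Lambda(T, \nu_\Lambda X) \simeq D\Hom_\Lambda(X, T)\]
functorially in $X \in \projcat\Lambda$ and $T$, by the defining property of the Nakayama functor: $\Hom_\Lambda(M,\nu Q)\simeq D\Hom_\Lambda(Q,M)$ for $Q$ projective. So $F\nu_\Lambda X \simeq D\Hom_\Lambda(X,T)$ as right $\Gamma$-modules. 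Here the $\Gamma=\End(T)$-action on the left side comes by precomposition, and on the right side it is dual to postcomposition. This matches and is natural in $X$.

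Second, identify $\nu_\Gamma F$. For $X\in\add R\subseteq \add T$, $FX$ is projective over $\Gamma$, and
\[\nu_\Gamma FX = D\Hom_\Gamma(FX,\Gamma) = D\Hom_\Gamma(FX, FT).\]
Since $F$ restricts to an equivalence $\add T\iso\add\Gamma$ (fully faithful there), we get $\Hom_\Gamma(FX,FT)\simeq\Hom_\Lambda(X,T)$. This isomorphism is natural in $X$ and compatible with the $\Gamma$-action given by postcomposition with $\End(T)$, because $F$ is a functor. Hence $\nu_\Gamma FX \simeq D\Hom_\Lambda(X,T)$ naturally.

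Composing the two natural isomorphisms gives $\nu_\Gamma F\simeq F\nu_\Lambda$ on $\add R$. The target is $\add J$ by Lemma~\ref{nu F(R) = J}, since $\nu_\Gamma F(R)\simeq J$. The main point to verify carefully is the compatibility of the right $\Gamma$-module structures: $\Gamma$ must act consistently through $\End_\Lambda(T)$ on both identifications, in particular that the Nakayama adjunction isomorphism is natural in the variable $T$ for endomorphisms. This is standard bifunctoriality.

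Notice that the hypothesis $\idim P=1$ is only used to know the target category is $\add J$. The identification itself holds on $\add R$ in general.
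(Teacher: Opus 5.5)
Your proof is correct and follows essentially the same route as the paper: both sides are identified naturally with $D\Hom_\Lambda(X,T)$, the $\nu_\Gamma F$ side via full faithfulness of $F$ on $\add T$, and the $F\nu_\Lambda$ side via the Nakayama formula. The paper unpacks that formula as the tensor--hom adjunction followed by $T\otimes_\Lambda\Hom_\Lambda(X,\Lambda)\simeq\Hom_\Lambda(X,T)$ for projective $X$, and your opening remark invoking $\idim P=1$ in that step is unnecessary, as you yourself note at the end.
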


\begin{proof}
    Let $X \in \add R$,
    so $X$ is both projective and in $\add T$.
    We have the following isomorphisms of $\Gamma$-modules,
    which are natural in $X$:
    \begin{align*}
        F \circ \nu_\Lambda (X) &= \Hom_\Lambda (T, D \Hom_\Lambda (X, \Lambda)) \\
        &\simeq D (T \otimes_\Lambda \Hom_\Lambda (X, \Lambda))
    \end{align*}
    and
    \begin{align*}
        \nu_\Gamma \circ F (X) &= D \Hom_\Gamma (F(X), \Gamma) \\
        &= D \Hom_\Gamma (F(X), F(T)) \\
        &\simeq D \Hom_\Lambda (X, T)
    \end{align*}
    where the last isomorphism holds because $X \in \add T$ and the functor $F$ restricts to an equivalence $\add T \iso \add \Gamma$.

    Now the result follows from the natural isomorphism
    $T \otimes_\Lambda \Hom_\Lambda (X, \Lambda) \simeq \Hom_\Lambda (X,T)$,
    which holds because $X$ is a projective $\Lambda$-module.
\end{proof}

\begin{lemma} \label{nu b is approx}
    The morphism $FJ_1 \xrightarrow{\nu b} I$
    is a surjective minimal right $(\add J)$-approximation of~$I$.
\end{lemma}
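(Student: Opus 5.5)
The plan is to transport the statement back to $\modcat \Lambda$ through a chain of equivalences. There the corresponding statement is a property of the projective cover $\nu\inv J_1 \to \tau\inv P$ from Lemma~\ref{proj presentation of tau- P}. Write $p : \nu\inv J_1 \to \tau\inv P$ for this epimorphism, so that $b = F(p)$ (this is how $b$ arises in the proof of Proposition~\ref{simple injective Gamma module}). The relevant equivalences are:
\begin{itemize}
\item $F$ restricts to an equivalence $\add T \iso \add \Gamma = \projcat \Gamma$;
\item $\nu_\Gamma$ restricts to an equivalence $\projcat \Gamma \iso \injcat \Gamma$.
\end{itemize}
By Lemma~\ref{nu F(R) = J}, the composite $\nu_\Gamma F$ sends $\add R$ onto $\add J$ and sends $\tau\inv P$ to $I$.

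\textbf{Step 1: identify the source and target.} Since $\nu\inv J_1 \in \add R$ by Lemma~\ref{inj presentation -> left approximation}, Lemma~\ref{nu and F commute} gives
\[
\nu_\Gamma F(\nu\inv J_1) \simeq F\nu_\Lambda \nu\inv J_1 = F J_1 .
\]
By Lemma~\ref{nu F(R) = J}, this object lies in $\add J$. Also $\nu_\Gamma F(\tau\inv P) \simeq I$ by the same lemma. So $\nu b = \nu_\Gamma F(p)$ is, up to these isomorphisms, a morphism $FJ_1 \to I$ with source in $\add J$.

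\textbf{Step 2: surjectivity.} The map $p$ is nonzero, since $\tau\inv P \neq 0$. Because $\nu_\Gamma F$ is an equivalence on $\add T$, the map $\nu b$ is also nonzero. As $I$ is simple, $\nu b$ is surjective.

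\textbf{Step 3: the approximation property.} Let $J' \in \add J$ and $g : J' \to I$. Write $J' \simeq \nu_\Gamma F(R')$ with $R' \in \add R$. Full faithfulness of $\nu_\Gamma F$ on $\add T$ gives $g = \nu_\Gamma F(h)$ for some $h : R' \to \tau\inv P$. Since $R'$ is projective and $p$ is surjective, $h$ factors as $h = p \circ h'$. Applying $\nu_\Gamma F$ then factors $g$ through $\nu b$.

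\textbf{Step 4: minimality.} The map $p$ is a projective cover, so it is right minimal: any endomorphism $\varphi$ of $\nu\inv J_1$ with $p\varphi = p$ is an automorphism. This follows from the minimality of the resolution in Lemma~\ref{proj presentation of tau- P}. Right minimality is transported along the fully faithful functor $\nu_\Gamma F$ on $\add T$, because every endomorphism of $FJ_1$ is the image of an endomorphism of $\nu\inv J_1$.

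The main obstacle is bookkeeping rather than anything conceptual. One must make sure that the isomorphism $\nu_\Gamma F(\nu\inv J_1) \simeq FJ_1$ of Lemma~\ref{nu and F commute} is compatible with the identification of $\nu b$ as a map to $I$. Only the existence of such isomorphisms is needed, since being a surjective minimal right approximation is invariant under composing with isomorphisms. It should therefore suffice to work with $\nu_\Gamma F(p)$ throughout and invoke the isomorphisms only at the end.
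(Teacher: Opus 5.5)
Your proof is correct, but it takes a different route from the paper's.

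\textbf{What the paper does.} The paper identifies the source and target of $\nu b$ exactly as in your Step~1. It then says the argument is analogous to Lemma~\ref{inj presentation -> left approximation}, which means working inside $\modcat \Gamma$:
\begin{enumerate}
\item Given $g : S \to I$ with $S \in \add J$, apply $\nu_\Gamma\inv$ to obtain a map $\nu_\Gamma\inv S \to F(\tau\inv P)$.
\item Its composite with $F(\tau\inv P) \to I$ vanishes, because $\Hom_\Gamma(F(R), I) = 0$: the simple $I$ is the top of $F(\tau\inv P)$, not of $F(R)$.
\item So the map factors through the kernel, that is, through $b$, by the exact sequence of Proposition~\ref{simple injective Gamma module}.
\item Finally apply $\nu_\Gamma$ again.
\end{enumerate}

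\textbf{What you do instead.} You pull $g$ all the way back to $\modcat \Lambda$ along the equivalence $\nu_\Gamma F : \add T \iso \injcat \Gamma$. There the factorization is just the lifting property of the projective $R'$ against the epimorphism $p$. Likewise, your minimality is transported from the projective cover $p$, rather than read off from the resolution of $I$.

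\textbf{What each route buys.} Once Lemma~\ref{nu F(R) = J} has identified $\nu_\Gamma F(\tau\inv P) \simeq I$, your argument needs only three things: $b = F(p)$, full faithfulness of $\nu_\Gamma F$ on $\add T$, and projectivity of $R'$. In particular, your factorization step never uses the vanishing of $\Hom_\Gamma(F(R), I)$, so it is arguably the more direct route. The paper's route has the advantage of making the two dual lemmas mirror each other inside a single module category, without passing back through $\Lambda$.
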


\begin{proof}
    First, recall that $b$ is a morphism $F(\nu\inv J_1) \xrightarrow{b} F(\tau\inv P)$ of $\Gamma$-modules
    which was introduced in Proposition~\ref{simple injective Gamma module}.
    Now the source of $\nu b$ is $\nu_\Gamma \circ F(\nu_\Lambda\inv J_1)$,
    which by Lemma~\ref{nu and F commute} is isomorphic to $F(\nu_\Lambda \circ \nu_\Lambda\inv J_1) \simeq F(J_1)$.
    Similarly, the target of $\nu b$ is $\nu_\Gamma \circ F(\tau\inv P)$,
    which by Lemma~\ref{nu F(R) = J} is isomorphic to $I$.
    This ensures that the morphism $F J_1 \xrightarrow{\nu b} I$ in the statement of the lemma is well-defined.
    The proof is then analogous to that of Lemma~\ref{inj presentation -> left approximation}.
\end{proof}

\subsection{Flip-flop and derived equivalence of posets} \label{sec:prelim flip-flop}

The notion of flip-flop was introduced by Ladkani in \cite{Lad-universal}.
It is a purely combinatorial operation on posets,
which yields a derived equivalence of their incidence algebras.

\begin{definition}
    Let $(X,\leq_X)$ and $(Y,\leq_Y)$ be posets,
    let $f : X \to Y$ be an order-preserving map.
    We define two partial orders on the set $X \sqcup Y$,
    called $\leq^f_+$ and $\leq^f_-$, as follows:
    \begin{enumerate}
        \item for $x, x' \in X$, we have $x \leq^f_+ x' \iff x \leq^f_- x' \iff x \leq_X x'$
        \item for $y, y' \in Y$, we have $y \leq^f_+ y' \iff y \leq^f_- y' \iff y \leq_Y y'$
        \item for $x \in X$ and $y \in Y$, we have $x \leq^f_+ y \iff f(x) \leq_Y y$ and $y \leq^f_- x \iff y \leq_Y f(x)$,
    \end{enumerate}
    these being the only relations in the posets $(X \sqcup Y, \leq^f_+)$ and $(X \sqcup Y, \leq^f_-)$.

    Moreover, we say that two posets $(Z, \leq_Z)$ and $(Z', \leq_{Z'})$
    are \emph{related by a flip-flop} when there exist $X$, $Y$ and $f$ such that
    $(Z, \leq_Z) \simeq (X \sqcup Y, \leq^f_+)$ and $(Z', \leq_{Z'}) \simeq (X \sqcup Y, \leq^f_-)$.
    We also say that $Z$ and $Z'$ each admits a \emph{flip-flop decomposition}.
\end{definition}

\begin{example}
    Figure \ref{figure intro} provides an example of flip-flop,
    where $X = \{x_0 < x_1\}$ (in gold), \mbox{$Y = \{y_0 < y_1 < y_2\}$} (in silver),
    and the map $f : X \to Y$ is defined by $f(x_0) = y_0$ and $f(x_1) = y_2$.
    The poset on the left is then $(X \sqcup Y, \leq^f_-)$ and the one on the right is $(X \sqcup Y, \leq^f_+)$.
\end{example}

The motivation for defining this notion rests in the following theorem,
which is a particular case of Corollary~1.3 in \cite{Lad-universal}:

\begin{theorem}
    Let $Z$ and $Z'$ be two posets related by a flip-flop.
    Then for any abelian category $\mathcal A$,
    the categories of functors $\Fun (Z\opcat, \mathcal A)$ and $\Fun ({Z'}\opcat, \mathcal A)$
    are derived equivalent.
\end{theorem}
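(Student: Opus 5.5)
The plan is to rewrite both functor categories as comma categories built from the restriction functor along $f$, and then compare them with a functorial mapping-cone construction at the level of complexes. Write $Z = (X \sqcup Y, \leq^f_+)$ and $Z' = (X \sqcup Y, \leq^f_-)$. In $Z$ we never have $y \leq^f_+ x$, so $X$ is a lower set and $Y$ is an upper set. Consider the restriction functor $f^* : \Fun(Y\opcat, \mathcal A) \to \Fun(X\opcat, \mathcal A)$, $H \mapsto H \circ f$. It is exact, since it is computed pointwise.

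First I will show that $\Fun(Z\opcat, \mathcal A)$ is isomorphic to the comma category of triples $(G, H, \alpha)$ with $G \in \Fun(X\opcat, \mathcal A)$, $H \in \Fun(Y\opcat, \mathcal A)$ and $\alpha : f^* H \to G$ a natural transformation. Given a functor $F$ on $Z\opcat$, each relation $x \leq^f_+ y$ gives a map $F(y) \to F(x)$. Since $x \leq^f_+ y$ holds exactly when $f(x) \leq_Y y$, all these maps factor as $F(y) \to F(f(x)) \to F(x)$. So the mixed data is precisely a family $F(f(x)) \to F(x)$, natural in $x$. Dually, $\Fun(Z'\opcat, \mathcal A)$ is the category of triples $(G, H, \beta : G \to f^* H)$, because $y \leq^f_- x$ holds exactly when $y \leq_Y f(x)$. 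Both identifications are routine bookkeeping with the definition of $\leq^f_\pm$.

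Next I pass to complexes. Complexes over each comma category are the same as triples of complexes with a chain map, because $f^*$ is additive and exact. I then define
\[\Phi(G, H, \alpha) = \bigl(\operatorname{cone}(\alpha),\ \Sigma H,\ \operatorname{cone}(\alpha) \to \Sigma f^* H = f^* \Sigma H\bigr),\]
using the canonical chain-level map out of the cone. In the other direction I define $\Psi(G', H', \beta) = \bigl(\Sigma^{-1}\operatorname{cone}(\beta),\ \Sigma^{-1} H',\ \dots\bigr)$, using the rotated canonical map $f^*\Sigma^{-1} H' \to \Sigma^{-1}\operatorname{cone}(\beta)$. Cones of chain maps are strictly functorial, so $\Phi$ and $\Psi$ are functors on categories of complexes. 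They also commute with shifts and send cones to cones, so they descend to the homotopy categories.

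It remains to check the following points.
\begin{enumerate}
    \item $\Phi$ and $\Psi$ preserve quasi-isomorphisms. Quasi-isomorphisms are detected componentwise, and by the five lemma on the long exact sequence of a cone, a componentwise quasi-isomorphism of triples induces one on cones.
    \item $\Psi\Phi$ and $\Phi\Psi$ are connected to the identity by natural quasi-isomorphisms. This uses the standard chain-level quasi-isomorphism between $\operatorname{cone}(\operatorname{cone}(\alpha) \to \Sigma f^*H)$ and $\Sigma G$, together with its compatibility with the structure maps.
\end{enumerate}
Given these, $\Phi$ and $\Psi$ induce mutually quasi-inverse triangle equivalences $\deriv$-level (indeed also on unbounded derived categories) between $\Fun(Z\opcat, \mathcal A)$ and $\Fun(Z'\opcat, \mathcal A)$.

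I expect the main obstacle to be step (2). The required quasi-isomorphisms must be natural and must respect the third component of each triple, and they must be checked to be quasi-isomorphisms even though cones are only well behaved up to homotopy. My first attempt will be to write down the explicit homotopy equivalence between the double cone and $\Sigma G$ (respectively $\Sigma^{-1}$ of it) and verify that it commutes on the nose with the structure maps to $f^*$ of the $Y$-component. If that fails, the fallback is to work in the dg enhancement and identify both sides as derived gluings of $D(\Fun(X\opcat,\mathcal A))$ and $D(\Fun(Y\opcat,\mathcal A))$ along $f^*$, for which the comparison is the standard rotation of the gluing triangle.
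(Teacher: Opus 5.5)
Your argument is correct and is essentially the standard proof of this statement. The paper gives no proof of its own and quotes the result as a special case of Ladkani's Corollary~1.3. Ladkani obtains such equivalences from explicit cone-type functors on complexes of diagrams, and for a flip-flop these reduce to your description as comma categories along the exact restriction $f^*$, followed by the mapping cone of the structure map.

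The step you flag as the main obstacle goes through on the nose, with the usual cone conventions.

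\textbf{The composite $\Psi\Phi$.} As a graded object, the $X$-component of $\Psi\Phi(G,H,\alpha)$ is $f^*H\oplus G\oplus\Sigma f^*H$. The map $(\alpha,\mathrm{id},0)$ from it to $G$ is a natural quasi-isomorphism. Its composite with the structure map, which is the inclusion of $f^*H$, is exactly $\alpha$.

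\textbf{The composite $\Phi\Psi$.} The $X$-component of $\Phi\Psi(G',H',\beta)$ is $\Sigma^{-1}f^*H'\oplus G'\oplus f^*H'$. The map $(0,\mathrm{id},\beta)$ from $G'$ into it is a natural quasi-isomorphism, and it is compatible with the structure maps.

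Under other cone conventions a sign may appear in these compatibilities. Such a sign is absorbed by the automorphism $(-\mathrm{id},\mathrm{id})$ of triples.
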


If we consider finite posets and take $\mathcal A$ to be the category of finite dimensional $k$-vector spaces,
we get the following corollary.

\begin{corollary}
    Let $Z$ and $Z'$ be two finite posets related by a flip-flop.
    Then their incidence algebras $kZ$ and $kZ'$ are derived equivalent,
    that is we have $\deriv (kZ) \simeq \deriv (kZ')$.
\end{corollary}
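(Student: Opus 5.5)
The plan is to specialise the preceding theorem to $\mathcal A = \modcat k$, the category of finite dimensional $k$-vector spaces, and then identify the functor categories with module categories of the incidence algebras. Let $Z$ and $Z'$ be finite posets related by a flip-flop. The theorem gives a derived equivalence between $\Fun(Z\opcat, \modcat k)$ and $\Fun({Z'}\opcat, \modcat k)$. We read this as an equivalence of bounded derived categories; if it is only stated for unbounded derived categories, we would restrict to the subcategories of compact objects, which agree with the bounded derived categories here since both incidence algebras have finite global dimension.

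The key step is the standard identification of $k$-linear representations of a finite poset with modules over its incidence algebra. The algebra $kZ$ has basis $\{e_{x,y} \mid x \leq y\}$, with multiplication given by concatenation of relations. A finitely generated right $kZ$-module $M$ splits as $M = \bigoplus_{x \in Z} M e_x$, where the $e_x$ are the idempotents attached to the elements of $Z$. Each relation $x \leq y$ induces a linear map between the corresponding summands, in one direction or the other depending on the multiplication convention. These maps compose functorially, so $M$ becomes a functor from $Z$ or from $Z\opcat$ to $\modcat k$. Conversely, any such functor assembles into a module. This gives an exact equivalence $\modcat kZ \simeq \Fun(Z\opcat, \modcat k)$ for the appropriate convention, and hence $\deriv(kZ) \simeq \deriv(\Fun(Z\opcat, \modcat k))$. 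The same holds for $Z'$, and composing with the theorem gives $\deriv(kZ) \simeq \deriv(kZ')$.

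The only expected obstacle is a convention mismatch: with the paper's convention, $\modcat kZ$ might instead be $\Fun(Z, \modcat k)$. To cover this case, we check that $Z\opcat$ and ${Z'}\opcat$ are again related by a flip-flop. Suppose $Z = (X \sqcup Y, \leq^f_+)$. In the opposite order, for $x \in X$ and $y \in Y$ we have $y \leq^{\mathrm{op}} x \iff x \leq^f_+ y \iff f(x) \leq_Y y$. Regarding $f$ as an order-preserving map $X\opcat \to Y\opcat$, this says $y \leq_{Y\opcat} f(x)$. Hence $Z\opcat = (X\opcat \sqcup Y\opcat, \leq^{f}_-)$, and symmetrically ${Z'}\opcat = (X\opcat \sqcup Y\opcat, \leq^{f}_+)$. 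Since the relations inside $X$ and inside $Y$ are simply reversed, $Z\opcat$ and ${Z'}\opcat$ are related by a flip-flop with the two roles exchanged. Applying the theorem to this pair gives the corollary under either convention. Alternatively, one can observe that $kZ\opcat \simeq (kZ)\opcat$, and that the duality $D$ transports a derived equivalence between two algebras to one between their opposite algebras.
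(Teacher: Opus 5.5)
Your proposal is correct and follows the paper's route: the paper specialises the theorem to $\mathcal A = \modcat k$ and identifies $\Fun(Z\opcat, \modcat k)$ with $\modcat kZ$, and your check that the opposites of a flip-flop pair are again related by a flip-flop is exactly Lemma~\ref{opposite flip-flop}. Your unbounded fallback is not needed, and as phrased it would require taking $\mathcal A$ to be the category of all $k$-vector spaces rather than $\modcat k$, because the compact-object argument needs arbitrary coproducts, which $\Fun(Z\opcat, \modcat k)$ does not have.
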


When the order-preserving map $f$ defining a flip-flop structure is injective,
the structure of the Hasse quivers of the posets and the effect of the flip-flop on them is easily described.

\begin{proposition}
    Assume that $f : X \to Y$ is injective.
    Then the Hasse quiver $H_Z$ of the poset $(X \sqcup Y, \leq^f_+)$
    (resp.\ $H_{Z'}$ of $(X \sqcup Y, \leq^f_-)$)
    is obtained by gluing the Hasse quivers $H_X$ and $H_Y$ of $X$ and $Y$
    along the new arrows $(f(x) \to x)$
    (resp.\ $(x \to f(x))$)
    for $x \in X$.

    In particular, the quivers $H_Z$ and $H_{Z'}$ have the same underlying graph and only differ by flipping the arrows between $x$ and $f(x)$ for $x \in X$.
\end{proposition}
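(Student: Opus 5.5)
The plan is to determine the cover relations of $(X \sqcup Y, \leq^f_+)$ directly from the definition. The case of $\leq^f_-$ is then obtained by duality: replacing $X$, $Y$ by their opposite posets exchanges $\leq^f_+$ and $\leq^f_-$ (up to reversing all relations), and it also reverses all Hasse arrows. Recall the convention that an arrow $a \to b$ encodes a cover $a > b$. The claim is that the arrows of $H_Z$ are of three kinds: arrows of $H_X$, arrows of $H_Y$, and the arrows $f(x) \to x$, i.e.\ covers $x <^f_+ f(x)$.

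First I handle covers inside $X$ or inside $Y$. The key observation is that there is no relation $y \leq^f_+ x$ with $y \in Y$, $x \in X$. Hence any chain from $x$ to $x'$ in $X \sqcup Y$ stays in $X$: once it enters $Y$ it can never return. So covers between elements of $X$ in $Z$ are exactly covers of $X$.

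For covers inside $Y$, suppose $y < x < y'$ with $x \in X$. This is impossible, again because no element of $Y$ lies below an element of $X$. So covers between elements of $Y$ in $Z$ are exactly covers of $Y$.

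Next I treat the mixed pairs $x <^f_+ y$, which by definition means $f(x) \leq_Y y$.

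I claim $x < f(x)$ is always a cover. If $x < z < f(x)$ with $z \in X$, then $x <_X z$ and $f(z) \leq f(x)$. Monotonicity gives $f(x) \leq f(z)$, so $f(z) = f(x)$, and injectivity forces $z = x$, a contradiction. If instead $z \in Y$, then $f(x) \leq z < f(x)$, which is impossible.

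Conversely, if $f(x) < y$ strictly, then $x < f(x) < y$, so $x < y$ is not a cover. Hence the mixed covers are exactly the pairs $x < f(x)$.

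The only delicate step is the cover $x < f(x)$, and it is exactly where injectivity of $f$ is used. Without injectivity, a second element $z >_X x$ with $f(z) = f(x)$ would sit between $x$ and $f(x)$.

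The final assertion, that $H_Z$ and $H_{Z'}$ have the same underlying graph, follows because both quivers consist of $H_X$, $H_Y$ and one arrow between $x$ and $f(x)$ for each $x \in X$; only the orientation of those connecting arrows changes.
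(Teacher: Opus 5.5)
Your proof is correct and follows the paper's approach. Covers inside $X$ and inside $Y$ persist because no element of $Y$ lies below an element of $X$, and $x < f(x)$ is shown to be a cover by the same two-case argument (intermediate $z \in X$ or $z \in Y$) using monotonicity and injectivity of $f$. You are slightly more complete than the paper: you explicitly rule out mixed covers $x < y$ with $y \neq f(x)$, and you derive the $\leq^f_-$ case by passing to opposite posets (as in Lemma~\ref{opposite flip-flop}), where the paper only says it is analogous.
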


\begin{proof}
    We show the result only for $Z = (X \sqcup Y, \leq^f_+)$,
    the other case being analogous.
    It is clear that the covering relations (corresponding to arrows of the Hasse quiver)
    in the posets $X$ and $Y$ remain covering relations when embedded in $Z$
    (since the only relations between an element $x$ of $X$ and an element $y$ of $Y$ are of the form $x < y$).
    So it suffices to show that the relations $x < f(x)$ for $x \in X$ are covering relations in $Z$.

    Let $x \in X$.
    We assume by contradiction that there exists some $z \in Z$ such that $x < z < f(x)$.
    First assume that $z \in X$.
    Then by the definition of $\leq^f_+$ this implies that $f(z) \leq f(x)$,
    hence $f(z) < f(x)$ since $f$ is injective and $x \neq z$.
    But this contradicts the fact that $x < z$ since $f$ is order-preserving.

    Now assume that $z \in Y$.
    Then by the definition of $\leq^f_+$ this implies that $f(x) \leq z$,
    which contradicts the assumption that $z < f(x)$. This concludes the proof.
\end{proof}

The following lemma explains what happens when considering the opposite of a poset constructed by a flip-flop.

\begin{lemma} \label{opposite flip-flop}
    Let $f : X \to Y$ be an order-preserving map.
    Then
    \[\bigl(X \sqcup Y, \leq^f_-\bigr)\opcat \simeq \bigl(X\opcat \sqcup Y\opcat, \leq^{f\opcat}_+\bigr)\]
    where $f\opcat : X\opcat \to Y\opcat$ is the induced order-preserving map.
\end{lemma}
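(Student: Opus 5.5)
We take the identity map on the underlying set $X \sqcup Y$ as the candidate isomorphism and check directly that the order relations agree. On the left, the poset is $(X \sqcup Y, \leq^f_-)\opcat$, whose order we write $\preceq$; thus $z \preceq z'$ if and only if $z' \leq^f_- z$. On the right, the poset is $(X\opcat \sqcup Y\opcat, \leq^{f\opcat}_+)$, with order written $\preceq'$. The map $f\opcat : X\opcat \to Y\opcat$ is the same set map as $f$, and it is order-preserving because $x \leq_{X\opcat} x'$ means $x' \leq_X x$, which gives $f(x') \leq_Y f(x)$, that is, $f(x) \leq_{Y\opcat} f(x')$.

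We compare the relations case by case, following the three clauses in the definition of $\leq^f_\pm$.

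For $x, x' \in X$, we have $x \preceq x'$ if and only if $x' \leq_X x$. This holds if and only if $x \leq_{X\opcat} x'$, which is exactly $x \preceq' x'$. The same argument applies to pairs $y, y' \in Y$.

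For mixed pairs, the only relations in $\leq^f_-$ between $x \in X$ and $y \in Y$ have the form $y \leq^f_- x$, which holds if and only if $y \leq_Y f(x)$. In the opposite order these become the relations $x \preceq y$, with $x \preceq y$ if and only if $y \leq_Y f(x)$. On the right-hand side, the only mixed relations in $\leq^{f\opcat}_+$ are $x \preceq' y$, and these hold if and only if $f\opcat(x) \leq_{Y\opcat} y$, that is, $y \leq_Y f(x)$. The two conditions coincide.

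Neither side has any relation of the form $y \preceq x$ with $x \in X$ and $y \in Y$. Hence the two relations agree on all pairs, and the identity map is an isomorphism of posets. There is no real obstacle here. The only care needed is to keep track of which order is reversed in each clause, and to note that "these being the only relations" in the definition makes the absence of reverse mixed relations automatic on both sides.
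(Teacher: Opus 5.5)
Your proof is correct: the identity on $X \sqcup Y$ is the isomorphism, and your clause-by-clause comparison is the right verification, including the check that no mixed relation $y \preceq x$ occurs on either side. The paper states this lemma without proof, and your direct check is the routine argument it leaves to the reader.
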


The following proposition is helpful to show that a poset admits a flip-flop decomposition.

\begin{proposition} \label{characterisation of flip-flop decomposition}
    Let $(Z, \leq_Z)$ be a poset,
    and assume that there is a decomposition $Z = X \sqcup Y$ (as sets) and a map $f : X \to Y$
    (which we do not assume to be order-preserving)
    such that the following conditions hold:
    for every $x \in X$, $y \in Y$, we have $x \leq y \iff f(x) \leq y$
    (resp.\ $y \leq x \iff y \leq f(x)$)
    and $y \nleq x$ (resp.\ $x \nleq y$).
    
    Then $f$ is order-preserving,
    and we have $(Z, \leq_Z) \simeq (X \sqcup Y, \leq^f_-)$
    (resp.\ $(Z, \leq_Z) \simeq (X \sqcup Y, \leq^f_+)$).
\end{proposition}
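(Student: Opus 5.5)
The plan is to take the identity map on the underlying set $Z = X \sqcup Y$ as the isomorphism. We check that $\leq_Z$ agrees with the corresponding order $\leq^f_\pm$ from the definition of flip-flop in each of the three families of relations: inside $X$, inside $Y$, and between $X$ and $Y$. Before this we must show that $f$ is order-preserving, since only then is $\leq^f_\pm$ defined. We treat the first set of hypotheses; the ``resp.'' case is the same argument with all inequalities reversed (alternatively, one can pass to $Z\opcat$ and use Lemma~\ref{opposite flip-flop}).

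\emph{Step 1: $f$ is order-preserving.} Let $x \leq_X x'$ in $X$. Applying the hypothesis ``$x \leq y \iff f(x) \leq y$'' to $x'$ and $y = f(x')$ gives $x' \leq f(x')$, since $f(x') \leq f(x')$. Transitivity in $Z$ then yields $x \leq f(x')$. Applying the hypothesis to $x$ and $y = f(x')$ gives $f(x) \leq f(x')$. The dual hypothesis ``$y \leq x \iff y \leq f(x)$'' gives $f(x) \leq x$ in the same way, and then $f(x) \leq x \leq x' \Rightarrow f(x) \leq f(x')$.

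\emph{Step 2: comparing relations.} Inside $X$ and inside $Y$ the orders $\leq^f_\pm$ are by definition the restrictions of $\leq_Z$, so these agree trivially. For mixed pairs $x \in X$, $y \in Y$, the hypotheses give two facts. First, a relation between $x$ and $y$ in one direction holds in $Z$ exactly when the corresponding relation between $f(x)$ and $y$ holds in $Y$. Second, no relation holds in the other direction. This is precisely item (3) of the definition for the order named in the statement, together with the clause that these are the only relations. So the identity map is an isomorphism $(Z,\leq_Z) \simeq (X \sqcup Y, \leq^f_\mp)$ with the pairing as stated.

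The main point needing care is Step 1, in particular extracting $x' \leq f(x')$ (resp.\ $f(x') \leq x'$) from the hypothesis. Everything else is direct unwinding of the definition, taking care to match each hypothesis with the sign convention for $\leq^f_+$ and $\leq^f_-$ fixed in the definition.
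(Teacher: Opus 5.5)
Your Step 1 is exactly the paper's argument ($x \leq x' \leq f(x')$ forces $f(x) \leq f(x')$), and your overall plan is also the paper's: take the identity map and compare relations. The problem is the last sentence of Step 2, where you assert the match ``with the pairing as stated''. That is, you claim that under the first set of hypotheses the identity is an isomorphism onto $(X \sqcup Y, \leq^f_-)$. This is false. By item (3) of the definition, the only mixed relations in $\leq^f_-$ are of the form $y \leq^f_- x$ (holding when $y \leq_Y f(x)$). The first hypotheses say instead that the mixed relations in $Z$ are $x \leq y \iff f(x) \leq y$, and never $y \leq x$. These are exactly the mixed relations of $\leq^f_+$.

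So your argument, carried out correctly, proves $(Z,\leq_Z) \simeq (X \sqcup Y, \leq^f_+)$ under the first hypotheses, and $\leq^f_-$ under the ``resp.'' ones. The statement as printed has its two conclusions swapped relative to the definition. The paper's own proof implicitly uses the correct pairing. It begins ``We show the result only for $Z = (X \sqcup Y, \leq^f_+)$'' and derives $x \leq f(x)$ from the hypothesis $x \leq y \iff f(x) \leq y$. Likewise, Proposition~\ref{tors has flip-flop decomposition} verifies $y \leq x \iff y \leq f(x)$ and $x \nleq y$ in order to obtain $\leq^f_-$.

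The literal pairing cannot be rescued by choosing some bijection other than the identity, because it is simply false. For instance, take $X = \{x\}$ and $Y = \{y_0 < y_1,\ y_2\}$ with $y_2$ incomparable to $y_0$ and $y_1$. Let $f(x) = y_0$ and $Z = (X \sqcup Y, \leq^f_+)$. Then $Z$ satisfies the first hypotheses, and it consists of the $3$-element chain $x < y_0 < y_1$ plus an isolated point. By contrast, $(X \sqcup Y, \leq^f_-)$ has only $y_0 < x$ and $y_0 < y_1$ plus an isolated point, so it contains no $3$-element chain; the two posets are not isomorphic.

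You yourself identified the sign matching as the point needing care. The fix is to actually check the hypotheses against item (3) rather than deferring to the statement's pairing, and to conclude $\leq^f_\pm$ instead of $\leq^f_\mp$.
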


\begin{proof}
    We show the result only for $Z = (X \sqcup Y, \leq^f_+)$,
    the other case being analogous.
    First, notice that the assumption implies that for every $x \in X$,
    we have $x \leq f(x)$.
    We show that $f$ is order-preserving:
    let $x \leq x' \in X$.
    We have $x \leq x' \leq f(x')$,
    so by the assumption this implies $f(x) \leq f(x')$,
    so $f$ is order-preserving.
    It is then easy to check that the partial order $\leq_Z$ coincides with the one defined by the flip-flop data.
\end{proof}

\hidepagenumber
\section{Flip-flop for functorially finite torsion classes}
\showpagenumber

\subsection{The flip-flop decomposition} \label{torsion classes wrt source}

In the following, we consider an algebra $\Lambda$ which has a simple projective module $P$.
If $\Lambda = kQ/I$ is the path algebra of a quiver with relations,
then such a module is the indecomposable projective module corresponding to a source in the quiver $Q$.

\begin{lemma} \label{simple projective in Fac}
    Let $M \in \modcat \Lambda$ such that $P \in \Fac M$.
    Then $P$ is a direct summand of $M$.
\end{lemma}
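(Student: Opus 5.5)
The plan is to combine the two facts about $P$ already recorded in Section~\ref{section 1-APR}: it is projective, and it is simple. Suppose $P \in \Fac M$. By definition of $\Fac M$ there is a surjection $f : M^{\oplus n} \surj P$ for some $n \geq 1$.

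First, since $P$ is projective, $f$ splits. Hence $P$ is isomorphic to a direct summand of $M^{\oplus n}$. Now write $M = \bigoplus_i M_i$ as a sum of indecomposables. Then $M^{\oplus n}$ is a direct sum of copies of the $M_i$.

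Second, $P$ is indecomposable, being simple. By the Krull--Remak--Schmidt property, $P$ is therefore isomorphic to some $M_i$, and so it is a direct summand of $M$.

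Alternatively, one can argue through Lemma~\ref{morphisms with target P}. The surjection gives $\Hom_\Lambda(M^{\oplus n}, P) \neq 0$, so some component $M \to P$ of $f$ is nonzero. Then $\Hom_\Lambda(M, P) \neq 0$, and that lemma says directly that $P$ is a direct summand of $M$.

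There is no real obstacle here. The only point needing care is passing from a summand of $M^{\oplus n}$ to a summand of $M$. This is handled either by Krull--Remak--Schmidt or by the second route, which bypasses it entirely; I would present the second, shorter argument.
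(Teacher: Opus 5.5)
Your proposal is correct, and the route you say you would present is the paper's own: the paper proves this as a direct consequence of Lemma~\ref{morphisms with target P}, exactly as you do, since a nonzero component $M \to P$ of the surjection $M^{\oplus n} \surj P$ forces $P \in \add M$. Your first route, splitting the surjection and then applying Krull--Remak--Schmidt, is also valid.
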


\begin{proof}
    This is a direct consequence of Lemma~\ref{morphisms with target P}.
\end{proof}

\begin{lemma} \label{removing P from Fac M}
    Let $M = P \oplus M'$ be a basic $\Lambda$-module.
    Then we have $\ind \Fac M' = (\ind \Fac M) \setminus \{P\}$.
\end{lemma}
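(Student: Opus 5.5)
We prove the two inclusions separately, working with indecomposable modules.

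For $\ind \Fac M' \subseteq (\ind \Fac M) \setminus \{P\}$: since $M' \in \add M$, we have $\Fac M' \subseteq \Fac M$. It remains to see that $P \notin \Fac M'$. If it were, Lemma~\ref{simple projective in Fac} would make $P$ a direct summand of $M'$. Then $P \oplus P$ would be a summand of $M$, contradicting that $M$ is basic.

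For the reverse inclusion, take an indecomposable $N \in \Fac M$ with $N \not\simeq P$, and a surjection $\pi : M^{\oplus n} = P^{\oplus n} \oplus M'^{\oplus n} \surj N$. Write $\pi = (\pi_P, \pi')$ according to this decomposition. The key step is to show that $\pi'$ is already surjective; then $N \in \Fac M'$.

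To see this, consider the image $\im \pi'$ and the induced surjection $P^{\oplus n} \surj N / \im \pi'$. Its target is a quotient of a semisimple module, so it is a direct sum of copies of the simple $P$. If it is nonzero, there is a surjection $N \surj P$. Since $P$ is projective this surjection splits, so $P$ is a direct summand of $N$ (this is Lemma~\ref{morphisms with target P}). As $N$ is indecomposable, $N \simeq P$, a contradiction. Hence $N / \im \pi' = 0$, so $\pi'$ is surjective and $N \in \Fac M'$.

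The only delicate point is this reverse inclusion: handling the $P$-component of the surjection, which is resolved by passing to the cokernel of $\pi'$ and using Lemma~\ref{morphisms with target P}. The rest is routine.
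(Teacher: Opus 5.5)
Your proof is correct. The first inclusion is the paper's argument. For the second you follow the paper's plan: split a surjection $M^{\oplus n} \surj N$ into its $P$-part and its $M'$-part, and show the $M'$-part is already surjective. Where you differ is in how you prove that key step.

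The paper looks at the image of one copy of $P$ inside $N$. That image is a simple submodule, hence lies in $\soc N$. Since $N$ is indecomposable and not isomorphic to $P$, it also lies in $\rad N$. The remaining component must then be surjective; this is really Nakayama's lemma, since $N = \im h_2 + \rad N$ forces $\im h_2 = N$. The paper then removes the copies of $P$ one at a time by induction on $n$.

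You instead pass to the cokernel $N/\im \pi'$. It is a quotient of $P^{\oplus n}$, hence a direct sum of copies of $P$. If it were nonzero, $N$ would surject onto $P$, which Lemma~\ref{morphisms with target P} rules out for an indecomposable $N \not\simeq P$. This handles all copies of $P$ at once and avoids both the socle/radical analysis and the induction. It needs nothing beyond the semisimplicity of $P^{\oplus n}$ and Lemma~\ref{morphisms with target P}, so it is arguably the cleaner route.
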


\begin{proof}
    First, we clearly have $\Fac M' \subseteq \Fac M$,
    and $P \notin \Fac M'$ by Lemma~\ref{simple projective in Fac},
    so we get the inclusion $\ind \Fac M' \subseteq (\ind \Fac M) \setminus \{P\}$.

    It remains to show that every indecomposable module in $\Fac M$ except $P$ is in $\Fac M'$.
    Let $X$ be such a module.
    We know that there exists an epimorphism $h : M^n \surj X$ for some integer $n \geq 1$.
    Decompose $h = (h_1, h_2)$,
    where $h_1 : P \to X$, $h_2 : M'' \to X$ and $M^n = P \oplus M''$,
    and assume that $h_1 \neq 0$.
    Since $P$ is simple, $h_1$ is injective,
    and we have $P \simeq \im h_1 \subseteq \soc X$.
    Since $X$ is indecomposable and not isomorphic to $P$,
    this implies that $\im h_1 \subseteq \rad X$.
    So $h_2$ is surjective:
    indeed, the contrary would imply $\im h_2 \subseteq \rad X$,
    hence $\im h = \im h_1 + \im h_2 \subseteq \rad X$,
    and this would contradict the surjectivity of $h$.

    Now by induction on $n$,
    we conclude that there exists an epimorphism $h' : {M'}^n \surj X$,
    which concludes the proof.
\end{proof}

\begin{lemma} \label{T1 is a torsion class}
    The subcategory $\leftperp P \subseteq \modcat \Lambda$ is a functorially finite torsion class,
    and we have $\ind (\leftperp P) = (\ind \modcat \Lambda) \setminus \{P\}$.
\end{lemma}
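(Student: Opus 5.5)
The plan is to first identify the indecomposables of $\leftperp P$ and then exhibit $\leftperp P$ as $\Fac$ of a single module.

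\textbf{Indecomposables.} By Lemma~\ref{morphisms with target P}, a module $M$ satisfies $\Hom_\Lambda(M,P)=0$ if and only if $P$ is not a direct summand of $M$. So an indecomposable $M$ lies in $\leftperp P$ exactly when $M \not\simeq P$. Since a subcategory is determined by its indecomposables, this gives $\ind(\leftperp P) = (\ind \modcat \Lambda)\setminus\{P\}$.

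\textbf{Torsion class.} The subcategory $\leftperp P$ is a left Hom-orthogonal, so it is automatically closed under quotients and extensions. Indeed, if $M \surj N$ with $\Hom(M,P)=0$, then $\Hom(N,P)\inj\Hom(M,P)$ forces $\Hom(N,P)=0$. Extensions are handled by left exactness of $\Hom(\blank,P)$. Proposition~\ref{prop:characterisation of torsion classes} then shows that $\leftperp P$ is a torsion class, with torsion-free class $\rightperp{(\leftperp P)}$. Alternatively, Proposition~\ref{prop:characterisation of torsion classes} can be applied dually: $\add P$ is closed under submodules, since $P$ is simple, and under extensions, since $\Ext^1(P,P)=0$ because $P$ is projective. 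This yields the torsion pair $(\leftperp P, \add P)$.

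\textbf{Functorial finiteness.} I would use Proposition~\ref{prop:equivalence ff torsion(free)}. The corresponding torsion-free class is $\add P = \Sub P$, which is functorially finite, so $\leftperp P$ is functorially finite as well. To make this airtight, I need the torsion-free class of $\leftperp P$ to be exactly $\add P$. This follows because $\rightperp{(\leftperp P)} \supseteq \add P$ trivially, and the reverse inclusion comes from the dual version of Proposition~\ref{prop:characterisation of torsion classes}, which says $(\leftperp{(\add P)}, \add P)$ is a torsion pair with $\leftperp{(\add P)} = \leftperp P$.

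An alternative route is to use $\Fac$ directly. Write $\Lambda = P\oplus R$. Then $\leftperp P = \Fac(R\oplus \tau\inv P\oplus \cdots)$ is less clean, so I would stick with the torsion-free side. No step here is a real obstacle; the only point requiring care is identifying the torsion-free class as $\add P$, which Proposition~\ref{prop:characterisation of torsion classes} handles.
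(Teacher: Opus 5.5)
Your proof is correct and follows essentially the same route as the paper. The paper likewise shows that $\add P$ is a torsion-free class (closed under submodules and extensions because $P$ is simple projective), notes that it is functorially finite, and concludes via Proposition~\ref{prop:equivalence ff torsion(free)} that $\leftperp P = \leftperp{(\add P)}$ is a functorially finite torsion class. It then reads off $\ind(\leftperp P)$ from Lemma~\ref{morphisms with target P}.

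Your direct check that $\leftperp P$ is closed under quotients and extensions is correct but redundant once you use the torsion pair $(\leftperp{(\add P)}, \add P)$. The unfinished $\Fac$ remark can simply be dropped; in fact $\leftperp P = \Fac(\tau\inv P \oplus R)$, cf.\ Proposition~\ref{intervals for 2-silt proj}.
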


\begin{proof}
    We know that $\add P$ is closed under submodules and extensions,
    since every module in $\add P$ is semi-simple and $P$ is projective.
    So $\add P$ is a torsion-free class by Proposition~\ref{prop:characterisation of torsion classes},
    and it is functorially finite since it only contains one indecomposable module.
    So by Proposition~\ref{prop:equivalence ff torsion(free)} we get that
    $\leftperp P = \leftperp{(\add P)}$ is a functorially finite torsion class.

    Now by Lemma~\ref{morphisms with target P},
    we see that an indecomposable module $M$ satisfies $\Hom_\Lambda(M,P) = 0$ if and only if it is not isomorphic to $P$,
    which shows that $\ind (\leftperp P) = (\ind \modcat \Lambda) \setminus \{P\}$.
\end{proof}

We consider the subset $\tors_P \Lambda = \{\mathcal T \in \tors \Lambda \mid P \in \mathcal T\}$ of $\tors \Lambda$.
Then we define a map $f : \tors_P \Lambda \to \tors \Lambda \setminus \tors_P \Lambda$
by setting $f(\mathcal T) = \mathcal T \cap \leftperp P$,
for a torsion class $\mathcal T \in \tors_P \Lambda$.
In other words, the torsion class $f(\mathcal T)$ is given by
$\ind f(\mathcal T) = (\ind \mathcal T) \setminus \{P\}$.

\begin{proposition} \label{mutation for torsion classes}
    The map $f$ is well-defined as a map $\tors_P \Lambda \to \tors \Lambda \setminus \tors_P \Lambda$.

    Moreover, if $\mathcal T = \Fac M \in \tors_P \Lambda$ with $M = P \oplus M'$ a basic module,
    then $f(\mathcal T) = \Fac M'$.

    So $f$ restricts to a map $\ftors_P \Lambda \to \ftors \Lambda \setminus \ftors_P \Lambda$,
    which we still call $f$,
    where we have set $\ftors_P \Lambda = \tors_P \Lambda \cap \ftors \Lambda$.
\end{proposition}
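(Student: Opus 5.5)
We verify the three assertions of Proposition~\ref{mutation for torsion classes} in turn, using only the elementary lemmas on the simple projective module $P$.

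\textbf{Well-definedness.} Let $\mathcal T \in \tors_P \Lambda$.
By Lemma~\ref{T1 is a torsion class}, $\leftperp P$ is a torsion class.
An intersection of torsion classes is again closed under quotients and extensions, so it is a torsion class by Proposition~\ref{prop:characterisation of torsion classes}.
Hence $f(\mathcal T) = \mathcal T \cap \leftperp P$ is a torsion class.
Since $\ind(\leftperp P) = (\ind \modcat \Lambda) \setminus \{P\}$, we get $\ind f(\mathcal T) = (\ind \mathcal T) \setminus \{P\}$.
In particular $P \notin f(\mathcal T)$, so $f(\mathcal T) \in \tors \Lambda \setminus \tors_P \Lambda$.

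\textbf{The formula $f(\Fac M) = \Fac M'$.} Suppose $\mathcal T = \Fac M$ with $M$ basic and $P \in \Fac M$.
By Lemma~\ref{simple projective in Fac}, $P$ is a direct summand of $M$, so we may write $M = P \oplus M'$.
Lemma~\ref{removing P from Fac M} gives $\ind \Fac M' = (\ind \Fac M) \setminus \{P\}$.
This equals $\ind f(\mathcal T)$ by the first step.
Both $\Fac M'$ and $f(\mathcal T)$ are subcategories in the paper's sense, so they are determined by their indecomposables, and therefore $f(\mathcal T) = \Fac M'$.

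\textbf{Restriction to functorially finite classes.} Take $\mathcal T \in \ftors_P \Lambda$.
By Proposition~\ref{prop:equivalence ff torsion(free)}, $\mathcal T = \Fac N$ for some module $N$.
Replacing $N$ by its basic part does not change $\Fac N$, so we may take $N$ basic.
The second step then shows $f(\mathcal T) = \Fac N'$, which is a functorially finite torsion class, again by Proposition~\ref{prop:equivalence ff torsion(free)}.
It does not contain $P$ by the first step.
Therefore $f$ maps $\ftors_P \Lambda$ into $\ftors \Lambda \setminus \ftors_P \Lambda$.

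There is no substantial obstacle here, since the real work was done in Lemma~\ref{removing P from Fac M}.
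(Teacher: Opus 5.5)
Your proof is correct and follows the paper's argument. Well-definedness comes from intersecting with the torsion class $\leftperp P$, and the formula $f(\Fac M) = \Fac M'$ follows from Lemma~\ref{removing P from Fac M} by comparing indecomposables; your explicit use of Smal\o{}'s characterisation (Proposition~\ref{prop:equivalence ff torsion(free)}) for the functorially finite case just spells out what the paper leaves implicit in its final ``So''.
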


\begin{proof}
    We first check that $f$ is well-defined.
    Let $\mathcal T \in \tors_P \Lambda$.
    The subcategory $f(\mathcal T)$ is still a torsion class,
    since it is defined as the intersection of two torsion classes.
    Moreover, it is clear that $P \notin f(\mathcal T)$,
    so that $f(\mathcal T) \in \tors \Lambda \setminus \tors_P \Lambda$.

    The second claim follows directly from Lemma~\ref{removing P from Fac M} and the definition of $f$.
\end{proof}

\begin{proposition} \label{tors has flip-flop decomposition}
    The poset $\tors \Lambda$ has a flip-flop decomposition given by
    \[(\tors \Lambda, \mathord{\subseteq}) \simeq \bigl(\tors_P \Lambda \sqcup (\tors \Lambda \setminus \tors_P \Lambda), \leq^f_-\bigr)\]
    which restricts to a flip-flop decomposition of $\ftors \Lambda$,
    namely we have
    \[(\ftors \Lambda, \mathord{\subseteq}) \simeq \bigl(\ftors_P \Lambda \sqcup (\ftors \Lambda \setminus \ftors_P \Lambda), \leq^f_-\bigr).\]
\end{proposition}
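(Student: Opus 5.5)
We apply Proposition~\ref{characterisation of flip-flop decomposition} in its ``$\leq^f_-$'' form, with $Z = \tors \Lambda$, $X = \tors_P \Lambda$, $Y = \tors \Lambda \setminus \tors_P \Lambda$, and the map $f(\mathcal T) = \mathcal T \cap \leftperp P$. Proposition~\ref{mutation for torsion classes} shows $f$ is well defined. We must check two conditions for every $\mathcal T \in X$ and $\mathcal U \in Y$. The first is that $\mathcal U \subseteq \mathcal T$ holds if and only if $\mathcal U \subseteq f(\mathcal T)$. The second is that $\mathcal T \not\subseteq \mathcal U$.

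The second condition is immediate: $P \in \mathcal T$ and $P \notin \mathcal U$, so $\mathcal T \not\subseteq \mathcal U$.

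For the first condition, one direction holds because $f(\mathcal T) \subseteq \mathcal T$. For the other, suppose $\mathcal U \subseteq \mathcal T$. Since $P \notin \mathcal U$, every indecomposable of $\mathcal U$ lies in $(\ind \mathcal T) \setminus \{P\}$. By the description $\ind f(\mathcal T) = (\ind \mathcal T) \setminus \{P\}$, this set is $\ind f(\mathcal T)$. Subcategories are determined by their indecomposables, so $\mathcal U \subseteq f(\mathcal T)$. Equivalently, $\mathcal U \subseteq \leftperp P$ by Lemma~\ref{T1 is a torsion class}, hence $\mathcal U \subseteq \mathcal T \cap \leftperp P$. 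The proposition then gives that $f$ is order-preserving and $(\tors \Lambda, \subseteq) \simeq (X \sqcup Y, \leq^f_-)$.

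For the functorially finite version, run the same argument with $Z = \ftors \Lambda$, $X = \ftors_P \Lambda$ and $Y = \ftors \Lambda \setminus \ftors_P \Lambda$. The only extra input is that $f$ sends $\ftors_P \Lambda$ into $Y$. This is the last part of Proposition~\ref{mutation for torsion classes}: $f(\Fac M) = \Fac M'$ with $M = P \oplus M'$. Both verifications above are inclusions of subcategories, so they remain valid in the subposet. I expect no real obstacle; the only point needing care is that the map is well defined on functorially finite classes, and Lemma~\ref{removing P from Fac M} already provides this.
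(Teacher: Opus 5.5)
Your proof is correct and follows the paper's argument: it invokes Proposition~\ref{characterisation of flip-flop decomposition} and checks two things, namely that $\mathcal U \subseteq \mathcal T \iff \mathcal U \subseteq f(\mathcal T)$ via $\ind f(\mathcal T) = (\ind \mathcal T) \setminus \{P\}$, and that $\mathcal T \nsubseteq \mathcal U$ because $P \in \mathcal T$ while $P \notin \mathcal U$. You also note explicitly that the functorially finite case needs $f(\Fac M) = \Fac M'$ from Proposition~\ref{mutation for torsion classes} to keep $f$ inside $\ftors \Lambda$, which fills in a step the paper leaves implicit.
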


\begin{proof}
    By Proposition~\ref{characterisation of flip-flop decomposition},
    it suffices to show that for $\mathcal T \in \tors_P \Lambda$ and $\mathcal T' \in \tors \Lambda \setminus \tors_P \Lambda$,
    we have $\mathcal T' \subseteq \mathcal T$ if and only if $\mathcal T' \subseteq f(\mathcal T)$,
    as well as $\mathcal T \nsubseteq \mathcal T'$.

    The latter claim follows from the fact that $P \in \mathcal T$ and $P \notin \mathcal T'$.
    For the former claim, we have the equivalence
    \[\mathcal T' \subseteq \mathcal T \iff \ind \mathcal T' \subseteq \ind \mathcal T\]
    and since $\ind f(\mathcal T) = (\ind \mathcal T) \setminus \{P\}$ and $P \notin \ind \mathcal T'$,
    clearly we get the equivalence
    \[\mathcal T' \subseteq \mathcal T \iff \mathcal T' \subseteq f(\mathcal T).\qedhere\]
\end{proof}

\begin{proposition} \label{flip-flop intervals tors}
    Both subsets
    appearing in the flip-flop decomposition of $\tors \Lambda$ are intervals of $\tors \Lambda$.
    Namely, we have
    $\tors_P \Lambda = \bigl[\add P, \modcat \Lambda\bigr]$ and
    $\tors \Lambda \setminus \tors_P \Lambda = \bigl[\{0\}, \leftperp P\bigr]$.
    
    Moreover, we have the same description for $\ftors_P \Lambda$ and $\ftors \Lambda \setminus \ftors_P \Lambda$
    as intervals of $\ftors \Lambda$.
\end{proposition}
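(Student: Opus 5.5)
We check the two interval descriptions directly from the definitions, then restrict them to $\ftors \Lambda$.

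For $\tors_P \Lambda$: first, $\add P$ is a torsion class. It is closed under quotients, because any quotient of $P^n$ is semisimple with all composition factors equal to $P$, hence lies in $\add P$. It is closed under extensions, because $P$ is projective, so every extension of objects of $\add P$ splits. By Proposition~\ref{prop:characterisation of torsion classes}, $\add P$ is therefore a torsion class. It is functorially finite, since $\add P = \Fac P$. Now let $\mathcal T$ be any torsion class. Since torsion classes are closed under direct sums and summands, $P \in \mathcal T$ holds if and only if $\add P \subseteq \mathcal T$. Every $\mathcal T$ is contained in $\modcat \Lambda$, and $\modcat \Lambda = \Fac \Lambda$ is functorially finite. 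Hence $\tors_P \Lambda = \bigl[\add P, \modcat \Lambda\bigr]$.

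For the complement: by Lemma~\ref{T1 is a torsion class}, $\leftperp P$ is a functorially finite torsion class with $\ind(\leftperp P) = (\ind \modcat\Lambda)\setminus\{P\}$. A torsion class $\mathcal T$ is determined by $\ind \mathcal T$, so $P \notin \mathcal T$ holds if and only if $\ind \mathcal T \subseteq (\ind\modcat\Lambda)\setminus\{P\}$, that is, if and only if $\mathcal T \subseteq \leftperp P$. The zero torsion class $\{0\} = \Fac 0$ is the minimum of the poset and is functorially finite. This gives $\tors \Lambda \setminus \tors_P \Lambda = \bigl[\{0\}, \leftperp P\bigr]$.

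For the functorially finite version, we intersect both equalities with $\ftors \Lambda$. All four endpoints ($\add P$, $\modcat\Lambda$, $\{0\}$, $\leftperp P$) are functorially finite. Therefore $\ftors_P\Lambda$ is the interval $\bigl[\add P,\modcat\Lambda\bigr]$ taken inside $\ftors\Lambda$, and $\ftors\Lambda\setminus\ftors_P\Lambda$ is the interval $\bigl[\{0\},\leftperp P\bigr]$ taken inside $\ftors\Lambda$.

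The argument has no real obstacle. The only points needing care are that $\add P$ is a torsion class and that the endpoints are functorially finite, and both follow from the earlier results cited above.
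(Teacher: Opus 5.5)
Your proof is correct and follows essentially the same route as the paper. Both arguments check that the endpoints $\add P$, $\modcat \Lambda$, $\{0\}$ and $\leftperp P$ are functorially finite torsion classes, using Lemma~\ref{T1 is a torsion class} for $\leftperp P$, and then read off the two membership criteria from $\ind \add P = \{P\}$ and $\ind(\leftperp P) = (\ind \modcat \Lambda)\setminus\{P\}$. The only difference is that you spell out why $\add P = \Fac P$ is a torsion class, which the paper states with just ``since $P$ is simple''.
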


\begin{proof}
    First, we know that $\leftperp P$ is a (functorially finite) torsion class by Lemma~\ref{T1 is a torsion class},
    and $\add P$ is also a functorially finite torsion class (since $P$ is simple).

    Now it suffices to show that for $\mathcal T \in \tors \Lambda$,
    we have the equivalences
    \[\mathcal T \in \tors_P \Lambda \iff \add P \subseteq \mathcal T\]
    and
    \[\mathcal T \notin \tors_P \Lambda \iff \mathcal T \subseteq \leftperp P.\]
    Both these claims are clear from the definition of $\tors_P \Lambda$
    and the fact that $\ind \add P = \{P\}$.
\end{proof}

We show that the torsion-free class $\rightperp P$ associated to the torsion class $\add P$ is in fact a Serre subcategory of $\modcat \Lambda$.

\begin{lemma} \label{F is abelian}
    The subcategory $\rightperp P \subseteq \modcat \Lambda$ is a Serre subcategory.
    In particular, it is abelian.
\end{lemma}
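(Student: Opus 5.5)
The idea is to identify $\rightperp P$ concretely as the modules having no composition factor isomorphic to $P$. Serre-closedness (closure under submodules, quotients and extensions) is then immediate, because composition factors behave additively along short exact sequences.

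First, I would show that for $M \in \modcat \Lambda$ the following are equivalent:
\begin{enumerate}
    \item $\Hom_\Lambda(P, M) = 0$;
    \item $P$ is not a composition factor of $M$.
\end{enumerate}
If $P$ is a composition factor of $M$, then by Lemma~\ref{simple projective has to be in socle} it is a summand of $\soc M$, so it is a submodule of $M$ and $\Hom_\Lambda(P,M) \neq 0$. Conversely, a nonzero morphism $P \to M$ is injective because $P$ is simple, so $P$ is then a composition factor of $M$. Alternatively, since $P$ is projective, $\dim_k \Hom_\Lambda(P, M)$ is $\dim_k \End_\Lambda(P)$ times the multiplicity of $P$ as a composition factor of $M$; this gives the same equivalence directly.

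Second, for a short exact sequence $0 \to M' \to M \to M'' \to 0$, the composition factors of $M$ are exactly the union, with multiplicities, of those of $M'$ and $M''$ (Jordan--Hölder). Hence $M \in \rightperp P$ if and only if both $M'$ and $M''$ lie in $\rightperp P$. This is precisely the definition of a Serre subcategory. Alternatively, exactness of $\Hom_\Lambda(P, \blank)$ gives the same conclusion without composition factors.

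Finally, a Serre subcategory of an abelian category is closed under kernels, cokernels and finite direct sums, since kernels are submodules and cokernels are quotients. It is therefore an abelian subcategory with the induced exact structure.

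No step is a real obstacle. The only point needing care is the identification in the first step, which is handled by projectivity (or by Lemma~\ref{simple projective has to be in socle}).
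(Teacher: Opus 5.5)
Your proof is correct, but your main route differs from the paper's. You first identify $\rightperp P$ with the modules having no composition factor isomorphic to $P$, using Lemma~\ref{simple projective has to be in socle} or the multiplicity formula for $\dim_k \Hom_\Lambda(P,M)$. You then invoke Jordan--H\"older. The paper instead applies the exact functor $\Hom_\Lambda(P,\blank)$ to a short exact sequence $0 \to L \to M \to N \to 0$. It reads off that $\Hom_\Lambda(P,M)$ vanishes if and only if both $\Hom_\Lambda(P,L)$ and $\Hom_\Lambda(P,N)$ do. That is exactly the one-line alternative you mention at the end of your second step. The paper's argument is shorter and uses only the projectivity of $P$, not its simplicity and not the socle lemma, so it shows $\rightperp Q$ is Serre for any projective $Q$. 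Your route costs an extra identification step, but it gives a more concrete description of $\rightperp P$. It consists of the modules without $P$ as a composition factor, equivalently those annihilated by $e_P$, i.e.\ $\modcat(\Lambda/\langle e_P\rangle)$. This matches the algebra $\Delta$ appearing in Remark~\ref{Jasso reduction}.
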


\begin{proof}
    We want to show that for every short exact sequence $0 \to L \to M \to N \to 0$ in $\modcat \Lambda$,
    we have
    \[M \in \rightperp P \iff L \in \rightperp P \textnormal{ and } N \in \rightperp P.\]
    
    Now applying the functor $\Hom_\Lambda (P, \blank)$ to such an exact sequence yields an exact sequence
    \[0 \to \Hom_\Lambda(P,L) \to \Hom_\Lambda(P,M) \to \Hom_\Lambda(P,N) \to 0\]
    since $P$ is projective,
    and the desired equivalence then follows.
\end{proof}

\begin{remark} \label{Jasso reduction}
    The subset $\tors_P \Lambda$ can be further understood in the framework of reduction introduced by Jasso in \cite{Jasso}.
    We consider the support $\tau$-tilting module $P$, whose Bongartz completion is $\Lambda$.
    We set $\Delta = \Lambda / \langle e_P \rangle$,
    that is, $\Delta$ is the algebra obtained by taking the quotient of $\Lambda$ by the two-sided ideal generated by the primitive idempotent $e_P$ corresponding to $P$.
    Under this setup, applying \cite[Theorem~3.12]{Jasso} gives an isomorphism of posets
    \[\tors_P \Lambda \simeq \tors \Delta.\]

    Moreover, if $\Lambda = kQ/I$ is the path algebra of a quiver with relations and $P$ corresponds to a source $x \in Q_0$,
    we have $\idim P = 1$ if and only if the ideal $I$ contains no relations starting at the vertex $x$.
    In this case, we have $\Delta \simeq k (Q \setminus \{x\}) / I$.
\end{remark}

We now want to transpose these results to a dual setting,
namely investigating the structure of the poset $(\torf \Gamma, \subseteq)$ of torsion-free classes of an algebra $\Gamma$ which admits a simple injective module $I$.
With the appropriate definitions, all the statements and the proofs transpose directly,
so we simply record them for convenience.

\begin{lemma} \label{F1 is a torsion-free class}
    The subcategory $\rightperp I \subseteq \modcat \Gamma$ is a functorially finite torsion-free class,
    and we have $\ind (\rightperp I) = (\ind \modcat \Gamma) \setminus \{I\}$.
\end{lemma}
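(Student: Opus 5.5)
This is the dual of Lemma~\ref{T1 is a torsion class}, and I will prove it the same way, with all arrows reversed.

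First I need the dual of Lemma~\ref{morphisms with target P}: for $M \in \modcat \Gamma$, we have $\Hom_\Gamma(I, M) \neq 0$ if and only if $I$ is a direct summand of $M$. Take a nonzero $g : I \to M$. Since $I$ is simple, $g$ is injective. Since $I$ is injective, $g$ splits, so $I \in \add M$. The converse is clear. It follows that an indecomposable $M$ satisfies $\Hom_\Gamma(I, M) = 0$ if and only if $M \not\simeq I$. This gives $\ind(\rightperp I) = (\ind \modcat \Gamma) \setminus \{I\}$.

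Next I show that $\add I$ is a torsion class, meaning it is closed under quotients and extensions.
\begin{itemize}
    \item \emph{Quotients.} Every module in $\add I$ is semisimple, so its quotients are again direct sums of copies of $I$.
    \item \emph{Extensions.} Let $0 \to I^m \to E \to I^n \to 0$ be a short exact sequence. Since $I^m$ is injective, the sequence splits, so $E \simeq I^{m+n} \in \add I$.
\end{itemize}
By Proposition~\ref{prop:characterisation of torsion classes}, $(\add I, \rightperp{(\add I)})$ is therefore a torsion pair. Moreover $\rightperp{(\add I)} = \rightperp I$, since $\Hom_\Gamma(I^m, X) = 0$ if and only if $\Hom_\Gamma(I, X) = 0$.

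Finally, $\add I = \Fac I$. By Proposition~\ref{prop:equivalence ff torsion(free)}, the torsion class is functorially finite, and hence so is its torsion-free class $\rightperp I$. Alternatively, one can apply the duality $D$ to Lemma~\ref{T1 is a torsion class}. I do not expect any real obstacle here; the only point needing care is the splitting argument, which uses injectivity of $I$ in place of projectivity of $P$.
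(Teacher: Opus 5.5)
Your proof is correct and follows the paper's approach: the paper states this lemma without proof as the direct dual of Lemma~\ref{T1 is a torsion class}, and your argument is exactly that dualisation. You show that $\add I$ is a functorially finite torsion class, using that it is semisimple and that $I$ is injective, so $\rightperp I$ is a functorially finite torsion-free class; the description of $\ind(\rightperp I)$ then comes from the dual of Lemma~\ref{morphisms with target P}.
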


We consider the subset $\torf_I \Gamma = \{\mathcal F \in \torf \Gamma \mid I \in \mathcal F\}$ of $\torf \Gamma$.
Then we define a map $g : \torf_I \Gamma \to \torf \Gamma \setminus \torf_I \Gamma$
by setting $g(\mathcal F) = \mathcal F \cap \rightperp I$,
for a torsion-free class $\mathcal F \in \torf_I \Gamma$.
In other words, the torsion-free class $g(\mathcal F)$ is given by
$\ind g(\mathcal F) = (\ind \mathcal F) \setminus \{I\}$.

\begin{proposition} \label{torf has flip-flop decomposition}
    The poset $\torf \Gamma$ has a flip-flop decomposition given by
    \[(\torf \Gamma, \mathord{\subseteq}) \simeq \bigl(\torf_I \Gamma \sqcup (\torf \Gamma \setminus \torf_I \Gamma), \leq^g_-\bigr),\]
    and this restricts to a flip-flop decomposition of $\ftorf \Gamma$,
    namely we have
    \[(\ftorf \Gamma, \mathord{\subseteq}) \simeq \bigl(\ftorf_I \Gamma \sqcup (\ftorf \Gamma \setminus \ftorf_I \Gamma), \leq^g_-\bigr).\]
\end{proposition}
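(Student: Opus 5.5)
The plan is to mirror the proof of Proposition~\ref{tors has flip-flop decomposition}, with $\torf \Gamma$, $I$ and $g$ in place of $\tors \Lambda$, $P$ and $f$. By Proposition~\ref{characterisation of flip-flop decomposition} (the version yielding $\leq^g_-$), it suffices to take $\mathcal F \in \torf_I \Gamma$ and $\mathcal F' \in \torf \Gamma \setminus \torf_I \Gamma$ and show two things: that $\mathcal F' \subseteq \mathcal F$ if and only if $\mathcal F' \subseteq g(\mathcal F)$, and that $\mathcal F \nsubseteq \mathcal F'$. This will also show that $g$ is order-preserving.

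The first step is to check that $g$ is well defined. By Lemma~\ref{F1 is a torsion-free class}, $\rightperp I$ is a torsion-free class, and its indecomposables are all indecomposables except $I$. Intersections of torsion-free classes are closed under submodules and extensions, so they are torsion-free classes by Proposition~\ref{prop:characterisation of torsion classes}. Hence $g(\mathcal F) = \mathcal F \cap \rightperp I$ is a torsion-free class not containing $I$, with $\ind g(\mathcal F) = (\ind \mathcal F)\setminus\{I\}$.

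The two required conditions then follow from comparing indecomposables. Since $I \in \mathcal F$ and $I \notin \mathcal F'$, we get $\mathcal F \nsubseteq \mathcal F'$. Since $I \notin \ind \mathcal F'$, the inclusion $\ind \mathcal F' \subseteq \ind \mathcal F$ holds if and only if $\ind \mathcal F' \subseteq (\ind \mathcal F)\setminus\{I\}$.

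For the functorially finite restriction, the key point is that $g$ maps $\ftorf_I \Gamma$ into $\ftorf \Gamma \setminus \ftorf_I \Gamma$. This is the dual of Lemma~\ref{removing P from Fac M}. For a basic module $N = I \oplus N'$, we claim $\ind \Sub N' = (\ind \Sub N)\setminus\{I\}$. The argument is dual to the original one.
\begin{enumerate}
    \item Dualising Lemma~\ref{morphisms with target P}: any nonzero map $I \to X$ splits, since $I$ is simple injective.
    \item So for an indecomposable $X \neq I$ with a monomorphism $X \inj N^n$, any nonzero component $X \to I$ is surjective, and its kernel contains $\rad X$.
    \item Its kernel is a maximal submodule, so it cannot contain $\soc X$. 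Otherwise $X$ would have $I$ as a direct summand: by the dual of Lemma~\ref{simple projective has to be in socle}, $I$ appears only in the top, as a direct summand.
    \item Hence the remaining components are injective on $\soc X$, and therefore injective on $X$.
    \item Induction on $n$ then gives a monomorphism $X \inj N'^{n}$.
\end{enumerate}
By Proposition~\ref{prop:equivalence ff torsion(free)}, it follows that $g(\Sub N) = \Sub N'$ is functorially finite. The restricted decomposition then follows from the same criterion, applied inside the subposet $\ftorf \Gamma$.

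Alternatively, one can avoid redoing the argument by applying the duality $D$. It sends $\modcat \Gamma$ to $\modcat \Gamma\opcat$, turns $I$ into a simple projective module, exchanges torsion-free classes with torsion classes, and preserves inclusion. One can then transport Proposition~\ref{tors has flip-flop decomposition} directly. The main obstacle is only the dual $\Sub$ lemma, and this duality argument avoids it.
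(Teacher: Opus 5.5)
Your main line matches the paper, which only says that the proof of Proposition~\ref{tors has flip-flop decomposition} transposes directly, with Lemma~\ref{removing P from Fac M} handling the functorially finite part. Your comparison of indecomposables via Proposition~\ref{characterisation of flip-flop decomposition}, plus a $\Sub$-version of that lemma, is exactly this transposition.

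However, step~3 of your $\Sub$ lemma is stated backwards. The kernel of a nonzero map $h_1 : X \to I$ must \emph{contain} $\soc X$. For instance, if $X$ is uniserial of length two with top $I$, then $\ker h_1 = \rad X = \soc X$. The correct argument runs as follows. Suppose some simple $S \subseteq \soc X$ is not contained in $\ker h_1$. Then $h_1$ restricts to an isomorphism $S \iso I$, so $S$ is an injective submodule of $X$ and hence a direct summand. This contradicts that $X$ is indecomposable with $X \not\simeq I$.

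The inclusion $\soc X \subseteq \ker h_1$ is exactly what step~4 needs. It gives $\ker h_2 \cap \soc X \subseteq \ker h_1 \cap \ker h_2 = \ker h = 0$. Since every nonzero submodule of $X$ meets $\soc X$, it follows that $\ker h_2 = 0$. As you wrote it, step~3 is false and step~4 does not follow from it. Your justification ``otherwise $X$ would have $I$ as a direct summand'' is the right one, but only for the reversed claim.

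Your alternative via $D$ is correct, and it is a genuinely cleaner route because it makes the $\Sub$ lemma unnecessary. It rests on four facts about $\mathcal F \mapsto D\mathcal F$:
\begin{enumerate}
    \item it is an inclusion-preserving bijection $\torf \Gamma \to \tors \Gamma\opcat$;
    \item it preserves functorial finiteness, because $D(\Sub N) = \Fac DN$;
    \item it sends $\torf_I \Gamma$ onto $\tors_{DI} \Gamma\opcat$, with $DI$ simple projective;
    \item it carries $g$ to $f$, because $D(\rightperp I) = \leftperp{(DI)}$.
\end{enumerate}
Hence both decompositions, with the same sign $\leq_-$, are transported from Proposition~\ref{tors has flip-flop decomposition} applied to $\Gamma\opcat$. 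If you take this route, state these four checks explicitly rather than saying the result transports ``directly''.
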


\begin{proposition} \label{flip-flop intervals torf}
    Both subsets
    appearing in the flip-flop decomposition of $\torf \Gamma$ are intervals of $\torf \Gamma$.
    Namely, we have
    $\torf_I \Gamma = \bigl[\add I, \modcat \Gamma\bigr]$ and
    $\torf \Gamma \setminus \torf_I \Gamma = \bigl[\{0\}, \rightperp I\bigr]$.
    
    Moreover, we have the same description for $\ftorf_I \Gamma$ and $\ftorf \Gamma \setminus \ftorf_I \Gamma$
    as intervals of $\ftorf \Gamma$.
\end{proposition}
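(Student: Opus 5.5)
The plan is to mirror the proof of Proposition~\ref{flip-flop intervals tors}, interchanging torsion classes with torsion-free classes and $\Hom(\blank,P)$ with $\Hom(I,\blank)$.

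\textbf{Step 1: both endpoints are functorially finite torsion-free classes.} For $\rightperp I$ this is Lemma~\ref{F1 is a torsion-free class}. For $\add I$, every module in $\add I$ is semisimple, and $I$ is injective. Hence $\add I$ is closed under submodules, since every submodule of $I^{\oplus n}$ is a direct sum of copies of the simple $I$. It is also closed under extensions, since any extension $0 \to I^{\oplus m} \to E \to I^{\oplus n} \to 0$ splits by injectivity of $I$. The dual half of Proposition~\ref{prop:characterisation of torsion classes} then shows that $\add I$ is a torsion-free class. It is functorially finite because it has a single indecomposable object; alternatively, $\add I = \Sub I$ and we can apply Proposition~\ref{prop:equivalence ff torsion(free)}. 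The trivial class $\{0\}$ and $\modcat \Gamma$ are clearly functorially finite torsion-free classes, so all four endpoints lie in $\ftorf \Gamma$.

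\textbf{Step 2: the two interval descriptions.} Fix $\mathcal F \in \torf \Gamma$. Since $\ind \add I = \{I\}$ and $\mathcal F$ is closed under direct sums and summands, we have $\mathcal F \in \torf_I \Gamma$ if and only if $\add I \subseteq \mathcal F$. This gives $\torf_I \Gamma = \bigl[\add I, \modcat \Gamma\bigr]$.

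For the complement, Lemma~\ref{F1 is a torsion-free class} gives $\ind(\rightperp I) = (\ind \modcat \Gamma) \setminus \{I\}$. Since a subcategory is determined by its indecomposable objects, $I \notin \mathcal F$ holds if and only if $\ind \mathcal F \subseteq \ind(\rightperp I)$, that is, if and only if $\mathcal F \subseteq \rightperp I$. This gives $\torf \Gamma \setminus \torf_I \Gamma = \bigl[\{0\}, \rightperp I\bigr]$.

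\textbf{Step 3: the functorially finite case.} The same equivalences hold verbatim for $\mathcal F \in \ftorf \Gamma$. Since all endpoints lie in $\ftorf \Gamma$ by Step 1, the corresponding subsets $\ftorf_I \Gamma$ and $\ftorf \Gamma \setminus \ftorf_I \Gamma$ are the intervals with the same endpoints, now taken inside $\ftorf \Gamma$.

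\textbf{Main obstacle.} There is essentially none; the only real input is Lemma~\ref{F1 is a torsion-free class}, which I take as given. Its dual argument uses that a nonzero morphism $I \to M$ is injective because $I$ is simple, and hence splits because $I$ is injective. The one point to be careful about is that the interval endpoints must themselves belong to the poset in question, which is exactly what Step 1 verifies.
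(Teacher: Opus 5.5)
Your proposal is correct and follows the paper's approach. The paper obtains this statement by dualising its proof of Proposition~\ref{flip-flop intervals tors}: it checks that $\add I$ and $\rightperp I$ are functorially finite torsion-free classes, then reads off both interval descriptions from $\ind \add I = \{I\}$ and $\ind(\rightperp I) = (\ind \modcat \Gamma) \setminus \{I\}$, which is exactly your argument, with your extra detail on why $\add I$ is closed under submodules and extensions.
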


\begin{lemma} \label{T' is abelian} 
    The subcategory $\leftperp I \subseteq \modcat \Gamma$ is a Serre subcategory.
    In particular, it is abelian.
\end{lemma}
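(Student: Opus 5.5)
The plan is to dualise the proof of Lemma~\ref{F is abelian}, with $I$ simple injective in place of $P$ simple projective. For any short exact sequence
\[0 \to L \to M \to N \to 0\]
in $\modcat \Gamma$, we need to show
\[M \in \leftperp I \iff L \in \leftperp I \text{ and } N \in \leftperp I .\]
This says exactly that $\leftperp I$ is closed under submodules, quotients and extensions, so it is a Serre subcategory.

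The key step is to apply the contravariant functor $\Hom_\Gamma(\blank, I)$ to the sequence. Since $I$ is injective, this functor is exact, and we get a short exact sequence of vector spaces
\[0 \to \Hom_\Gamma(N,I) \to \Hom_\Gamma(M,I) \to \Hom_\Gamma(L,I) \to 0 .\]
The middle term vanishes if and only if both outer terms vanish, which gives the required equivalence.

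It then remains to note that a Serre subcategory of an abelian category is abelian. It is closed under kernels and cokernels taken in $\modcat \Gamma$, since these are a submodule and a quotient of objects in the subcategory. Hence it is an abelian subcategory, with exact inclusion.

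No real obstacle is expected. The only point to check is that injectivity of $I$ is what makes $\Hom_\Gamma(\blank, I)$ exact. Simplicity of $I$ is not needed for this argument; it only enters elsewhere, to identify $\leftperp I$ as the torsion class paired with $\rightperp I$ via Lemma~\ref{F1 is a torsion-free class}.
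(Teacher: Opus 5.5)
Your proof is correct and is exactly the paper's argument: the paper records this lemma as the dual of Lemma~\ref{F is abelian}, whose proof applies the exact functor $\Hom_\Lambda(P,\blank)$ to a short exact sequence, and you apply the exact functor $\Hom_\Gamma(\blank, I)$ in the same way. One small slip in your closing aside, which does not affect the proof: $\leftperp I$ is the torsion class of the torsion pair $(\leftperp I, \add I)$, whereas Lemma~\ref{F1 is a torsion-free class} concerns the different torsion pair $(\add I, \rightperp I)$, so $\leftperp I$ is not ``paired with $\rightperp I$''.
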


\subsection{Translation in terms of 2-term silting complexes}

Our goal is to translate the results of section \ref{torsion classes wrt source}
in the language of 2-term silting complexes.
As in section \ref{torsion classes wrt source}, we consider an algebra $\Lambda$ which admits a simple projective module $P$,
and we write $\Lambda = P \oplus R$.
We fix a minimal injective presentation
\[0 \to P \to J_0 \stackrel{a}{\to} J_1\]
of $P$, as in section \ref{section 1-APR}.
By Lemma~\ref{inj presentation -> left approximation},
we have $\nu\inv J_0 = P$,
and the morphism $\nu\inv a$ is a minimal left $(\add R)$-approximation of $P$.
We write $S = (P \xrightarrow{\nu\inv a} \nu\inv J_1) \in \Kb(\projcat \Lambda)$.
We also set $\widetilde T = S \oplus R \in \Kb (\projcat \Lambda)$.

\begin{remark} \label{S is projective resolution of tau-P}
    The complex $S$ has cohomology concentrated in degree 0,
    and $H^0(S) = \tau\inv P$, according to Lemma~\ref{proj presentation of tau- P}.
    Thus the 2-term complex $S$ is isomorphic, in $\deriv (\Lambda)$, to the stalk complex $\tau\inv P$.
    This implies that the weak 1-APR tilting module $T = \tau\inv P \oplus R$ is isomorphic, in $\deriv (\Lambda)$, to $\widetilde T \coloneqq S \oplus R$,
    which we view as an element of $\twosilt (\projcat \Lambda)$.
\end{remark}

The first step is to translate the flip-flop decomposition of $\ftors \Lambda$
into a similar decomposition for the poset $\twosilt (\projcat \Lambda)$.
To this end, we have to understand what the subposets $\ftors_P \Lambda$ and $\ftors \Lambda \setminus \ftors_P \Lambda$
and the map $f : \ftors_P \Lambda \to \ftors \Lambda \setminus \ftors_P \Lambda$ correspond to under the bijection between $\ftors \Lambda$ and $\twosilt (\projcat \Lambda)$.

\begin{proposition} \label{silt_P maps to tors_P}
    Let $X \in \twosilt (\projcat \Lambda)$.
    Then $\Fac H^0 (X) \in \ftors_P \Lambda$ if and only if $P \in \add X$.
\end{proposition}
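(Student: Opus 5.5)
Both implications rest on Lemma~\ref{simple projective in Fac} and Lemma~\ref{removing P from Fac M}, together with the description of $H^0$ of a 2-term complex of projectives. Write $X = (X^{-1} \xrightarrow{d} X^0)$ with $X$ basic, and set $M = H^0(X) = \operatorname{coker} d$. Since $\Fac M$ is a torsion class, $\Fac H^0(X) \in \ftors_P \Lambda$ holds exactly when $P \in \Fac M$. By Lemma~\ref{simple projective in Fac}, this is equivalent to $P$ being a direct summand of $M$. It therefore suffices to show that $P \in \add H^0(X)$ if and only if $P \in \add X$, where $P$ on the right means the stalk complex $(0 \to P)$.

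\emph{If $P \in \add X$.} Write $X = P \oplus X'$. Then $H^0(X) = P \oplus H^0(X')$, so $P \in \Fac H^0(X)$.

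\emph{If $P$ is a direct summand of $M = H^0(X)$.} I would split off the stalk complex $P$ from $X$. Let $\pi : X^0 \surj M$ be the cokernel map and $p : M \surj P$ a split projection.
\begin{enumerate}
    \item Since $P$ is projective, the surjection $p\pi : X^0 \to P$ splits. So there is $s : P \to X^0$ with $p\pi s = \mathrm{id}_P$, and $X^0 = s(P) \oplus K$ with $K = \ker(p\pi)$.
    \item The image of $d$ lies in $\ker \pi \subseteq K$. Hence, with respect to the decomposition $X^0 = s(P) \oplus K$, the complex $X$ decomposes as $(0 \to P) \oplus (X^{-1} \to K)$.
\end{enumerate}
This shows $P \in \add X$ as a direct summand, which completes the equivalence. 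One could alternatively quote the AIR correspondence between indecomposable summands and $\tau$-rigid pairs, but the direct splitting argument is self-contained.

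The only point needing care is whether $X$ might be presented non-minimally, for instance with a summand of the form $(Q \xrightarrow{\mathrm{id}} Q)$. Such a summand is zero in $\Kb(\projcat \Lambda)$, so it does not affect the argument above.
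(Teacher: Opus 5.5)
Your proof is correct. Its overall structure matches the paper's: both use that $\Fac H^0(X)$ is a torsion class and Lemma~\ref{simple projective in Fac} to reduce the statement to the equivalence $P \in \add H^0(X) \iff P \in \add X$.

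The difference is in how the nontrivial direction of that equivalence is justified. The paper argues in one line: the stalk complex $P$ is the only indecomposable 2-term complex whose $H^0$ is $P$. This tacitly uses the structure of 2-term complexes. Every indecomposable one is either $\Sigma Q$ for an indecomposable projective $Q$, or a minimal projective presentation of an indecomposable module. Hence $H^0$ of an indecomposable summand is zero or indecomposable, and Krull--Schmidt then locates $P$ as $H^0$ of a single summand.

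Your explicit splitting avoids this classification entirely. You lift the split epimorphism $p\pi : X^0 \surj P$ and note that $\im d = \ker \pi \subseteq \ker(p\pi)$. This argument:
\begin{itemize}
\item works for any representative of $X$, minimal or not;
\item needs neither the simplicity of $P$ nor the presilting property of $X$ in that step;
\item shows more generally that any projective direct summand of $H^0(X)$ splits off from $X$ as a stalk complex.
\end{itemize}
The paper's version is shorter but leans on background facts about 2-term complexes; yours is self-contained.
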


\begin{proof}
    We know by Proposition~\ref{correspondence twosilt/torsion/torsionfree} that $\Fac H^0(X) \in \ftors \Lambda$.
    Moreover, by Lemma~\ref{simple projective in Fac},
    we have $\Fac H^0(X) \in \ftors_P \Lambda$ if and only if $P \in \add H^0(X)$.
    Now the equivalence
    \[P \in \add H^0(X) \iff P \in \add X\]
    follows from the fact that $P \in \Kb (\projcat \Lambda)$
    (seen as a complex concentrated in degree 0)
    is a 2-term presilting complex,
    and that it is the only indecomposable 2-term complex whose cohomology in degree zero is $P$.
\end{proof}

We define the subset $\twosilt_P(\projcat \Lambda) = \{X \in \twosilt(\projcat \Lambda) \mid P \in \add X\}$.

\begin{proposition} \label{correspondence 2-silt_P and tors_P}
    We have a commutative diagram
    \[\begin{tikzcd}[ampersand replacement=\&,cramped]
        {\twosilt_{P} (\projcat \Lambda)} \& {\twosilt (\projcat \Lambda) \setminus \twosilt_{P} (\projcat \Lambda)} \\
        {\ftors_P \Lambda} \& {\ftors \Lambda \setminus \ftors_P \Lambda}
        \arrow["{\mu^-_{P}}", from=1-1, to=1-2]
        \arrow["{\Fac H^0}"', from=1-1, to=2-1]
        \arrow["{\Fac H^0}", from=1-2, to=2-2]
        \arrow["f"', from=2-1, to=2-2]
    \end{tikzcd}\]
    where the vertical maps are isomorphisms of posets.
    
    In particular, this implies that the map
    $\mu^-_{P} : \twosilt_{P} (\projcat \Lambda) \to \twosilt (\projcat \Lambda) \setminus \twosilt_{P} (\projcat \Lambda)$
    is order-preserving.
\end{proposition}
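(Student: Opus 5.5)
The vertical maps are handled first. By Proposition~\ref{correspondence twosilt/torsion/torsionfree}, $\Fac H^0$ is a poset isomorphism $\twosilt(\projcat \Lambda) \iso \ftors \Lambda$. By Proposition~\ref{silt_P maps to tors_P}, it sends $\twosilt_P(\projcat \Lambda)$ exactly onto $\ftors_P \Lambda$. Hence it restricts to isomorphisms of subposets $\twosilt_P(\projcat \Lambda) \iso \ftors_P \Lambda$ and, on complements, $\twosilt(\projcat \Lambda)\setminus\twosilt_P(\projcat \Lambda) \iso \ftors \Lambda \setminus \ftors_P \Lambda$. The order-preservation claim at the end follows formally once commutativity is known, because $f$ is order-preserving (Proposition~\ref{tors has flip-flop decomposition} together with Proposition~\ref{characterisation of flip-flop decomposition}) and the vertical maps are isomorphisms.

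The main work is to show that $\mu^-_P$ lands in the right set and that the square commutes. Let $X = P \oplus X' \in \twosilt_P(\projcat \Lambda)$ be basic. I will not analyse the mutation triangle directly. Instead I will use completion. Since $X'$ is a 2-term presilting complex with $\card{X'} = \card\Lambda - 1$, Proposition~\ref{prop:two complements twosilt}(3) says it has exactly two completions in $\twosilt(\projcat \Lambda)$. One is $X$; the other is $\mu^-_P(X)$ or $\mu^+_P(X)$, whichever is 2-term.

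To identify the second completion, I use the module side. Set $M = H^0(X) = P \oplus M'$ with $M' = H^0(X')$; this is basic, since $P$ is the only indecomposable 2-term complex with $H^0$ equal to $P$. Proposition~\ref{mutation for torsion classes} gives $f(\Fac M) = \Fac M' \in \ftors\Lambda\setminus\ftors_P\Lambda$. Let $Y$ be the 2-term silting complex corresponding to $\Fac M'$. Then $Y \ne X$ and $Y \le X$, since $\Fac M' \subseteq \Fac M$.

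The main obstacle is showing that $X' \in \add Y$. For this I would use the AIR description $Y = $ the Bongartz co-completion side. Since $\Fac M' = \Fac H^0(Y)$, the $\tau$-rigid pair underlying $Y$ has $H^0(Y)$ as an $\Ext$-projective generator of $\Fac M'$. Each summand of $M'$ is $\tau$-rigid and lies in $\Fac M' \subseteq \Fac M$. Because $\Hom(M',\tau M') = 0$ and $\Fac M' \subseteq \Fac M$, each summand of $M'$ is $\Ext$-projective in $\Fac M'$, so it is a summand of $H^0(Y)$. Any shifted projective summands $\Sigma Q$ of $X'$ satisfy $\Hom(Q, M) = 0$, hence $\Hom(Q, M') = 0$, and so $\Sigma Q$ remains in the complement for $\Fac M'$.

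Therefore $Y$ is a completion of $X'$ different from $X$. Since $Y < X$, Proposition~\ref{properties of mutation} identifies the mutation as the left one: $Y = \mu^-_P(X)$. In particular $\mu^-_P(X)$ is 2-term, does not contain $P$ (as $\Fac H^0(Y) \notin \ftors_P\Lambda$), and $\Fac H^0(\mu^-_P X) = f(\Fac H^0 X)$. This gives both well-definedness of the top arrow and commutativity of the square.
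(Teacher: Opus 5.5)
There is a genuine gap: the step that ties your $Y$ to $\mu^-_P(X)$ is not justified, and that step is exactly the well-definedness content of the proposition. Your identification of the 2-term silting complex $Y$ with $\Fac H^0(Y)=\Fac M'$ as a completion of $X'$ is fine: the summands of $M'$ are Ext-projective in $\Fac M'$, and the shifted projectives $\Sigma Q$ satisfy $\Hom(Q,M')=0$.

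Proposition~\ref{prop:two complements twosilt}(3) only says that $X'$ has exactly two completions in $\twosilt(\projcat\Lambda)$. It does not say that the second one is obtained from $X$ by a single mutation. So your sentence ``the other is $\mu^-_P(X)$ or $\mu^+_P(X)$, whichever is 2-term'' presupposes that one of these mutations is 2-term, which is what has to be proved. Proposition~\ref{properties of mutation} does not close the gap at the end either. In $\silt\Kb(\projcat\Lambda)$, the object $X'$ has infinitely many complements below $X$: mutating repeatedly at the new summand gives a strictly decreasing chain $X>\mu^-_P(X)>\cdots$. Hence ``$Y<X$ and $X'\in\add Y$'' does not force $Y=\mu^-_P(X)$. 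Already for $\Lambda=k$ and $X'=0$, the object $\Sigma^2\Lambda$ is a complement below $\Lambda$ that is not $\mu^-_\Lambda(\Lambda)=\Sigma\Lambda$. Since you conclude ``in particular $\mu^-_P(X)$ is 2-term'' from this identification, the argument is circular at that point.

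The missing ingredient is short, and it is what the paper's proof uses. Write the mutation triangle $P\xrightarrow{u}Y_0\to Z\to\Sigma P$, with $u$ a minimal left $(\add X')$-approximation. Then $Z$ is an extension of $\Sigma P$ by $Y_0$; concretely, the cone of $u$ is concentrated in degrees $-1$ and $0$. So $\mu^-_P(X)=Z\oplus X'$ is a 2-term silting complex that contains $X'$ and is strictly below $X$. By Proposition~\ref{prop:two complements twosilt}(3) it is therefore your $Y$. Alternatively, you could cite the result of \cite{AIR} that 2-term silting mutation corresponds to mutation of support $\tau$-tilting pairs, but that is stronger than what you are proving.

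With this fix, your route is a valid alternative to the paper's. You determine $\Fac H^0(\mu^-_P X)$ through $\tau$-tilting theory (Ext-projectives of $\Fac M'$, i.e.\ the co-Bongartz side) combined with uniqueness of the second completion. The paper instead applies $H^0$ to the same triangle to get an epimorphism $H^0(Y_0)\twoheadrightarrow H^0(Z)$. That gives $\Fac H^0(\mu^-_P X)=\Fac H^0(X')$ directly, without the completion count at all.
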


\begin{proof}
    First, we see that Proposition~\ref{silt_P maps to tors_P} and Proposition~\ref{correspondence twosilt/torsion/torsionfree} ensure that both vertical maps are well-defined.

    Now we show that the diagram commutes.
    Let $X = P \oplus X' \in \twosilt_{P} (\projcat \Lambda)$.
    Take $P \to Y$ a minimal left $(\add X')$-approximation of $P$,
    and consider the triangle $P \to Y \to Z \to \Sigma P$ in $\Kb (\projcat \Lambda)$,
    so that we have $\mu^-_{P}(X) = Z \oplus X'$.
    The fact that $Z$ is an extension of two 2-term complexes
    (namely $Y$ and $\Sigma P$)
    ensures that it is 2-term.
    This implies that $\mu^-_P(X)$ belongs to $\twosilt(\projcat \Lambda) \setminus \twosilt_P(\projcat \Lambda)$.
    Now applying the cohomological functor $H^0$ to this triangle,
    we get an exact sequence $H^0(Y) \to H^0(Z) \to H^0(\Sigma P) = 0$ in $\modcat \Lambda$,
    so we get an epimorphism $H^0(Y) \surj H^0(Z)$.
    This implies that $\Fac H^0(Z) \subseteq \Fac H^0(Y) \subseteq \Fac H^0(X')$,
    so finally we obtain
    $\Fac H^0(\mu^-_{P}(X)) = \Fac (H^0(Z) \oplus H^0(X')) = \Fac H^0(X')$.

    Moreover, we have $\Fac H^0(X) = \Fac (P \oplus H^0(X'))$,
    so according to Proposition~\ref{mutation for torsion classes},
    we get $f(\Fac H^0(X)) = \Fac H^0(X')$.
    This shows that the diagram commutes.
\end{proof}

The following is a direct corollary of Proposition~\ref{correspondence 2-silt_P and tors_P} using Proposition~\ref{tors has flip-flop decomposition}.

\begin{corollary} \label{flip-flop for 2-silt proj}
    The poset $\twosilt(\projcat \Lambda)$ has a flip-flop decomposition:
    \[\twosilt(\projcat \Lambda) \simeq \Bigl(\twosilt_P(\projcat \Lambda) \sqcup \twosilt(\projcat \Lambda) \setminus \twosilt_P(\projcat \Lambda), \leq^{\mu_P^-}_-\Bigr).\]
\end{corollary}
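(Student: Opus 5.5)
The plan is to transport the flip-flop decomposition of $\ftors \Lambda$ from Proposition~\ref{tors has flip-flop decomposition} along the poset isomorphism $\Fac H^0 : \twosilt(\projcat \Lambda) \iso \ftors \Lambda$ of Proposition~\ref{correspondence twosilt/torsion/torsionfree}. The key observation is that a flip-flop structure is defined purely in terms of the data $(X, Y, f)$. Consequently, if a poset isomorphism $\varphi : Z \iso Z'$ carries a set decomposition $Z = X \sqcup Y$ onto $Z' = X' \sqcup Y'$ and intertwines maps $f : X \to Y$ and $f' : X' \to Y'$ via $f' \circ \varphi|_X = \varphi|_Y \circ f$, then $(Z, \leq) \simeq (X \sqcup Y, \leq^f_-)$ holds if and only if $(Z',\leq) \simeq (X' \sqcup Y', \leq^{f'}_-)$. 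I would verify this directly, or alternatively through the criterion of Proposition~\ref{characterisation of flip-flop decomposition}.

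First, by Proposition~\ref{silt_P maps to tors_P}, the isomorphism $\Fac H^0$ sends $\twosilt_P(\projcat \Lambda)$ bijectively onto $\ftors_P \Lambda$, and hence sends its complement bijectively onto $\ftors \Lambda \setminus \ftors_P \Lambda$. Second, Proposition~\ref{correspondence 2-silt_P and tors_P} shows that $\mu^-_P$ maps $\twosilt_P(\projcat\Lambda)$ into the complement and that $\Fac H^0 \circ \mu^-_P = f \circ \Fac H^0$.

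It then remains to check the conditions of Proposition~\ref{characterisation of flip-flop decomposition} on the silting side. Take $X \in \twosilt_P(\projcat \Lambda)$ and $Y$ in the complement. Applying $\Fac H^0$ turns $Y \leq X$ into $\Fac H^0 Y \subseteq \Fac H^0 X$. By the proof of Proposition~\ref{tors has flip-flop decomposition}, this inclusion is equivalent to $\Fac H^0 Y \subseteq f(\Fac H^0 X) = \Fac H^0 \mu^-_P(X)$, which in turn is equivalent to $Y \leq \mu^-_P(X)$. Likewise, $X \nleq Y$ follows from $\Fac H^0 X \nsubseteq \Fac H^0 Y$, since $P$ belongs to the former and not to the latter. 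Proposition~\ref{characterisation of flip-flop decomposition} in its $\leq^{\mu^-_P}_-$ version then gives the claimed isomorphism, and it also recovers that $\mu^-_P$ is order-preserving.

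There is no serious obstacle here. The only point requiring care is bookkeeping of the direction conventions. The partial order on $\twosilt$ corresponds to inclusion of torsion classes, not reverse inclusion. Because of this, the sign stays $\leq_-$ and the roles of $X$ and $Y$ are not swapped.
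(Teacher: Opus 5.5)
Your proposal is correct and is essentially the paper's argument. The paper states the corollary as a direct consequence of transporting the flip-flop decomposition of $\ftors\Lambda$ (Proposition~\ref{tors has flip-flop decomposition}) along the isomorphism $\Fac H^0$, using Proposition~\ref{silt_P maps to tors_P} and the commutative square of Proposition~\ref{correspondence 2-silt_P and tors_P}, which is exactly what you do. Your explicit check via Proposition~\ref{characterisation of flip-flop decomposition} just spells this out, and the condition you verify ($Y \leq X \iff Y \leq \mu^-_P(X)$, together with $X \nleq Y$) is indeed the one that yields $\leq_-$ under the definition of flip-flop, even though the paper's statement of that proposition has the $\pm$ labels interchanged.
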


We now translate the description of $\ftors_P \Lambda$ and $\ftors \Lambda \setminus \ftors_P \Lambda$ as intervals of $\ftors \Lambda$ given in Proposition~\ref{flip-flop intervals tors}
to the context of silting complexes.
It follows immediately that both subsets
$\twosilt_P (\projcat \Lambda)$ and $\twosilt (\projcat \Lambda) \setminus \twosilt_P (\projcat \Lambda)$ are intervals of $\twosilt (\projcat \Lambda)$.
We can moreover compute the bounds of these intervals explicitly.

\begin{proposition} \label{intervals for 2-silt proj}
    We have
    \begin{gather*}
        \twosilt_P (\projcat \Lambda) = \bigl[P \oplus \Sigma R, \Lambda\bigr] \\
        \twosilt (\projcat \Lambda) \setminus \twosilt_P (\projcat \Lambda) = \bigl[\Sigma \Lambda, \widetilde T\bigr].
    \end{gather*} 
\end{proposition}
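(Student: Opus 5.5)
We transport Proposition~\ref{flip-flop intervals tors} along the poset isomorphism $\Fac H^0 : \twosilt(\projcat \Lambda) \iso \ftors \Lambda$ of Proposition~\ref{correspondence twosilt/torsion/torsionfree}. By Proposition~\ref{silt_P maps to tors_P}, this isomorphism restricts to bijections
\[\twosilt_P(\projcat\Lambda) \to \ftors_P \Lambda \quad\text{and}\quad \twosilt(\projcat\Lambda)\setminus\twosilt_P(\projcat\Lambda) \to \ftors\Lambda\setminus\ftors_P\Lambda.\]
An order isomorphism sends intervals to intervals. So it suffices to identify the preimages of the four bounds $\add P$, $\modcat\Lambda$, $\{0\}$ and $\leftperp P$, which are computed in Proposition~\ref{flip-flop intervals tors} as intervals of $\ftors\Lambda$.

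Two of the four bounds are immediate. Since $H^0(\Lambda) = \Lambda$, we get $\Fac H^0(\Lambda) = \modcat\Lambda$. Since $H^0(\Sigma\Lambda) = 0$, we get $\Fac H^0(\Sigma\Lambda) = \{0\}$.

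For the object $P \oplus \Sigma R$, we proceed in three steps.
\begin{enumerate}
    \item It is 2-term, since it lies in $\Lambda * \Sigma\Lambda$.
    \item It is presilting. We have $\Hom(P\oplus\Sigma R, \Sigma(P\oplus \Sigma R))$, and the only possibly nonzero component is $\Hom(P, \Sigma^{\ge 1}\dots)$ between stalk complexes of projectives in different degrees. Concretely, the relevant components are $\Hom(P,\Sigma P)=0$, $\Hom(\Sigma R,\Sigma^2 R)=0$, $\Hom(P,\Sigma^2 R)=0$ and $\Hom(\Sigma R, \Sigma P) \simeq \Hom(R,P)$. The last one vanishes by Lemma~\ref{morphisms with target P}, since $P\notin\add R$ and $\Lambda$ is basic.
    \item It is silting. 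It has $\card\Lambda$ summands, so Proposition~\ref{prop:two complements twosilt}(2) applies.
\end{enumerate}
Its torsion class is $\Fac H^0(P\oplus\Sigma R) = \Fac P = \add P$.

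For the object $\widetilde T = S\oplus R$, we argue as follows.
\begin{enumerate}
    \item By Remark~\ref{S is projective resolution of tau-P} it corresponds to the tilting module $T$, so it lies in $\twosilt(\projcat\Lambda)$. Alternatively, $\widetilde T = \mu^-_P(\Lambda)$, because $\nu\inv a$ is a minimal left $(\add R)$-approximation of $P$ by Lemma~\ref{inj presentation -> left approximation}. Then Theorem~\ref{mutation preserves silting} applies, and 2-termness follows as in the proof of Proposition~\ref{correspondence 2-silt_P and tors_P}.
    \item Its torsion class is $\Fac H^0(\widetilde T) = \Fac(\tau\inv P \oplus R)$. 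It remains to show that this equals $\leftperp P$.
    \item The cleanest route uses the commutative square of Proposition~\ref{correspondence 2-silt_P and tors_P} applied to $X = \Lambda = P\oplus R$. It gives $\Fac H^0(\mu^-_P(\Lambda)) = f(\modcat\Lambda) = \modcat\Lambda \cap \leftperp P = \leftperp P$.
\end{enumerate}
This argument does not even need $\idim P = 1$, since only the approximation property is used.

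The only point requiring real care is the identification $\mu^-_P(\Lambda) = \widetilde T$, namely that the mutation triangle $P \to \nu\inv J_1 \to Z \to \Sigma P$ yields $Z \simeq S$. This holds because the cone of a morphism between stalk projectives in degree $0$ is exactly the two-term complex $(P \xrightarrow{\nu\inv a} \nu\inv J_1)$ in degrees $-1,0$. Given this, the statement follows by pulling back both intervals of Proposition~\ref{flip-flop intervals tors} (the ff version) through the isomorphism.
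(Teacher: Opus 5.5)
Your proposal is correct and follows essentially the same route as the paper: you transport Proposition~\ref{flip-flop intervals tors} along $\Fac H^0$, read off the bounds $\Lambda$, $\Sigma\Lambda$ and $P \oplus \Sigma R$, and identify the remaining bound as $\mu^-_P(\Lambda) = S \oplus R = \widetilde T$, using that $\nu\inv a$ is a minimal left $(\add R)$-approximation together with the commutative square of Proposition~\ref{correspondence 2-silt_P and tors_P}. The differences are cosmetic: you spell out the silting check for $P \oplus \Sigma R$, which the paper calls easy, and you compute the mutation before its torsion class rather than after.
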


\begin{proof}
    First, it is clear that $\Fac H^0(\Lambda) = \modcat \Lambda$ and
    $\Fac H^0(\Sigma \Lambda) = \{0\}$.
    Moreover, it is easy to check that $P \oplus \Sigma R$ is silting,
    and it is clearly a 2-term complex whose cohomology in degree 0 is $P$,
    so that $\Fac H^0(P \oplus \Sigma R) = \add P$.

    It remains to compute the upper bound $Z$ of the interval $\twosilt (\projcat \Lambda) \setminus \twosilt_P (\projcat \Lambda)$,
    that is the 2-term silting complex satisfying $\Fac H^0(Z) = \leftperp P$.
    We do so by using Proposition~\ref{correspondence 2-silt_P and tors_P}:
    since we know that $f(\modcat \Lambda) = \leftperp P$,
    we get $Z = \mu^-_P(\Lambda) = \mu^-_P (P \oplus R)$.
    Now we know that $P \xrightarrow{\nu\inv a} \nu\inv J_1$ is a minimal left $(\add R)$-approximation of $P$.
    Then the 2-term complex \mbox{$S = (P \xrightarrow{\nu\inv a} \nu\inv J_1)$} is the cone of the morphism $\nu\inv a$
    in $\Kb (\projcat \Lambda)$,
    hence by the definition of mutation we get $Z = S \oplus R = \widetilde T$,
    as desired.
\end{proof}

As in section \ref{torsion classes wrt source},
we now want to translate these results to the dual setting where we replace the algebra $\Lambda$ with a simple projective module $P$
by an algebra $\Gamma$ with a simple injective module $I$.
As before, we write $D\Gamma = I \oplus J$.
We take $J' \stackrel{c}{\to} I$ a minimal right $(\add J)$-approximation of $I$,
and we define a 2-term complex $S' = (J' \stackrel{c}{\to} I) \in \Kb(\injcat \Gamma)$.

We have to be careful to the fact that the correspondence between 2-term silting complexes of injectives and torsion-free classes is given by
and anti-isomorphism between the posets $\twosilt (\injcat \Gamma)$ and $(\ftorf \Gamma, \subseteq)$.
Namely, we have $\twosilt (\injcat \Gamma) \simeq (\ftorf \Gamma, \subseteq)\opcat$.

\begin{proposition}
    Let $X \in \twosilt (\injcat \Gamma)$.
    Then $\Sub H^0 (X) \in \ftorf_I \Gamma$ if and only if $I \in \add X$.
\end{proposition}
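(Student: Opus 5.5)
This is the dual of Proposition~\ref{silt_P maps to tors_P}, and I would follow the same two-step argument with the arrows reversed.

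\emph{Step 1.} By Proposition~\ref{correspondence twosilt/torsion/torsionfree}, $\Sub H^0(X)$ is a functorially finite torsion-free class, so the only question is whether $I \in \Sub H^0(X)$. I will use the dual of Lemma~\ref{simple projective in Fac}. If $I \in \Sub M$, there is a monomorphism $I \inj M^{\oplus n}$. Because $I$ is injective, this monomorphism splits. Krull--Remak--Schmidt then gives $I \in \add M$. The converse is clear. Hence $\Sub H^0(X) \in \ftorf_I \Gamma$ if and only if $I \in \add H^0(X)$.

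\emph{Step 2.} It remains to show $I \in \add H^0(X) \iff I \in \add X$. Under Convention~\ref{convention 2-term complexes}, $X$ is concentrated in degrees $0$ and $1$, and $H^0(X)$ is the kernel of the differential. The stalk complex $I = (I \to 0)$ is 2-term and presilting, and its $H^0$ is $I$.

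The direction ($\Leftarrow$) is immediate, since $H^0$ is additive.

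For ($\Rightarrow$), decompose $X$ into indecomposable 2-term complexes, $X = \bigoplus X_i$, so that $H^0(X) = \bigoplus H^0(X_i)$. Then $I$ is a summand of some $H^0(X_i)$, where $X_i = (I_0 \xrightarrow{d} I_1)$ may be taken minimal, i.e.\ with no summand of the form $(E \xrightarrow{\mathrm{id}} E)$.

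\begin{itemize}
\item Since $I$ is a summand of $\ker d$, the inclusion $I \inj \ker d \inj I_0$ splits because $I$ is injective. So $I_0 = I \oplus I_0'$.
\item The restriction of $d$ to this copy of $I$ is zero, because that copy lies in $\ker d$.
\item Therefore $X_i \simeq (I \to 0) \oplus (I_0' \to I_1)$. By indecomposability, $X_i \simeq I$, so $I \in \add X$.
\end{itemize}

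This is exactly the dual of the assertion in Proposition~\ref{silt_P maps to tors_P} that $I$ is the only indecomposable 2-term complex with $H^0$ equal to $I$. The same reasoning also shows that any indecomposable $X_i$ with $I \mid H^0(X_i)$ must be $I$ itself.

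I expect the only delicate point to be Step 2. One must check that the splitting of $I_0$ is compatible with the differential, which holds because $d$ vanishes on $I \subseteq \ker d$, so the decomposition is a decomposition of complexes. The rest is formal dualisation.
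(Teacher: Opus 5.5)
Your proof is correct and takes the paper's route: the paper leaves this statement as the dual of Proposition~\ref{silt_P maps to tors_P}, whose proof has exactly your two steps. Those are the dual of Lemma~\ref{simple projective in Fac}, followed by the observation that the stalk complex $I$ is the only indecomposable 2-term complex whose $H^0$ contains $I$ as a summand. Your splitting argument in Step 2 makes that observation explicit, and it also applies directly to $X$ itself, without first decomposing into indecomposables or assuming minimality.
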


We define the subset $\twosilt_I(\injcat \Gamma) = \{X \in \twosilt(\injcat \Gamma) \mid I \in \add X\}$.

\begin{proposition} \label{correspondence 2-silt_I and torf_I}
    We have a commutative diagram
    \[\begin{tikzcd}[ampersand replacement=\&,cramped]
        {\twosilt_{I} (\injcat \Gamma)} \& {\twosilt (\injcat \Gamma) \setminus \twosilt_{I} (\injcat \Gamma)} \\
        {(\ftorf_I \Gamma)\opcat} \& {(\ftorf \Gamma \setminus \ftorf_I \Gamma)\opcat}
        \arrow["{\mu^+_{I}}", from=1-1, to=1-2]
        \arrow["{\Sub H^0}"', from=1-1, to=2-1]
        \arrow["{\Sub H^0}", from=1-2, to=2-2]
        \arrow["{g\opcat}"', from=2-1, to=2-2]
    \end{tikzcd}\]
    where the vertical maps are isomorphisms of posets.
    
    In particular, this implies that the map
    $\mu^+_{I} : \twosilt_{I} (\injcat \Gamma) \to \twosilt (\injcat \Gamma) \setminus \twosilt_{I} (\injcat \Gamma)$
    is order-preserving.
\end{proposition}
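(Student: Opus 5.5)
The plan is to dualise the proof of Proposition~\ref{correspondence 2-silt_P and tors_P}.

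\textbf{Well-definedness of the vertical maps.} By the proposition just before the statement, $\Sub H^0(X) \in \ftorf_I \Gamma$ if and only if $I \in \add X$. Combined with the anti-isomorphism of Proposition~\ref{correspondence twosilt/torsion/torsionfree}, this shows that $\Sub H^0$ restricts to isomorphisms $\twosilt_I(\injcat \Gamma) \iso (\ftorf_I \Gamma)\opcat$ and $\twosilt(\injcat \Gamma) \setminus \twosilt_I(\injcat \Gamma) \iso (\ftorf \Gamma \setminus \ftorf_I \Gamma)\opcat$.

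\textbf{Commutativity.} Let $X = I \oplus X' \in \twosilt_I(\injcat \Gamma)$. Take a minimal right $(\add X')$-approximation $Y \to I$ and complete it to a triangle $Z \to Y \to I \to \Sigma Z$, so that $\mu^+_I(X) = Z \oplus X'$.

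Rotating gives a triangle $\Sigma\inv I \to Z \to Y \to I$. Here $\Sigma\inv I$ and $Y$ are both 2-term, i.e.\ they lie in the extension-closed subcategory $\Sigma\inv D\Gamma * D\Gamma$. Hence $Z$ is 2-term, and $\mu^+_I(X)$ is a 2-term silting complex by Theorem~\ref{mutation preserves silting}.

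Applying $H^0$, and using that $\Sigma\inv I = (0 \to I)$ has $H^0 = 0$ by Convention~\ref{convention 2-term complexes}, we get an exact sequence $0 \to H^0(Z) \to H^0(Y)$. So $H^0(Z)$ embeds in $H^0(Y) \in \add H^0(X')$, which gives
\[\Sub H^0(\mu^+_I(X)) = \Sub\bigl(H^0(Z) \oplus H^0(X')\bigr) = \Sub H^0(X').\]

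It remains to identify $g(\Sub H^0(X))$. We have $\Sub H^0(X) = \Sub(I \oplus H^0(X'))$. By the dual of Lemma~\ref{removing P from Fac M}, we get $\ind \Sub H^0(X') = (\ind \Sub(I \oplus H^0(X'))) \setminus \{I\}$. By the definition of $g$, this says exactly that $g(\Sub H^0(X)) = \Sub H^0(X')$.

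This dual lemma is the one step I would check rather than just cite. The argument dualises as follows. Take a monomorphism $h : N \inj I \oplus M''$ with $N$ indecomposable and $N \not\simeq I$, and write $h_1$ for its component to $I$. If $h_1 \neq 0$, it is surjective onto the simple injective $I$, hence split, so $I$ is a summand of $\head N$. Since $N$ is indecomposable and not $I$, this forces $\ker h_1 \not\subseteq \rad N$, hmm, more precisely: the kernel of $h_1$ contains $\soc N$. The component $h_2$ into $M''$ is then a monomorphism, because otherwise $\ker h_2 \cap \soc N \neq 0$ would lie in $\ker h$. Inducting on the number of copies of $I$ finishes the argument.

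\textbf{Order-preservation.} Since $g$ is order-preserving by Proposition~\ref{torf has flip-flop decomposition}, so is $g\opcat$. As the vertical maps are poset isomorphisms and the square commutes, $\mu^+_I$ is order-preserving.

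I expect the main obstacle to be purely bookkeeping: keeping the opposite orders straight and checking that $\Sigma\inv I$ is genuinely 2-term for the injective convention. Alternatively, one could transport everything from Proposition~\ref{correspondence 2-silt_P and tors_P} applied to $\Gamma\opcat$ via $k$-duality $D$. This would exchange $\mu^-$ with $\mu^+$ and $\Fac$ with $\Sub$, but checking those compatibilities is no shorter than the direct argument.
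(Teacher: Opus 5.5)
Your proposal is correct and takes the same approach as the paper: the paper records this statement without a separate proof, saying the proof of Proposition~\ref{correspondence 2-silt_P and tors_P} transposes directly, and your argument is exactly that dualisation. One local fix in your dual of Lemma~\ref{removing P from Fac M}: a surjection onto the injective $I$ need not split, and the correct reason $h_1$ vanishes on $\soc N$ is that otherwise some simple submodule of $N$ maps isomorphically onto $I$, and the resulting embedding $I \inj N$ splits by injectivity of $I$, contradicting that $N$ is indecomposable and not isomorphic to $I$.
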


The following is a direct corollary using Proposition~\ref{torf has flip-flop decomposition} and Lemma~\ref{opposite flip-flop}.

\begin{corollary} \label{flip-flop for 2-silt inj}
    The poset $\twosilt(\injcat \Gamma)$ has a flip-flop decomposition:
    \[\twosilt(\injcat \Gamma) \simeq \Bigl(\twosilt_I(\injcat \Gamma) \sqcup \twosilt(\injcat \Gamma) \setminus \twosilt_I(\injcat \Gamma), \leq^{\mu_I^+}_+\Bigr).\]
\end{corollary}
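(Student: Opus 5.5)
The plan is to transport the flip-flop decomposition of $(\ftorf \Gamma, \subseteq)$ from Proposition~\ref{torf has flip-flop decomposition} along the anti-isomorphism $\Sub H^0 : \twosilt(\injcat \Gamma) \to (\ftorf \Gamma, \subseteq)\opcat$ of Proposition~\ref{correspondence twosilt/torsion/torsionfree}, using the commutative square of Proposition~\ref{correspondence 2-silt_I and torf_I}.

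First I pass to opposites. Proposition~\ref{torf has flip-flop decomposition} gives
\[(\ftorf \Gamma, \subseteq) \simeq \bigl(\ftorf_I \Gamma \sqcup (\ftorf \Gamma \setminus \ftorf_I \Gamma), \leq^g_-\bigr).\]
Applying Lemma~\ref{opposite flip-flop} to the order-preserving map $g$, I obtain
\[(\ftorf \Gamma, \subseteq)\opcat \simeq \bigl((\ftorf_I \Gamma)\opcat \sqcup (\ftorf \Gamma \setminus \ftorf_I \Gamma)\opcat, \leq^{g\opcat}_+\bigr).\]

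Next I pull this back to $\twosilt(\injcat \Gamma)$. By Proposition~\ref{correspondence 2-silt_I and torf_I}, the poset isomorphism $\Sub H^0$ restricts to isomorphisms $\twosilt_I(\injcat \Gamma) \iso (\ftorf_I \Gamma)\opcat$ and $\twosilt(\injcat \Gamma) \setminus \twosilt_I(\injcat \Gamma) \iso (\ftorf \Gamma \setminus \ftorf_I \Gamma)\opcat$. The proposition preceding it ensures the splitting into the two pieces matches on both sides. Under these identifications, the same square shows that $g\opcat$ corresponds to $\mu^+_I$.

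It then remains to note a general fact about the construction $\leq^f_+$: it is natural in the data $(X, Y, f)$. If $\alpha : X \iso X'$ and $\beta : Y \iso Y'$ are poset isomorphisms with $f' \alpha = \beta f$, then $\alpha \sqcup \beta$ is an isomorphism $(X \sqcup Y, \leq^f_+) \iso (X' \sqcup Y', \leq^{f'}_+)$. This is immediate from the definition, since each of the three kinds of relations is preserved and reflected. Composing these isomorphisms yields the claimed decomposition with respect to $\leq^{\mu^+_I}_+$.

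The only delicate point is this bookkeeping of opposites and of the sign $\pm$. The rest is formal once the square in Proposition~\ref{correspondence 2-silt_I and torf_I} is available. That square is the dual of Proposition~\ref{correspondence 2-silt_P and tors_P}: right mutation at $I$ replaces $I$ by the cocone of $J' \to I$, and $H^0$ of this cocone injects into $H^0$ of the remaining summands, so $\Sub H^0$ loses exactly $I$.
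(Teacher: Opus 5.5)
Your proposal is correct and follows the paper's route: the paper also obtains this as a direct corollary of Proposition~\ref{torf has flip-flop decomposition} and Lemma~\ref{opposite flip-flop}, transported along the square of Proposition~\ref{correspondence 2-silt_I and torf_I}. One small slip in your closing aside: mutating a general $X = I \oplus X'$ uses a minimal right $(\add X')$-approximation of $I$, not the fixed map $J' \to I$ (that map only arises for $X = D\Gamma$), but your argument that $H^0$ of the cocone injects into $H^0$ of the remaining summands goes through unchanged.
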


\begin{proposition} \label{intervals for 2-silt inj}
    We have
    \begin{gather*}
        \twosilt_I (\injcat \Gamma) = \bigl[D\Gamma, I \oplus \Sigma\inv J\bigr] \\
        \twosilt (\injcat \Gamma) \setminus \twosilt_I (\injcat \Gamma) = \bigl[\Sigma\inv D\Gamma, J \oplus S'\bigr]
    \end{gather*}
    where $S' = (J' \stackrel{c}{\to} I) \in \Kb(\injcat \Gamma)$ is the complex constructed from
    any given minimal right $(\add J)$-approximation $c$ of $I$.
\end{proposition}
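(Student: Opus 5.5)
The plan is to mirror the proof of Proposition~\ref{intervals for 2-silt proj}, now in the dual setting. The anti-isomorphism $\Sub H^0 : \twosilt(\injcat \Gamma) \to (\ftorf \Gamma, \subseteq)\opcat$ of Proposition~\ref{correspondence twosilt/torsion/torsionfree} carries intervals to intervals, with the bounds reversed. By Proposition~\ref{flip-flop intervals torf}, inside $\ftorf \Gamma$ we have $\ftorf_I \Gamma = \bigl[\add I, \modcat \Gamma\bigr]$ and $\ftorf \Gamma \setminus \ftorf_I \Gamma = \bigl[\{0\}, \rightperp I\bigr]$. By the preceding proposition, $\twosilt_I(\injcat \Gamma)$ corresponds exactly to $\ftorf_I \Gamma$, so its complement corresponds to the complement. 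Both subsets are therefore intervals of $\twosilt(\injcat \Gamma)$. It only remains to identify the four 2-term silting complexes of injectives whose $\Sub H^0$ equals $\modcat \Gamma$, $\add I$, $\{0\}$ and $\rightperp I$. These will be, respectively, the lower bound of the first interval, its upper bound, the lower bound of the second, and its upper bound.

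Three of these are immediate.
\begin{itemize}
\item $H^0(D\Gamma) = D\Gamma$, which is a cogenerator, so $\Sub H^0(D\Gamma) = \modcat \Gamma$.
\item $H^0(\Sigma\inv D\Gamma) = 0$, since this complex is concentrated in degree $1$ by Convention~\ref{convention 2-term complexes}. Hence $\Sub H^0 = \{0\}$.
\item For $I \oplus \Sigma\inv J$: it is silting because $\Hom(I \oplus \Sigma\inv J, \Sigma^{>0}(I \oplus \Sigma\inv J))$ reduces to $\Hom(I, J)$ in degree~$0$-shifted pieces. Concretely, the only possibly nonzero term is $\Hom(\Sigma\inv J, \Sigma^{i}\Sigma\inv I)$ type maps from a higher-degree stalk into a lower one. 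These vanish in $\Kb(\injcat \Gamma)$ because maps from stalks in degree $1$ to stalks in degree $0$, shifted positively, are zero. It has $\card{D\Gamma}$ summands, so it is silting by Proposition~\ref{prop:two complements twosilt}. Its $H^0$ is $I$, which is simple, so $\Sub I = \add I$.
\end{itemize}

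The main point is the upper bound of the complement, namely the complex $Z$ with $\Sub H^0(Z) = \rightperp I$. Here I use Proposition~\ref{correspondence 2-silt_I and torf_I}. We have $g(\modcat \Gamma) = \modcat \Gamma \cap \rightperp I = \rightperp I$, and $D\Gamma = I \oplus J$ lies in $\twosilt_I(\injcat \Gamma)$ with $\Sub H^0(D\Gamma) = \modcat \Gamma$. Commutativity of the diagram therefore gives $Z = \mu^+_I(D\Gamma)$.

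By definition of right mutation, $\mu^+_I(D\Gamma)$ is computed from a minimal right $(\add J)$-approximation $c : J' \to I$ and the triangle $Z_0 \to J' \xrightarrow{c} I \to \Sigma Z_0$, giving $\mu^+_I(D\Gamma) = Z_0 \oplus J$. The cocone $Z_0$ of $c$ in $\Kb(\injcat \Gamma)$ is the complex $(J' \xrightarrow{c} I)$ with $J'$ in degree $0$ and $I$ in degree $1$, which is exactly $S'$. Hence $Z = J \oplus S'$.

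Independence from the choice of $c$ follows because minimal approximations are unique up to isomorphism. The only delicate step is the sign and degree bookkeeping: one must check that the cocone, rather than the cone, is the complex placed in degrees $0$ and $1$ under Convention~\ref{convention 2-term complexes}. If needed, this can be cross-checked directly by computing $H^0(S') = \ker c$, which lies in $\Sub J$, and verifying that $\Sub(J \oplus \ker c) = \rightperp I$ using the dual of Lemma~\ref{removing P from Fac M}.
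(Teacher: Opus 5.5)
Your strategy is the paper's: the statement is the dual of Proposition~\ref{intervals for 2-silt proj}, and your computation $\mu^+_I(D\Gamma)=J\oplus S'$ via the cocone of $c$ is exactly the dual of $\widetilde T=\mu^-_P(\Lambda)$. Two steps need correcting, though.

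\textbf{The bounds of the second interval.} By Corollary~\ref{2-silt proj/inj as interval}, $\Sigma\inv D\Gamma$ is the \emph{maximum} of $\twosilt(\injcat\Gamma)=\bigl[D\Gamma,\Sigma\inv D\Gamma\bigr]$. You note yourself that $\Sub H^0$ reverses bounds. So $\bigl[\{0\},\rightperp I\bigr]$ corresponds to the interval whose \emph{top} is the complex with $\Sub H^0=\{0\}$, namely $\Sigma\inv D\Gamma$. Its bottom is the complex with $\Sub H^0=\rightperp I$, namely $J\oplus S'$. Your assignment ``$\{0\}$ gives the lower bound of the second interval, $\rightperp I$ its upper bound'' contradicts this. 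The correct conclusion is $\twosilt(\injcat\Gamma)\setminus\twosilt_I(\injcat\Gamma)=\bigl[J\oplus S',\Sigma\inv D\Gamma\bigr]$. The printed $\bigl[\Sigma\inv D\Gamma, J\oplus S'\bigr]$ is a misprint: as written it is empty unless $\rightperp I=\{0\}$. The corrected form is the one the paper actually uses later. In Corollary~\ref{flip-flop for translated 2-silt inj} this piece is $\Phi\bigl(\bigl[\Sigma\Lambda,\widetilde T\bigr]\bigr)$, with $\Phi(\Sigma\Lambda)=J\oplus S'$ and $\Phi(\widetilde T)=\Sigma\inv D\Gamma$ by Lemma~\ref{action of Phi on the bounds}. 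You should have flagged this rather than reproducing it.

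\textbf{The presilting check for $I\oplus\Sigma\inv J$.} Your argument proves vanishing for the wrong Hom-spaces. In $\Kb(\injcat\Gamma)$ a chain map between stalk complexes in different degrees is zero. So every term of $\Hom(I\oplus\Sigma\inv J,\Sigma^{>0}(I\oplus\Sigma\inv J))$ vanishes for degree reasons except one: $\Hom(I,\Sigma(\Sigma\inv J))=\Hom_\Gamma(I,J)$. This term is \emph{not} killed by degrees, and it is precisely the one you must handle. It vanishes because a nonzero map out of the simple injective module $I$ is a split monomorphism. That would make $I$ a summand of $J$, which is impossible since $D\Gamma=I\oplus J$ is basic. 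This is the dual of Lemma~\ref{morphisms with target P}; in the projective case the corresponding input is $\Hom_\Lambda(R,P)=0$. With these two fixes, the rest of your argument goes through.
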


\subsection{Comparison of the decompositions under 1-APR tilt}

Let $\Lambda$ be a finite dimensional algebra admitting a simple projective module $P$,
and decompose $\Lambda = P \oplus R$ as a direct sum of $\Lambda$-modules.
We further assume that $\idim P = 1$,
so that $T = \tau\inv P \oplus R$ is a 1-APR tilting module.
We set $\Gamma = \End_\Lambda(T)$.
We know from Proposition~\ref{simple injective Gamma module} that $\Gamma$ admits a simple injective module $I = \Ext^1_\Lambda(T,P)$,
and moreover $\pdim I = 1$.
We decompose $D\Gamma = I \oplus J$ as a direct sum of $\Gamma$-modules.

The $\Lambda$-module $T$ is tilting,
so the functor $F = \Hom_\Lambda (T, \blank) : \modcat \Lambda \to \modcat \Gamma$
induces a derived equivalence $\mathbf R F = \mathbf R \Hom_\Lambda(T, \blank) : \deriv (\Lambda) \iso \deriv (\Gamma)$.
We use the following property of this derived functor:
if $M \in \modcat \Lambda$, then the object $\mathbf R F(M) \in \deriv (\Gamma)$ satisfies
\[H^i (\mathbf R F(M)) \simeq \Ext^i_\Lambda (T, M)\]
as $\Gamma$-modules, for $i \in \mathbb Z$.
In particular, we have $H^i(\mathbf R F (M)) = 0$ for $i<0$.

The functor $\mathbf R F$ restricts to an equivalence between the subcategories $\thick T \subseteq \deriv (\Lambda)$
and $\thick \Gamma = \Kb (\projcat \Gamma) \subseteq \deriv (\Gamma)$.
Now the triangle $P \to R \to \tau\inv P \to \Sigma P$ in $\deriv (\Lambda)$ shows that
$\thick T = \thick \Lambda = \Kb(\projcat \Lambda)$,
so $\mathbf R F$ restricts to an equivalence $\Kb (\projcat \Lambda) \iso \Kb (\projcat \Gamma)$.
From it we define a new functor
\[\Phi = \Sigma\inv \circ \nu_\Gamma \circ \mathbf R F : \Kb (\projcat \Lambda) \iso \Kb (\injcat \Gamma)\]
where $\nu_\Gamma : \Kb(\projcat \Gamma) \iso \Kb(\injcat \Gamma)$ is the derived Nakayama functor of $\Gamma$,
and $\Sigma$ denotes the translation functor of $\Kb(\injcat \Gamma)$
(see \ref{interpretation Sigma-1 nu} for an interpretation of the functor $\Sigma\inv \circ \nu$ in terms of torsion pairs).
Since $\Phi$ is a triangle equivalence,
it induces an isomorphism of posets $\silt \Kb(\projcat \Lambda) \simeq \silt \Kb(\injcat \Gamma)$.
We use the functor $\Phi$ to translate results about the poset $\twosilt (\injcat \Gamma) \subseteq \silt \Kb(\injcat \Gamma)$
and its flip-flop decomposition to statements about an isomorphic subposet of $\silt \Kb (\projcat \Lambda)$.

Recall the following 2-term presilting complexes:
$S = (P \xrightarrow{\nu\inv a} \nu\inv J_1) \in \Kb(\projcat \Lambda)$ where $\nu\inv a$ is a minimal left $R$-approximation of $P$,
the silting complex $\widetilde T = S \oplus R \in \Kb(\projcat \Lambda)$
(see \ref{S is projective resolution of tau-P}),
and $S' = (F J_1 \xrightarrow{\nu b} I) \in \Kb(\injcat \Gamma)$ where $\nu b$ is a minimal right $J$-approximation of $I$
(see \ref{intervals for 2-silt inj} and \ref{nu b is approx}).
Also recall our conventions on denoting 2-term complexes,
which were introduced in \ref{convention 2-term complexes}.

\begin{lemma} \label{I = Sigma P}
    The functor $\mathbf R F$ sends the stalk complex $P$ in degree 0
    to the stalk complex $I$ in degree 1.
    In other words, we have $\mathbf R F (\Sigma P) \simeq I$ in $\deriv (\Gamma)$.
\end{lemma}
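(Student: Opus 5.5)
We compute the cohomology of $\mathbf R F(P)$ directly, using the property $H^i(\mathbf R F(M)) \simeq \Ext^i_\Lambda(T, M)$ recalled above, applied with $M = P$. If only one cohomology group is nonzero, the complex is isomorphic to a shifted stalk complex, and the claim follows.

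First, $H^0(\mathbf R F(P)) = \Hom_\Lambda(T, P)$. This vanishes by Lemma~\ref{morphisms with target P}: a nonzero morphism $T \to P$ would force $P \in \add T$. But $T = \tau\inv P \oplus R$, $R$ does not contain $P$ as a summand since $\Lambda = P \oplus R$ is basic, and $\tau\inv P \not\simeq P$ because $\tau\inv P$ is non-projective (alternatively, $\Hom(T,P)=0$ as already used in the proof of Proposition~\ref{simple injective Gamma module}). Negative cohomology vanishes automatically.

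Next, $H^1(\mathbf R F(P)) = \Ext^1_\Lambda(T, P) = I$ by the definition of $I$ in Proposition~\ref{simple injective Gamma module}. Finally, for $i \geq 2$ we have $\Ext^i_\Lambda(T, P) = 0$ because $\pdim T = 1$ by Theorem~\ref{1-APR implies tilting}.

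So $\mathbf R F(P)$ is a complex in $\deriv(\Gamma)$ with cohomology concentrated in degree $1$, equal to $I$. It is therefore isomorphic to the stalk complex $I$ placed in degree $1$, that is $\Sigma\inv I$ in the convention where modules sit in degree $0$. Applying the triangle functor $\mathbf R F$ and shifting gives $\mathbf R F(\Sigma P) \simeq \Sigma \mathbf R F(P) \simeq I$.

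There is no real obstacle. The only point needing care is the vanishing of $\Hom_\Lambda(T, P)$, which reduces to $P \notin \add T$ as above, together with the standard fact that a complex with a single nonzero cohomology group is quasi-isomorphic to a shifted stalk complex (truncation).
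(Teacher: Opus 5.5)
Your proof is correct and follows the paper's: you compute the cohomology of $\mathbf R F(P)$ as $\Ext^i_\Lambda(T,P)$, use $\Hom_\Lambda(T,P)=0$ and $\Ext^1_\Lambda(T,P)=I$, and conclude that $\mathbf R F(P)$ is a stalk complex in degree $1$. The only difference is that you kill $\Ext^{\geq 2}_\Lambda(T,P)$ using $\pdim T = 1$, while the paper uses $\idim P = 1$; your version shows the lemma already holds for weak 1-APR tilts.
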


\begin{proof}
    We have $\Ext^i_\Lambda(T,P) = 0$ for $i > 1$ since $\idim P = 1$,
    and we know that $\Hom_\Lambda(T,P) = 0$.
    This implies that the complex $\mathbf R F(P) \in \deriv(\Gamma)$ has cohomology concentrated in degree 1,
    and we have $H^1 (\mathbf R F(P)) \simeq \Ext^1 (T,P) = I$.
    Now the complex $\mathbf R F(\Sigma P)$ has cohomology concentrated in degree 0,
    and it is isomorphic in $\deriv(\Gamma)$ to $\Ext^1_\Lambda(T,P) = I$.
\end{proof}

\begin{lemma} \label{action of Phi on the bounds}
    We have $\Phi(\widetilde T) = \Sigma\inv D\Gamma$,
    $\Phi(\Sigma \Lambda) = J \oplus S'$,
    and $\Phi(\Sigma S \oplus R) = I \oplus \Sigma\inv J$.
\end{lemma}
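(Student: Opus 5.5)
The plan is to compute each of the three images summand by summand, using additivity of the triangle functor $\Phi = \Sigma\inv \nu_\Gamma \mathbf R F$. The basic inputs are:
\begin{itemize}
    \item since $\widetilde T \simeq T$ in $\deriv(\Lambda)$ (Remark~\ref{S is projective resolution of tau-P}), and $T$ is tilting, $\mathbf R F(S) \simeq F(\tau\inv P)$ and $\mathbf R F(R) \simeq F(R)$ are projective stalk complexes in degree $0$;
    \item $\mathbf R F(\Sigma P) \simeq I$ by Lemma~\ref{I = Sigma P};
    \item $\nu_\Gamma F(\tau\inv P) \simeq I$ and $\nu_\Gamma F(R) \simeq J$ by Lemma~\ref{nu F(R) = J}.
\end{itemize}

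\textbf{First identity.} We get $\Phi(\widetilde T) = \Sigma\inv \nu_\Gamma\bigl(F(\tau\inv P) \oplus F(R)\bigr) = \Sigma\inv(I \oplus J) = \Sigma\inv D\Gamma$.

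\textbf{Third identity.} We have $\Phi(R) = \Sigma\inv J$. For $\Sigma S$, use $\Phi(\Sigma S) = \Sigma \Phi(S) = \nu_\Gamma F(\tau\inv P) = I$. Together these give $\Phi(\Sigma S \oplus R) = I \oplus \Sigma\inv J$.

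\textbf{Second identity.} We have $\Phi(\Sigma R) = \Sigma \Phi(R) = J$. It remains to show $\Phi(\Sigma P) \simeq S'$, and this is the main step. Here $\Phi(\Sigma P) = \Sigma\inv \nu_\Gamma(I)$, where $\nu_\Gamma$ is the derived Nakayama functor applied to $I$. To evaluate it, we replace $I$ by its projective resolution from Proposition~\ref{simple injective Gamma module}, namely the complex $\bigl(F(\nu\inv J_1) \xrightarrow{b} F(\tau\inv P)\bigr)$ in degrees $-1,0$. Applying $\nu_\Gamma$ termwise gives the complex $\bigl(\nu_\Gamma F(\nu\inv J_1) \xrightarrow{\nu b} \nu_\Gamma F(\tau\inv P)\bigr)$ in degrees $-1,0$. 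By Lemmas~\ref{nu and F commute} and~\ref{nu F(R) = J}, this is isomorphic to $(F J_1 \xrightarrow{\nu b} I)$ in degrees $-1,0$. Applying $\Sigma\inv$ shifts it to degrees $0,1$, which is exactly $S'$ under Convention~\ref{convention 2-term complexes}. Hence $\Phi(\Sigma\Lambda) = \Phi(\Sigma P) \oplus \Phi(\Sigma R) \simeq S' \oplus J$.

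The delicate point is consistency in this last step. The identification $\nu_\Gamma F(\nu\inv J_1) \simeq F J_1$ must be the one through which $\nu b$ was defined in Lemma~\ref{nu b is approx}, so the isomorphism of complexes is just the definition of $S'$. The shift conventions must also be tracked carefully: the stalk $I$ in degree $0$ has its resolution in degrees $-1,0$, and $\Sigma\inv$ moves it to degrees $0,1$, as required.

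Alternatively, one could deduce the second identity abstractly. $\Phi$ is an order isomorphism on silting objects compatible with mutation, $\Sigma\Lambda = \mu^-$ of something, and Proposition~\ref{intervals for 2-silt inj} identifies $J \oplus S'$ as an interval bound. However, the direct computation above is cleaner, so I would use it.
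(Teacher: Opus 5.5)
Your proposal is correct and follows essentially the same route as the paper. You compute $\Phi(R)$ and $\Phi(S)$ from the fact that $\mathbf R F$ sends $R$ and $S$ to the projective stalks $F(R)$ and $F(\tau\inv P)$, together with Lemma~\ref{nu F(R) = J}, and you obtain $\Phi(\Sigma P) = S'$ by applying $\Sigma\inv\nu_\Gamma$ termwise to the projective resolution of $I$ from Proposition~\ref{simple injective Gamma module}, using Lemma~\ref{nu and F commute}; the closing abstract alternative is not needed, so relying on the direct computation is the right choice.
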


\begin{proof}
    First we compute $\Phi(R)$.
    We know that $\Ext^i_\Lambda(T,R) = 0$ for $i > 0$,
    since $R$ is a summand of $T$ and $T$ is tilting.
    So the complex $\mathbf R F(R)$ has cohomology concentrated in degree 0,
    hence it is isomorphic in $\deriv(\Gamma)$ to $\Hom (T,R) = F(R)$.
    Moreover, $F(R)$ is a projective $\Gamma$-module.
    So we get $\mathbf RF(R) = (0 \to FR) \in \Kb(\projcat \Gamma)$.
    Now by Lemma~\ref{nu F(R) = J}, we know that $\nu_\Gamma (FR) = J$,
    hence $\Phi(R) = \Sigma\inv \nu (\mathbf RF (R)) = (0 \to J) \in \Kb(\injcat \Gamma)$.

    We apply the same reasoning to compute $\Phi(S)$.
    By Remark \ref{S is projective resolution of tau-P}, we know that $S$ is isomorphic in $\deriv (\Lambda)$ to $\tau\inv P$,
    which is a direct summand of $T$.
    So we get $\mathbf R F(S) = (0 \to F(\tau\inv P))$ in $\Kb(\projcat \Gamma)$.
    Again by Lemma~\ref{nu F(R) = J}, we know that $\nu_\Gamma F(\tau\inv P) = I$,
    so we get $\Phi(S) = (0 \to I)$ in $\Kb(\injcat \Gamma)$.

    We deduce that
    \[\Phi(\widetilde T) = \Phi(S \oplus R) = (0 \to I) \oplus (0 \to J) = (0 \to D\Gamma) = \Sigma\inv D\Gamma,\]
    and that
    \[\Phi(\Sigma S \oplus R) = (I \to 0) \oplus (0 \to J) = I \oplus \Sigma\inv J.\]

    We know from Lemma~\ref{I = Sigma P} that we have  $\mathbf R F (\Sigma P) \simeq I$ in $\deriv (\Gamma)$.
    From Proposition~\ref{simple injective Gamma module} we get a projective resolution of $I$,
    which implies that $\mathbf RF(\Sigma P) = (F(\nu\inv J_1) \stackrel{b}{\to} F(\tau\inv P)) \in \Kb(\projcat \Gamma)$.
    Then by Lemma~\ref{nu F(R) = J} and Lemma~\ref{nu and F commute},
    we have $\Phi(\Sigma P) = (F J_1 \xrightarrow{\nu b} I) = S' \in \Kb(\injcat \Gamma)$.
    So finally we get
    \[\Phi(\Sigma \Lambda) = \Phi(\Sigma R) \oplus \Phi(\Sigma P) = J \oplus S'.\qedhere\]
\end{proof}

In Figure \ref{fig:double flip-flop silting} we represent the situation in the poset $\silt \Kb(\projcat \Lambda)$.

\begin{figure}
    \def\svgwidth{0.4\textwidth}
    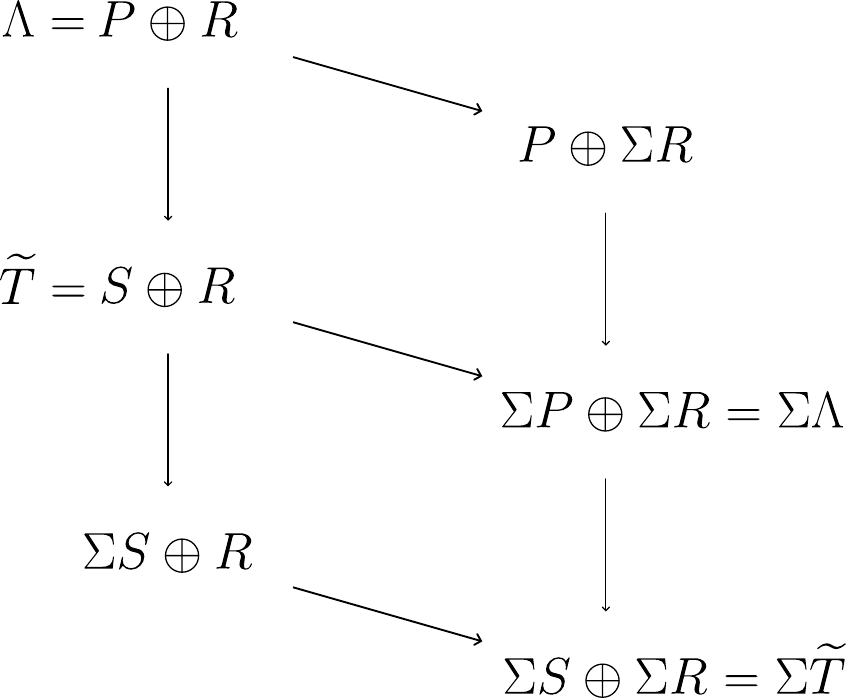
    \caption{The arrows represent the partial order,
        with the convention that an arrow $x \to y$ means $x \geq y$,
        and the vertical arrows are given by mutation, so they correspond to covering relations.
    }
    \label{fig:double flip-flop silting}
\end{figure}

\begin{corollary} \label{flip-flop for translated 2-silt inj}
    The functor $\Phi$ induces an isomorphism of posets between the interval $\bigl[\Sigma \widetilde T, \widetilde T\bigr]$ in $\silt \Kb(\projcat \Lambda)$
    and the poset $\twosilt (\injcat \Gamma)$.
    
    Moreover, this interval has a flip-flop decomposition given by
    \[\bigl[\Sigma \widetilde T, \widetilde T\bigr] \simeq \Bigl(\bigl[\Sigma \widetilde T, \Sigma S \oplus R\bigr] \sqcup \bigl[\Sigma \Lambda, \widetilde T\bigr], \leq^{\mu^+_{\Sigma S}}_+\Bigr)\]
\end{corollary}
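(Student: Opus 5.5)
We transport everything along the triangle equivalence $\Phi$. Since $\Phi$ is a triangle equivalence $\Kb(\projcat \Lambda) \iso \Kb(\injcat \Gamma)$ commuting with $\Sigma$, it induces an isomorphism of posets $\silt \Kb(\projcat \Lambda) \iso \silt \Kb(\injcat \Gamma)$ which sends intervals to intervals. By Lemma~\ref{action of Phi on the bounds}, $\Phi(\widetilde T) = \Sigma\inv D\Gamma$, hence $\Phi(\Sigma \widetilde T) = D\Gamma$. By Corollary~\ref{2-silt proj/inj as interval}, $\twosilt(\injcat \Gamma) = \bigl[D\Gamma, \Sigma\inv D\Gamma\bigr]$, so $\Phi$ restricts to an isomorphism $\bigl[\Sigma\widetilde T, \widetilde T\bigr] \iso \twosilt(\injcat \Gamma)$. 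This gives the first claim.

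For the flip-flop decomposition, we pull back Corollary~\ref{flip-flop for 2-silt inj} along this isomorphism. By Proposition~\ref{intervals for 2-silt inj}, the two pieces are $\twosilt_I(\injcat \Gamma) = \bigl[D\Gamma, I \oplus \Sigma\inv J\bigr]$ and its complement $\bigl[\Sigma\inv D\Gamma, J \oplus S'\bigr]$. We use the construction of $S'$ via the approximation $\nu b$, which is legitimate by Lemma~\ref{nu b is approx}. Lemma~\ref{action of Phi on the bounds} identifies the preimages of the endpoints: $\Phi\inv(D\Gamma) = \Sigma \widetilde T$, $\Phi\inv(I\oplus \Sigma\inv J) = \Sigma S \oplus R$, $\Phi\inv(\Sigma\inv D\Gamma) = \widetilde T$ and $\Phi\inv(J \oplus S') = \Sigma\Lambda$. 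So the two pieces correspond to $\bigl[\Sigma \widetilde T, \Sigma S \oplus R\bigr]$ and $\bigl[\Sigma\Lambda, \widetilde T\bigr]$ respectively.

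It remains to identify the map $\Phi\inv \circ \mu^+_I \circ \Phi$ with $\mu^+_{\Sigma S}$. A triangle equivalence commutes with mutation: minimal right approximations, triangles and direct sum decompositions are all preserved. Hence $\Phi\inv \mu^+_{I} \Phi = \mu^+_{\Phi\inv(I)}$ on silting objects having $\Phi\inv(I)$ as a summand. So the key point is $\Phi\inv(I) = \Sigma S$, i.e.\ $\Phi(\Sigma S) = I$ as a stalk complex in degree $0$. This follows from the computation $\Phi(S) = (0 \to I) = \Sigma\inv I$ in the proof of Lemma~\ref{action of Phi on the bounds}. Moreover, $X \in \bigl[\Sigma\widetilde T, \widetilde T\bigr]$ lies in the first piece exactly when $\Phi(X) \in \twosilt_I(\injcat \Gamma)$, that is, when $I \in \add \Phi(X)$, equivalently $\Sigma S \in \add X$. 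So the decomposition pulled back is indexed by the summand $\Sigma S$, and the map is $\mu^+_{\Sigma S}$.

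The only point needing care is this compatibility of mutation with $\Phi$, together with the fact that mutation here is taken inside $\silt$ of the whole category rather than inside the interval. This is harmless because Proposition~\ref{correspondence 2-silt_I and torf_I} already states that $\mu^+_I$ lands in $\twosilt(\injcat\Gamma)$. The rest is bookkeeping, and no sign issue arises since $\Phi$ is order-preserving (not reversing), so the $\leq_+$ convention is kept.
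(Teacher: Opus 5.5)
Your proposal is correct and follows essentially the same route as the paper: you transport the flip-flop decomposition of $\twosilt(\injcat \Gamma)$ (Corollary~\ref{flip-flop for 2-silt inj}, with the interval bounds from Proposition~\ref{intervals for 2-silt inj}) along $\Phi\inv$, using Lemma~\ref{action of Phi on the bounds} for the endpoints, and you identify $\mu^+_I$ with $\mu^+_{\Sigma S}$ via $\Phi\inv(I) = \Sigma S$ and the fact that a triangle equivalence commutes with mutation. You are in fact slightly more explicit than the paper about why $\Phi(\Sigma S) = I$ and why the pulled-back pieces are exactly the silting objects with or without $\Sigma S$ as a summand.
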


\begin{proof}
    We know that the poset $\twosilt (\injcat \Gamma)$ is equal to the interval $\bigl[D\Gamma, \Sigma\inv D\Gamma\bigr]$
    inside the poset $\silt \Kb(\injcat \Lambda)$
    (see \ref{2-silt proj/inj as interval}),
    that it has a flip-flop decomposition (see \ref{flip-flop for 2-silt inj}),
    and that the subposets involved in the flip-flop decomposition are intervals (see \ref{intervals for 2-silt inj}).
    Namely, we have
    \begin{gather*}
        \twosilt(\injcat \Gamma) \simeq \Bigl(\twosilt_I(\injcat \Gamma) \sqcup \twosilt(\injcat \Gamma) \setminus \twosilt_I(\injcat \Gamma), \leq^{\mu_I^+}_+\Bigr) \\
        \twosilt_I (\injcat \Gamma) = \bigl[D\Gamma, I \oplus \Sigma\inv J\bigr] \\
        \twosilt (\injcat \Gamma) \setminus \twosilt_I (\injcat \Gamma) = \bigl[\Sigma\inv D\Gamma, J \oplus S'\bigr].
    \end{gather*}
    Now Lemma~\ref{action of Phi on the bounds} gives the action of $\Phi$ on the bounds of the intervals involved.
    It remains to see how the order-preserving map $\mu^+_I$ defining the flip-flop structure is translated in this context.
    First, by definition we have $\twosilt_I (\injcat \Gamma) = \{X \in \twosilt(\injcat \Gamma) \mid I \in \add X\}$.
    So every object in the interval $\bigl[\Sigma \widetilde T, \Sigma S \oplus R\bigr]$ admits $\Phi\inv(I) = \Sigma S$ as a direct summand.
    This ensures that the map $\mu_{\Sigma S}^+$ is well-defined on this interval,
    and it is then clear that $\Phi$ commutes with mutation, since it is a triangle equivalence. 
\end{proof}

\begin{remark}
    The elements of the interval $\bigl[\Sigma \widetilde T, \widetilde T\bigr]$ correspond to the silting objects in $\Kb(\projcat \Lambda)$ which are 2-term with respect to $\widetilde T$
    (see \ref{prop:def 2-term wrt}).
\end{remark}

\begin{proposition} \label{X-parts of the flip-flops are isomorphic}
    There exists an isomorphism of posets $\alpha : \bigl[P \oplus \Sigma R, \Lambda\bigr] \to \bigl[\Sigma \widetilde T, \Sigma S \oplus R\bigr]$
    satisfying $\mu_{\Sigma S}^+ \circ \alpha = \mu_P^-$,
    so that we have a commutative diagram
    \[\begin{tikzcd}
        {\bigl[P \oplus \Sigma R, \Lambda\bigr]} & {\bigl[\Sigma \Lambda, \Sigma \widetilde T\bigr]} \\
        {\bigl[\Sigma \widetilde T, \Sigma S \oplus R\bigr]} & {\bigl[\Sigma \Lambda, \Sigma \widetilde T\bigr]}
        \arrow["{{\mu^-_P}}", from=1-1, to=1-2]
        \arrow["\alpha"', from=1-1, to=2-1]
        \arrow[equals, from=1-2, to=2-2]
        \arrow["{{\mu^+_{\Sigma S}}}", from=2-1, to=2-2]
    \end{tikzcd}.\]
\end{proposition}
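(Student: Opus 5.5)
The plan is to define $\alpha$ explicitly by replacing the summand $P$ with $\Sigma S$. Every $X \in \bigl[P \oplus \Sigma R, \Lambda\bigr]$ lies in $\twosilt_P(\projcat \Lambda)$ by Proposition~\ref{intervals for 2-silt proj}, so it can be written uniquely as $X = P \oplus X'$. I would set
\[\alpha(P \oplus X') = \Sigma S \oplus X'.\]
This has the right behaviour on the bounds: $\alpha(\Lambda) = \Sigma S \oplus R$ and $\alpha(P \oplus \Sigma R) = \Sigma S \oplus \Sigma R = \Sigma \widetilde T$.

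\textbf{Step 1: $\alpha(X)$ is silting and lies in $\bigl[\Sigma \widetilde T, \widetilde T\bigr]$.}
Here I would use the triangle $\nu\inv J_1 \to S \to \Sigma P \to \Sigma \nu\inv J_1$, with $\nu\inv J_1 \in \add R$.
\begin{itemize}
    \item Because $X \leq \Lambda$ and $X \geq P \oplus \Sigma R$, the complex $X'$ is 2-term, and it satisfies $\Hom(P, \Sigma X') = 0$ and $\Hom(X', \Sigma P) = 0$.
    \item From these vanishings, I would check $\Hom(\Sigma S \oplus X', \Sigma^{>0}(\Sigma S \oplus X')) = 0$ by applying $\Hom$ to the triangle above. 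This uses the vanishing $\Hom(S, \Sigma^{>0} S) = 0$ together with the relations between $X'$ and $\Lambda$ and $P$.
    \item Next I would show $\Sigma S \oplus X' \in \widetilde T * \Sigma \widetilde T$, again by a Hom-vanishing argument.
    \item Finally, $\card{\Sigma S \oplus X'} = \card{\widetilde T}$, so Proposition~\ref{prop:two complements twosilt}(2), applied with respect to the silting object $\widetilde T$, shows that $\alpha(X)$ is silting.
\end{itemize}

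\textbf{Step 2: compatibility with mutation.}
Write $\mu^-_P(X) = Z \oplus X'$. By Proposition~\ref{intervals for 2-silt proj} it lies in $\bigl[\Sigma\Lambda, \widetilde T\bigr]$; since $\Sigma \widetilde T \leq \Sigma \Lambda$, it therefore lies in $\bigl[\Sigma \widetilde T, \widetilde T\bigr]$.
\begin{itemize}
    \item Both $Z \oplus X'$ and $\Sigma S \oplus X'$ are completions of the almost complete presilting object $X'$, which is 2-term with respect to $\widetilde T$.
    \item They are distinct: if they were equal, then $Z \simeq \Sigma S$, so $\mu^-_P(X)$ would have $\Sigma S$ as a summand. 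But objects of $\bigl[\Sigma\Lambda, \widetilde T\bigr]$ are 2-term with respect to $\Lambda$, while $\Sigma S$ is not, since it has cohomology in degree $-1$.
    \item By Proposition~\ref{prop:two complements twosilt}(3), $X'$ has exactly two completions in $\bigl[\Sigma \widetilde T, \widetilde T\bigr]$, so these are the two.
    \item On the other hand, $\mu^+_{\Sigma S}(\Sigma S \oplus X')$ is a completion of $X'$ that is strictly greater than $\Sigma S \oplus X'$ by Proposition~\ref{properties of mutation}. It also lies in $\bigl[\Sigma\widetilde T, \widetilde T\bigr]$, because it is the image under $\Phi\inv$ of a mutation inside $\twosilt(\injcat \Gamma)$ (Corollary~\ref{flip-flop for translated 2-silt inj}).
    \item Hence $\mu^+_{\Sigma S}(\Sigma S \oplus X') = Z \oplus X' = \mu^-_P(X)$, which is the required identity $\mu^+_{\Sigma S} \circ \alpha = \mu^-_P$.
\end{itemize}

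\textbf{Step 3: $\alpha$ is an isomorphism of posets.}
\begin{itemize}
    \item \emph{Injectivity.} This follows from the identity $\mu^+_{\Sigma S} \circ \alpha = \mu^-_P$ and Proposition~\ref{properties of mutation}(3), since mutation is invertible.
    \item \emph{Surjectivity.} Any $Y = \Sigma S \oplus Y'$ in $\bigl[\Sigma \widetilde T, \Sigma S \oplus R\bigr]$ contains $\Sigma S$ as a summand, as in the proof of Corollary~\ref{flip-flop for translated 2-silt inj}. Running the argument of Step 1 in reverse, I would show that $P \oplus Y'$ lies in $\bigl[P\oplus\Sigma R, \Lambda\bigr]$.
    \item \emph{Order.} Both flip-flop maps $\mu^-_P$ and $\mu^+_{\Sigma S}$ are order-preserving, but this alone does not give order reflection. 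I would therefore compare directly: for $X_1 = P \oplus X_1'$ and $X_2 = P \oplus X_2'$, I would show that $\Hom(X_2, \Sigma^{>0} X_1) = 0$ holds if and only if $\Hom(\Sigma S \oplus X_2', \Sigma^{>0}(\Sigma S \oplus X_1')) = 0$. The terms involving only $X_i'$ are identical on both sides. The mixed terms are handled with the triangle relating $S$, $\Sigma P$ and $\add R$, using that all the $X_i'$ satisfy the vanishing conditions coming from $X_i \in \bigl[P \oplus \Sigma R, \Lambda\bigr]$.
\end{itemize}

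\textbf{Main obstacle.} I expect the main difficulty to be the Hom-vanishing computations in Steps 1 and 3, that is, checking that exchanging $P$ for $\Sigma S$ preserves both the presilting property and the comparisons with the other summands.

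If these direct computations become unwieldy, my fallback is to avoid them. I would work with torsion classes via $\Phi$ and Proposition~\ref{interpretation Sigma-1 nu}: identify $\bigl[\Sigma\widetilde T, \Sigma S\oplus R\bigr]$ with $\twosilt_I(\injcat\Gamma)$, and use Jasso reduction (Remark~\ref{Jasso reduction}) to identify both $\tors_P\Lambda$ and $\torf_I\Gamma$ with the torsion theory of the common algebra $\Lambda/\langle e_P\rangle \simeq \Gamma/\langle e_I\rangle$.
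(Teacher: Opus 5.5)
\textbf{Verdict.} Your Step~2 (the two-completions argument) and your order check are sound once Step~1 is known. Step~1 itself has a genuine gap, and so does the ``reverse'' argument you use for surjectivity.

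\textbf{Where Step~1 fails.} The Hom-vanishing you need does not follow from the ingredients you list. The critical mixed term is the $i=1$ component $\Hom(\Sigma S,\Sigma X')\simeq\Hom(S,X')$. Everything you propose to use holds equally for a weak 1-APR tilt: $X'$ is 2-term, $\Hom(P,\Sigma X')=0$, $\Hom(X',\Sigma P)=0$, and $\Hom(S,\Sigma^{>0}S)=0$. In that setting the conclusion is false. Take $\Lambda$ to be the radical-square-zero quotient of the equioriented $A_3$ path algebra, as in the introduction, so that $\idim P=2$. With $X=\Lambda$ and $X'=R$ you get $\Hom(S,R)\simeq\Hom_\Lambda(\tau\inv P,R)\neq 0$, so $\Sigma S\oplus R$ is not even presilting. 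Hence $\idim P=1$ must enter Step~1, and your plan never uses it.

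\textbf{How the computation could be completed.} There is an exact sequence $0\to\Ext^1_\Lambda(\tau\inv P,H^{-1}X')\to\Hom(S,X')\to\Hom_\Lambda(\tau\inv P,H^0X')$, so it suffices to kill both outer terms.
\begin{enumerate}
\item The right term vanishes. First, $\Hom_\Lambda(\tau\inv P,\Lambda)=0$, which is equivalent to $\idim P\leq 1$. Second, $\tau$-rigidity gives $\underline{\Hom}(\tau\inv P,H^0X')\simeq\overline{\Hom}(P,\tau H^0X')=0$.
\item The left term is $D\overline{\Hom}(H^{-1}X',P)$, which also vanishes. Write $d$ for the differential of $X'$. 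Then $\Ext^1_\Lambda(\im d,P)\simeq\Ext^2_\Lambda(\operatorname{coker} d,P)=0$. So any split epimorphism $H^{-1}X'\to P$ would extend to the degree $-1$ term of $X'$, contradicting $\Hom(X',\Sigma P)=0$.
\end{enumerate}
None of this is in your proposal. Your Jasso-reduction fallback would only give an abstract isomorphism, not one compatible with $\mu^-_P$ and $\mu^+_{\Sigma S}$.

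\textbf{The missing idea.} You do not need Step~1 at all, because your Step~2 reasoning proves it when run from the other end. This is what the paper does.
\begin{enumerate}
\item Start from $\mu^-_P(X)=Z\oplus X'\in\bigl[\Sigma\Lambda,\widetilde T\bigr]\subseteq\bigl[\Sigma\widetilde T,\widetilde T\bigr]$.
\item By Proposition~\ref{prop:two complements twosilt}, $X'$ has exactly two completions in $\bigl[\Sigma\widetilde T,\widetilde T\bigr]$, and they are related by a single mutation at $Z$.
\item $X=\mu^+_Z(Z\oplus X')$ lies in $\twosilt_P(\projcat\Lambda)$, which is disjoint from that interval. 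So the other completion is $\mu^-_Z(Z\oplus X')$.
\item $\mu^-_Z(Z\oplus X')$ is not in $\twosilt(\projcat\Lambda)$, or else $X'$ would have three completions there. So by Corollary~\ref{flip-flop for translated 2-silt inj} it lies in $\bigl[\Sigma\widetilde T,\Sigma S\oplus R\bigr]$.
\item Hence it has $\Sigma S$ as a summand and equals $\Sigma S\oplus X'$.
\end{enumerate}
The paper accordingly defines $\alpha=\mu^-_Z\circ\mu^-_P$, and $\idim P=1$ enters only through Lemma~\ref{action of Phi on the bounds}. The inverse map comes from the dual argument, which also replaces your unverified surjectivity step. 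Order-preservation in both directions is then immediate: the mixed terms vanish because each $X_i$ and each $\Sigma S\oplus X_i'$ is individually silting, so no triangle is needed.
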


\begin{proof}
    We start by constructing the map $\alpha : \bigl[P \oplus \Sigma R, \Lambda\bigr] \to \bigl[\Sigma \widetilde T, \Sigma S \oplus R\bigr]$.
    Let $X$ be an object in $\bigl[P \oplus \Sigma R, \Lambda\bigr] = \twosilt_P (\projcat \Lambda)$,
    and write $X = P \oplus X'$.
    We have $\mu^-_P(X) = Z \oplus X'$ for some $Z \in \Kb(\projcat \Lambda)$. 
    We know that $\mu^-_{P}(X) \in \bigl[\Sigma \Lambda, \widetilde T\bigr]$ by Proposition~\ref{correspondence 2-silt_P and tors_P}.

    We now claim that $\mu^-_Z \circ \mu^-_{P}(X) \in \bigl[\Sigma \widetilde T, \Sigma S \oplus R\bigr]$.
    Indeed, by Proposition~\ref{prop:two complements twosilt},
    the presilting complex $X'$ admits exactly two complements in the interval $\bigl[\Sigma \widetilde T, \widetilde T\bigr]$.
    So exactly one of $\mu^+_Z(Z \oplus X')$ and $\mu^-_Z(Z \oplus X')$ belongs again to this interval.
    But we know that $\mu^+_Z(Z \oplus X') = X$ is in the interval $\bigl[P \oplus \Sigma R, \Lambda\bigr]$,
    which is disjoint from $\bigl[\Sigma \widetilde T, \widetilde T\bigr]$.
    So we get that $\mu^-_Z(Z \oplus X') \in \bigl[\Sigma \widetilde T, \widetilde T\bigr]$.
    Moreover, we have $\mu^-_Z(Z \oplus X') \notin \bigl[\Sigma \Lambda, \Lambda\bigr]$,
    since otherwise $X'$ would have three complements in $\twosilt (\projcat \Lambda)$.
    So we have $\mu^-_Z \circ \mu^-_{P}(X) \in \bigl[\Sigma \widetilde T, \Sigma S \oplus R\bigr]$.
    
    We then set $\alpha(X) = \mu^-_Z \circ \mu^-_{P}(X)$,
    so that $\alpha : \bigl[P \oplus \Sigma R, \Lambda\bigr] \to \bigl[\Sigma \widetilde T, \Sigma S \oplus R\bigr]$ is a well-defined map.
    We observe that for $X \in \bigl[P \oplus \Sigma R, \Lambda\bigr]$,
    we have $\alpha(X) = \Sigma S \oplus X'$ :
    indeed, we know that the elements of $\bigl[\Sigma \widetilde T, \Sigma S \oplus R\bigr]$ all admit $\Sigma S$ as a direct summand,
    and we have $\Sigma S \notin \add X'$ since $\Sigma S$ is not 2-term with respect to $\Lambda$.
    This implies that $\mu_{\Sigma S}^+ \circ \alpha = \mu_P^-$.

    We now show that $\alpha$ is order-preserving.
    Let $X, Y \in \bigl[P \oplus \Sigma R, \Lambda\bigr]$ such that $X \geq Y$,
    and write $X = P \oplus X'$, $Y = P \oplus Y'$.
    So we have $\alpha(X) = \Sigma S \oplus X'$, $\alpha(Y) = \Sigma S \oplus Y'$.
    It is then easy to check that we have $\Sigma S \oplus X' \geq \Sigma S \oplus Y'$
    using the definition of the partial order and the fact that $\alpha(X)$ and $\alpha(Y)$ are silting.

    Finally, one can then repeat this process dually in order to define an order-preserving map
    $\beta : \bigl[\Sigma \widetilde T, \Sigma S \oplus R\bigr] \to \bigl[P \oplus \Sigma R, \Lambda\bigr]$,
    so that for $X \in \bigl[\Sigma \widetilde T, \Sigma S \oplus R\bigr]$,
    the object $\beta (X)$ will have the form \mbox{$\beta(X) = \mu^+_Z \circ \mu^+_{\Sigma S}(X)$}
    for some $Z \in \Kb(\projcat \Lambda)$.
    It only remains to check that $\alpha$ and $\beta$ are mutual inverses,
    which is clear since left and right mutation are mutual inverses.
\end{proof}

\begin{theorem} \label{theorem ff}
    The posets $\ftors \Lambda$ and $\ftors \Gamma$ are related by a flip-flop.
\end{theorem}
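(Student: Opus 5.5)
The plan is to assemble the pieces already established and show that $\ftors \Lambda$ and $\ftors \Gamma$ are both flip-flops built from the same data $(X, Y, h)$. We set
\[X = \bigl[P \oplus \Sigma R, \Lambda\bigr], \qquad Y = \bigl[\Sigma \Lambda, \widetilde T\bigr], \qquad h = \mu^-_P : X \to Y,\]
all taken inside $\silt \Kb(\projcat \Lambda)$.

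First, for $\Lambda$: by Proposition~\ref{correspondence twosilt/torsion/torsionfree} we have $\ftors \Lambda \simeq \twosilt(\projcat \Lambda)$. Corollary~\ref{flip-flop for 2-silt proj}, together with Proposition~\ref{intervals for 2-silt proj}, then gives
\[\ftors \Lambda \simeq (X \sqcup Y, \leq^{h}_-).\]

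Next, for $\Gamma$: Proposition~\ref{correspondence twosilt/torsion/torsionfree} and Proposition~\ref{interpretation Sigma-1 nu} give $\ftors \Gamma \simeq (\ftorf \Gamma)\opcat \simeq \twosilt(\injcat \Gamma)$. By Corollary~\ref{flip-flop for translated 2-silt inj}, this last poset is isomorphic via $\Phi$ to
\[\bigl(\bigl[\Sigma \widetilde T, \Sigma S \oplus R\bigr] \sqcup Y, \leq^{\mu^+_{\Sigma S}}_+\bigr).\]
We then transport this structure along the poset isomorphism $\alpha$ of Proposition~\ref{X-parts of the flip-flops are isomorphic}, which replaces the first piece by $X$. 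Since $\mu^+_{\Sigma S} \circ \alpha = \mu^-_P = h$, the transported order is exactly $\leq^{h}_+$, so
\[\ftors \Gamma \simeq (X \sqcup Y, \leq^h_+).\]
The $X$-part is matched by $\alpha$, the $Y$-part is literally the same interval, and the structure map is preserved. Concretely, for $x \in X$ and $y \in Y$ we have $\alpha(x) \leq^{\mu^+_{\Sigma S}}_+ y \iff \mu^+_{\Sigma S}(\alpha(x)) \leq y \iff h(x) \leq y$. Hence $(X \sqcup Y, \leq^h_-)$ and $(X \sqcup Y, \leq^h_+)$ are the two flip-flop posets, which proves the theorem.

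The main point needing care is bookkeeping of orientations. The correspondence $\twosilt(\injcat\Gamma) \simeq (\ftorf\Gamma)\opcat$ and the isomorphism $(\ftors\Gamma,\subseteq) \simeq (\ftorf\Gamma,\subseteq)\opcat$ must combine to a covariant isomorphism $\ftors\Gamma \simeq \twosilt(\injcat\Gamma)$; this is exactly what the diagram of Proposition~\ref{interpretation Sigma-1 nu} gives. One must also check that the target of $\mu^+_{\Sigma S}$ is the same interval $Y$ appearing on the $\Lambda$ side, which holds because both are $\bigl[\Sigma\Lambda,\widetilde T\bigr]$ inside $\silt \Kb(\projcat\Lambda)$ by Lemma~\ref{action of Phi on the bounds}. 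The remaining verification of the flip-flop relations is a direct check of the definitions of $\leq^f_\pm$.
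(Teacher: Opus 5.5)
Your proposal is correct and is essentially the paper's proof. You identify $\ftors\Lambda$ and $\ftors\Gamma$ with $\twosilt(\projcat\Lambda)$ and $\twosilt(\injcat\Gamma)$, use the flip-flop decompositions of Corollaries~\ref{flip-flop for 2-silt proj} and \ref{flip-flop for translated 2-silt inj} with common $Y$-part $\bigl[\Sigma\Lambda,\widetilde T\bigr]$, and match the $X$-parts via the isomorphism $\alpha$ of Proposition~\ref{X-parts of the flip-flops are isomorphic}, which satisfies $\mu^+_{\Sigma S}\circ\alpha=\mu^-_P$. Your explicit checks of the orientation conventions and of the cross relations $\alpha(x)\leq y\iff\mu^-_P(x)\leq y$ simply spell out what the paper leaves implicit.
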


\begin{proof}
    We know that $\ftors \Lambda \simeq \twosilt (\projcat \Lambda)$
    and that $\ftors \Gamma \simeq \twosilt (\injcat \Gamma)$
    by Proposition~\ref{correspondence twosilt/torsion/torsionfree}.
    Moreover, we have flip-flop decompositions
    \[\twosilt (\projcat \Lambda) \simeq \Bigl(\bigl[P \oplus \Sigma R, \Lambda\bigr] \sqcup \bigl[\Sigma \Lambda, \widetilde T\bigr], \leq_-^{\mu^-_P}\Bigr)\]
    by Corollary~\ref{flip-flop for 2-silt proj} and Proposition~\ref{intervals for 2-silt proj}
    and
    \[\twosilt (\injcat \Gamma) \simeq \Bigl(\bigl[\Sigma \widetilde T, \Sigma S \oplus R\bigr] \sqcup \bigl[\Sigma \Lambda, \widetilde T\bigr], \leq^{\mu^+_{\Sigma S}}_+\Bigr)\]
    by Corollary~\ref{flip-flop for translated 2-silt inj}.
    Finally, Proposition~\ref{X-parts of the flip-flops are isomorphic} shows that the two flip-flop data are isomorphic,
    hence the posets are related by a flip-flop.
\end{proof}

\subsection{Application to Cambrian lattices}

In this section, we explain how to apply our results to Cambrian lattices.
If $\Lambda$ is a representation-finite hereditary algebra of Dynkin type $\Delta$ with orientation $\Omega$,
the poset of torsion classes $\Cam_{\Delta,\Omega} \coloneqq \tors \Lambda$ is called a \emph{Cambrian lattice}
(since we are in the representation-finite case, all torsion classes are functorially finite).
We refer to \cite[Introduction]{Rea} for the general definition of Cambrian lattices.
For instance, for $\Lambda$ of equioriented type $A_n$,
the poset $\tors \Lambda$ is isomorphic to the well-studied Tamari lattice $\operatorname{Tam}_{n+1}$.
The following is a corollary of Theorem~\ref{theorem ff},
and generalises a result of Ladkani (Corollary~1.3 in \cite{Lad-cluster_tilting})
for simply-laced Dynkin diagrams to all (finite type) Dynkin diagrams.

\begin{corollary}
    Let $\Delta$ be a Dynkin diagram, let $\Omega, \Omega'$ be two orientations of $\Delta$.
    Then the Cambrian lattices $\Cam_{\Delta, \Omega}$ and $\Cam_{\Delta, \Omega'}$ are related by a series of flip-flops.

    Consequently, all the Cambrian lattices of the same Dynkin type are derived equivalent.
\end{corollary}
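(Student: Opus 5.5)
The plan is to connect any two orientations by a sequence of 1-APR tilts and apply Theorem~\ref{theorem ff} at each step. Let $\Delta$ be a Dynkin diagram, possibly non simply-laced. Take $\Lambda$ to be a representation-finite hereditary algebra of type $(\Delta,\Omega)$, realised for instance as the tensor algebra of a $k$-species, so that non simply-laced types are covered. Classical combinatorics of orientations of trees applies: since $\Delta$ is a tree, any two orientations $\Omega$ and $\Omega'$ are connected by a finite sequence of reflections at sinks (or sources). It therefore suffices to treat a single reflection at a source $x$ of $\Omega$, producing the orientation $\sigma_x \Omega$.

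For a single step, let $P$ be the simple projective $\Lambda$-module at the source $x$.

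\begin{itemize}
\item First we check that $\idim P = 1$. This holds because $\Lambda$ is hereditary and $P$ is not injective; if $\Delta$ had a single vertex the statement is trivial. Hence $T = \tau\inv P \oplus R$ is a 1-APR tilting module in the sense of Definition~\ref{def 1-APR}.
\item Next we identify $\Gamma = \End_\Lambda(T)$. The standard APR result, in the species version due to Auslander--Platzeck--Reiten and Dlab--Ringel, says that $\Gamma$ is again hereditary and representation-finite, of type $(\Delta, \sigma_x\Omega)$. There $x$ becomes a sink, matching the simple injective $I$ of Proposition~\ref{simple injective Gamma module}.
\item Representation-finiteness gives $\tors = \ftors$ for both algebras. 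Theorem~\ref{theorem ff} then yields that $\Cam_{\Delta,\Omega}$ and $\Cam_{\Delta,\sigma_x\Omega}$ are related by a flip-flop.
\end{itemize}

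Chaining these steps proves the first claim.

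For derived equivalence, each flip-flop gives $\deriv(k\Cam_{\Delta,\Omega}) \simeq \deriv(k\Cam_{\Delta,\sigma_x\Omega})$ by the corollary to Ladkani's theorem, since these posets are finite. Composing the equivalences along the chain finishes the proof.

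The main obstacle is the identification of $\End_\Lambda(T)$ with the hereditary algebra of the reflected orientation in the species setting. This requires the Cambrian lattice $\Cam_{\Delta,\Omega}$ to be independent of the choice of species realising $(\Delta,\Omega)$. I would cite Dlab--Ringel for the reflection functors on species and Reading's description of $\Cam_{\Delta,\Omega}$ through the Coxeter element. That description depends only on $(\Delta,\Omega)$, so the identification is well defined.

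As an alternative I would remark that the GLS algebras $H(C,D,\Omega)$ work equally well. Their simple projective at a source has injective dimension one, and APR tilting there reverses the orientation. Their lattices of torsion classes are the Cambrian lattices, by work of Geiss--Leclerc--Schröer and Mizuno.
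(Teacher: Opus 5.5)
Your proposal is correct and follows the paper's proof: connect the two orientations by reflections at sources, which are 1-APR tilts of representation-finite hereditary (species) algebras since $\idim P = 1$ there, then apply Theorem~\ref{theorem ff} at each step and finish with Ladkani's corollary. The paper simply cites Ringel for the chain of APR tilts, whereas you spell out the tree argument and the identification of $\End_\Lambda(T)$.

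One caution concerns only your side remark on GLS algebras. A simple projective exists only at loop-free vertices, so within a fixed $H(C,D,\cdot)$ classical 1-APR tilts need not reach every orientation. For instance, for $F_4$ the edge joining the two loop vertices can never be reversed, which is why the paper states that alternative only for $C_n$.
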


\begin{proof}
    According to \cite[Section 4.2]{Ringel},
    any two hereditary algebras $\Lambda, \Lambda'$ of type $\Delta$ can be related by a series of APR tilts
    (which are 1-APR tilts since the algebras are hereditary).
    Hence we can apply Theorem~\ref{theorem ff} iteratively to deduce that the posets $\tors \Lambda$ and $\tors \Lambda'$ are related by a series of flip-flops.
\end{proof}

We remark that one could also consider the algebras $H(\Delta, D, \Omega)$ introduced by Gei{\ss}, Leclerc and Schr{\"o}er in \cite{GLS}
(where $D$ is a symmetriser for the generalised Cartan matrix associated to the Dynkin diagram $\Delta$).
Those algebras are in general not hereditary, but they can be defined over any field.
According to \cite[Corollary~1.3]{Gyo}, the Cambrian lattice $\Cam_{\Delta, \Omega}$ is isomorphic to the lattice of torsion classes $\tors H(\Delta, D, \Omega)$.
For instance, for $\Delta = C_n$ and $D$ the corresponding minimal symmetriser,
the algebra $H(\Delta, C, \Omega)$ is given by the quiver
\[\begin{tikzcd}[cramped]
	1 & 2 & 3 & \cdots & {n-1} & n
	\arrow[from=1-1, to=1-2]
	\arrow[from=1-3, to=1-2]
	\arrow[from=1-3, to=1-4]
	\arrow[from=1-4, to=1-5]
	\arrow[from=1-6, to=1-5]
	\arrow["\varepsilon", from=1-6, to=1-6, loop, in=55, out=125, distance=10mm]
\end{tikzcd}\]
modulo the relation $\varepsilon^2 = 0$,
where the orientation of the arrows is given by $\Omega$.
It is easy to see that all orientations can be obtained from the linear one by a series of mutations at sinks or sources at the vertices $\{1, \dots, n-1\}$.
Now all these mutations correspond to 1-APR tilts, since the only relation defining the algebra involves the loop at vertex $n$.

\hidepagenumber
\section{Flip-flop for arbitrary torsion classes}
\showpagenumber

From now until the end of the paper, the goal is to show Theorem~\ref{thm intro arbitrary} from the introduction,
relating posets of arbitrary torsion classes for algebras related by a 1-APR tilt.
For that purpose, we use the framework of extriangulated categories and of first negative extensions.
All the extriangulated categories that we will be concerned with will arise as extension-closed subcategories of a given triangulated category $\mathcal D$.
Thus we will not need to use the definitions of extriangulated categories and of first negative extensions.
We explain what the general vocabulary means in this special case.

We fix a Hom-finite triangulated category $\mathcal D$ with translation functor $\Sigma$.
Let $\mathcal C$ be a subcategory of $\mathcal D$ which is closed under extensions.
A \emph{conflation} in $\mathcal C$ is a sequence $X \to Y \to Z$ in $\mathcal C$ which is a triangle in $\mathcal D$.
The \emph{first negative extension functor} $\mathbb E\inv : \mathcal C\opcat \times \mathcal C \to \modcat k$
is the restriction to $\mathcal C\opcat \times \mathcal C$ of the functor $\Hom_\mathcal D (\blank, \Sigma\inv \blank) : \mathcal D\opcat \times \mathcal D \to \modcat k$.

For two subcategories $\mathcal X, \mathcal Y \subseteq \mathcal C$
satisfying $\Hom_\mathcal C (\mathcal X, \mathcal Y) = 0$,
recall we had defined in \ref{def star notation} the subcategory $\mathcal X * \mathcal Y \subseteq \mathcal D$.
Since $\mathcal C$ is closed under extensions inside $\mathcal D$,
we still have $\mathcal X * \mathcal Y \subseteq \mathcal C$.

\subsection{Extriangulated categories and s-torsion pairs}

In this section we give necessary definitions and results about s-torsion pairs in an extriangulated category with first negative extension,
following \cite{AET}.
Along the way we also recall facts about t-structures, which we see as a special case of s-torsion pairs.

\begin{definition} \label{def s-torsion pair}
    A pair $(\mathcal T, \mathcal F)$ of subcategories of $\mathcal C$ is called an \emph{s-torsion pair} when
    \begin{enumerate}
        \item $\Hom_\mathcal C (\mathcal T, \mathcal F) = 0$
        \item $\mathcal C = \mathcal T * \mathcal F$
        \item $\mathbb E\inv (\mathcal T, \mathcal F) = 0$.
    \end{enumerate}
    
    We write $\stors \mathcal C$ for the set of s-torsion pairs in $\mathcal C$,
    and we define a partial order $\leq$ on it by setting $(\mathcal T_1, \mathcal F_1) \leq (\mathcal T_2, \mathcal F_2) \iff \mathcal T_1 \subseteq \mathcal T_2 \iff \mathcal F_1 \supseteq \mathcal F_2$,
    where the second equivalence is a consequence of the next proposition.
\end{definition}

\begin{proposition}[\protect{\cite[Proposition~3.2]{AET}}]
    Let $(\mathcal T, \mathcal F)$ be an s-torsion pair in $\mathcal C$.
    Then we have $\rightperp {\mathcal T} = \mathcal F$ and $\leftperp {\mathcal F} = \mathcal T$.
    This implies that $\mathcal T$ and $\mathcal F$ are both closed under extensions.
\end{proposition}

\begin{definition}[\protect{\cite{BBDG}}]
    A pair $(\mathcal U, \mathcal V)$ of subcategories of the triangulated category $\mathcal D$ is called a \emph{t-structure} when
    \begin{enumerate}
        \item $\Hom_\mathcal D (\mathcal U, \mathcal V) = 0$
        \item $\mathcal D = \mathcal U * \mathcal V$
        \item $\Sigma \mathcal U \subseteq \mathcal U$,
    \end{enumerate}
    where the third condition can be replaced by the equivalent condition $\Sigma\inv \mathcal V \subseteq \mathcal V$.

    The \emph{heart} of the t-structure $(\mathcal U, \mathcal V)$ is the subcategory $\mathcal H = \mathcal U \cap \Sigma \mathcal V$ of $\mathcal D$.
    It is an abelian category of $\mathcal D$,
    and we have cohomology functors $H^i : \mathcal D \to \mathcal H$ for $i \in \mathbb Z$.
\end{definition}

\begin{proposition}[\protect{\cite[Lemma~3.3]{AET}}] \label{s-tors for triangulated}
    Let $(\mathcal T, \mathcal F)$ be a pair of subcategories of $\mathcal D$,
    seen as an extriangulated category with first negative extension.
    Then $(\mathcal T, \mathcal F) \in \stors \mathcal D$ if and only if $(\mathcal T, \mathcal F)$ is a t-structure.
\end{proposition}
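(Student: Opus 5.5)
We prove the two implications separately. Here $\mathcal D$ is itself viewed as an extension-closed subcategory of $\mathcal D$, so $\mathbb E\inv(X,Y)=\Hom_\mathcal D(X,\Sigma\inv Y)$. Conditions (1) and (2) in the definitions of s-torsion pair and t-structure are literally the same. Hence the whole content is that, given (1) and (2), condition (3) of Definition~\ref{def s-torsion pair}, namely $\Hom_\mathcal D(\mathcal T,\Sigma\inv\mathcal F)=0$, is equivalent to $\Sigma\mathcal T\subseteq\mathcal T$. The equivalent condition $\Sigma\inv\mathcal F\subseteq\mathcal F$ will also be used.

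\emph{From t-structure to s-torsion pair.} Assume $(\mathcal T,\mathcal F)$ is a t-structure. Then $\Sigma\inv\mathcal F\subseteq\mathcal F$, so
\[\Hom_\mathcal D(\mathcal T,\Sigma\inv\mathcal F)\subseteq\Hom_\mathcal D(\mathcal T,\mathcal F)=0.\]
This is condition (3).

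\emph{From s-torsion pair to t-structure.} Assume $(\mathcal T,\mathcal F)$ is an s-torsion pair.

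\begin{enumerate}
\item Condition (3) says $\Hom_\mathcal D(\mathcal T,\Sigma\inv\mathcal F)=0$. Applying $\Sigma$ gives $\Hom_\mathcal D(\Sigma\mathcal T,\mathcal F)=0$.
\item By \cite[Proposition~3.2]{AET}, recalled above, $\mathcal T=\leftperp{\mathcal F}$. Here $\leftperp{}$ is computed inside $\mathcal C=\mathcal D$, so it is the Hom-orthogonal in $\mathcal D$.
\item Combining the two, every object $\Sigma T$ with $T\in\mathcal T$ lies in $\leftperp{\mathcal F}=\mathcal T$. Hence $\Sigma\mathcal T\subseteq\mathcal T$, which is the t-structure axiom.
\end{enumerate}

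If one prefers not to quote the proposition, step~2 can be shown directly. Take $X\in\leftperp{\mathcal F}$ and a triangle $T\to X\to F\to\Sigma T$ with $T\in\mathcal T$, $F\in\mathcal F$. The map $X\to F$ is zero, so $F$ is a direct summand of $\Sigma T$, and the map $F\to\Sigma T$ is split mono. But $\Hom(\Sigma T,F)=0$ by step~1, so the retraction $\Sigma T\to F$ is zero and hence $F=0$. Thus $X\simeq T\in\mathcal T$.

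The only delicate point is whether the orthogonality statement of \cite{AET} is valid in the triangulated setting, where $\mathcal D$ is viewed as an extriangulated category with first negative extension. The direct argument above avoids this dependence entirely, so no real obstacle remains.
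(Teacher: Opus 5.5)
Your proof is correct. Given the shared axioms (1) and (2), you rewrite $\mathbb E\inv(\mathcal T,\mathcal F)=0$ as $\Hom_{\mathcal D}(\Sigma\mathcal T,\mathcal F)=0$ and combine it with $\mathcal T=\leftperp{\mathcal F}$ (which your triangle argument checks directly) to get $\Sigma\mathcal T\subseteq\mathcal T$, and the converse follows at once from $\Sigma\inv\mathcal F\subseteq\mathcal F$. The paper gives no proof of its own here, only citing \cite[Lemma~3.3]{AET}, so there is no in-paper argument to compare against; yours is the natural one and is self-contained.
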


\begin{example}
    If $\mathcal D = \deriv (\Lambda)$ is the derived category of an algebra $\Lambda$,
    then we have the \emph{standard t-structure} $(\mathcal D^{\leq 0} (\Lambda), \mathcal D^{> 0} (\Lambda))$ given by
    \begin{align*}
        \mathcal D^{\leq 0} (\Lambda) &= \{X \in \deriv (\Lambda) \mid H^i(X) = 0, \forall i > 0\} \\
        \mathcal D^{> 0} (\Lambda) &= \{X \in \deriv (\Lambda) \mid H^i(X) = 0, \forall i \leq 0\}.
    \end{align*}
    Its heart $\mathcal H = \mathcal D^{\leq 0} (\Lambda) \cap \Sigma \mathcal D^{> 0} (\Lambda) = \mathcal D^{\leq 0} (\Lambda) \cap \mathcal D^{\geq 0} (\Lambda)$
    is the subcategory of complexes concentrated in degree 0,
    so that we have $\mathcal H \simeq \modcat \Lambda$.
\end{example}

\begin{proposition} \label{cohomology functors are representable}
    The cohomology functors $H^i_\Lambda$ for the standard t-structure on $\deriv (\Lambda)$ are representable.
    Namely, for $i \in \mathbb Z$, we have isomorphisms of functors
    \[H^i_\Lambda \simeq \Hom_\mathcal D (\Sigma^{-i} \Lambda, \blank) \simeq D \Hom_\mathcal D (\blank, \Sigma^{-i} D \Lambda)\]
    where $\Lambda$ and $D \Lambda$ denote complexes concentrated in degree 0.
\end{proposition}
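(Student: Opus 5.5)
The plan is to compute $H^i_\Lambda(X)$ directly for a complex $X$, first through the first isomorphism and then deduce the second by duality. The standard way to see the first is to use that $\mathcal D = \deriv(\Lambda)$ is equivalent to the homotopy category $\mathcal K^{-,\mathrm b}(\projcat \Lambda)$ of bounded-above complexes of projectives with bounded cohomology. Given $X$, replace it by such a projective resolution. Then $\Hom_{\mathcal D}(\Sigma^{-i}\Lambda, X)$ is computed in the homotopy category, since $\Lambda$ is a bounded complex of projectives. A morphism $\Lambda \to \Sigma^i X$ of complexes is an element of $X^i$ killed by the differential, i.e.\ a cocycle in $\Hom_\Lambda(\Lambda, X^i) = X^i$. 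Null-homotopic maps correspond exactly to coboundaries. So the Hom group is $H^i(X)$.

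Naturality in $X$ is clear, since everything is induced by $\Hom_\Lambda(\Lambda,\blank) \simeq \mathrm{id}$ on modules. The right $\Lambda$-module structure on $\Hom_\mathcal D(\Sigma^{-i}\Lambda, X)$ comes from $\End(\Lambda) = \Lambda$ acting by precomposition, which matches the module structure on $H^i(X)$.

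For the second isomorphism I would argue dually with injective resolutions. The cleanest approach is to note that $\deriv(\Lambda)$ is also equivalent to $\mathcal K^{+,\mathrm b}(\injcat \Lambda)$. Choosing an injective resolution of $X$, $\Hom_\mathcal D(X, \Sigma^{-i} D\Lambda)$ is computed by the complex $\Hom_\Lambda(X^{\bullet}, D\Lambda)$ in the appropriate degree; here one can use the original $X$, since $D\Lambda$ is injective and maps into a bounded-below complex of injectives are computed in the homotopy category. Now $\Hom_\Lambda(\blank, D\Lambda) \simeq D(\blank)$ by tensor-hom adjunction, and $D$ is exact. Hence the cohomology of $D(X^\bullet)$ in the relevant degree is $D H^i(X)$. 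Applying $D$ once more, using Hom-finiteness so that $DD \simeq \mathrm{id}$, gives $D\Hom_\mathcal D(X,\Sigma^{-i}D\Lambda) \simeq H^i(X)$, naturally in $X$.

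An alternative for the second step is Serre duality: $\nu$ is a Serre functor on $\Kb(\projcat\Lambda)$ with $\nu\Lambda = D\Lambda$, giving $D\Hom(\Sigma^{-i}\Lambda, X) \simeq \Hom(X, \Sigma^{-i}D\Lambda)$. This holds for $X \in \deriv(\Lambda)$ whenever one argument is perfect.

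The main obstacle is only bookkeeping: getting the sign of the shift right, checking that $\Hom(\Sigma^{-i}\Lambda, X) = \Hom(\Lambda, \Sigma^i X)$ picks out degree $i$ under the cohomological convention $(\Sigma X)^n = X^{n+1}$, and verifying the module structures. Nothing conceptual beyond the standard resolution equivalences is needed.
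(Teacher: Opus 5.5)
Your argument is correct. The paper states this proposition as a standard fact and gives no proof of it, so there is no competing route to compare against. Your computation is the usual verification:
\begin{itemize}
\item you identify $\Hom_{\mathcal D}(\Sigma^{-i}\Lambda, X)$ with the degree-$i$ cocycles of $X$ modulo coboundaries;
\item you identify $\Hom_{\mathcal D}(X, \Sigma^{-i}D\Lambda)$ with the cohomology at the middle term of $\Hom_\Lambda(X^{i+1}, D\Lambda) \to \Hom_\Lambda(X^{i}, D\Lambda) \to \Hom_\Lambda(X^{i-1}, D\Lambda)$, that is, of $D(X^\bullet)$, which is $D H^i(X)$ by exactness of $D$;
\item dualising once more is legitimate because $H^i(X)$ is finite dimensional.
\end{itemize}

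Two small remarks. First, you never need to resolve $X$. The stalk complex $\Lambda$ is a bounded complex of projectives and $\Sigma^{-i}D\Lambda$ is a bounded complex of injectives, so morphisms out of the former and into the latter are already computed in the homotopy category $\mathcal K(\modcat \Lambda)$ for arbitrary $X$. In particular, ``choosing an injective resolution of $X$'' in your second step plays no role, as you yourself observe. Working with $X$ itself also makes naturality in morphisms of $\mathcal D$ immediate, with no bookkeeping across resolutions.

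Second, in the Serre duality variant the precise statement is $D\Hom_{\mathcal D}(Q, X) \simeq \Hom_{\mathcal D}(X, \nu Q)$ for $Q \in \Kb(\projcat \Lambda)$ and arbitrary $X \in \deriv(\Lambda)$. So it is the first argument that must be perfect, not just ``one argument''. With $Q = \Sigma^{-i}\Lambda$ and $\nu Q = \Sigma^{-i}D\Lambda$, this is exactly the case you use.
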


\begin{remark} \label{stors for abelian}
    Let $(\mathcal U, \mathcal V)$ be a t-structure in $\mathcal D$,
    set $\mathcal H = \mathcal U \cap \Sigma \mathcal V$.
    Then $\mathcal H$ is closed under extensions inside $\mathcal D$,
    so we may see it as an extriangulated category.
    Conflations in $\mathcal H$ then correspond to short exact sequences in $\mathcal H$ in the abelian sense.
    
    Moreover, for any $X, Y \in \mathcal H$,
    we have $\mathbb E\inv (X, Y) = 0$ (since $X \in \mathcal U$ and $Y \in \mathcal V$),
    so comparing Definitions~\ref{def s-torsion pair} and \ref{def torsion pair},
    we see that s-torsion pairs in $\mathcal H$ are just ordinary torsion pairs.
\end{remark}

\begin{proposition}[\protect{\cite[Proposition~3.7]{AET}}] \label{functoriality of conflations}
    Let $(\mathcal T, \mathcal F)$ be an s-torsion pair in $\mathcal C$.
    For every $X \in \mathcal C$,
    there exists a conflation $tX \to X \to fX$ in $\mathcal C$ with $tX \in \mathcal T$ and $fX \in \mathcal F$,
    and such a conflation is unique up to isomorphism.

    Moreover, these conflations define functors $t : \mathcal C \to \mathcal T$ and $f : \mathcal C \to \mathcal F$,
    which are respectively right adjoint to the inclusion $\mathcal T \to \mathcal C$ and left adjoint to the inclusion $\mathcal F \to \mathcal C$.

    Moreover, given a morphism $h : X \to Y$ in $\mathcal C$,
    we have a commutative diagram
    \[\begin{tikzcd}
        {t X} & X & {f X} \\
        {t Y} & Y & {f Y}
        \arrow[from=1-1, to=1-2]
        \arrow["{t(h)}"', from=1-1, to=2-1]
        \arrow[from=1-2, to=1-3]
        \arrow["h", from=1-2, to=2-2]
        \arrow["{f(h)}", from=1-3, to=2-3]
        \arrow[from=2-1, to=2-2]
        \arrow[from=2-2, to=2-3]
    \end{tikzcd}\]
    in which $t(h)$ is the unique morphism making the left square commute
    and $f(h)$ is the unique morphism making the right square commute.
\end{proposition}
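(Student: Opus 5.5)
The statement to prove is Proposition~\ref{functoriality of conflations}: existence and uniqueness of the truncation conflation, adjointness of $t$ and $f$, and functoriality. The plan is to copy the classical argument for torsion pairs or t-structures, using only the three axioms of Definition~\ref{def s-torsion pair}. The vanishing $\mathbb E\inv(\mathcal T,\mathcal F)=0$ will replace the usual control of negative extensions.

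Existence is immediate from axiom (2), $\mathcal C = \mathcal T * \mathcal F$: every $X$ sits in a triangle $tX \to X \to fX \to \Sigma tX$ in $\mathcal D$ with $tX\in\mathcal T$ and $fX\in\mathcal F$. This is a conflation in $\mathcal C$.

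The central step is the following factorisation lemma. Let $T\to X\to F\to\Sigma T$ and $T'\to Y\to F'\to\Sigma T'$ be two such triangles, and let $h:X\to Y$. Then there exist unique morphisms $T\to T'$ and $F\to F'$ completing $h$ to a morphism of triangles.

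\emph{Existence.} The composite $T\to X\to Y\to F'$ vanishes because $\Hom(\mathcal T,\mathcal F)=0$. So $T\to Y$ factors through $T'\to Y$, which gives a map $u:T\to T'$. The axiom of triangulated categories then completes $(u,h)$ to a map of triangles with some $v:F\to F'$.

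\emph{Uniqueness of $u$.} Let $u$ and $u'$ be two fillers. Then $u-u'$ composes to zero with $T'\to Y$, so it factors through $\Sigma\inv F'\to T'$. But $\Hom(T,\Sigma\inv F')=\mathbb E\inv(T,F')=0$, hence $u=u'$.

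\emph{Uniqueness of $v$.} Let $v$ and $v'$ be two fillers. Then $v-v'$ vanishes after precomposition with $X\to F$, so it factors through $F\to\Sigma T$. Now $\Hom(\Sigma T,F')\simeq\Hom(T,\Sigma\inv F')=0$ by axiom (3), hence $v=v'$. I expect this to be the only point needing care: uniqueness of fillers in triangulated categories usually fails, and it is exactly axiom (3) that restores it on both sides.

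From the lemma the remaining claims follow formally.

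\emph{Uniqueness of the conflation.} Apply the lemma with $h=\mathrm{id}_X$ in both directions between two truncation triangles for $X$. Uniqueness forces the composites of the fillers to be identities, so the two triangles are isomorphic.

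\emph{Functoriality.} Choose a truncation triangle for every $X$ and define $t(h)$ and $f(h)$ as the unique fillers. Uniqueness gives $t(\mathrm{id})=\mathrm{id}$ and $t(gh)=t(g)t(h)$, and likewise for $f$. The commutative diagram in the statement is then exactly the lemma.

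\emph{Adjunctions.} For $T_0\in\mathcal T$, apply $\Hom(T_0,\blank)$ to $tX\to X\to fX$. This gives the exact sequence
\[\Hom(T_0,\Sigma\inv fX)\to\Hom(T_0,tX)\to\Hom(T_0,X)\to\Hom(T_0,fX).\]
The outer terms vanish by axioms (3) and (1). Hence $\Hom(T_0,tX)\simeq\Hom(T_0,X)$, naturally in $T_0$ and, by functoriality, in $X$; so $t$ is right adjoint to the inclusion $\mathcal T\to\mathcal C$. Dually, for $F_0\in\mathcal F$, apply $\Hom(\blank,F_0)$ and use $\Hom(\Sigma tX,F_0)=0$ and $\Hom(tX,F_0)=0$. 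This shows $f$ is left adjoint to the inclusion $\mathcal F\to\mathcal C$.
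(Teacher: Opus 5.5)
Your proof is correct and complete. Both uniqueness claims rest exactly on $\Hom_{\mathcal D}(T,\Sigma\inv F')=\mathbb E\inv(T,F')=0$, as you point out, and the adjunctions follow from this vanishing together with $\Hom(\mathcal T,\mathcal F)=0$.

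The paper gives no proof of its own here; it quotes the result from \cite[Proposition~3.7]{AET}. Your argument is the standard one, specialised to the only setting the paper uses: $\mathcal C$ extension-closed in $\mathcal D$, with $\mathbb E\inv=\Hom_{\mathcal D}(\blank,\Sigma\inv\blank)$.

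One small economy is available. Your factorisation lemma and your adjunction step use the same exact sequences. Existence and uniqueness of $u$ are just surjectivity and injectivity of $\Hom(T,T')\to\Hom(T,Y)$, and dually for $v$ with $\Hom(F,F')\to\Hom(X,F')$. So if you prove the adjunction isomorphisms first, $t(h)$, $f(h)$ and their uniqueness follow at once, without invoking TR3. In that form the argument only uses long exact Hom-sequences, so it carries over verbatim to a general extriangulated category with negative first extension.
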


\begin{definition}
    Let $(\mathcal T_1, \mathcal F_1) \leq (\mathcal T_2, \mathcal F_2)$ be two comparable torsion pairs in $\mathcal{C}$,
    so that we have an interval $\mathcal{I} = \bigl[(\mathcal T_1, \mathcal F_1), (\mathcal T_2, \mathcal F_2)\bigr] \subseteq \stors \mathcal{C}$.
    We call the subcategory $\mathcal H_\mathcal I = \mathcal T_2 \cap \mathcal F_1 \subseteq \mathcal C$ the \emph{heart} of the interval $\mathcal{I}$.
\end{definition}

\begin{remark} \label{heart of interval is extension-closed}
    Since both $\mathcal T_2$ and $\mathcal F_1$ are closed under extensions,
    we see that $\mathcal{H}_\mathcal{I}$ is extension-closed in $\mathcal C$,
    hence also in $\mathcal D$.
    So we may see it as an extriangulated category with first negative extension.
\end{remark}

\begin{theorem}[\protect{\cite[Theorem~3.9]{AET}}] \label{theorem reduction extriangulated}
    Let $\mathcal{I} = \bigl[(\mathcal T_1, \mathcal F_1), (\mathcal T_2, \mathcal F_2)\bigr]$ be an interval in $\stors \mathcal C$,
    and denote by $\mathcal H_\mathcal I \subseteq C$ its heart.
    Then we have mutually inverse isomorphisms of posets
    \[\red_\mathcal{I} : \mathcal I \longrightleftarrows \stors \mathcal H_\mathcal I : \lift_\mathcal{I}\]
    given by 
    \[\red_\mathcal{I} (\mathcal T, \mathcal F) = (\mathcal T \cap \mathcal F_1, \mathcal F \cap \mathcal T_2)\]
    for $(\mathcal T_1, \mathcal F_1) \leq (\mathcal T, \mathcal F) \leq (\mathcal T_2, \mathcal F_2)$,
    and \[\lift_\mathcal{I} (\mathcal X, \mathcal Y) = (\mathcal T_1 * \mathcal X, \mathcal Y * \mathcal F_2)\]
    for $(\mathcal X, \mathcal Y) \in \stors \mathcal H_\mathcal I$.
\end{theorem}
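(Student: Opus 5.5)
The statement to prove is the reduction theorem of \cite[Theorem~3.9]{AET}. The plan is a direct verification: show that both maps land where they should, that they preserve order, and that they are mutually inverse. Throughout, write $(\mathcal T_1,\mathcal F_1)\leq(\mathcal T,\mathcal F)\leq(\mathcal T_2,\mathcal F_2)$, so that $\mathcal T_1\subseteq\mathcal T\subseteq\mathcal T_2$ and $\mathcal F_2\subseteq\mathcal F\subseteq\mathcal F_1$.

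\emph{Step 1: $\red_\mathcal I$ lands in $\stors\mathcal H_\mathcal I$.} Both $\mathcal T\cap\mathcal F_1$ and $\mathcal F\cap\mathcal T_2$ lie in $\mathcal H_\mathcal I=\mathcal T_2\cap\mathcal F_1$. The Hom-vanishing and $\mathbb E\inv$-vanishing conditions are inherited from the pair $(\mathcal T,\mathcal F)$. For the decomposition, take $H\in\mathcal H_\mathcal I$ and its conflation $tH\to H\to fH$ for $(\mathcal T,\mathcal F)$ (Proposition~\ref{functoriality of conflations}).
\begin{itemize}
\item Since $tH\in\mathcal T\subseteq\mathcal T_2$, $H\in\mathcal T_2$, and $\mathcal T_2$ is closed under extensions, it would be natural to argue that $fH\in\mathcal T_2$. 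However, $\mathcal T_2$ is only known to be extension-closed, not closed under cones.
\item Instead, use adjunction: $fH\in\mathcal F$, and applying $\Hom(\blank,\mathcal F_2)$ to the conflation gives an exact sequence $\Hom(fH,F)\to\Hom(H,F)=0$, preceded by $\mathbb E\inv(tH,F)$.
\item The cleaner route is to take the $(\mathcal T_2,\mathcal F_2)$-conflation of $fH$, namely $t_2fH\to fH\to f_2fH$. Compose $H\to fH\to f_2fH$: this map vanishes because $H\in\mathcal T_2$ and $f_2fH\in\mathcal F_2$. Now $f_2fH\in\mathcal F_2\subseteq\mathcal F$, so by the universal property of $H\to fH$ (the left adjoint), the map $fH\to f_2fH$ is itself zero. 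Hence $fH$ is a summand of $t_2fH$, so $fH\in\mathcal T_2$.
\item Dually, using $\mathcal T_1\subseteq\mathcal T$ and $H\in\mathcal F_1$, one gets $tH\in\mathcal F_1$.
\end{itemize}

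\emph{Step 2: $\lift_\mathcal I$ lands in $\stors\mathcal C$.} Here $(\mathcal X,\mathcal Y)\in\stors\mathcal H_\mathcal I$. First note that $\Hom(\mathcal T_1,\mathcal X)=0$ since $\mathcal X\subseteq\mathcal F_1$, so $\mathcal T_1*\mathcal X$ is defined; likewise $\mathcal Y*\mathcal F_2$ is defined.
\begin{itemize}
\item \emph{Vanishing.} The conditions $\Hom=0$ and $\mathbb E\inv=0$ between $\mathcal T_1*\mathcal X$ and $\mathcal Y*\mathcal F_2$ reduce, by long exact sequences, to the four pairwise cases between $\{\mathcal T_1,\mathcal X\}$ and $\{\mathcal Y,\mathcal F_2\}$. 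Each case holds because $\mathcal X,\mathcal Y\subseteq\mathcal T_2\cap\mathcal F_1$, because $\mathcal T_1\subseteq\mathcal T_2$, and because $(\mathcal X,\mathcal Y)$ is an s-torsion pair.
\item \emph{Decomposition.} For $C\in\mathcal C$, first take $t_2C\to C\to f_2C$. Then decompose $t_2C$ by $(\mathcal T_1,\mathcal F_1)$; the same argument as in Step 1 puts $f_1t_2C$ in $\mathcal T_2\cap\mathcal F_1=\mathcal H_\mathcal I$. Decompose $f_1t_2C$ in $\mathcal H_\mathcal I$ as an element of $\mathcal X*\mathcal Y$. Finally, assemble the resulting filtration $\mathcal T_1,\mathcal X,\mathcal Y,\mathcal F_2$ into $(\mathcal T_1*\mathcal X)*(\mathcal Y*\mathcal F_2)$ using the octahedral axiom and associativity of $*$.
\end{itemize}

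\emph{Step 3: bijectivity and order.} Both maps are visibly monotone in the torsion parts, so it suffices to show they are mutually inverse.
\begin{itemize}
\item \emph{$\lift\circ\red=\mathrm{id}$.} It is enough to show $\mathcal T=\mathcal T_1*(\mathcal T\cap\mathcal F_1)$. The inclusion $\supseteq$ holds by extension-closure. For $\subseteq$, given $T\in\mathcal T$, take $t_1T\to T\to f_1T$; here $f_1T$ is in $\mathcal T$, by the analogue of the Step 1 argument with $\mathcal T$ in place of $\mathcal T_2$.
\item \emph{$\red\circ\lift=\mathrm{id}$.} It is enough to show $(\mathcal T_1*\mathcal X)\cap\mathcal F_1=\mathcal X$. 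For an object $Z$ in this intersection, its conflation $T_1\to Z\to X$ has first map zero, since $Z\in\mathcal F_1$ and $T_1\in\mathcal T_1$. Hence $X\cong Z\oplus\Sigma T_1$, so $Z$ is a summand of $X$, and $Z\in\mathcal X$ because $\mathcal X$ is summand-closed. Note that $\Sigma T_1$ may not lie in $\mathcal C$; this is harmless, since the subcategory $\mathcal X$ only needs closure under summands in $\mathcal D$. If this is delicate, use instead uniqueness of the $(\mathcal T_1,\mathcal F_1)$-conflation of $Z$.
\end{itemize}

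The main obstacle is the repeated claim that torsion-free quotients stay inside $\mathcal T_2$ (and dually). The plan is to handle it by the adjunction/vanishing argument of Step 1, which uses $\mathcal F_2\subseteq\mathcal F$ and the universal property of $H\to fH$ from Proposition~\ref{functoriality of conflations}.
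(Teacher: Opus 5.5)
Your proof is correct, and it is essentially the standard direct verification from \cite{AET}; the paper itself only quotes this result without proof. The ingredients match: the adjunction isomorphism $\Hom(fH,F)\cong\Hom(H,F)$ for $F\in\mathcal F$ (which uses $\mathbb E\inv(\mathcal T,\mathcal F)=0$) keeps the truncations inside the heart, associativity of $*$ gives $\mathcal C=(\mathcal T_1*\mathcal X)*(\mathcal Y*\mathcal F_2)$, and an s-torsion pair is determined by its torsion part. The only unstated check is that $\lift_\mathcal I$ lands in $\mathcal I$, which is immediate from $\mathcal T_1\subseteq\mathcal T_1*\mathcal X\subseteq\mathcal T_2$ and extension-closure of $\mathcal T_2$.
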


\begin{remark}
    For $(\mathcal X, \mathcal Y) \in \stors \mathcal H_\mathcal I$,
    we have in particular $\mathcal X \in \mathcal F_1$ and $\mathcal Y \in \mathcal T_2$.
    Now the fact that $\Hom_\mathcal C (\mathcal T_1, \mathcal F_1) = 0$ and $\Hom_\mathcal C (\mathcal T_2, \mathcal F_2) = 0$
    ensures that $\mathcal T_1 * \mathcal X$ and $\mathcal Y * \mathcal F_2$
    are well-defined as subcategories of $\mathcal C$.
\end{remark}

\begin{example}
    If $\mathcal D$ is a triangulated category and $(\mathcal{U}, \mathcal{V})$ is a t-structure on $\mathcal{D}$,
    then we have an interval $\mathcal I = \bigl[(\Sigma \mathcal{U}, \Sigma \mathcal{V}), (\mathcal{U}, \mathcal{V})\bigr] \subseteq \stors \mathcal D$.
    The heart of this interval is then $\mathcal H_\mathcal I = \mathcal U \cap \Sigma \mathcal V$,
    that is, the heart of the t-structure.
\end{example}

\subsection{An extriangulated category associated to a 1-APR tilt} \label{sec:preliminaries on C}

Let $\Lambda$ and $\Gamma$ be two algebras related by a 1-APR tilt,
that is, there is a 1-APR tilting $\Lambda$-module $T$ associated to a simple projective $\Lambda$-module $P$ and $\Gamma \simeq \End_\Lambda (T)$.
We know by Corollary~\ref{1-APR tilting derived equivalence} that the derived categories $\deriv (\Lambda)$ and $\deriv (\Gamma)$ are equivalent,
via the functor $\mathbf R F = \mathbf R \Hom_\Lambda (T, \blank)$.
In particular, the functor $\mathbf R F$ sends the object $T \in \deriv(\Lambda)$,
where the $\Lambda$-module $T$ is seen as a complex concentrated in degree 0,
to the object $\Gamma \in \deriv(\Gamma)$, seen again as a complex concentrated in degree 0.
Moreover, by Lemma~\ref{I = Sigma P},
it sends the simple projective $\Lambda$-module $P$ concentrated in degree 0
to the simple injective $\Gamma$-module $I = \Ext^1_\Lambda (T,P)$ concentrated in degree 1.
In other words, we have $\Sigma \circ \mathbf R F(P) \simeq I$ in $\deriv (\Gamma)$.

To make the symmetry of the situation more apparent,
we want to work in a triangulated category $\mathcal D$ which we see as the common derived category of $\Lambda$ and $\Gamma$.
More precisely, we set $\mathcal D = \deriv (\Lambda)$,
and we consider the t-structure $(\mathcal U, \mathcal V)$ of $\mathcal D$ which is the standard t-structure of $\deriv (\Lambda)$.
We define a second t-structure $(\mathcal U', \mathcal V')$ of $\mathcal D$
by asking $(\mathbf R F (\mathcal U'), \mathbf R F(\mathcal V'))$ to be the standard t-structure of $\deriv (\Gamma)$.
The hearts $\mathcal H = \mathcal U \cap \Sigma \mathcal V$ and $\mathcal H' = \mathcal U' \cap \Sigma \mathcal V'$
of these two t\nobreakdash-structures are abelian subcategories of $\mathcal D$
which are equivalent to $\modcat \Lambda$ and $\modcat \Gamma$ respectively.

These t-structures come equipped with cohomology functors.
We denote by $H^i_\Lambda$ ($i \in \mathbb Z$) the cohomology functors associated to the t-structure $(\mathcal U, \mathcal V)$.
Since $(\mathcal U, \mathcal V)$ is the standard t-structure of $\deriv (\Lambda)$,
we know by Proposition~\ref{cohomology functors are representable} that these functors are representable,
namely we have isomorphisms of functors
\[H^i_\Lambda \simeq \Hom_\mathcal D (\Sigma^{-i} \Lambda, \blank) \simeq D \Hom_\mathcal D (\blank, \Sigma^{-i} D \Lambda).\]
Similarly, $(\mathbf R F (\mathcal U'), \mathbf R F(\mathcal V'))$ is the standard t-structure of $\deriv (\Gamma)$
and $\mathbf R F$ is a triangle equivalence.
So the cohomology functors associated to $(\mathcal U', \mathcal V')$,
which we denote by $H^i_\Gamma$,
are represented in $\mathcal D$ by shifts of the object $(\mathbf R F)\inv (\Gamma) = T \in \deriv (\Lambda)$
and co-represented by shifts of the object $W \coloneqq (\mathbf R F)\inv (D\Gamma) = \Sigma P \oplus \nu R \in \deriv (\Lambda)$
(see \ref{def of W}).
Namely we have isomorphisms of functors
\[H^i_\Gamma \simeq \Hom_\mathcal D (\Sigma^{-i} T, \blank) \simeq D \Hom_\mathcal D (\blank, \Sigma^{-i} W).\]

We define a subcategory $\mathcal C$ of $\mathcal D$ by setting $\mathcal C = \mathcal U \cap \Sigma \mathcal V'$.
We will see in Corollary~\ref{interval J} that $\mathcal C$ is the heart of a particular interval in $\stors \mathcal D$,
hence it is extriangulated with first negative extension by Remark~\ref{heart of interval is extension-closed}.
We think of $\mathcal C$ as being obtained from ``gluing'' the categories $\modcat \Lambda$ and $\modcat \Gamma$ along the functor $F : \modcat \Lambda \to \modcat \Gamma$
(see for instance \ref{C = H union H'}).

In Proposition~\ref{flip-flop intervals tors} and Proposition~\ref{flip-flop intervals torf},
we introduced distinguished torsion pairs in $\modcat \Lambda$ and $\modcat \Gamma$
as bounds of the intervals appearing in the flip-flop decompositions of $\tors \Lambda$ and $\torf \Gamma$.
Namely, we have torsion pairs $(\add P, \rightperp P)$ and $(\leftperp P, \add P)$ in $\modcat \Lambda$
and $(\leftperp I, \add I)$ and $(\add I, \rightperp I)$ in $\modcat \Gamma$.
We set $(\mathcal T, \mathcal F) = (\add P, \rightperp P)$ and $(\mathcal T', \mathcal F') = (\leftperp I, \add I)$,
where we think of $\mathcal T$ and $\mathcal F$ as subcategories of $\mathcal H \subseteq \mathcal D$
and of $\mathcal T'$ and $\mathcal F'$ as subcategories of $\mathcal H' \subseteq \mathcal D$.
The isomorphism $\Sigma \circ \mathbf R F(P) \simeq I$ in $\deriv (\Gamma)$
then becomes an equality $\Sigma \mathcal T = \mathcal F'$ of subcategories of $\mathcal D$.

\begin{remark} \label{F and T' are abelian}
    We know from Lemma~\ref{F is abelian} that $\mathcal F$ is a Serre subcategory of $\mathcal H$
    and from Lemma~\ref{T' is abelian} that $\mathcal T'$ is a Serre subcategory of $\mathcal H'$.
    In particular, $\mathcal F$ and $\mathcal T'$ are both abelian categories.
\end{remark}

\begin{example} \label{example extriangulated A3}
    We take $\Lambda = k (1 \to 2 \to 3)$ and $P = P_1$ the projective indecomposable at the vertex 1,
    so that $\Gamma \simeq k (1 \leftarrow 2 \to 3)$.

    In Figure \ref{derived strip}, we draw the Auslander--Reiten quivers of $\deriv (\Lambda)$ and $\deriv (\Gamma)$.
    We do not draw the irreducible morphisms to avoid cluttering the picture.
    The vertical alignment corresponds to the identification given by the functor $\mathbf R F$.
    The t-structures $(\mathcal U, \mathcal V$) of $\deriv (\Lambda)$ and $(\mathcal U', \mathcal V')$ of $\deriv (\Gamma)$ are indicated with solid black lines.
    Their respective hearts $\mathcal H = \mathcal U \cap \Sigma \mathcal V$ and $\mathcal H' = \mathcal U' \cap \Sigma \mathcal V'$ are indicated with solid blue lines.
    In each of these hearts, the torsion pairs $(\mathcal T, \mathcal F)$ and $(\mathcal T', \mathcal F')$ are indicated in the following way:
    the torsion part is circled with a solid red line and filled with dots,
    and the torsion-free part is circled with a dashed red line and not filled.
\end{example}

\begin{figure}
    \def\svgwidth{0.95\textwidth}
    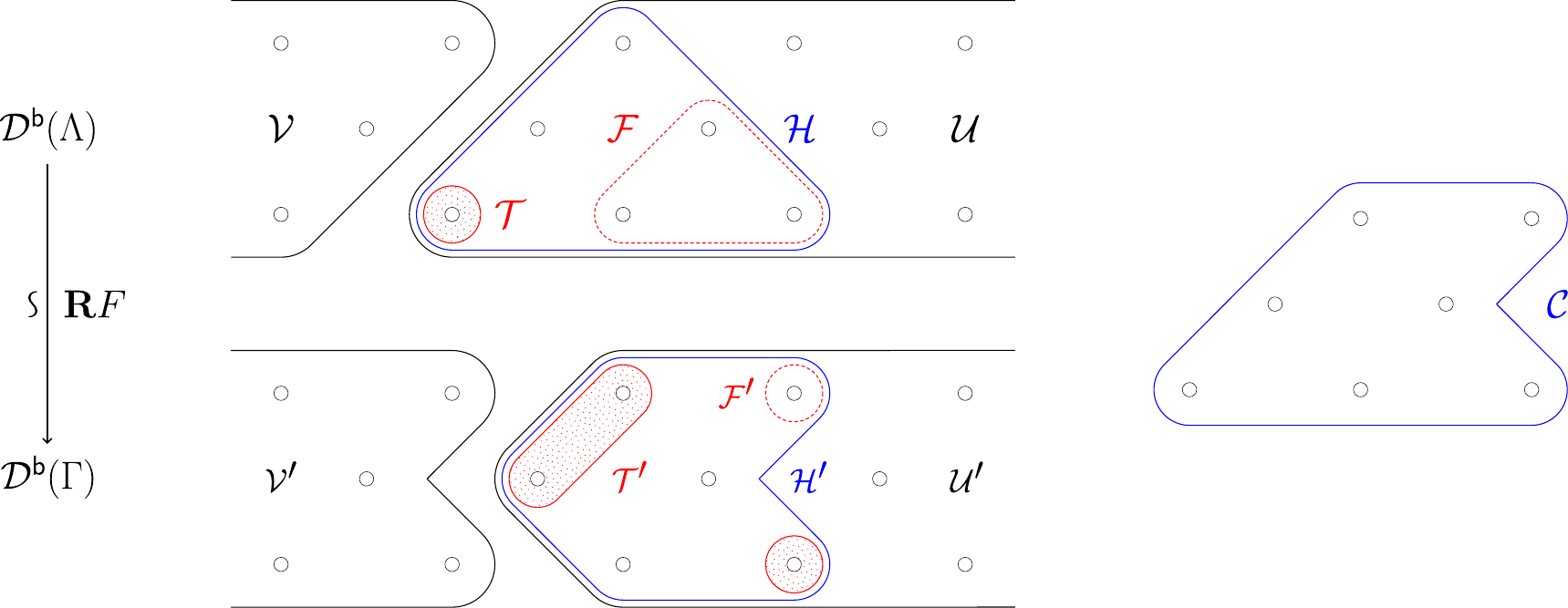
    \caption{The situation in Example \ref{example extriangulated A3}.}
    \label{derived strip}
\end{figure}

\begin{lemma} \label{def of W}
    Setting $W = \Sigma P \oplus \nu R \in \deriv (\Lambda)$,
    we have $\mathbf R F (W) = D\Gamma$.
\end{lemma}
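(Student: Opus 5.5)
The plan is to compute $\mathbf R F$ separately on the two summands of $W = \Sigma P \oplus \nu R$ and then identify the result with $D\Gamma = I \oplus J$.

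\emph{The summand $\Sigma P$.} This is already done by Lemma~\ref{I = Sigma P}, which gives $\mathbf R F(\Sigma P) \simeq I$ in $\deriv (\Gamma)$.

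\emph{The summand $\nu R$.} Here $\nu R$ means the injective $\Lambda$-module $\nu_\Lambda R$, seen as a stalk complex in degree $0$. For every $i$ we have $H^i(\mathbf R F(\nu R)) \simeq \Ext^i_\Lambda(T, \nu R)$, and these groups vanish for $i > 0$ because $\nu R$ is injective. So $\mathbf R F(\nu R)$ is isomorphic to the stalk complex $F(\nu R) = \Hom_\Lambda(T, \nu_\Lambda R)$. Since $R \in \add R$, Lemma~\ref{nu and F commute} gives $F(\nu_\Lambda R) \simeq \nu_\Gamma F(R)$. Lemma~\ref{nu F(R) = J} then gives $\nu_\Gamma F(R) \simeq J$.

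\emph{Conclusion.} Since $\mathbf R F$ is additive,
\[\mathbf R F(W) \simeq \mathbf R F(\Sigma P) \oplus \mathbf R F(\nu R) \simeq I \oplus J = D\Gamma.\]

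The only point needing care is the vanishing of $\Ext^{>0}_\Lambda(T, \nu R)$. This is immediate from the injectivity of $\nu R$, so I expect no real obstacle.

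As a sanity check, one could instead argue that $\mathbf R F$ commutes with the derived Nakayama (Serre) functors, since these are determined intrinsically by the triangulated category. Then $\mathbf R F(\nu_\Lambda T) \simeq \nu_\Gamma \Gamma = D\Gamma$. One would then need to compute $\nu_\Lambda T \simeq \nu_\Lambda \widetilde T = \nu S \oplus \nu R$, and use that $\nu S$ is the complex $(J_0 \xrightarrow{a} J_1)$, which is quasi-isomorphic to $\Sigma P$ because $\idim P = 1$ makes $a$ surjective. The direct computation above avoids these identifications, so I would use it.
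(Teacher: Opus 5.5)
Your proof is correct and follows the paper's argument: you get $\mathbf R F(\Sigma P) \simeq I$ from Lemma~\ref{I = Sigma P}, and you get $\mathbf R F(\nu R) \simeq F(\nu R) \simeq \nu_\Gamma F(R) \simeq J$ from the injectivity of $\nu R$ together with Lemmas~\ref{nu and F commute} and~\ref{nu F(R) = J}. Your alternative check, via compatibility of $\mathbf R F$ with the derived Nakayama functors and $\nu S \simeq \Sigma P$, is also sound, but it is not needed for the argument.
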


\begin{proof}
    Decompose $\Gamma = I \oplus J$ as direct sum of injective $\Gamma$-modules.
    We know that $\mathbf R F(\Sigma P) \simeq I$ in $\deriv (\Gamma)$,
    so it suffices to compute $\mathbf R F(\nu R)$.
    Now $\nu R$ is an injective $\Lambda$-module,
    so the only non-vanishing cohomology of the complex $\mathbf R F(\nu R) \in \deriv (\Gamma)$
    is in degree 0, and it is equal to $F(\nu R) = J$ by Lemma~\ref{nu F(R) = J} and Lemma~\ref{nu and F commute}.
    So we get $\mathbf R F(\nu R) \simeq J$ in $\deriv(\Gamma)$,
    which completes the proof.
\end{proof}

\begin{lemma} \label{U' included in U}
    We have $\mathcal U' \subseteq \mathcal U$,
    or equivalently $\Sigma \mathcal V' \supseteq \Sigma \mathcal V$.
\end{lemma}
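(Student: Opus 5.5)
The plan is to compare the aisles directly through their generators. Recall $\mathcal U = \mathcal D^{\leq 0}(\Lambda)$. This is the smallest subcategory of $\mathcal D$ containing $\Lambda$ and closed under $\Sigma$, extensions and direct summands. Equivalently, $\mathcal U$ is the left orthogonal of $\mathcal V = \Sigma^{-1}\mathcal U^\perp$.

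The cleanest route is through the co-aisles. By the representability in Proposition~\ref{cohomology functors are representable},
\[\mathcal V = \{X \in \mathcal D \mid \Hom_\mathcal D(\Sigma^{-i}\Lambda, X) = 0 \ \forall i \leq 0\},\]
and similarly
\[\mathcal V' = \{X \mid \Hom_\mathcal D(\Sigma^{-i}T, X) = 0 \ \forall i \leq 0\}.\]
The claim $\mathcal U' \subseteq \mathcal U$ is equivalent to $\mathcal V \subseteq \mathcal V'$, since aisles are left orthogonals of the co-aisles. Shifting by $\Sigma$ gives the stated equivalent form.

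So it suffices to show that every $X \in \mathcal V$ satisfies $\Hom(\Sigma^{-i}T, X) = 0$ for $i \leq 0$, i.e. $\Hom(\Sigma^{j}T, X) = 0$ for all $j \geq 0$. For this it is enough to check that $\Sigma^j T \in \mathcal U$ for $j \geq 0$. Since $\mathcal U$ is closed under $\Sigma$, this reduces to $T \in \mathcal U$. That holds trivially, because $T$ is a $\Lambda$-module concentrated in degree $0$, hence lies in the heart $\mathcal H \subseteq \mathcal U$. Then $\Hom(\mathcal U, \mathcal V) = 0$ gives the vanishing.

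Alternatively, one can argue directly. The aisle $\mathcal U'$ is generated, as the smallest suspended subcategory closed under extensions and summands, by $(\mathbf R F)^{-1}(\Gamma) = T$. This uses the standard description of $\mathcal D^{\leq 0}(\Gamma)$ as the aisle generated by $\Gamma$, valid for finite dimensional algebras since every bounded complex of finitely generated modules is built by finitely many extensions from shifts of modules with projective resolutions. Since $T \in \mathcal U$ and $\mathcal U$ is suspended and extension-closed, $\mathcal U' \subseteq \mathcal U$ follows.

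The only mildly delicate step is the justification that a t-structure's aisle equals the left orthogonal of its shifted co-aisle, or equivalently that $\mathcal V'$ is exactly cut out by the vanishing of $\Hom(\Sigma^{\geq 0}T, -)$. This follows from Proposition~\ref{cohomology functors are representable} transported along $\mathbf R F$, together with the fact that $X \in \Sigma\mathcal V'$ iff $H^i_\Gamma(X) = 0$ for $i < 0$.
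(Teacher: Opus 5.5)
Your main argument is correct and is essentially the paper's. Both prove the co-aisle inclusion by testing against shifts of $T$ through the representability $H^i_\Gamma \simeq \Hom_{\mathcal D}(\Sigma^{-i} T, \blank)$; the paper gets the vanishing from an injective resolution of $X \in \Sigma\mathcal V$ concentrated in non-negative degrees, whereas you get it directly from $\Sigma^{j} T \in \mathcal U$ for $j \geq 0$ together with $\Hom_{\mathcal D}(\mathcal U, \mathcal V) = 0$.

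You should drop the ``alternative'' paragraph and the opening description of $\mathcal U$ as the smallest suspended, extension- and summand-closed subcategory containing $\Lambda$. That subcategory lies in $\Kb(\projcat \Lambda)$, so it is strictly smaller than $\mathcal D^{\leq 0}(\Lambda)$ whenever $\Lambda$ has infinite global dimension, and the same holds for $\Gamma$; this case is allowed here. You should also fix the slip $\mathcal V = \Sigma^{-1}\rightperp{\mathcal U}$: with the paper's conventions, $\mathcal V = \rightperp{\mathcal U}$.
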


\begin{proof}
    We show that $\Sigma \mathcal V' \supseteq \Sigma \mathcal V$.
    Let $X \in \Sigma \mathcal V$.
    We want to show that $H^i_\Gamma(X) = 0$ for all $i > 0$,
    that is, $\Hom_\mathcal D (\Sigma^{-i} T, X) = 0$ for all $i > 0$.
    Take $Y \in \Kbplus (\injcat \Lambda)$ such that $X \simeq Y$ in $\deriv(\Lambda)$.
    Since $X \in \Sigma \mathcal V$,
    we can choose $Y$ so that it is concentrated in non-negative degrees.

    Then we get 
    \[\Hom_\mathcal D (\Sigma^{-i} T, X) \simeq \Hom_\mathcal D (\Sigma^{-i} T, Y) \simeq \Hom_{\Kb (\modcat \Lambda)} (\Sigma^{-i} T, Y)\]
    since $Y \in \Kb(\injcat \Lambda)$.
    Now for $i < 0$,
    we have $\Hom_{\Kb (\modcat \Lambda)} (\Sigma^{-i} T, Y) = 0$ 
    since $\Sigma^{-i} T$ is concentrated in degree $i$ and $Y$ is concentrated in non-negative degrees.
\end{proof}

\begin{corollary} \label{interval J}
    We have an interval $\mathcal J = \bigl[(\Sigma \mathcal U', \Sigma \mathcal V'), (\mathcal U, \mathcal V)\bigr]$,
    whose heart $\mathcal H_\mathcal J$ is equal to $\mathcal U \cap \Sigma \mathcal V' = \mathcal C$.
\end{corollary}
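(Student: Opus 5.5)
The statement has two parts. We must show that $(\Sigma \mathcal U', \Sigma \mathcal V')$ and $(\mathcal U, \mathcal V)$ are comparable elements of $\stors \mathcal D$, and then identify the heart of the resulting interval.

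For the first part, Proposition~\ref{s-tors for triangulated} says that s-torsion pairs in $\mathcal D$ are exactly t-structures. The pair $(\mathcal U, \mathcal V)$ is the standard t-structure of $\deriv(\Lambda)$. The pair $(\mathcal U', \mathcal V')$ is the preimage under the triangle equivalence $\mathbf R F$ of the standard t-structure of $\deriv(\Gamma)$, so it is a t-structure. Applying the triangle autoequivalence $\Sigma$ to the defining conditions (orthogonality, $\mathcal D = \mathcal U' * \mathcal V'$, stability of the aisle under $\Sigma$) shows that $(\Sigma \mathcal U', \Sigma \mathcal V')$ is also a t-structure. Both pairs therefore lie in $\stors \mathcal D$.

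Next, the order in Definition~\ref{def s-torsion pair} is inclusion of torsion parts, so comparability means $\Sigma \mathcal U' \subseteq \mathcal U$. Lemma~\ref{U' included in U} gives $\mathcal U' \subseteq \mathcal U$. Since $\mathcal U$ is an aisle, $\Sigma \mathcal U \subseteq \mathcal U$, hence $\Sigma \mathcal U' \subseteq \Sigma \mathcal U \subseteq \mathcal U$. Thus $(\Sigma \mathcal U', \Sigma \mathcal V') \leq (\mathcal U, \mathcal V)$, and $\mathcal J$ is a well-defined interval in $\stors \mathcal D$.

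For the heart, apply the definition of the heart of an interval $[(\mathcal T_1,\mathcal F_1),(\mathcal T_2,\mathcal F_2)]$, namely $\mathcal T_2 \cap \mathcal F_1$. Here $\mathcal T_2 = \mathcal U$ and $\mathcal F_1 = \Sigma \mathcal V'$, so
\[\mathcal H_\mathcal J = \mathcal U \cap \Sigma \mathcal V' = \mathcal C\]
by the definition of $\mathcal C$.

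No step is a real obstacle. The only point needing care is the direction of the inclusion: the aisle must be shifted by $\Sigma$ rather than $\Sigma\inv$, and the order must be read on torsion parts. The substantive input is Lemma~\ref{U' included in U}, which is already proved.
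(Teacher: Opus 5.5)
Your proposal is correct and matches the paper, which states this corollary without a separate proof as an immediate consequence of Lemma~\ref{U' included in U}. Your argument is the same: $\Sigma \mathcal U' \subseteq \Sigma \mathcal U \subseteq \mathcal U$ gives comparability in $\stors \mathcal D$, and the heart is read off as $\mathcal T_2 \cap \mathcal F_1 = \mathcal U \cap \Sigma \mathcal V' = \mathcal C$.
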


We start by proving a technical lemma from which we will derive insight about the structure of $\mathcal C$.

\begin{lemma} \label{important lemma}
    We have $(\ind \mathcal U) \setminus (\ind \mathcal U') = \{P\}$ and $(\ind \Sigma \mathcal V') \setminus (\ind \Sigma \mathcal V) = \{I\}$.
    In other words, we have $\mathcal U = \mathcal U' \plus \mathcal T$
    and $\Sigma \mathcal V' = \Sigma \mathcal V \plus \mathcal F$,
    where we recall that $\mathcal U' \plus \mathcal T$ denotes the subcategory whose set of indecomosables is $\ind \mathcal U' \cup \ind \mathcal T$.
\end{lemma}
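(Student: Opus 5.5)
The plan is to prove the first claim, $(\ind \mathcal U) \setminus (\ind \mathcal U') = \{P\}$, directly. The second claim should then follow by a symmetric argument, or by taking perpendiculars.

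\emph{Reduction to a test object.} Since $\mathcal U' \subseteq \mathcal U$ by Lemma~\ref{U' included in U}, it suffices to characterise which indecomposables $X \in \mathcal U$ fail to lie in $\mathcal U'$. An object lies in $\mathcal U'$ if and only if $\Hom_\mathcal D(X, \Sigma^{-i} W) = 0$ for all $i \le 0$, using the co-representability $H^i_\Gamma \simeq D\Hom_\mathcal D(\blank, \Sigma^{-i} W)$. Here $W = \Sigma P \oplus \nu R$ as in Lemma~\ref{def of W}.

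\emph{The $\nu R$ summand.} For $X \in \mathcal U$ and $i \le 0$, we have $\Hom_\mathcal D(X, \Sigma^{-i}\nu R) \simeq D H^{i}_\Lambda(X)$ restricted to the $R$-part. This vanishes for $i < 0$ because $\nu R$ is injective and $X \in \mathcal U$; more precisely, $\Hom(X, \Sigma^{j} \nu R) = D\Hom(\Sigma^{-j}R, X)$ is the dual of a cohomology, which is zero for $j > 0$. The contribution at $i = 0$ is just $\Hom(X, \nu R)$, which need not vanish, but it is degree $0$ and so does not obstruct membership in $\mathcal U'$.

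I need to be careful here. Membership in $\mathcal U'$ means $H^i_\Gamma(X) = 0$ for $i > 0$, that is, $\Hom_\mathcal D(X, \Sigma^{-i}W) = 0$ for $i > 0$, i.e.\ $\Hom(X, \Sigma^{j} W) = 0$ for $j < 0$. For the $\nu R$ part this is $D H^{j}_\Lambda(X)e_R$ with $j < 0$ (up to sign conventions); this vanishes automatically only if the indexing works out. The honest formulation of the condition is $\Hom(X, \Sigma^{j}\Sigma P) = \Hom(X, \Sigma^{j+1}P)$ together with the $\nu R$ terms. The key step, then, is that the $\nu R$ terms are controlled by $H_\Lambda$ of $X$ and vanish on $\mathcal U$, leaving only the condition $\Hom_\mathcal D(X, \Sigma^{m} P) = 0$ for the relevant degrees.

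\emph{The $P$ summand, which is the main obstacle.} Take $X \in \mathcal U$ represented by a complex of projectives in nonpositive degrees. Since $\idim P = 1$, the groups $\Hom(X, \Sigma^m P)$ for $X \in \mathcal U$ can be nonzero only for $m \in \{0, 1\}$ relative to $H^0_\Lambda(X)$. Because $\Sigma P \simeq (\mathbf RF)^{-1}(I)$ and $I$ is a simple injective in degree $0$, the relevant condition reduces to $\Hom_\mathcal D(X, P) = 0$, which is exactly $H^1_\Gamma(X) \ne 0$ detecting failure.

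A nonzero morphism $X \to P$ with $X \in \mathcal U$ factors through $H^0_\Lambda(X)$, giving a nonzero map $H^0_\Lambda(X) \to P$. By Lemma~\ref{morphisms with target P} this map splits, so $P$ is a summand of $H^0_\Lambda(X)$. Since $P$ is projective, the composite $X \to P$ lifts to a map $P \to X$ (stalk in degree $0$, using $X \in \mathcal U$). Hence $P$ is a direct summand of $X$, and indecomposability gives $X \simeq P$. Conversely, $P \notin \mathcal U'$ because $\mathbf RF(P) \simeq \Sigma^{-1} I$ lies in degree $1$.

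\emph{The second claim and the reformulation.} For $\Sigma\mathcal V' \setminus \Sigma\mathcal V = \{I\}$, the plan is to apply the dual argument with $\Gamma$, $I$ simple injective of projective dimension $1$ (Proposition~\ref{simple injective Gamma module}) and the roles of the t-structures swapped. Alternatively, one can take perpendiculars: $\Sigma\mathcal V = \mathcal U^{\perp}[1]$, so removing the single indecomposable $P$ from $\mathcal U$ adds exactly the indecomposable $\Sigma P \simeq I$ to the shifted co-aisle. The reformulations $\mathcal U = \mathcal U' \plus \mathcal T$ and $\Sigma\mathcal V' = \Sigma\mathcal V \plus \mathcal F'$ are then immediate, since $\mathcal T = \add P$ and $\mathcal F' = \add I$.
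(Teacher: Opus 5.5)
Your route for the first claim is the paper's. You detect $X\in\ind\mathcal U\setminus\ind\mathcal U'$ by $\Hom_{\mathcal D}(X,\Sigma^{-j}W)\neq0$ for some $j>0$, discard the $\nu R$ summand of $W=\Sigma P\oplus\nu R$ using $X\in\mathcal U$, reduce to $\Hom_{\mathcal D}(X,P)\neq0$, and split off $P$ with Lemma~\ref{morphisms with target P}. Your splitting step is correct: factor through $X\to H^0_\Lambda(X)$, split $H^0_\Lambda(X)\to P$, and lift the section along $\Hom_{\mathcal D}(P,X)\simeq\Hom_\Lambda(P,H^0_\Lambda(X))$. It is a slightly more intrinsic version of the paper's argument with the projective complex $\cdots\to P_{-1}\xrightarrow{d}P_0$. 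You also check $P\notin\mathcal U'$, which the paper leaves implicit.

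The bookkeeping before the splitting is wrong as written, and you leave the $\nu R$ step explicitly unverified.
\begin{itemize}
\item The correct one-line check: $\Hom_{\mathcal D}(X,\Sigma^{-j}\nu R)$ is a direct summand of $\Hom_{\mathcal D}(X,\Sigma^{-j}D\Lambda)\simeq DH^j_\Lambda(X)$. This vanishes for all $j>0$ because $X\in\mathcal U$, which is exactly the range tested by membership in $\mathcal U'$.
\item Your two attempts at this step go wrong in different ways. The first states the vanishing for $i<0$ instead of $i>0$. The second, $\Hom(X,\Sigma^j\nu R)=D\Hom(\Sigma^{-j}R,X)$, has the sign of $j$ flipped; it should be $D\Hom(\Sigma^{j}R,X)$. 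Taken literally, it puts the vanishing in the wrong range.
\item The claim that $\idim P=1$ forces $\Hom_{\mathcal D}(X,\Sigma^mP)=0$ for $m\notin\{0,1\}$ is false: take $X=\Sigma^2P\in\mathcal U$. It is also not what you need. The $P$-part of the test is $\Hom_{\mathcal D}(X,\Sigma^{1-j}P)$ with $j>0$. For $j\geq2$ it vanishes simply because $\Sigma^{1-j}P\in\mathcal V$ and $\Hom_{\mathcal D}(\mathcal U,\mathcal V)=0$, so only $j=1$ survives.
\item The hypothesis $\idim P=1$ is used only through the identification $W=\Sigma P\oplus\nu R$ (Lemmas~\ref{I = Sigma P} and~\ref{def of W}).
\end{itemize}

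For the second claim, the dual argument you propose is what the paper does, and it goes through. Take $Y\in\ind\Sigma\mathcal V'\setminus\ind\Sigma\mathcal V$. For $j<0$, the $R$-part of $H^j_\Lambda(Y)$ is a summand of $H^j_\Gamma(Y)=0$. Hence $\Hom_{\mathcal D}(\Sigma^{-j-1}I,Y)\neq0$, which forces $j=-1$, and the simple injective $I$ splits off $Y$.

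Your ``take perpendiculars'' alternative is only an assertion. Shrinking an aisle by one indecomposable does not formally enlarge the co-aisle by exactly one; with $\idim P=2$ it can fail. It can be justified as follows. Truncating $Y\in\ind\mathcal V'\setminus\ind\mathcal V$ with respect to $(\mathcal U,\mathcal V)$ gives a triangle $U\to Y\to V\to\Sigma U$ with $U\in\mathcal U\cap\mathcal V'\subseteq\add P$. This triangle splits because $\Hom_{\mathcal D}(\mathcal V,\Sigma P)=0$, which again uses $\idim P=1$. That argument has to be written out, not asserted.

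Finally, you are right to write the second reformulation as $\Sigma\mathcal V'=\Sigma\mathcal V\plus\mathcal F'$ with $\mathcal F'=\add I$. The $\mathcal F$ in the statement is a typo.
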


\begin{proof}
    We only prove the first equality, since the second equality is dual to it.
    
    Let $X \in \mathcal U$ be an indecomposable object such that $X \notin \mathcal U'$.
    This implies that there exists some $j > 0$ such that $H^j_\Gamma(X) \neq 0$,
    or equivalently $\Hom_\mathcal D (X, \Sigma^{-j} W) \neq 0$,
    where $W = \Sigma P \oplus \nu R$.
    Since $\nu R$ is a direct summand of $D\Lambda$,
    we see that $\Hom_\mathcal D (X, \Sigma^{-j} \nu R)$ is a direct summand of $\Hom_\mathcal D (X, \Sigma^{-j} D\Lambda) \simeq D H^j_\Lambda (X)$.
    Moreover, we have $H^j_\Lambda (X) = 0$ since $j>0$ and $X \in \mathcal U$ by assumption.
    This implies that we must have $\Hom_\mathcal D (X, \Sigma^{-j} (\Sigma P)) \neq 0$.

    So we get that $\Hom_\mathcal D (X, \Sigma^{-(j-1)} P) \neq 0$.
    Since $X \in \mathcal U$, it is isomorphic in $\mathcal D$ to a complex of projective $\Lambda$-modules concentrated in non-positive degrees:
    \[ X \simeq \bigl(\dots \to P_{-2} \to P_{-1} \xrightarrow{d} P_0 \to 0 \to \cdots\bigr).\]
    We see that we must then have $j = 1$,
    and that a non-zero element $\alpha \in \Hom_\mathcal D (X, P)$
    corresponds to a non-zero morphism $a : P_0 \to P$ such that $ad = 0$.
    By Lemma~\ref{morphisms with target P},
    this implies that $a$ is a split epimorphism in $\modcat \Lambda$.
    Moreover, any section of $a$ can be extended to a section of $\alpha$,
    so $P$ is isomorphic to a direct summand of $X$.
    Since $X$ is indecomposable by assumption,
    we get that $X \simeq P$, which completes the proof.
\end{proof}

\begin{corollary} \label{lemma1}
    We have $\mathcal U \cap \mathcal V' = \mathcal T$ and $\Sigma \mathcal U \cap \Sigma \mathcal V' = \mathcal F'$.
\end{corollary}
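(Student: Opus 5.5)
We derive both equalities from Lemma~\ref{important lemma} and the t-structure axioms. The second equality is dual to the first, so we focus on $\mathcal U \cap \mathcal V' = \mathcal T$.

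By Lemma~\ref{important lemma}, $\mathcal U = \mathcal U' \plus \mathcal T$. Every subcategory here is determined by its indecomposables, so it suffices to show two things:
\begin{enumerate}
    \item $\ind(\mathcal U') \cap \ind(\mathcal V') = \emptyset$;
    \item $P \in \mathcal V'$, which gives $\mathcal T = \add P \subseteq \mathcal U \cap \mathcal V'$.
\end{enumerate}

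For the first point, let $X$ be in both $\mathcal U'$ and $\mathcal V'$. Then $\Hom_\mathcal D(X,X) = 0$ because $\Hom_\mathcal D(\mathcal U', \mathcal V') = 0$, so $X = 0$.

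For the second point, we use Lemma~\ref{I = Sigma P}. It gives $\mathbf R F(P) \simeq \Sigma\inv I$, a stalk complex concentrated in degree $1$. Hence $H^i_\Gamma(P) = 0$ for $i \leq 0$, which means $\mathbf R F(P)$ lies in the standard $\mathcal D^{>0}(\Gamma)$, that is, $P \in \mathcal V'$. We also have $P \in \mathcal H \subseteq \mathcal U$.

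Putting these together: every indecomposable of $\mathcal U \cap \mathcal V'$ lies in $\ind \mathcal U' \cup \{P\}$. It cannot be in $\ind \mathcal U'$ by the first point, so it is isomorphic to $P$. Conversely $P$ lies in the intersection, so $\ind(\mathcal U \cap \mathcal V') = \{P\}$. The intersection is closed under sums and summands, hence equals $\add P = \mathcal T$.

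For the second equality, apply $\Sigma$ to the second part of Lemma~\ref{important lemma} in the shifted form $\Sigma \mathcal V' = \Sigma \mathcal V \plus \mathcal F'$; the indecomposable added there is $I$, and $\mathcal F' = \add I$. This gives $\Sigma\mathcal U \cap \Sigma \mathcal V' \subseteq \Sigma\mathcal U \cap (\Sigma\mathcal V \plus \add I)$.

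We then check three things:
\begin{enumerate}
    \item $\ind \Sigma\mathcal U \cap \ind\Sigma\mathcal V = \emptyset$, by Hom-orthogonality as before.
    \item $I \in \Sigma\mathcal V'$, since $I \in \mathcal H' \subseteq \Sigma \mathcal V'$.
    \item $I \in \Sigma\mathcal U$. Here $I = \Sigma P$ in $\mathcal D$, and $P \in \mathcal U$.
\end{enumerate}
Hence $\ind(\Sigma\mathcal U\cap\Sigma\mathcal V') = \{I\}$, and the intersection is $\add I = \mathcal F'$.

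The only delicate point is the identification $\Sigma \mathcal T = \mathcal F'$, i.e.\ $I = \Sigma P$ inside $\mathcal D$, which is exactly Lemma~\ref{I = Sigma P}. Everything else is bookkeeping with indecomposables.
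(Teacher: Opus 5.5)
Your argument is correct and is essentially the paper's. Both reduce to bookkeeping of indecomposables using Lemma~\ref{important lemma} and the Hom-orthogonality of the t-structures; the only difference is that you check $P \in \mathcal V'$ directly from Lemma~\ref{I = Sigma P} instead of reading it off from $\mathcal V' = \mathcal V \plus \mathcal T$. The second equality needs no separate argument, since it is $\Sigma$ applied to the first: $\Sigma$ is an autoequivalence and $\Sigma \mathcal T = \mathcal F'$.
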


\begin{proof}
    First we recall that we have $\mathcal F' = \Sigma \mathcal T$.
    Both equalities follow from the the fact that
    $\mathcal U = \mathcal U' \plus \mathcal T$ and $\mathcal V' = \mathcal V \plus \mathcal T$
    from Lemma~\ref{important lemma},
    and the fact that $\mathcal U \cap \mathcal V = \mathcal U' \cap \mathcal V' = 0$
    since $(\mathcal U, \mathcal V)$ and $(\mathcal U', \mathcal V')$ are t-structures.
\end{proof}

\begin{corollary} \label{C = H union H'}
    We have $\mathcal C = \mathcal H \plus \mathcal H'$,
    and we can decompose $\mathcal H \plus \mathcal H' = \mathcal T \plus (\mathcal H \cap \mathcal H') \plus \mathcal F'$.
\end{corollary}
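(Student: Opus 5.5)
We work purely at the level of indecomposables, using Lemma~\ref{important lemma}, namely $\mathcal U = \mathcal U' \plus \mathcal T$ and $\Sigma \mathcal V' = \Sigma \mathcal V \plus \mathcal F$. Since $\mathcal C = \mathcal U \cap \Sigma \mathcal V'$ is closed under direct sums and summands, it suffices to compute $\ind \mathcal C = \ind \mathcal U \cap \ind \Sigma \mathcal V'$. Substituting the two descriptions gives
\[\ind \mathcal C = (\ind \mathcal U' \cup \{P\}) \cap (\ind \Sigma \mathcal V \cup \{I\}).\]
We expand this into four pieces.

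The first piece is $\ind \mathcal U' \cap \ind \Sigma \mathcal V$. For the other three, note that $P \in \mathcal H \subseteq \Sigma \mathcal V$ and $I \in \mathcal H' \subseteq \mathcal U'$. Hence $\{P\} \cap \ind\Sigma\mathcal V = \{P\}$ and $\ind \mathcal U' \cap \{I\} = \{I\}$. Finally, $P \neq I$: otherwise $P \in \mathcal U \cap \Sigma\mathcal V'$ would lie in both $\mathcal T$ and $\mathcal F'$ simultaneously. This is best avoided by noting $P \in \mathcal U \setminus \mathcal U'$ while $I \in \mathcal U'$. We therefore get
\[\ind \mathcal C = \{P\} \cup (\ind \mathcal U' \cap \ind \Sigma \mathcal V) \cup \{I\}.\]

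Next we identify the middle term with $\ind(\mathcal H \cap \mathcal H')$. Using $\mathcal U = \mathcal U' \plus \mathcal T$ with $P \notin \mathcal U'$, we have $\mathcal U' \cap \Sigma\mathcal V = (\mathcal U \cap \Sigma \mathcal V) \setminus \{P\} = \ind\mathcal H \setminus\{P\}$. Symmetrically, it equals $\ind\mathcal H' \setminus \{I\}$, because $\Sigma\mathcal V = \Sigma\mathcal V'$ minus $I$. Thus the middle term is contained in $\ind\mathcal H \cap \ind\mathcal H'$. Conversely, $\ind \mathcal H \cap \ind\mathcal H'$ is contained in $\mathcal U'\cap\Sigma\mathcal V$, since $\mathcal H' \subseteq \mathcal U'$ and $\mathcal H \subseteq \Sigma\mathcal V$. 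Hence the middle term is exactly $\ind(\mathcal H\cap\mathcal H')$. Since $\ind\mathcal T = \{P\}$ and $\ind \mathcal F' = \{I\}$, this yields $\mathcal C = \mathcal T \plus (\mathcal H \cap \mathcal H') \plus \mathcal F'$.

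For the other equality, $\ind\mathcal H = \{P\} \cup (\ind \mathcal U' \cap \ind \Sigma\mathcal V)$ and $\ind\mathcal H' = \{I\}\cup(\ind \mathcal U' \cap \ind \Sigma\mathcal V)$. Their union is $\ind\mathcal C$, so $\mathcal C = \mathcal H \plus \mathcal H'$.

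The only step needing care is confirming $P\in\Sigma\mathcal V$ and $I\in\mathcal U'$. Both are immediate, since $P$ and $I$ are stalk modules in the respective hearts: $P$ is a $\Lambda$-module in degree $0$, and $I$ is a $\Gamma$-module in degree $0$ via $\mathbf R F$. The same observation gives $P \ne I$.
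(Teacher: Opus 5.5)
Your argument is correct and is essentially the intended one: the paper gives no written proof of this corollary, treating it as an immediate consequence of Lemma~\ref{important lemma}, and you derive it exactly that way, by intersecting $\ind \mathcal U = \ind \mathcal U' \cup \{P\}$ with $\ind \Sigma \mathcal V' = \ind \Sigma \mathcal V \cup \{I\}$ and using $P \in \mathcal H \subseteq \Sigma \mathcal V$ and $I \in \mathcal H' \subseteq \mathcal U'$. One small correction: the second decomposition is $\Sigma \mathcal V' = \Sigma \mathcal V \plus \mathcal F'$ (since $\ind \mathcal F' = \{I\}$), not $\Sigma \mathcal V \plus \mathcal F$ as you wrote; this slip comes from the lemma's statement, and your computation in fact uses the correct version.
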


\begin{lemma} \label{no non split extensions bof}
    Let $\mathcal X \subseteq \mathcal H$ be a subcategory.
    We have $\mathcal F' * \mathcal X = \mathcal F' \plus \mathcal X$.
\end{lemma}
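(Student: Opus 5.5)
The plan is to show directly that every triangle defining an object of $\mathcal F' * \mathcal X$ splits; the only input needed is the hypothesis $\idim P = 1$. First I check that $\mathcal F' * \mathcal X$ is well defined in the sense of Definition~\ref{def star notation}, which requires $\Hom_\mathcal D(\mathcal F', \mathcal X) = 0$. Since $\mathcal F' = \Sigma \mathcal T = \add \Sigma P$ and $\mathcal X \subseteq \mathcal H$, this amounts to $\Hom_\mathcal D(\Sigma P, X) = \Hom_\mathcal D(P, \Sigma\inv X) = 0$ for $X \in \mathcal H \simeq \modcat \Lambda$. This holds because there are no negative extensions between modules.

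The inclusion $\mathcal F' \plus \mathcal X \subseteq \mathcal F' * \mathcal X$ is immediate. For $F'' \in \mathcal F'$ and $X \in \mathcal X$, the split triangle $F'' \to F'' \oplus X \to X \xrightarrow{0} \Sigma F''$ shows $F'' \oplus X \in \mathcal F' * \mathcal X$. Since $\mathcal F' * \mathcal X$ is a subcategory, and in particular closed under direct summands, it then contains all of $\add(\mathcal F' \cup \mathcal X)$.

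For the reverse inclusion, I take $Z \in \mathcal F' * \mathcal X$, so there is a triangle
\[F'' \to Z \to X \xrightarrow{h} \Sigma F''\]
with $F'' \in \add \Sigma P$ and $X \in \mathcal X$. The connecting morphism $h$ lies in
\[\Hom_\mathcal D(X, \Sigma^2 P^{\oplus n}) \simeq \Ext^2_\Lambda(X, P)^{\oplus n}.\]
This vanishes because $\idim P = 1$, which is where the 1-APR hypothesis enters. Hence $h = 0$, the triangle splits, and $Z \simeq F'' \oplus X \in \mathcal F' \plus \mathcal X$.

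There is no real obstacle here. The only step requiring care is the identification of morphisms in $\mathcal D = \deriv(\Lambda)$ between stalk complexes with $\Ext$ groups, namely $\Hom_\mathcal D(M, \Sigma^i N) \simeq \Ext^i_\Lambda(M,N)$ for $M, N \in \mathcal H$. This is used twice: once with $i = -1$ for well-definedness, and once with $i = 2$ for the splitting.
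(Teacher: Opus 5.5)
Your proposal is correct and follows the paper's proof. The paper also reduces to showing $\Hom_\mathcal D(X, \Sigma F') \simeq \Ext^2_\Lambda(X,T) = 0$ for $T = \Sigma\inv F' \in \add P$, using $\idim P = 1$, so every such triangle splits. Your additional checks, namely well-definedness via $\Hom_\mathcal D(\mathcal F', \mathcal X) = 0$ and the easy inclusion, are harmless details that the paper leaves implicit.
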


\begin{proof}
    It suffices to show that for every $X \in \mathcal X$, $F' \in \mathcal F'$,
    we have $\Hom_\mathcal D (X, \Sigma F') = 0$.
    Now setting $T = \Sigma\inv F' \in \mathcal T$,
    we have $\Hom_\mathcal D (X, \Sigma F') = \Hom_\mathcal D (X, \Sigma^2 T) \simeq \Ext^2_\Lambda (X,T) = 0$
    since $\mathcal T = \add P$ and $\idim P = 1$.
\end{proof}

\begin{lemma}
    The following pairs of subcategories form a chain of s-torsion pairs in $\mathcal C$:
    \[(\mathcal C, \{0\}) \geq (\mathcal H', \mathcal T) \geq (\mathcal F', \mathcal H) \geq (\{0\}, \mathcal C).\]
\end{lemma}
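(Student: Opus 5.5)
The two extreme pairs $(\mathcal C,\{0\})$ and $(\{0\},\mathcal C)$ are trivially s-torsion pairs. The chain inequalities are immediate once the middle pairs are known to be s-torsion pairs. Indeed $\mathcal F' \subseteq \mathcal H'$ because $\mathcal F' = \add I$ lies in the heart $\mathcal H'$. So the work is to check the three axioms of Definition~\ref{def s-torsion pair} for $(\mathcal H',\mathcal T)$ and $(\mathcal F',\mathcal H)$. I would obtain them as the reductions of t-structures through the interval $\mathcal J$ of Corollary~\ref{interval J}, so that Theorem~\ref{theorem reduction extriangulated} does the work.

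\textbf{Step 1: $(\mathcal H',\mathcal T)$.} Consider the t-structure $(\mathcal U',\mathcal V')$ in $\stors \mathcal D$. By Lemma~\ref{U' included in U} and $\Sigma\mathcal U'\subseteq \mathcal U'$, it satisfies $(\Sigma\mathcal U',\Sigma\mathcal V')\leq(\mathcal U',\mathcal V')\leq(\mathcal U,\mathcal V)$, so it lies in $\mathcal J$. Hence $\red_\mathcal J(\mathcal U',\mathcal V') = (\mathcal U'\cap\Sigma\mathcal V',\ \mathcal V'\cap\mathcal U)$ is an s-torsion pair in $\mathcal H_\mathcal J=\mathcal C$. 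The first component is $\mathcal H'$ by definition. The second is $\mathcal U\cap\mathcal V'=\mathcal T$ by Corollary~\ref{lemma1}.

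\textbf{Step 2: $(\mathcal F',\mathcal H)$.} Similarly take $(\Sigma\mathcal U,\Sigma\mathcal V)$. We have $\Sigma\mathcal U\subseteq\mathcal U$, and $\Sigma\mathcal U'\subseteq\Sigma\mathcal U$ follows from Lemma~\ref{U' included in U}, so this t-structure lies in $\mathcal J$. Its reduction is $(\Sigma\mathcal U\cap\Sigma\mathcal V',\ \Sigma\mathcal V\cap\mathcal U)$. Here $\Sigma\mathcal V\cap\mathcal U=\mathcal H$ by definition, and $\Sigma\mathcal U\cap\Sigma\mathcal V'=\mathcal F'$ by Corollary~\ref{lemma1}.

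\textbf{Step 3: the chain.} Since $\red_\mathcal J$ is order-preserving and
\[(\Sigma\mathcal U',\Sigma\mathcal V')\leq(\Sigma\mathcal U,\Sigma\mathcal V)\leq(\mathcal U',\mathcal V')\leq(\mathcal U,\mathcal V),\]
the middle inequality gives $(\mathcal F',\mathcal H)\leq(\mathcal H',\mathcal T)$. The outer terms reduce to $(\{0\},\mathcal C)$ and $(\mathcal C,\{0\})$. The middle inequality $\Sigma\mathcal U\subseteq\mathcal U'$ is the one nontrivial containment, and I expect it to be the main obstacle. I would derive it from Lemma~\ref{important lemma}. Since $\mathcal U=\mathcal U'\plus\mathcal T$, every indecomposable of $\Sigma\mathcal U$ is either $\Sigma P$ or lies in $\Sigma\mathcal U'\subseteq\mathcal U'$. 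Moreover $\Sigma P\simeq I$ lies in $\mathcal H'\subseteq\mathcal U'$ by Lemma~\ref{I = Sigma P}.

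Alternatively, this inequality can be checked directly inside $\mathcal C$: one needs $\mathcal F'\subseteq\mathcal H'$, which holds since $\mathcal F'=\add I$.
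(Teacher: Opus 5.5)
Your proposal is correct and is essentially the paper's proof: reduce the chain of t-structures $(\mathcal U,\mathcal V)\geq(\mathcal U',\mathcal V')\geq(\Sigma\mathcal U,\Sigma\mathcal V)\geq(\Sigma\mathcal U',\Sigma\mathcal V')$ through $\red_\mathcal J$, identify the reduced pairs via Corollary~\ref{lemma1}, and obtain $\Sigma\mathcal U\subseteq\mathcal U'$ from Lemma~\ref{important lemma} together with $\Sigma P\simeq I\in\mathcal H'$. Your extra remark also holds: the order on $\stors\mathcal C$ is inclusion of torsion parts and $\mathcal F'\subseteq\mathcal H'$, so the middle inequality is immediate and the separate check $\Sigma\mathcal U\subseteq\mathcal U'$ is in fact redundant.
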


\begin{proof}
    We know that the following are t-structures in $\mathcal D$:
    \[(\mathcal U, \mathcal V) \geq (\mathcal U', \mathcal V') \geq (\Sigma \mathcal U, \Sigma \mathcal V) \geq (\Sigma \mathcal U', \Sigma \mathcal V').\]
    Let us show that they compare in the indicated way.
    We know from Lemma~\ref{U' included in U} that we have $(\mathcal U, \mathcal V) \geq (\mathcal U', \mathcal V')$.
    Moreover, for $X \in \ind \Sigma \mathcal U$,
    by Lemma~\ref{important lemma}, we either have $X \in \ind \Sigma \mathcal U'$ or $X \simeq \Sigma P = I$.
    In both cases, we get $X \in \mathcal U'$,
    since $\mathcal U'$ is stable under $\Sigma$,
    and $I \in \mathcal H' \subseteq \mathcal U'$.
    So we have $(\mathcal U', \mathcal V') \geq (\Sigma \mathcal U, \Sigma \mathcal V)$.
    
    Since $\mathcal D$ is triangulated,
    these t-structures are s-torsion pairs in $\mathcal D$
    (see \ref{s-tors for triangulated}).

    Now we apply the map $\red_\mathcal J : \mathcal J \iso \stors \mathcal H_\mathcal J$ defined in Theorem~\ref{theorem reduction extriangulated},
    with $\mathcal J$ being the interval $\bigl[(\Sigma \mathcal U', \Sigma \mathcal V'), (\mathcal U, \mathcal V)\bigr]$ in $\stors \mathcal D$
    (see \ref{interval J}).
    We get the following chain of s-torsion pairs in the heart $\mathcal H_\mathcal J = \mathcal U \cap \Sigma \mathcal V' = \mathcal C$:
    \[(\mathcal U \cap \Sigma \mathcal V', \mathcal V \cap \mathcal U) \geq (\mathcal U' \cap \Sigma \mathcal V', \mathcal V' \cap \mathcal U) \geq (\Sigma \mathcal U \cap \Sigma \mathcal V', \Sigma \mathcal V \cap \mathcal U) \geq (\Sigma \mathcal U' \cap \Sigma \mathcal V', \Sigma \mathcal V' \cap \mathcal U)\]
    which can be rewritten as
    \[(\mathcal C, \{0\}) \geq (\mathcal H', \mathcal T) \geq (\mathcal F', \mathcal H) \geq (\{0\}, \mathcal C)\]
    by definition of $\mathcal H$, $\mathcal H'$ and $\mathcal C$, and by Corollary~\ref{lemma1}.
\end{proof}

\subsection{Structure of its poset of s-torsion pairs and flip-flop}

We explain the strategy of the proof of Theorem~\ref{theorem arbitrary}.
First, we want to translate the flip-flop decompositions of $\tors \Lambda$ and $\torf \Gamma$ in terms of the poset $\stors \mathcal C$.
This common framework will allow us to compare the two decompositions.

We give names to the following intervals in $\stors \mathcal C$:
\begin{align*}
    \mathcal I = \bigl[(\mathcal F', \mathcal H), (\mathcal C, \{0\})\bigr] \\
    \mathcal I' = \bigl[(\{0\}, \mathcal C), (\mathcal H', \mathcal T)\bigr]
\end{align*}
and we decompose
\[\mathcal I \cup \mathcal I' = (\mathcal I \setminus \mathcal I') \sqcup (\mathcal I \cap \mathcal I') \sqcup (\mathcal I' \setminus \mathcal I)\]
with \[\mathcal I \cap \mathcal I' = \bigl[(\mathcal F', \mathcal H), (\mathcal H', \mathcal T)\bigr].\]

Since the heart $\mathcal H_\mathcal I$ of the interval $\mathcal I$ is equal to $\mathcal H \simeq \modcat \Lambda$,
we have an isomorphism of posets $\red_\mathcal I : \mathcal I \iso \stors \mathcal H_\mathcal I \simeq \tors \Lambda$
(see \ref{stors for abelian}).
Similarly we have $\mathcal H_{\mathcal I'} = \mathcal H'$,
and \mbox{$\red_{\mathcal I'} : \mathcal I' \iso \stors \mathcal H_{\mathcal I'} \simeq \tors \Gamma$}
is an isomorphism of posets.

In \ref{translation flip-flop Lambda} and \ref{translation flip-flop Gamma},
we translate the flip-flop decompositions of $\tors \Lambda$ and $\torf \Gamma$
to get flip-flop decompositions for $\mathcal I$ and for $\mathcal I'$:
\[\mathcal I \simeq \bigl((\mathcal I \setminus \mathcal I') \sqcup (\mathcal I \cap \mathcal I'), \leq_-^\varphi\bigr)\]
\[\mathcal I' \simeq \bigl((\mathcal I' \setminus \mathcal I) \sqcup (\mathcal I \cap \mathcal I'), \leq_+^\psi\bigr)\]
where $\varphi$ and $\psi$ are order-preserving maps that are defined there.

The goal is then to show that the two flip-flop data are isomorphic.
In our context, this amounts to constructing an isomorphism $\alpha : \mathcal I \setminus \mathcal I' \iso \mathcal I' \setminus \mathcal I$
such that the diagram
\begin{equation}\label{commutative diagram alpha} \tag*{$(*)$}
    \begin{tikzcd}
        {\mathcal I \setminus \mathcal I'} & {\mathcal I \cap \mathcal I'} \\
        {\mathcal I' \setminus \mathcal I} & {\mathcal I \cap \mathcal I'}
        \arrow["\varphi", from=1-1, to=1-2]
        \arrow["\alpha"', from=1-1, to=2-1]
        \arrow[equals, from=1-2, to=2-2]
        \arrow["\psi", from=2-1, to=2-2]
    \end{tikzcd}
\end{equation}
commutes.
To construct $\alpha$, we once again use reduction of intervals of s-torsion pairs.
It turns out (see \ref{translation flip-flop Lambda} and \ref{translation flip-flop Gamma})
that $\mathcal I \setminus \mathcal I'$ and $\mathcal I' \setminus \mathcal I$ are both intervals of $\stors \mathcal C$,
and that their hearts are $\mathcal H_{\mathcal I \setminus \mathcal I'} = \mathcal F$ and $\mathcal H_{\mathcal I' \setminus \mathcal I} = \mathcal T'$ respectively.

In \ref{equivalence of small hearts},
we construct explicit mutually inverse equivalences between the categories $\mathcal F$ and $\mathcal T'$.
We then use them to get mutually inverse isomorphisms between $\stors \mathcal F$ and $\stors \mathcal T'$ in \ref{abelian trick},
and in \ref{X-parts of the flip-flops are isomorphic - extriangulated} we define $\alpha$
and show the commutativity of the diagram \ref{commutative diagram alpha}.

\begin{remark}
    The intervals $\mathcal I$ and $\mathcal I'$ cover the poset $\stors \mathcal C$,
    that is, we have $\stors \mathcal C = \mathcal I \cup \mathcal I'$.
    Indeed, it suffices to see that for $(\mathcal X, \mathcal Y) \in \stors \mathcal C$
    such that $(\mathcal X, \mathcal Y) \nleq (\mathcal H', \mathcal T)$,
    we have the inequality $(\mathcal X, \mathcal Y) \geq (\mathcal F', \mathcal H)$.
    Now the condition $\mathcal X \nsubseteq \mathcal H'$ is equivalent to $\mathcal T \subseteq \mathcal X$.
    This implies that $\mathcal F' \nsubseteq \mathcal Y$,
    since we have $\mathbb E\inv (\mathcal T, \mathcal F') \neq 0$ and $(\mathcal X, \mathcal Y)$ is an s-torsion pair.
    So we get that $\mathcal Y \subseteq \mathcal H$, as desired.
\end{remark}

\begin{remark}
    The fact that the categories $\mathcal F$ and $\mathcal T'$ are equivalent can be seen as a consequence of Jasso reduction.
    However, we stress that even though these categories are equivalent,
    they are in general different as subcategories of $\mathcal C$;
    this is true already in Example~\ref{example extriangulated A3}.
    Moreover, in order to show the commutativity of the diagram \ref{commutative diagram alpha},
    we need to have an explicit isomorphism between $\mathcal I \setminus \mathcal I'$ and $\mathcal I' \setminus \mathcal I$.
\end{remark}

\begin{proposition} \label{translation flip-flop Lambda}
    The isomorphism of posets $\red_\mathcal I : \mathcal I \to \stors \mathcal H_\mathcal I \simeq \tors \Lambda$
    induces a commutative diagram
    \[\begin{tikzcd}
        {\mathcal I \setminus \mathcal I'} & {\mathcal I \cap \mathcal I'} \\
        {\tors_P \Lambda} & {\tors \Lambda \setminus \tors_P \Lambda}
        \arrow["\varphi", from=1-1, to=1-2]
        \arrow["{\red_\mathcal I}"', from=1-1, to=2-1]
        \arrow["{\red_\mathcal I}", from=1-2, to=2-2]
        \arrow["f"', from=2-1, to=2-2]
    \end{tikzcd}\]
    where the map $\varphi : \mathcal I \setminus \mathcal I' \to \mathcal I \cap \mathcal I'$ is given by
    $\varphi(\mathcal X, \mathcal Y) = \bigl(\mathcal X \cap \mathcal H', \rightperp {(\mathcal X \cap \mathcal H')}\bigr)$.

    Moreover, the subset $\mathcal I \setminus \mathcal I'$ coincides with the interval $\bigl[(\mathcal T \plus \mathcal F', \mathcal F), (\mathcal C, \{0\})\bigr]$
    in $\stors \mathcal C$,
    whose heart is $\mathcal H_{\mathcal I \setminus \mathcal I'} = \mathcal F$.
\end{proposition}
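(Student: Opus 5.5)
The plan is to compute everything through $\red_\mathcal I$ and $\lift_\mathcal I$ from Theorem~\ref{theorem reduction extriangulated}, applied to the interval $\mathcal I = \bigl[(\mathcal F', \mathcal H), (\mathcal C, \{0\})\bigr]$ whose heart is $\mathcal C \cap \mathcal H = \mathcal H$. For $(\mathcal X,\mathcal Y) \in \mathcal I$ we have $\red_\mathcal I(\mathcal X,\mathcal Y) = (\mathcal X \cap \mathcal H, \mathcal Y)$. Conversely, for a torsion pair $(\mathcal A, \mathcal B)$ in $\mathcal H$ we have $\lift_\mathcal I(\mathcal A,\mathcal B) = (\mathcal F' * \mathcal A, \mathcal B)$, which equals $(\mathcal F' \plus \mathcal A, \mathcal B)$ by Lemma~\ref{no non split extensions bof}. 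Thus $\mathcal X = \mathcal F' \plus \mathcal A$ with $\mathcal A = \mathcal X \cap \mathcal H$, and by Corollary~\ref{C = H union H'} the indecomposables of $\mathcal X$ are those of $\mathcal A$ together with $I$.

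The first step is to identify the two pieces. An element of $\mathcal I$ lies in $\mathcal I'$ if and only if $\mathcal X \subseteq \mathcal H'$. Since $\ind \mathcal H = \{P\} \cup \ind(\mathcal H\cap\mathcal H')$ and $I \in \mathcal H'$, this holds if and only if $P \notin \mathcal A$. Hence $\red_\mathcal I$ sends $\mathcal I \setminus \mathcal I'$ onto $\tors_P\Lambda$ and $\mathcal I\cap\mathcal I'$ onto $\tors\Lambda\setminus\tors_P\Lambda$.

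Next I would check that $\varphi$ is well defined and that the square commutes. Given $(\mathcal X,\mathcal Y)$ with $P \in \mathcal A$, the torsion class $f(\mathcal A) = \mathcal A \cap \leftperp P$ has indecomposables $\ind\mathcal A\setminus\{P\}$. Its lift is $(\mathcal F'\plus f(\mathcal A), \mathcal B')$, where $\mathcal B'$ is the torsion-free class of $f(\mathcal A)$ in $\mathcal H$. On indecomposables, $\mathcal F' \plus f(\mathcal A)$ is $\ind\mathcal X\setminus\{P\}$, which equals $\mathcal X\cap\mathcal H'$ by Corollary~\ref{C = H union H'}. Because the torsion part of an s-torsion pair determines the pair, with $\mathcal Y = \rightperp{\mathcal X}$ computed in $\mathcal C$ by \cite[Proposition~3.2]{AET}, we get $\lift_\mathcal I f \red_\mathcal I(\mathcal X,\mathcal Y) = (\mathcal X\cap\mathcal H', \rightperp{(\mathcal X\cap\mathcal H')})$. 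This both shows that $\varphi$ lands in $\mathcal I\cap\mathcal I'$ and that the diagram commutes.

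For the interval description, Proposition~\ref{flip-flop intervals tors} gives $\tors_P\Lambda = [\add P, \modcat\Lambda]$. Applying the order isomorphism $\lift_\mathcal I$ turns this into $\bigl[\lift_\mathcal I(\mathcal T,\mathcal F), (\mathcal C,\{0\})\bigr]$, and $\lift_\mathcal I(\mathcal T,\mathcal F) = (\mathcal F' \plus \mathcal T, \mathcal F)$. The heart of this interval is $\mathcal C \cap \mathcal F = \mathcal F$.

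The main obstacle is justifying the uses of Lemma~\ref{no non split extensions bof} and Corollary~\ref{C = H union H'} on indecomposables. In particular one must verify that $\mathcal X\cap\mathcal H'$ is indeed the torsion part of an s-torsion pair in $\mathcal C$, and not just a subcategory with the right indecomposables. Obtaining it as a lift, as above, takes care of this automatically.
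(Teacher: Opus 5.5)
Your proposal is correct and takes essentially the paper's route. You compute $\lift_\mathcal I(\mathcal A,\mathcal B) = (\mathcal F' \plus \mathcal A, \mathcal B)$ via Lemma~\ref{no non split extensions bof}, then prove commutativity by comparing torsion parts on indecomposables, and you identify $\red_\mathcal I(\mathcal I \cap \mathcal I')$ directly through ``$\mathcal X \subseteq \mathcal H'$ iff $P \notin \mathcal X \cap \mathcal H$'' rather than, as the paper does, by lifting the four interval bounds of Proposition~\ref{flip-flop intervals tors} and matching them with $\mathcal I \cap \mathcal I' = \bigl[(\mathcal F', \mathcal H), (\mathcal H', \mathcal T)\bigr]$; both versions work.
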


\begin{proof}
    To show that the diagram is well-defined, we first need to determine the inverse images of the subsets $\tors_P \Lambda$ and $\tors \Lambda \setminus \tors_P \Lambda$
    under the isomorphism $\red_\mathcal I : \mathcal I \iso \stors \mathcal H \simeq \tors \Lambda$.
    This is the same as determining their direct images under the inverse isomorphism $\lift_\mathcal I : \tors \Lambda \iso \mathcal I$.
    Since both subsets are intervals of $\tors \Lambda$ by Proposition~\ref{flip-flop intervals tors},
    it suffices to compute the images of their upper and lower bounds under the map $\lift_\mathcal I$.
    We have
    \[\tors_P \Lambda = \bigl[(\add P, \rightperp P), (\modcat \Lambda, \{0\})\bigr] = \bigl[(\mathcal T, \mathcal F), (\mathcal H, \{0\})\bigr]\]
    and
    \[\tors \Lambda \setminus \tors_P \Lambda = \bigl[(\{0\}, \modcat \Lambda), (\leftperp P, \add P)\bigr] = \bigl[(\{0\}, \mathcal H), (\mathcal H \cap \mathcal H', \mathcal T)\bigr]\]
    where we have adapted the notation of Proposition~\ref{flip-flop intervals tors} to the one introduced at the beginning of section~\ref{sec:preliminaries on C}.

    Since $\mathcal I = \bigl[(\mathcal F', \mathcal H), (\mathcal C, \{0\})\bigr]$,
    the map $\lift_\mathcal I$ is given by
    \[\lift_\mathcal I (\mathcal X, \mathcal Y) = (\mathcal F' * \mathcal X, \mathcal Y * \{0\}) = (\mathcal F' \plus \mathcal X, \mathcal Y)\]
    for $(\mathcal X, \mathcal Y) \in \tors \Lambda$,
    where the second equality follows from Lemma~\ref{no non split extensions bof}.
    So we get 
    \begin{align*}
        &\lift_\mathcal I (\mathcal T, \mathcal F) = (\mathcal T \plus \mathcal F', \mathcal F) \\
        &\lift_\mathcal I (\mathcal H, \{0\}) = (\mathcal C, \{0\}) \\
        &\lift_\mathcal I (\{0\}, \mathcal H) = (\mathcal F', \mathcal H) \\
        &\lift_\mathcal I (\mathcal H \cap \mathcal H', \mathcal T) = (\mathcal H', \mathcal T).
    \end{align*}
    
    Since we know that $\mathcal I \cap \mathcal I' = \bigl[(\mathcal F', \mathcal H), (\mathcal H', \mathcal T)\bigr]$,
    we see that $\red_\mathcal I (\mathcal I \cap \mathcal I') = \tors \Lambda \setminus \tors_P \Lambda$,
    which shows that the right vertical arrow is well-defined.
    Now this implies that $\red_\mathcal I (\mathcal I \setminus \mathcal I') = \tors_P \Lambda$,
    since $\mathcal I \setminus \mathcal I'$ is the complement of $\mathcal I \cap \mathcal I'$ inside $\mathcal I$.
    So the left vertical arrow is well-defined,
    and we see that $\mathcal I \setminus \mathcal I'$ coincides with the interval $\bigl[(\mathcal T \plus \mathcal F', \mathcal F), (\mathcal C, \{0\})\bigr]$ in $\stors \mathcal C$.
    Finally, the heart of this interval is by definition
    $\mathcal H_{\mathcal I \setminus \mathcal I'} = \mathcal F \cap \mathcal C = \mathcal F$.

    We now show that the diagram is commutative.
    Let $(\mathcal X, \mathcal Y) \in \mathcal I \setminus \mathcal I'$.
    By the above characterisation of $\mathcal I \setminus \mathcal I'$ as the interval $\bigl[(\mathcal T \plus \mathcal F', \mathcal F), (\mathcal C, \{0\})\bigr]$,
    we have $\mathcal T \plus \mathcal F' \subseteq \mathcal X$,
    so we can write $\mathcal X = \widetilde{\mathcal X} \plus \mathcal T \plus \mathcal F'$ for some subcategory $\widetilde{\mathcal X} \subseteq \mathcal H \cap \mathcal H'$.
    To show that ${\red}_\mathcal I \circ \varphi (\mathcal X, \mathcal Y) = f \circ \red_\mathcal I (\mathcal X, \mathcal Y)$,
    it suffices to check that the torsion part of ${\lift}_\mathcal I \circ f \circ \red_\mathcal I (\mathcal X, \mathcal Y)$
    is equal to that of $\varphi (\mathcal X, \mathcal Y)$,
    namely to $\mathcal X \cap \mathcal H'$.
    Now by definition of the involved maps,
    the torsion part of ${\lift}_\mathcal I \circ f \circ \red_\mathcal I (\mathcal X, \mathcal Y)$
    is equal to
    \begin{align*}
        ((\mathcal X \cap \mathcal H) \cap \mathcal H') \plus \mathcal F' &= ((\widetilde{\mathcal X} \plus \mathcal T) \cap \mathcal H') \plus \mathcal F' \\
        &= \widetilde{\mathcal X} \plus \mathcal F' \\
        &= \mathcal X \cap \mathcal H'
    \end{align*}
    which completes the proof.
\end{proof}

Dually, we get the following result.

\begin{proposition} \label{translation flip-flop Gamma}
    The isomorphism of posets 
    \[\red_\mathcal {I'} : \mathcal I' \to \stors \mathcal H_\mathcal {I'} \simeq \tors \Gamma \simeq (\torf \Gamma)\opcat\]
    induces a commutative diagram
    \[\begin{tikzcd}
        {\mathcal I' \setminus \mathcal I} & {\mathcal I \cap \mathcal I'} \\
        {(\torf_I \Gamma)\opcat} & {(\torf \Gamma \setminus \torf_I \Gamma)\opcat}
        \arrow["\psi", from=1-1, to=1-2]
        \arrow["{\red_\mathcal {I'}}"', from=1-1, to=2-1]
        \arrow["{\red_\mathcal {I'}}", from=1-2, to=2-2]
        \arrow["g\opcat"', from=2-1, to=2-2]
    \end{tikzcd}\]
    where the map $\psi : \mathcal I' \setminus \mathcal I \to \mathcal I \cap \mathcal I'$ is given by
    $\psi(\mathcal X', \mathcal Y') = \bigl(\leftperp{(\mathcal Y' \cap \mathcal H)}, \mathcal Y' \cap \mathcal H\bigr)$.

    Moreover, the subset $\mathcal I' \setminus \mathcal I$ coincides with the interval $\bigl[(\{0\}, \mathcal C), (\mathcal T', \mathcal T \plus \mathcal F')\bigr]$
    in $\stors \mathcal C$,
    whose heart is $\mathcal H_{\mathcal I' \setminus \mathcal I} = \mathcal T'$.
\end{proposition}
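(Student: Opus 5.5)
The plan is to dualise the proof of Proposition~\ref{translation flip-flop Lambda}, with the roles of $\mathcal H$, $\mathcal T$, $\mathcal F'$ played by $\mathcal H'$, $\mathcal F'$, $\mathcal T$, and the map $f$ replaced by $g\opcat$.

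First, compute $\lift_{\mathcal I'}$ explicitly. Since $\mathcal I' = \bigl[(\{0\}, \mathcal C), (\mathcal H', \mathcal T)\bigr]$, Theorem~\ref{theorem reduction extriangulated} gives $\lift_{\mathcal I'}(\mathcal X, \mathcal Y) = (\mathcal X, \mathcal Y * \mathcal T)$ for $(\mathcal X, \mathcal Y) \in \stors \mathcal H' \simeq \tors \Gamma$. I need the dual of Lemma~\ref{no non split extensions bof}, namely $\mathcal Y * \mathcal T = \mathcal Y \plus \mathcal T$ for $\mathcal Y \subseteq \mathcal H'$. This amounts to $\Hom_\mathcal D(\mathcal T, \Sigma \mathcal Y) = 0$. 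Writing $\mathcal T = \Sigma\inv \mathcal F'$, this becomes $\Hom_\mathcal D(I, \Sigma^2 Y) \simeq \Ext^2_\Gamma(I, Y)$, which vanishes because $\pdim I = 1$ by Proposition~\ref{simple injective Gamma module}.

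Next, transport the intervals of Proposition~\ref{flip-flop intervals torf}. Under $\tors \Gamma \simeq (\torf \Gamma)\opcat$, the subset $\torf_I \Gamma = [\add I, \modcat \Gamma]$ corresponds to the interval of torsion pairs between $(\{0\}, \mathcal H')$ and $(\mathcal T', \mathcal F')$. Likewise $\torf\Gamma\setminus\torf_I\Gamma = [\{0\}, \rightperp I]$ corresponds to the interval between $(\add I, \rightperp I)$ and $(\mathcal H', \{0\})$. Lifting the bounds gives:
\begin{itemize}
\item $\lift_{\mathcal I'}(\{0\}, \mathcal H') = (\{0\}, \mathcal C)$, using Corollary~\ref{C = H union H'};
\item $\lift_{\mathcal I'}(\mathcal T', \mathcal F') = (\mathcal T', \mathcal T \plus \mathcal F')$;
\item $\lift_{\mathcal I'}(\mathcal H', \{0\}) = (\mathcal H', \mathcal T)$;
\item $\lift_{\mathcal I'}(\mathcal F', \rightperp I) = (\mathcal F', \mathcal T \plus (\mathcal H \cap \mathcal H')) = (\mathcal F', \mathcal H)$.
\end{itemize}
The last equality uses $\ind \rightperp I = \ind \mathcal H' \setminus \{I\}$ together with Corollary~\ref{C = H union H'}. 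Hence $\red_{\mathcal I'}(\mathcal I \cap \mathcal I')$ is exactly the image of $(\torf \Gamma \setminus \torf_I \Gamma)\opcat$. Taking complements inside $\mathcal I'$, we get $\mathcal I' \setminus \mathcal I = \bigl[(\{0\}, \mathcal C), (\mathcal T', \mathcal T \plus \mathcal F')\bigr]$, whose heart is $\mathcal T' \cap \mathcal C = \mathcal T'$. This shows both vertical maps are well defined.

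For commutativity, take $(\mathcal X', \mathcal Y') \in \mathcal I' \setminus \mathcal I$. Then $\mathcal T \plus \mathcal F' \subseteq \mathcal Y'$, so $\mathcal Y' = \widetilde{\mathcal Y} \plus \mathcal T \plus \mathcal F'$ with $\widetilde{\mathcal Y} \subseteq \mathcal H \cap \mathcal H'$. Also $\red_{\mathcal I'}(\mathcal X', \mathcal Y')$ has torsion-free part $\mathcal Y' \cap \mathcal H' = \widetilde{\mathcal Y} \plus \mathcal F'$. Applying $g$ removes $I$, giving $\widetilde{\mathcal Y}$, and lifting gives torsion-free part $\widetilde{\mathcal Y} \plus \mathcal T = \mathcal Y' \cap \mathcal H$. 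Since an s-torsion pair is determined by its torsion-free part via $\mathcal T = \leftperp{\mathcal F}$, this agrees with $\psi(\mathcal X', \mathcal Y')$. In particular $\psi$ is well defined, since its value is a lift and hence an s-torsion pair.

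The only delicate points are the bookkeeping of $\opcat$ under the torsion/torsion-free correspondence and the extension-vanishing lemma above. Neither looks hard.
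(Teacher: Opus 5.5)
Your proposal is correct and is essentially the paper's proof: the paper only says ``Dually'' and leaves the dualisation of Proposition~\ref{translation flip-flop Lambda} to the reader, and you carry it out in full. You dualise Lemma~\ref{no non split extensions bof} via $\pdim I = 1$, lift the four interval bounds using $\lift_{\mathcal I'}(\mathcal X, \mathcal Y) = (\mathcal X, \mathcal Y \plus \mathcal T)$, and check commutativity on torsion-free parts; the $\opcat$ bookkeeping is also handled correctly.
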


In Figure~\ref{figure stors C A3}, we give a graphical representation of the structure of the poset $\stors \mathcal C$,
which should be compared with Figure~\ref{fig:double flip-flop silting}.
On the right-hand side, we also represent the different s\nobreakdash-torsion pairs in the situation of Example~\ref{example extriangulated A3},
where the torsion part is circled with a solid line and the torsion-free part is circled with a dashed line.

\begin{figure}
    \def\svgwidth{0.9\textwidth}
    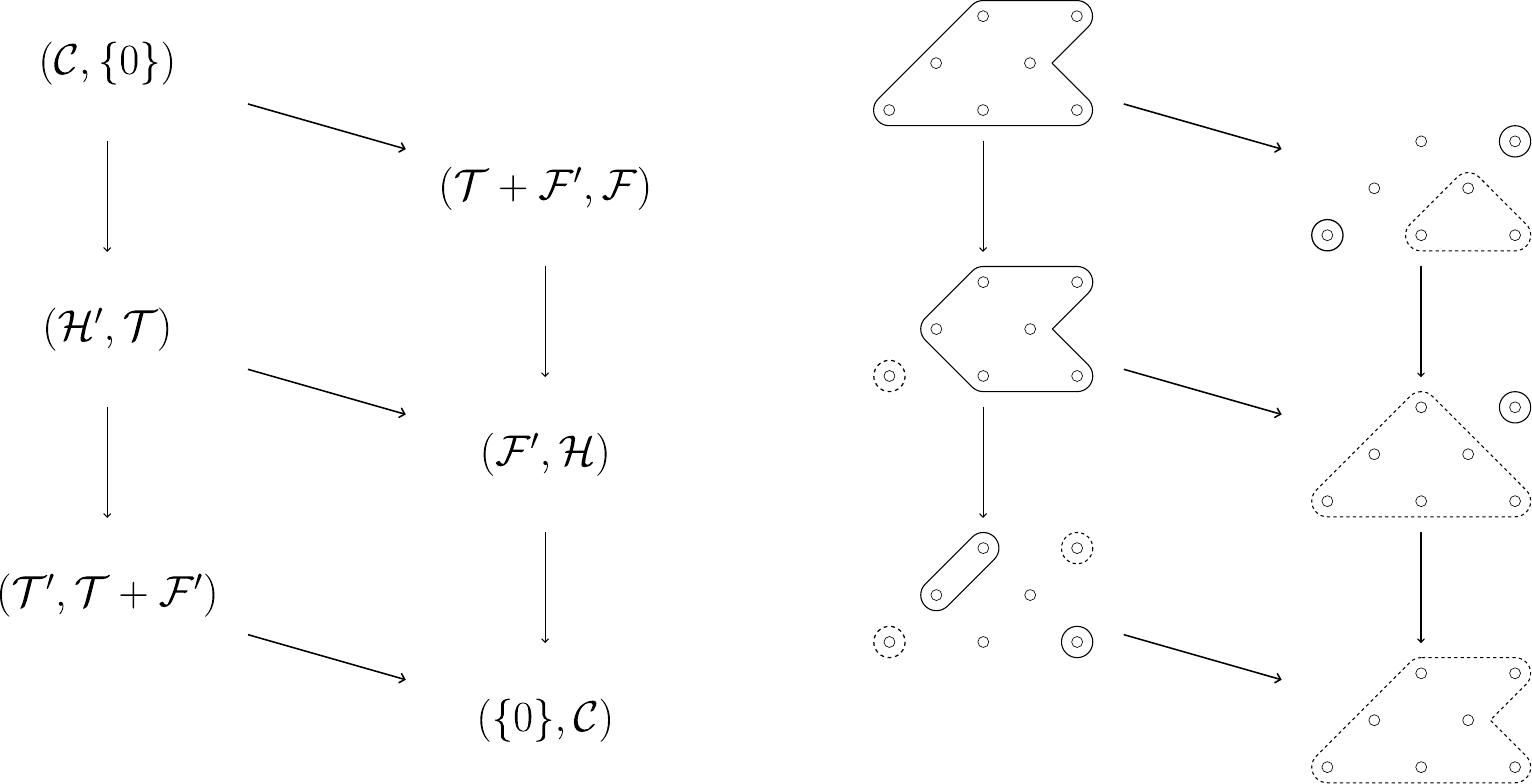    
    \caption{Structure of the poset $\stors \mathcal C$.}
    \label{figure stors C A3}
\end{figure}

\begin{proposition} \label{equivalence of small hearts}
    The functors $\theta : \mathcal C \to \mathcal T'$ and $\xi : \mathcal C \to \mathcal F$,
    defined respectively as the right adjoint of the inclusion $\mathcal T' \to \mathcal C$
    and as the left adjoint of the inclusion $\mathcal F \to \mathcal C$,
    restrict to mutually inverse equivalences of categories
    $\theta : \mathcal F \longrightleftarrows \mathcal T' : \xi$.

    In particular, for $X \in \mathcal F$ and $X' \in \mathcal T'$,
    the existence of an isomorphism $X' \simeq \theta (X)$ is equivalent to the existence of a triangle
    \[T \to X' \to X \to F'\]
    in $\mathcal D$ with $T \in \mathcal T$ and $F' = \Sigma T \in \mathcal F'$.
\end{proposition}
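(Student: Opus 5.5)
The plan is to realise $\theta$ and $\xi$ as the torsion and torsion-free functors of two explicit s-torsion pairs in $\mathcal C$. Then the triangle in the statement is, read in two ways, the canonical conflation for each pair, and uniqueness of these conflations gives everything. By Propositions~\ref{translation flip-flop Gamma} and \ref{translation flip-flop Lambda}, both $(\mathcal T', \mathcal T \plus \mathcal F')$ and $(\mathcal T \plus \mathcal F', \mathcal F)$ are s-torsion pairs in $\mathcal C$. By Proposition~\ref{functoriality of conflations}, $\theta$ is the torsion functor $t$ of the first pair and $\xi$ is the torsion-free functor $f$ of the second. So for every $Y \in \mathcal C$ we have conflations
\[\theta Y \to Y \to Y_1, \qquad Y_2 \to Y \to \xi Y,\]
with $Y_1, Y_2 \in \mathcal T \plus \mathcal F'$, and each is unique up to isomorphism.

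\textbf{Step 1: $X \in \mathcal F$.} Write $X_1 = T_0 \oplus F_0'$ with $T_0 \in \mathcal T = \add P$ and $F_0' \in \mathcal F'$. The component $X \to T_0$ is zero: a nonzero map $X \to P$ would make $P$ a summand of $X$ by Lemma~\ref{morphisms with target P}, which is impossible since $\Hom(P,X)=0$.
\begin{itemize}
\item I expect to show that this forces $T_0 = 0$. A zero component means $T_0$ splits off the cone, so $\Sigma\inv T_0$ becomes a summand of $\theta X$.
\item But $\theta X \in \mathcal T' \subseteq \mathcal H'$, while $\Sigma\inv P \notin \mathcal C$ (it is not in $\mathcal U$... or rather it lies outside $\mathcal U \cap \Sigma \mathcal V'$). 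Hence $T_0 = 0$.
\end{itemize}
Rotating $\theta X \to X \to F_0' \to \Sigma \theta X$ then gives a triangle $T \to \theta X \to X \to F'$ with $T = \Sigma\inv F_0' \in \mathcal T$.

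\textbf{Step 2: $X' \in \mathcal T'$.} Write $X'_2 = T_0 \oplus F'_0$. Here $\Hom(\mathcal F', X') = \Hom(I, X') = 0$: a nonzero map from the simple injective $I$ is mono, hence split, which would make $I$ a summand of $X'$. This is impossible since $X' \in \leftperp I$. By the same splitting argument as in Step 1, $F'_0 = 0$. This gives a triangle $T \to X' \to \xi X' \to \Sigma T$ with $T \in \mathcal T$.

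\textbf{Step 3: inverse equivalences.} Suppose $T \to X' \to X \to \Sigma T$ is a triangle with $T\in\mathcal T$, $X \in \mathcal F$, $X' \in \mathcal T'$; all terms lie in $\mathcal C$.
\begin{itemize}
\item Read as $T \to X' \to X$, it is a conflation for the pair $(\mathcal T \plus \mathcal F', \mathcal F)$, so by uniqueness $\xi X' \simeq X$.
\item Read rotated as $X' \to X \to \Sigma T$, it is a conflation for $(\mathcal T', \mathcal T \plus \mathcal F')$, so $\theta X \simeq X'$.
\end{itemize}
Combining with Steps 1 and 2, we get $\xi\theta X \simeq X$ and $\theta\xi X' \simeq X'$, and this also gives the equivalence stated in the last sentence of the proposition. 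Naturality of these isomorphisms follows from the uniqueness of the induced maps in Proposition~\ref{functoriality of conflations}; equivalently, they are the restricted unit and counit of the adjunctions. Hence $\theta$ and $\xi$ restrict to mutually inverse equivalences.

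The step I expect to be delicate is the splitting argument showing that the unwanted summand of the third term vanishes. It needs a careful check that a zero component forces a shifted summand into $\theta X$ (respectively into $\xi X'$, where the relevant summand is $\Sigma F'_0 = \Sigma I$ in $\xi X' \in \mathcal F \subseteq \mathcal H$), and that such a shifted object lies outside $\mathcal C$.
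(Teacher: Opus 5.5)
Your proposal is correct and takes essentially the same route as the paper. It identifies $\theta$ and $\xi$ with the torsion and torsion-free functors of the s-torsion pairs $(\mathcal T', \mathcal T \plus \mathcal F')$ and $(\mathcal T \plus \mathcal F', \mathcal F)$, shows that the complementary part of $X \in \mathcal F$ (resp.\ $X' \in \mathcal T'$) lies in $\mathcal F'$ (resp.\ $\mathcal T$), and concludes by uniqueness of the canonical conflations. Your splitting argument is a sound explicit justification of the step the paper dispatches with ``since $\Hom(X, \mathcal T) = 0$'': a zero component would force $\Sigma\inv P$ or $\Sigma I = \Sigma^2 P$ to be a summand of an object of $\mathcal C$, which is impossible. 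Your unit/counit treatment of morphisms amounts to the paper's commutative-square check.
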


\begin{proof}
    The fact that $\theta$ and $\xi$ are well-defined follows from Proposition~\ref{functoriality of conflations}
    and from the fact that $(\mathcal T', \mathcal T \plus \mathcal F')$ and $(\mathcal T \plus \mathcal F', \mathcal F)$ are s-torsion pairs in $\mathcal C$.
    
    Moreover, for $X \in \mathcal F$, by definition of $\theta$,
    the object $\theta (X) \in \mathcal T'$ is characterised by the existence of a triangle of the form
    $\Sigma\inv F' \to \theta(X) \to X \to F'$ with $F' \in \mathcal F'$ (since $\Hom (X, \mathcal T) = 0$),
    and we have a dual characterisation for $\xi$.
    This implies that both compositions $\theta \xi$ and $\xi \theta$ act as the identity on objects.
    
    Similarly,
    for morphisms $h : X \to Y$ in $\mathcal F$ and $h' : X' \to Y'$ in $\mathcal T'$
    such that $X' = \theta (X)$ and $Y' = \theta (Y)$,
    we know (see \ref{functoriality of conflations}) that the equalities $h' = \theta (h)$ and $h = \xi (h')$ are both equivalent to the commutativity of the diagram
    \[\begin{tikzcd}
        {X'} & X \\
        {Y'} & Y
        \arrow[from=1-1, to=1-2]
        \arrow["{h'}"', from=1-1, to=2-1]
        \arrow["h", from=1-2, to=2-2]
        \arrow[from=2-1, to=2-2]
    \end{tikzcd}\]
    where the horizontal morphisms are the canonical ones coming from the fact that $X' = \theta(X)$ and $Y' = \theta(Y)$.
    This implies that $\xi \theta (h) = h$ and $\theta \xi (h') = h'$,
    so finally we have shown that $\theta$ and $\xi$ are mutually inverse equivalences.
\end{proof}

\begin{proposition} \label{abelian trick}
    The equivalences $\theta : \mathcal F \longrightleftarrows \mathcal T' : \xi$
    induce mutually inverse isomorphisms of posets $\stors \mathcal F \longrightleftarrows \stors \mathcal T'$,
    which we still denote by $\theta$ and $\xi$.
\end{proposition}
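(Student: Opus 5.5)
The plan is to reduce the statement to ordinary torsion theory in abelian categories. By Remark~\ref{F and T' are abelian}, $\mathcal F$ is a Serre subcategory of $\mathcal H$ and $\mathcal T'$ is a Serre subcategory of $\mathcal H'$, so both are abelian. Once we know that their extriangulated structures (inherited from $\mathcal D$) are the abelian ones and that $\mathbb E\inv$ vanishes on them, the argument of Remark~\ref{stors for abelian} identifies $\stors \mathcal F$ with $\tors \mathcal F$ and $\stors \mathcal T'$ with $\tors \mathcal T'$. Torsion pairs in an abelian category are defined purely in terms of the abelian structure: Hom-vanishing and short exact sequences. Hence any equivalence of abelian categories transports torsion pairs to torsion pairs and preserves inclusion. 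It is also automatically exact, being an equivalence. So the mutually inverse equivalences $\theta$ and $\xi$ of Proposition~\ref{equivalence of small hearts} induce mutually inverse order-preserving maps $(\mathcal X,\mathcal Y)\mapsto(\theta\mathcal X,\theta\mathcal Y)$ and $(\mathcal X',\mathcal Y')\mapsto(\xi\mathcal X',\xi\mathcal Y')$.

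The key steps, in order, are as follows.

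\emph{Step 1.} Check that conflations in $\mathcal F$, viewed as an extension-closed subcategory of $\mathcal D$, are exactly the short exact sequences of the abelian category $\mathcal F$. Since $\mathcal F\subseteq \mathcal H$, a triangle $X\to Y\to Z\to\Sigma X$ with all terms in $\mathcal F$ is a triangle with terms in the heart $\mathcal H$, hence a short exact sequence in $\mathcal H$. Conversely, a short exact sequence in $\mathcal F$ is one in $\mathcal H$, because $\mathcal F$ is Serre, and therefore gives a triangle.

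\emph{Step 2.} Check that $\mathbb E\inv(X,Y)=\Hom_\mathcal D(X,\Sigma\inv Y)=0$ for $X,Y\in\mathcal F\subseteq\mathcal H$, which holds because $\mathcal H$ is a heart. With Steps~1 and~2, Definition~\ref{def s-torsion pair} on $\mathcal F$ reduces to Definition~\ref{def torsion pair} in the abelian category $\mathcal F$. The same two checks apply verbatim to $\mathcal T'\subseteq\mathcal H'$.

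\emph{Step 3.} Show that an equivalence $E:\mathcal A\to\mathcal B$ of abelian categories induces a poset isomorphism $\tors\mathcal A\simeq\tors\mathcal B$, with $E(\mathcal X)$ meaning the full subcategory of objects isomorphic to some $E(X)$. Hom-vanishing is preserved since $E$ is fully faithful. The canonical sequences are preserved since $E$ is exact, and $E$ and its quasi-inverse are mutually inverse on isomorphism-closed subcategories. Applying this with $E=\theta$ and quasi-inverse $\xi$ gives the claimed mutually inverse isomorphisms.

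I expect the only point requiring care to be the identification of the extriangulated structure with the abelian one in Step~1. In particular, one must confirm that $\theta$, defined through the $\mathcal D$-triangles of Proposition~\ref{functoriality of conflations}, is exact for the abelian structures. This follows formally from its being an equivalence of abelian categories, because equivalences preserve kernels and cokernels. So I would simply cite that fact rather than compute with triangles.
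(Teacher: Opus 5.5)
Your proposal is correct and follows essentially the same route as the paper. The paper also uses the Serre property and the argument of Remark~\ref{stors for abelian} to identify $\stors \mathcal F = \tors \mathcal F$ and $\stors \mathcal T' = \tors \mathcal T'$, and then uses the fact that $\theta$ and $\xi$ are exact, being equivalences of abelian categories. Your Steps~1--3 spell out checks the paper leaves implicit: conflations versus short exact sequences, vanishing of $\mathbb E\inv$ on a heart, and transport of torsion pairs along an equivalence.
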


\begin{proof}
    We know that $\mathcal F$ and $\mathcal T'$ are both Serre subcategories,
    of $\mathcal H$ and of $\mathcal H'$ respectively (see \ref{F and T' are abelian}).
    Similarly to Remark~\ref{stors for abelian},
    this implies that s-torsion pairs in $\mathcal F$ and in $\mathcal T'$ are simply torsion pairs in the abelian sense.
    So we have $\stors \mathcal F = \tors \mathcal F$ and $\stors \mathcal T' = \tors \mathcal T'$.

    Now the equivalences $\theta$ and $\xi$ are exact (because they are equivalences between abelian categories),
    so they induce mutually inverse isomorphisms of posets $\theta : \tors \mathcal F \longrightleftarrows \tors \mathcal T' : \xi$,
    as desired.
\end{proof}

\begin{lemma} \label{clever lemma}
    Let $\mathcal Z$ be a subcategory of $\mathcal F$.
    Then we have $(\mathcal T * \mathcal Z) \cap \mathcal H' = (\theta (\mathcal Z) * \mathcal F') \cap \mathcal H$.

    Equivalently, if $\mathcal Z'$ is a subcategory of $\mathcal T'$,
    then we have $(\mathcal Z' * \mathcal F') \cap \mathcal H = (\mathcal T * \xi (\mathcal Z')) \cap \mathcal H'$.
\end{lemma}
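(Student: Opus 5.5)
The plan is to compare indecomposables on both sides, using Corollary~\ref{C = H union H'}, which gives $\ind \mathcal C = \ind \mathcal T \sqcup \ind(\mathcal H\cap\mathcal H') \sqcup \ind \mathcal F'$. Here $\mathcal F = \mathcal H \cap \mathcal H'$ as subcategories, since $\ind \mathcal H = \{P\}\sqcup \ind(\mathcal H\cap \mathcal H')$ and $\ind\mathcal F = \ind\mathcal H\setminus\{P\}$. Dually, $\mathcal T' = \mathcal H\cap\mathcal H'$.

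That last observation looks suspicious. The paper stresses that $\mathcal F$ and $\mathcal T'$ differ as subcategories of $\mathcal C$, so $\theta$ is not the identity on objects. I therefore will not use it, and will argue with triangles instead.

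Fix an indecomposable $Y$ and show it lies in the left side if and only if it lies in the right side.

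\emph{Left to right.} Take $Y \in (\mathcal T * \mathcal Z)\cap \mathcal H'$, with a triangle $T \to Y \to Z \to \Sigma T$, where $T\in\mathcal T$ and $Z\in\mathcal Z\subseteq\mathcal F$. The next step is to compare this with the canonical triangle $T_0 \to \theta(Z) \to Z \to \Sigma T_0$ of Proposition~\ref{equivalence of small hearts}. Since $Y\in\mathcal H'$ and $\mathcal T\subseteq \mathcal C$, I will decompose $Y$ with respect to the s-torsion pair $(\mathcal T',\mathcal T\plus\mathcal F')$ in $\mathcal C$. This gives a conflation $\theta(Y)\to Y\to fY$ with $fY\in\mathcal T\plus\mathcal F'$. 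Because $Y\in\mathcal H'$ and $\Hom(\mathcal H',\mathcal T)=0$ (as $\mathcal T\subseteq\mathcal V'$), $fY$ lies in $\mathcal F'$. Rotating and using the octahedral axiom on $Y\to Z$, the composite $\theta(Y)\to Z$ should make $Y$ an extension of $\theta(Z)$-type data by $\mathcal F'$. I then want a triangle $\theta(Z)\to Y\to F'\to$ with $F'\in\mathcal F'$, which shows $Y\in\theta(\mathcal Z)*\mathcal F'$.

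\emph{Membership in $\mathcal H$.} Next I need $Y\in\mathcal H$, that is, $Y\in\Sigma\mathcal V$. This follows because $T,Z\in\mathcal H\subseteq\Sigma\mathcal V$ and $\Sigma\mathcal V$ is extension-closed. The reverse inclusion is the dual argument: from $X'\to Y\to F'$ with $Y\in\mathcal H$, use the pair $(\mathcal T\plus\mathcal F',\mathcal F)$ and $\xi$. The second statement follows from the first by substituting $\mathcal Z=\xi(\mathcal Z')$ and applying Proposition~\ref{equivalence of small hearts}.

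The main obstacle is the octahedral step, where the $\mathcal T$-part of $Z$ is identified with the $\mathcal T$-part cut off in $Y$. My approach is to use $\Hom(\mathcal T,\mathcal T')=0$ (since $\mathcal T'\subseteq\mathcal H'$ and $P\notin\mathcal H'$), together with uniqueness of conflations from Proposition~\ref{functoriality of conflations}. The octahedron on $T\to Y\to Z$ and $\Sigma\inv F'\to\theta(Z)\to Z$ should produce a triangle relating $Y$, $\theta(Z)$ and an object of $\mathcal T*\mathcal F'$. The $\mathcal T$-summands must then split off using $\Ext^2(\mathcal H,\mathcal T)=0$, as in Lemma~\ref{no non split extensions bof}, and the indecomposability of $Y$.
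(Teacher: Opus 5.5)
There is a genuine gap, and it is exactly at the step you call the main obstacle. You never produce a triangle $\theta(Z)\to Y\to F'$ with $F'\in\mathcal F'$, and both tools you propose for it fail.

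\emph{The vanishing $\Hom(\mathcal T,\mathcal T')=0$ is false.} For $Z\in\mathcal F$, the canonical triangle $T_0\to\theta(Z)\to Z\to\Sigma T_0$ has its first three terms in $\mathcal H$. So it is a short exact sequence in $\mathcal H$, and $T_0\to\theta(Z)$ is a monomorphism. Hence $\Hom(\mathcal T,\theta(Z))\neq 0$ whenever $T_0\neq 0$, i.e.\ whenever $\theta(Z)\not\simeq Z$. This happens as soon as $\mathcal F\neq\mathcal T'$, e.g.\ already in Example~\ref{example extriangulated A3}. The fact $P\notin\mathcal H'$ gives no vanishing at all. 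What does hold is the opposite direction, $\Hom(\mathcal T',\mathcal T)=0$, because $\mathcal T\subseteq\mathcal V'$.

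\emph{The splitting argument does not apply.} The octahedron you describe, on $T\to Y\to Z\xrightarrow{\beta}\Sigma T$ and $\Sigma^{-1}F'\to\theta(Z)\xrightarrow{\gamma}Z\to F'$, yields a triangle $\theta(Z)\to Y\to W\to\Sigma\theta(Z)$ together with $T\to W\to F'\xrightarrow{\delta}\Sigma T$. So $W\in\mathcal T*\mathcal F'$, which is the opposite order to Lemma~\ref{no non split extensions bof} (that lemma concerns $\mathcal F'*\mathcal X$). Here $\delta$ is essentially a morphism $\Sigma^{-1}F'\to T$ in $\add P$, and it need not vanish. For instance, when $Y=\theta(Z)$ one gets $W=0$ and $\delta$ is an isomorphism. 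So $\Ext^2(\mathcal H,\mathcal T)=0$ says nothing about $W$, and indecomposability of $Y$ does not help either.

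The missing idea is to use the abelian structure of $\mathcal H'$, which is what the paper does. Since $Y,Z,\Sigma T\in\mathcal H'$, the sequence $Y\to Z\xrightarrow{\beta}\Sigma T$ is short exact in $\mathcal H'$, so $\beta$ is an epimorphism there. The octahedron factors $\beta$ through $\delta$, so $\delta$ is an epimorphism in $\mathcal H'$ and $W$ is its kernel. Then $W\in\mathcal F'$, because $\mathcal F'$ is closed under subobjects.

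Alternatively, your first sketch can be finished without that octahedron. Apply the octahedral axiom to the composite $\theta(Y)\to Y\to Z$. Its cone $V$ sits in a triangle $fY\to V\to\Sigma T\to\Sigma fY$, so $V\in\mathcal F'*\mathcal F'=\mathcal F'$. Uniqueness in Proposition~\ref{functoriality of conflations}, for the pair $(\mathcal T',\mathcal T\plus\mathcal F')$, then gives $\theta(Y)\simeq\theta(Z)$. Equivalently, $\Hom(X',Y)\simeq\Hom(X',Z)$ naturally in $X'\in\mathcal T'$, since $\Hom(\mathcal T',\mathcal T)=0=\Hom(\mathcal T',\Sigma\mathcal T)$. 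For the reverse inclusion, the dual vanishing $\Hom(\Sigma\mathcal T,\mathcal F)=0=\Hom(\mathcal T,\mathcal F)$ gives $\xi(Y)\simeq\xi(X')$. This route is slightly slicker than the paper's.

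Your handling of $Y\in\mathcal H$ and the reduction of the second statement to the first are fine. Two smaller points:
\begin{enumerate}
\item $\Hom(Y,\mathcal T)=0$ only kills the component of $Y\to fY$ into a $\mathcal T$-summand. It is cleaner to note that the $(\mathcal T',\mathcal F')$-sequence of $Y$ in $\mathcal H'$ is already a conflation of the required shape, so $fY\in\mathcal F'$.
\item Your discarded observation is wrong for a precise reason: $\ind\mathcal H\setminus\{P\}=\ind\leftperp P$ by Lemma~\ref{T1 is a torsion class}, not $\ind\rightperp P=\ind\mathcal F$. So $\mathcal H\cap\mathcal H'$ corresponds to $\leftperp P$, which contains both $\mathcal F$ and $\mathcal T'$ as generally different subcategories.
\end{enumerate}
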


\begin{proof}
    We start by showing that $(\mathcal T * \mathcal Z) \cap \mathcal H' \subseteq (\theta (\mathcal Z) * \mathcal F') \cap \mathcal H$
    for $\mathcal Z \subseteq \mathcal F$.
    Let $M \in (\mathcal T * \mathcal Z) \cap \mathcal H'$,
    and take a triangle $T \to M \to Z \stackrel{\beta}{\to} \Sigma T$ with $T \in \mathcal T$ and $Z \in \mathcal Z$.
    By definition of $\theta (\mathcal Z)$,
    we have a triangle $\Sigma\inv F' \to \theta (Z) \stackrel{\gamma}{\to} Z \to F'$
    with $F' \in \mathcal F'$.
    Now observe that $\Hom(\theta (\mathcal Z), \mathcal F') = 0$ since $\theta (\mathcal Z) \subseteq \mathcal T'$,
    so in particular we have $\beta \gamma = 0$.
    By the octahedral axiom,
    these two triangles fit in a commutative diagram of triangles
    \begin{equation*}
        \begin{tikzcd}
            {\theta (Z)} & {\theta (Z)} \\
            M & Z & {\Sigma T} \\
            W & {F'} & {\Sigma T}
            \arrow[equals, from=1-1, to=1-2]
            \arrow[from=1-1, to=2-1]
            \arrow["\gamma", from=1-2, to=2-2]
            \arrow[from=2-1, to=2-2]
            \arrow[from=2-1, to=3-1]
            \arrow["\beta", from=2-2, to=2-3]
            \arrow[from=2-2, to=3-2]
            \arrow[equals, from=2-3, to=3-3]
            \arrow[from=3-1, to=3-2]
            \arrow["\delta", from=3-2, to=3-3]
        \end{tikzcd}
    \end{equation*}
    for some $W \in \mathcal D$.
    
    Now since we have $M, Z, \Sigma T \in \mathcal H'$,
    we see the map $\beta$ is an epimorphism in $\mathcal H'$.
    So considering the bottom right square in the diagram as a commutative square in $\mathcal H'$,
    we see that $\delta$ is also an epimorphism in $\mathcal H'$.
    This implies that $W \in \mathcal H'$
    (it is the kernel of $\delta$ in $\mathcal H'$).
    Moreover, since $\mathcal F'$ is a torsion-free class in $\mathcal H'$,
    it is closed under subobjects,
    so we get that $W \in \mathcal F'$.
    Now the leftmost vertical triangle shows that $M \in \theta (\mathcal Z) * \mathcal F'$,
    hence $(\mathcal T * \mathcal Z) \cap \mathcal H' \subseteq \theta (\mathcal Z) * \mathcal F'$.
    Since $\mathcal H$ is closed under extensions and $\mathcal T$ and $\mathcal Z$ are both subcategories of $\mathcal H$,
    we finally get that $(\mathcal T * \mathcal Z) \cap \mathcal H' \subseteq (\theta (\mathcal Z) * \mathcal F') \cap \mathcal H$.

    A dual argument shows that for $\mathcal Z' \subseteq \mathcal T'$,
    we have $(\mathcal Z' * \mathcal F') \cap \mathcal H \subseteq (\mathcal T * \xi (\mathcal Z')) \cap \mathcal H'$.
    So combining those two inclusions together with the fact that $\theta$ and $\xi$ are mutually inverse equivalences \mbox{$\mathcal F \longrightleftarrows \mathcal T'$},
    we get the desired equalities of subcategories.
\end{proof}

\begin{proposition} \label{X-parts of the flip-flops are isomorphic - extriangulated}
    The map $\alpha = {\lift}_{\mathcal I' \setminus \mathcal I} \circ \theta \circ \red_{\mathcal I \setminus\mathcal I'} : \mathcal I \setminus \mathcal I' \to \mathcal I' \setminus \mathcal I$
    is an isomorphism of posets satisfying $\psi \alpha = \varphi$,
    so that we have a commutative diagram of morphisms of posets
    \begin{equation} \tag*{$(*)$}
        \begin{tikzcd}
            {\mathcal I \setminus \mathcal I'} & {\mathcal I \cap \mathcal I'} \\
            {\mathcal I' \setminus \mathcal I} & {\mathcal I \cap \mathcal I'}
            \arrow["\varphi", from=1-1, to=1-2]
            \arrow["\alpha"', from=1-1, to=2-1]
            \arrow[equals, from=1-2, to=2-2]
            \arrow["\psi", from=2-1, to=2-2]
        \end{tikzcd}
    \end{equation}
\end{proposition}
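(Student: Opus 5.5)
The map $\alpha$ is a composite of three poset isomorphisms. The first is $\red_{\mathcal I \setminus \mathcal I'} : \mathcal I \setminus \mathcal I' \iso \stors \mathcal F$, from Theorem~\ref{theorem reduction extriangulated} together with Proposition~\ref{translation flip-flop Lambda}. The second is $\theta : \stors \mathcal F \iso \stors \mathcal T'$, from Proposition~\ref{abelian trick}. The third is $\lift_{\mathcal I' \setminus \mathcal I} : \stors \mathcal T' \iso \mathcal I' \setminus \mathcal I$, from Theorem~\ref{theorem reduction extriangulated} together with Proposition~\ref{translation flip-flop Gamma}. Hence $\alpha$ is an isomorphism of posets, and the only real content is the identity $\psi\alpha = \varphi$.

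To check $\psi\alpha = \varphi$, I would compute both sides explicitly. Take $(\mathcal X, \mathcal Y) \in \mathcal I \setminus \mathcal I' = \bigl[(\mathcal T \plus \mathcal F', \mathcal F), (\mathcal C, \{0\})\bigr]$. Its reduction is $(\mathcal Z, \mathcal Y)$ with $\mathcal Z = \mathcal X \cap \mathcal F$, and $(\mathcal Z, \mathcal Y)$ is a torsion pair in $\mathcal F$. The lift formula then gives $\mathcal X = (\mathcal T \plus \mathcal F') * \mathcal Z$. By Lemma~\ref{no non split extensions bof} this should simplify to $\mathcal X = \mathcal F' \plus (\mathcal T * \mathcal Z)$, but the order of the factors in the extension needs care. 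Therefore
\[\varphi(\mathcal X, \mathcal Y) = \bigl(\mathcal X \cap \mathcal H', \rightperp{(\mathcal X\cap\mathcal H')}\bigr), \qquad \mathcal X \cap \mathcal H' = \mathcal F' \plus \bigl((\mathcal T * \mathcal Z) \cap \mathcal H'\bigr).\]
For the other side, $\theta(\mathcal Z, \mathcal Y) = (\theta\mathcal Z, \theta\mathcal Y)$, and lifting along $\bigl[(\{0\},\mathcal C),(\mathcal T', \mathcal T\plus\mathcal F')\bigr]$ gives $\alpha(\mathcal X,\mathcal Y) = (\theta\mathcal Z, \theta\mathcal Y * (\mathcal T \plus \mathcal F'))$. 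Dually, the torsion-free part should simplify to $\mathcal T \plus (\theta\mathcal Y * \mathcal F')$. Then $\psi\alpha(\mathcal X,\mathcal Y)$ has torsion-free part $\bigl(\theta\mathcal Y * (\mathcal T\plus\mathcal F')\bigr) \cap \mathcal H$, which I expect to equal $\mathcal T \plus \bigl((\theta\mathcal Y * \mathcal F')\cap \mathcal H\bigr)$.

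Both sides are elements of $\mathcal I \cap \mathcal I'$, so it suffices to compare one component of each, for instance torsion parts, or torsion part against torsion-free part via orthogonality. Lemma~\ref{clever lemma} is exactly the tool for this comparison. Applied to $\mathcal Z$, it gives
\[(\mathcal T * \mathcal Z)\cap\mathcal H' = (\theta\mathcal Z * \mathcal F')\cap\mathcal H,\]
and applied to $\mathcal Y \subseteq \mathcal F$, it gives the analogous identity for the torsion-free side. With these, the plan is to show that the torsion part of $\psi\alpha(\mathcal X,\mathcal Y)$, namely $\leftperp{(\ldots)}$, equals $\mathcal F' \plus \bigl((\theta\mathcal Z * \mathcal F')\cap\mathcal H\bigr)$.

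The cleanest route avoids computing perps directly. Set $\mathcal M = \mathcal F' \plus \bigl((\mathcal T * \mathcal Z)\cap\mathcal H'\bigr)$ and $\mathcal N = \mathcal T \plus \bigl((\theta\mathcal Y * \mathcal F')\cap\mathcal H\bigr)$. Since $\mathcal M$ is the torsion part of $\varphi(\mathcal X,\mathcal Y)$ and $\mathcal N$ is the torsion-free part of $\psi\alpha(\mathcal X,\mathcal Y)$, both sides agree as soon as $(\mathcal M, \mathcal N)$ is itself an s-torsion pair in $\mathcal C$. I would argue this with the two rewritings of Lemma~\ref{clever lemma}. The vanishing $\Hom(\mathcal M,\mathcal N) = 0$ and $\mathbb E\inv(\mathcal M,\mathcal N)=0$ follow from $\Hom(\theta\mathcal Z,\theta\mathcal Y)=0$, from $\Hom(\mathcal Z, \mathcal Y)=0$, and from the known orthogonalities among $\mathcal T, \mathcal F', \mathcal H, \mathcal H'$. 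Generation $\mathcal C = \mathcal M * \mathcal N$ is the harder half.

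The main obstacle is this last step: verifying an equality of subcategories of $\mathcal H \cap \mathcal H'$ from the genuine torsion pair $(\mathcal Z,\mathcal Y)$ in $\mathcal F$ and its transport under $\theta$. A fallback is to pass through $\red_{\mathcal I \cap \mathcal I'}$, or through $\red_\mathcal I$, using the already proved commutativity in Proposition~\ref{translation flip-flop Lambda}. Then one only needs $\red_\mathcal I \psi\alpha = f\red_\mathcal I$, which amounts to checking that the torsion part inside $\mathcal H$ equals $(\mathcal T * \mathcal Z)\cap\mathcal H'$, and that is precisely Lemma~\ref{clever lemma}.
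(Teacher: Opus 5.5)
Your setup is the paper's. Everything reduces to showing that $\mathcal M=\mathcal F'\plus((\mathcal T*\mathcal Z)\cap\mathcal H')$ and $\mathcal N=\mathcal T\plus((\theta\mathcal Y*\mathcal F')\cap\mathcal H)$ form an s-torsion pair. Your orthogonality half is also the paper's: you rewrite $(\mathcal T*\mathcal Z)\cap\mathcal H'$ as $(\theta\mathcal Z*\mathcal F')\cap\mathcal H$ by Lemma~\ref{clever lemma}, then use $\Hom(\theta\mathcal Z,\theta\mathcal Y)=0$, $\Hom(\theta\mathcal Z,\mathcal F')=0$ and $\Hom(\mathcal F',\mathcal H)=0$. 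This is exactly the paper's inclusion $\rightperp{((\mathcal T*\mathcal Z)\cap\mathcal H')}\supseteq(\theta\mathcal Y*\mathcal F')\cap\mathcal H$.

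The gap is the other half, which you flag as the main obstacle and then do not prove. You need $\rightperp{\mathcal M}\subseteq\mathcal N$. After reducing to $\mathcal H\cap\mathcal H'$, this says: every $M\in\mathcal H\cap\mathcal H'$ with $\Hom((\mathcal T*\mathcal Z)\cap\mathcal H',M)=0$ lies in $(\theta\mathcal Y*\mathcal F')\cap\mathcal H$. Your fallback through $\red_\mathcal I$ does not remove this. The condition $\red_\mathcal I\psi\alpha=f\red_\mathcal I$ is the same perpendicular computation inside $\mathcal H$; it is not ``precisely'' Lemma~\ref{clever lemma}. That lemma is a statement about a single subcategory of $\mathcal F$ and never uses that $(\mathcal Z,\mathcal Y)$ is a torsion pair. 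The missing inclusion is exactly where $\mathcal Y$ has to be recovered from $\mathcal Z$, so the torsion-pair property must enter somewhere, and in your proposal it never does.

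The paper fills this as follows. For such $M$, take the $(\mathcal T',\mathcal F')$-decomposition $T'\to M\to F'\to\Sigma T'$ in $\mathcal H'$. Since $\theta\mathcal Z\subseteq\mathcal T'$, we have $\Hom(\theta\mathcal Z,T')\simeq\Hom(\theta\mathcal Z,M)$. This vanishes because $\theta\mathcal Z\subseteq(\mathcal T*\mathcal Z)\cap\mathcal H'$, by the defining triangle of $\theta(Z)$. As $(\theta\mathcal Z,\theta\mathcal Y)$ is a torsion pair in $\mathcal T'$, this forces $T'\in\theta\mathcal Y$, hence $M\in\theta\mathcal Y*\mathcal F'$.

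Alternatively, your remark about applying Lemma~\ref{clever lemma} to $\mathcal Y$ can be made into a complete argument. It gives $\mathcal N=\mathcal T\plus((\mathcal T*\mathcal Y)\cap\mathcal H')$, which equals $\mathcal T*\mathcal Y$ because $P\in\mathcal T*\mathcal Y\subseteq\mathcal H$. Now $\red_\mathcal I(\mathcal X,\mathcal Y)=(\mathcal T*\mathcal Z,\mathcal Y)$ is a torsion pair in $\mathcal H$. So the torsion submodule of any $M\in\mathcal H$ right-orthogonal to $(\mathcal T*\mathcal Z)\cap\mathcal H'$ can have no indecomposable summand other than $P$, and therefore $M\in\mathcal T*\mathcal Y$. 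The reverse inclusion is clear. This identifies the torsion-free part of $f\red_\mathcal I(\mathcal X,\mathcal Y)$ with that of $\red_\mathcal I\psi\alpha(\mathcal X,\mathcal Y)$. One of these two arguments has to be carried out; as written, your proposal stops exactly where the content of the proof lies.
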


\begin{proof}
    We know that $\alpha$ is an isomorphism of posets, since it is defined as the composition of isomorphisms of posets.
    Moreover, its inverse is given by $\alpha\inv = {\lift}_{\mathcal I \setminus \mathcal I'} \circ \xi \circ \red_{\mathcal I' \setminus\mathcal I}$.

    We can decompose the diagram $(*)$ in the following way,
    where every map except $\varphi$ and $\psi$ is an isomorphism of posets.

    \[\begin{tikzcd}
        & {\mathcal I \setminus \mathcal I'} & {\mathcal I \cap \mathcal I'} & \\
        {\stors \mathcal F} &&& {\stors (\mathcal H \cap \mathcal H')} \\
        {\stors \mathcal T'} &&& {\stors (\mathcal H \cap \mathcal H')} \\
        & {\mathcal I' \setminus \mathcal I} & {\mathcal I \cap \mathcal I'}
        \arrow["\varphi", from=1-2, to=1-3]
        \arrow["{\red_{\mathcal I \cap \mathcal I'}}", from=1-3, to=2-4]
        \arrow["{\red\inv_{\mathcal I \setminus \mathcal I'}}", from=2-1, to=1-2]
        \arrow["{\theta}", shift left, from=2-1, to=3-1]
        \arrow[equals, from=2-4, to=3-4]
        \arrow["{\xi}", shift left, from=3-1, to=2-1]
        \arrow["{\red\inv_{\mathcal I' \setminus \mathcal I}}"', from=3-1, to=4-2]
        \arrow["\psi"', from=4-2, to=4-3]
        \arrow["{\red_{\mathcal I \cap \mathcal I'}}"', from=4-3, to=3-4]
    \end{tikzcd}\]

    To show that the diagram is commutative,
    we let $(\mathcal X, \mathcal Y) \in \stors \mathcal F$ and $(\mathcal X', \mathcal Y') \in \stors \mathcal T'$
    be such that $\theta (\mathcal X, \mathcal Y) = (\mathcal X', \mathcal Y')$,
    and we show that $\varphi \circ \lift_{\mathcal I \setminus \mathcal I'} (\mathcal X, \mathcal Y) = \psi \circ \lift_{\mathcal I' \setminus \mathcal I} (\mathcal X', \mathcal Y')$.
    Since an s-torsion pair is uniquely determined by its torsion (resp.\ torsion-free) part,
    it suffices to show that the torsion part of $\varphi \circ \lift_{\mathcal I \setminus \mathcal I'} (\mathcal X, \mathcal Y)$
    and the torsion-free part of $\psi \circ \lift_{\mathcal I' \setminus \mathcal I} (\mathcal X', \mathcal Y')$ together form an s-torsion pair in $\mathcal C$.
    So after unravelling the definitions of $\varphi$, $\psi$ and the lifting maps
    (see \ref{translation flip-flop Lambda}, \ref{translation flip-flop Gamma} and \ref{theorem reduction extriangulated}),
    we want to show that $\rightperp{\bigl(((\mathcal T \plus \mathcal F') * \mathcal X) \cap \mathcal H'\bigr)} = (\mathcal Y' * (\mathcal T \plus \mathcal F')) \cap \mathcal H$.
    
    Now using Lemma~\ref{no non split extensions bof},
    the torsion part of $\varphi \circ \lift_{\mathcal I \setminus \mathcal I'} (\mathcal X, \mathcal Y)$
    can be rewritten as
    \begin{align*}
        ((\mathcal T \plus \mathcal F') * \mathcal X) \cap \mathcal H' &= ((\mathcal T * \mathcal X) \plus \mathcal F') \cap \mathcal H' \\
        &= ((\mathcal T * \mathcal X) \cap \mathcal H') \plus \mathcal F'
    \end{align*}
    and similarly the torsion-free part of $\psi \circ \lift_{\mathcal I' \setminus \mathcal I} (\mathcal X', \mathcal Y')$
    can be rewritten as $((\mathcal Y' * \mathcal F') \cap \mathcal H) \plus \mathcal T$.

    We use one last reduction to see that it suffices to show that
    \[{\red}_{\mathcal I \cap \mathcal I'} (\varphi \circ \lift_{\mathcal I \setminus \mathcal I'} (\mathcal X, \mathcal Y)) = {\red}_{\mathcal I \cap \mathcal I'} (\psi \circ \lift_{\mathcal I' \setminus \mathcal I} (\mathcal X', \mathcal Y'))\]
    as s-torsion pairs in $\mathcal H_{\mathcal I \cap \mathcal I'} = \mathcal H \cap \mathcal H'$.
    So we want to show that $\rightperp{((\mathcal T * \mathcal X) \cap \mathcal H')} = (\mathcal Y' * \mathcal F') \cap \mathcal H$
    inside the category $\mathcal H \cap \mathcal H'$.

    We first show that $\rightperp{((\mathcal T * \mathcal X) \cap \mathcal H')} \subseteq (\mathcal Y' * \mathcal F') \cap \mathcal H$.
    Let $M \in \mathcal H \cap \mathcal H'$
    such that we have $M \in \rightperp{((\mathcal T * \mathcal X) \cap \mathcal H')}$.
    It suffices to show that $M \in \mathcal Y' * \mathcal F'$.
    Since $(\mathcal T', \mathcal F')$ is a torsion pair in $\mathcal H'$,
    we have a triangle of the form $T' \to M \to F' \to \Sigma T'$
    with $T' \in \mathcal T'$ and $F' \in \mathcal F'$.
    We show that $T' \in \mathcal Y'$,
    which will imply that $M \in \mathcal Y' * \mathcal F'$.
    For $X' \in \mathcal X'$,
    we have $\Hom(X', T') \simeq \Hom(X', M)$ since $X' \in \mathcal T'$,
    Moreover, we have $\Hom(X', M) = 0$ since $X' \in \theta(\mathcal X) \subseteq (\mathcal T * \mathcal X) \cap \mathcal H'$.
    So we get $T' \in \rightperp{\mathcal X'}$,
    which implies that $T' \in \mathcal Y'$ since $T' \in \mathcal T'$ and $(\mathcal X', \mathcal Y') \in \stors \mathcal T'$,
    as desired.

    It only remains to show that $\rightperp{((\mathcal T * \mathcal X) \cap \mathcal H')} \supseteq (\mathcal Y' * \mathcal F') \cap \mathcal H$,
    or equivalently, using Lemma~\ref{clever lemma},
    that $\Hom ((\mathcal X' * \mathcal F') \cap \mathcal H, (\mathcal Y' * \mathcal F') \cap \mathcal H) = 0$.

    We have $\Hom(\mathcal X', \mathcal Y') = 0$ since $(\mathcal X', \mathcal Y') \in \stors \mathcal F$,
    and $\Hom(\mathcal X', \mathcal F') = 0$ since $\mathcal X' \subseteq \mathcal T' \subseteq \leftperp {\mathcal F'}$.
    So we get that $\Hom(\mathcal X', \mathcal Y' * \mathcal F') = 0$,
    hence in particular $\Hom(\mathcal X', (\mathcal Y' * \mathcal F') \cap \mathcal H) = 0$.
    Moreover, we have $\Hom(\mathcal F', \mathcal H) = 0$ since $(\mathcal F', \mathcal H) \in \stors \mathcal C$,
    hence in particular $\Hom(\mathcal F', (\mathcal Y' * \mathcal F') \cap \mathcal H) = 0$.
    This implies that $\Hom(\mathcal X' * \mathcal F', (\mathcal Y' * \mathcal F') \cap \mathcal H) = 0$,
    from which the result follows.
\end{proof}

\begin{theorem} \label{theorem arbitrary}
    The posets $\tors \Lambda$ and $\tors \Gamma$ are related by a flip-flop.
\end{theorem}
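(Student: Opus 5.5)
The argument mirrors the proof of Theorem~\ref{theorem ff}, with $\stors \mathcal C$ replacing $\silt \Kb(\projcat \Lambda)$. I would identify both $\tors \Lambda$ and $\tors \Gamma$ with the intervals $\mathcal I$ and $\mathcal I'$ of $\stors \mathcal C$, transport their flip-flop decompositions there, and then show that the two sets of flip-flop data are isomorphic.

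\textbf{First step.} The reduction map $\red_\mathcal I$ is an isomorphism of posets $\mathcal I \simeq \stors \mathcal H \simeq \tors \Lambda$, by Theorem~\ref{theorem reduction extriangulated} and Remark~\ref{stors for abelian}. Proposition~\ref{tors has flip-flop decomposition} gives a flip-flop decomposition of $\tors \Lambda$ with map $f$. Proposition~\ref{translation flip-flop Lambda} transports this decomposition and shows $\red_\mathcal I(\mathcal I \setminus \mathcal I') = \tors_P \Lambda$ and $\red_\mathcal I(\mathcal I \cap \mathcal I') = \tors \Lambda \setminus \tors_P \Lambda$, with $\varphi$ corresponding to $f$. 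Hence
\[\tors \Lambda \simeq \mathcal I \simeq \bigl((\mathcal I \setminus \mathcal I') \sqcup (\mathcal I \cap \mathcal I'), \leq^\varphi_-\bigr).\]

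\textbf{Second step.} Similarly, $\red_{\mathcal I'}$ identifies $\mathcal I'$ with $\tors \Gamma \simeq (\torf \Gamma)\opcat$, the second isomorphism being $\mathcal T \mapsto \rightperp{\mathcal T}$. Proposition~\ref{torf has flip-flop decomposition} gives
\[\torf \Gamma \simeq \bigl(\torf_I\Gamma \sqcup (\torf \Gamma \setminus \torf_I \Gamma), \leq^g_-\bigr).\]
By Lemma~\ref{opposite flip-flop}, its opposite is the $\leq^{g\opcat}_+$ flip-flop. Through the commutative square of Proposition~\ref{translation flip-flop Gamma}, this becomes
\[\tors \Gamma \simeq \mathcal I' \simeq \bigl((\mathcal I' \setminus \mathcal I) \sqcup (\mathcal I \cap \mathcal I'), \leq^\psi_+\bigr).\]
At this step I need to check that the opposite orders match. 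The orders on $\mathcal I'\setminus\mathcal I$ and $\mathcal I\cap\mathcal I'$ are those induced from $\stors\mathcal C$, and $\red_{\mathcal I'}$ respects them, since the vertical maps in that square are isomorphisms onto the opposite posets.

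\textbf{Final step.} Proposition~\ref{X-parts of the flip-flops are isomorphic - extriangulated} supplies an isomorphism of posets $\alpha : \mathcal I \setminus \mathcal I' \to \mathcal I' \setminus \mathcal I$ with $\psi\alpha = \varphi$. Apply $\alpha \sqcup \mathrm{id}$ to the disjoint union underlying $\mathcal I'$. This gives an isomorphism
\[\bigl((\mathcal I' \setminus \mathcal I) \sqcup (\mathcal I \cap \mathcal I'), \leq^\psi_+\bigr) \simeq \bigl((\mathcal I \setminus \mathcal I') \sqcup (\mathcal I \cap \mathcal I'), \leq^{\psi\alpha}_+\bigr) = \bigl((\mathcal I \setminus \mathcal I') \sqcup (\mathcal I \cap \mathcal I'), \leq^{\varphi}_+\bigr).\]
This holds because the flip-flop construction is natural in isomorphisms of the source poset $X$. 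The two posets are therefore $(X \sqcup Y, \leq^\varphi_-)$ and $(X \sqcup Y, \leq^\varphi_+)$ with $X = \mathcal I\setminus\mathcal I'$ and $Y = \mathcal I\cap\mathcal I'$, so they are related by a flip-flop.

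The main work, the commutativity of the square $(*)$, is already done. The only delicate point remaining is the bookkeeping of the opposite-order conventions in the second step, so that $\psi$ is indeed order-preserving in the right direction and the sign $+$ versus $-$ is correct.
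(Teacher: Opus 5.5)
Your proposal is correct and follows the paper's proof essentially verbatim. You identify $\tors \Lambda$ and $\tors \Gamma$ with the intervals $\mathcal I$ and $\mathcal I'$ of $\stors \mathcal C$, transport the two flip-flop decompositions via Propositions~\ref{translation flip-flop Lambda} and~\ref{translation flip-flop Gamma} (using Lemma~\ref{opposite flip-flop} for the sign), and conclude with the isomorphism $\alpha$ satisfying $\psi\alpha = \varphi$. Your explicit check that $\alpha \sqcup \mathrm{id}$, read as a map from $(\mathcal I \setminus \mathcal I') \sqcup (\mathcal I \cap \mathcal I')$ to $(\mathcal I' \setminus \mathcal I) \sqcup (\mathcal I \cap \mathcal I')$, carries $\leq^{\varphi}_+$ to $\leq^{\psi}_+$ simply spells out what the paper means by ``the two flip-flop data are isomorphic''.
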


\begin{proof}
    We know that $\tors \Lambda \simeq \stors \mathcal H \simeq \mathcal I$
    and that $\tors \Gamma \simeq \stors \mathcal H' \simeq \mathcal I'$.
    Moreover, we have flip-flop decompositions
    \[\mathcal I \simeq \bigl((\mathcal I \setminus \mathcal I') \sqcup (\mathcal I \cap \mathcal I'), \leq_-^\varphi\bigr)\]
    by Proposition~\ref{tors has flip-flop decomposition} and Proposition~\ref{translation flip-flop Lambda}
    and
    \[\mathcal I' \simeq \bigl((\mathcal I' \setminus \mathcal I) \sqcup (\mathcal I \cap \mathcal I'), \leq_+^\psi\bigr)\]
    by Proposition~\ref{torf has flip-flop decomposition}, Proposition~\ref{translation flip-flop Gamma} and Lemma~\ref{opposite flip-flop}.
    Finally, Proposition~\ref{X-parts of the flip-flops are isomorphic - extriangulated} shows that the two flip-flop data are isomorphic,
    hence the posets are related by a flip-flop.    
\end{proof}

\section*{References}
\printbibliography[heading=none]

\end{document}